\newtheorem{theorem}{Theorem}[section]
\newtheorem{lemma}[theorem]{Lemma}
\newtheorem{corollary}[theorem]{Corollary}
\newtheorem{proposition}[theorem]{Proposition}
\theoremstyle{definition}
\newtheorem{question}{Question}
\newtheorem{example}[theorem]{Example}
\newtheorem{remark}[theorem]{Remark}
\renewcommand{\emptyset}{\varnothing}
\DeclareMathOperator{\uhr}{\upharpoonright}  
\DeclareMathOperator{\N}{\mathbb{N}}
\DeclareMathOperator{\R}{\mathbb{R}}
\DeclareMathOperator{\B}{\mathbb{B}}
\DeclareMathOperator{\s}{\mathbb{S}}
\DeclareMathOperator{\D}{\mathbb{D}}
\DeclareMathOperator{\sto}{\leadsto}
\DeclareMathOperator{\conv}{conv}
\DeclareMathOperator{\ldim}{loc\hspace{1pt}dim}
\DeclareMathOperator{\diam}{diam}
\setlist{noitemsep}
\numberwithin{equation}{section} 
\titleformat{\section}[block]
 {\bfseries}
 {\thesection.}
 {\fontdimen2\font}
 {}
\newcommand{\periodafter}[1]{#1.}
\titleformat{\subsection}[runin]
 {\bfseries}
 {\thesubsection.}
 {\fontdimen2\font}
 {\periodafter}
\begin{document}

 \author{Valentin Gutev}

 \address{Department of Mathematics, Faculty of Science, University of
   Malta, Msida MSD 2080, Malta} \email{valentin.gutev@um.edu.mt}

 \subjclass[2010]{Primary 54C60, 54C65; Secondary 46A25, 46B10, 46C05,
   54C20, 54C55, 54D15, 54F35, 54F45, 55P05}

 \keywords{Set-valued mapping, lower semi-continuous, continuous
   selection, continuous extension, reflexive Banach space,
   finite-dimensional space, infinite-dimensional space.}

 \title{Selections and Higher Separation Axioms}

 \begin{abstract}
   This survey presents some historical background and recent
   developments in the area of selections for set-valued mappings
   along with several open questions. It was written with the hope
   that the presented material may pique an interest in the selection
   problem for set-valued mappings --- a problem with a fascinating
   history and appealing applications.
\end{abstract}

\date{\today}
\maketitle

\startcontents[sections]
\vspace*{1pc}
 \printcontents[sections]{l}{1}{\setcounter{tocdepth}{2}}
\vspace*{1pc}

\section{The Selection Problem}

In the mid 1950's Ernest Michael wrote a series of fundamental papers
\cite{michael:56a,michael:56b,michael:57} relating familiar extension
theorems to selections, thus laying down the foundation of the theory
of continuous selections. Nowadays, selections became an indispensable
tool for many mathematicians working in vastly different
areas. However, the key importance of Michael's selection theory is
not only in providing a comprehensive solution to diverse selection
problems, but also in the immediate inclusion of the obtained results
into the general context of development of topology. In the first of
these papers \cite{michael:56a}, Michael wrote:\medskip

``One of the most interesting and important problems in topology is
the \emph{extension problem}: Two topological spaces $X$ and $Y$ are
given, together with a closed $A\subset X$, and we would like to know
whether every continuous function $g:A\to Y$ can be extended to a
continuous function $f$ from $X$ (or at least from some open $U\supset
A$) into $Y$. Sometimes there are additional requirements on $f$,
which frequently (as in the theory of fibre bundles) take the
following form: For every $x$ in $X$, $f(x)$ must be an element of a
pre-assigned subset of $Y$. This new problem, which we call the
\emph{selection problem}, is clearly more general than the extension
problem, and presents a challenge even when $A$ is the null set or a
1-point set (where the extension problem is trivial).''\medskip

Michael proceeded to remark that with the exception of sandwich-like
theorems, such as Tong \cite{tong:48,MR0050265}, Kat\v{e}tov
\cite{katetov:51,MR0060211} and Dowker \cite{dowker:51}, no attempt
has been made to obtain results under minimal hypotheses. Then he
stated the purpose of his papers emphasising that most of the familiar
extension theorems, such as the Tietze-Urysohn extension theorem,
Kuratowski's extension theorem for finite-dimensional spaces and the
Borsuk homotopy extension theorem can be slightly altered and
essentially generalised in terms of selection theorems.\medskip

The present survey contains different aspects of Michael's selection
theory, possible new research directions along with several open
questions.

\section{Selections and Dowker's Extension Theorem}

In this section, and the rest of the paper, all spaces are Hausdorff
topological spaces. However, several of the considerations are also
valid for $T_1$-spaces, and some even without assuming any separation
axioms.

\subsection{Collectionwise Normality}

A space $X$ is \emph{collectionwise normal} (Bing \cite{bing:51}) if
for every discrete collection $\mathscr{D}$ of subsets of $X$ there
exists a discrete collection $\{U_D:D\in \mathscr{D}\}$ of open
subsets of $X$ such that ${D\subset U_D}$, for all $D\in
\mathscr{D}$. In this case, we simply say that $\mathscr{D}$ has an
\emph{open discrete expansion}.  Every collectionwise normal space is
normal, but the converse is not necessarily true.  In the same paper,
see \cite[Examples G and H]{bing:51}, Bing described an example of a
normal space which is not collectionwise normal, now known as Bing's
example \cite[5.1.23 Bing's Example]{engelking:89}. He also proved
that full normality (i.e.\ paracompactness) implies collectionwise
normality but not conversely \cite[Theorem 12]{bing:51}. Recall that a
space $X$ is \emph{paracompact} if every open cover of $X$ has a
locally finite open refinement.\medskip

Since the closure of the elements of any discrete collection is a
discrete collection too, a space $X$ is collectionwise normal if
every discrete collection $\mathscr{D}$ of closed subsets of $X$ has
an open discrete expansion. In the presence of normality, this can be
further relaxed to requiring $\mathscr{D}$ to have only an \emph{open
  pairwise-disjoint expansion} (i.e.\ an open pairwise-disjoint family
$\{U_D:D\in \mathscr{D}\}$ with ${D\subset U_D}$, for all
$D\in \mathscr{D}$).\medskip

Collectionwise normality is a natural generalisation of
normality. Indeed, if $X$ has this property with respect to discrete
families $\mathscr{D}$ of cardinality at most $\tau$, then it is
called \emph{$\tau$-collectionwise normal}. Thus, $X$ is normal iff it
is $2$-collectionwise normal (or, more generally, $n$-collectionwise
normal for every finite $n\geq 2$). In fact, a space $X$ is normal iff
it is $\omega$-collectionwise normal, which follows easily from
Urysohn's characterisation of normality. On the other hand, for every
infinite cardinal $\tau$ there exists a $\tau$-col\-lectionwise normal
space which is not $\tau^+$-collectionwise normal
\cite{przymusinski:78a}, where the cardinal $\tau^+$ is the immediate
successor of~$\tau$.\medskip

Collectionwise normality is the natural domain for continuous
extensions. The following extension theorem was proved by Dowker
\cite{dowker:52}, and is commonly called \emph{Dowker's extension
  theorem}.

\begin{theorem}[\cite{dowker:52}]
  \label{theorem-res-pro-v1:1}
  A space $X$ is collectionwise normal if and only if for every
  closed subset $A\subset X$, every continuous map from $A$ to a
  Banach space $E$ can be continuously extended to the whole of $X$.
\end{theorem}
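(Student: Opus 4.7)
The plan is to treat the two implications separately, with essentially all the technical content living in the sufficiency direction.

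\emph{Necessity.} Assume every continuous Banach-valued map on a closed subset of $X$ extends to $X$. Normality follows at once: for disjoint closed $A_0,A_1\subset X$ the function $f\colon A_0\cup A_1\to\R$ equal to $i$ on $A_i$ is continuous on the closed set $A_0\cup A_1$, and any $\R$-valued extension separates $A_0$ from $A_1$ via pullback of suitable open intervals. For collectionwise normality, fix a discrete family $\mathscr{D}=\{D_\alpha\}_{\alpha\in I}$ of closed sets in $X$. Its discreteness forces $A := \bigcup_\alpha D_\alpha$ to be closed, and the formula $f(x) = e_\alpha$ for $x\in D_\alpha$ (with $e_\alpha$ the $\alpha$-th standard unit vector in $\ell^1(I)$) gives a continuous map $f\colon A\to\ell^1(I)$, since each $a\in A$ has a neighbourhood meeting only one $D_\alpha$. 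Extend $f$ to $F\colon X\to\ell^1(I)$ and set
\[
 U_\alpha = F^{-1}\bigl(\{v\in\ell^1(I)\colon v_\alpha>2/3\text{ and }\|v\|_1<4/3\}\bigr).
\]
Then $D_\alpha\subset U_\alpha$, and whenever $x\in U_\alpha\cap U_\beta$ with $\alpha\neq\beta$ the vector $v=F(x)$ satisfies $\|v\|_1\geq v_\alpha+v_\beta>4/3$, a contradiction. Thus $\{U_\alpha\}$ is pairwise disjoint, and combined with the normality just established, the remark preceding the theorem upgrades this to the required discrete open expansion.

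\emph{Sufficiency.} Suppose $X$ is collectionwise normal, $A\subset X$ is closed, and $f\colon A\to E$ is continuous. The plan reduces the extension problem to an approximation lemma: \emph{for every continuous $h\colon A\to E$ with $\|h\|_\infty\leq c$ there is a continuous $H\colon X\to E$ with $\|H\|_\infty\leq c$ and $\|h-H|_A\|_\infty\leq c/2$.} After first reducing to a bounded $f$ (radially retract $E$ onto a ball containing $f(A)$ and extend into that ball), iterating the lemma on the successive residuals $f-\sum_{k\leq n}H_k$ produces a uniformly Cauchy telescoping series whose sum is the desired extension.

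To prove the approximation lemma, use that $E$ is metric and hence paracompact: the open cover of $E$ by balls of radius $c/4$ admits a $\sigma$-discrete open refinement $\{V_{n,\beta}\}_{n\in\N,\,\beta\in I_n}$. For each fixed $n$ the pullback $\{h^{-1}(V_{n,\beta})\}_{\beta}$ is discrete in $A$; since $A$ is closed in $X$, these families are also discrete in $X$. Shrinking each open piece to a closed subfamily covering $A$ and applying collectionwise normality level by level yields, for every $n$, a discrete open expansion in $X$. Urysohn functions on $X$ attached to each such expansion, suitably scaled across $n\in\N$ and combined with a bump separating $A$ from regions where no $V_{n,\beta}$ is relevant, assemble into a locally finite partition of unity $\{\varphi_{n,\beta}\}$ on $X$; choosing arbitrary $y_{n,\beta}\in V_{n,\beta}$, the convex combination $H(x)=\sum_{n,\beta}\varphi_{n,\beta}(x)\,y_{n,\beta}$ is the required map, the norm and approximation bounds following from the convexity of the $c$-ball in $E$ and the fact that each active $y_{n,\beta}$ lies within $c/2$ of $h(x)$ whenever $x\in A$.

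The main obstacle is organising the partition-of-unity construction under collectionwise normality rather than paracompactness: one cannot simply quote the existence of a locally finite partition of unity subordinate to an arbitrary open cover. The workaround is to exploit the $\sigma$-discrete refinability inherited from the metric on $E$, reducing the global problem on $X$ to a countable sequence of discrete-expansion tasks --- each provided by collectionwise normality --- and then glueing the discrete levels into a genuinely locally finite whole, so that the linear and metric structure of the Banach target carries the combinatorial burden that paracompactness would otherwise handle.
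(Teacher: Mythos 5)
The paper itself contains no proof of this theorem --- it is quoted from Dowker and the reader is referred to \cite{MR3673071} for a simple proof --- so your argument can only be judged on its own terms. Your necessity half is the standard argument and is complete and correct: the map $x\mapsto e_\alpha$ into $\ell^1(I)$ is continuous on the closed union of a discrete family, the sets $U_\alpha=F^{-1}(\{v: v_\alpha>2/3,\ \|v\|_1<4/3\})$ are pairwise disjoint, and the remark preceding the theorem (normality plus pairwise-disjoint open expansions gives collectionwise normality) finishes it. The sufficiency half follows the standard Dowker-type scheme (approximation lemma plus geometric iteration, with the $\sigma$-discrete refinement of a ball cover of $E$ supplying the countably many discrete-expansion tasks that collectionwise normality can handle), and that scheme does work. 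Two steps, however, are not right as written.

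First, the reduction to bounded $f$ is broken: ``radially retract $E$ onto a ball containing $f(A)$'' presupposes that $f(A)$ is bounded, which is exactly what you are trying to arrange. The standard fix is to compose $f$ with a homeomorphism of $E$ onto its open unit ball, extend the resulting bounded map to a map $F$ into the \emph{closed} ball, observe that $F^{-1}(\text{boundary})$ is a closed set disjoint from $A$, and damp $F$ by a Urysohn function vanishing there before composing with the inverse homeomorphism. Without this (or an equivalent device) your argument only proves the extension theorem for bounded maps. Second, the claim that the functions $\varphi_{n,\beta}$ can be assembled into a \emph{locally finite} partition of unity on $X$ is doubtful: each level is discrete, but nothing controls how the open expansions of different levels accumulate off $A$, and collectionwise normality gives you no way to force local finiteness across the levels (that is precisely what paracompactness would buy and what you correctly say you cannot assume). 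Fortunately local finiteness is not needed: with weights $2^{-n}$ the sum $\sum_{n,\beta}2^{-n}\psi_{n,\beta}$ converges uniformly, it is bounded away from $0$ after adding one Urysohn bump supported off $A$, and the normalised series $\sum\varphi_{n,\beta}(x)y_{n,\beta}$ is continuous by uniform convergence since the $y_{n,\beta}$ are bounded --- provided you choose $y_{n,\beta}\in V_{n,\beta}\cap h(A)$ (not arbitrarily in $V_{n,\beta}$) so that the bound $\|H\|_\infty\leq c$ actually holds. With these two repairs the proof is the standard one and is correct.
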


For a simple proof of Theorem \ref{theorem-res-pro-v1:1}, the
interested reader is referred to \cite{MR3673071}.

\subsection{Selections and Extensions}

For a space $Y$, let $\mathscr{F}(Y)$ be the collection of
all nonempty closed subsets of $Y$; $\mathscr{C}(Y)$ --- that of all
compact members of $\mathscr{F}(Y)$; and
$\mathscr{C}'(Y)=\mathscr{C}(Y)\cup\{Y\}$. For a (not necessarily
convex) subset $Y$ of a normed space $E$, we will use the subscript
``$\mathbf{c}$'' to denote the convex members of anyone of the above
families; namely, $\mathscr{F}_\mathbf{c}(Y)$ for the convex members
of $\mathscr{F}(Y)$; $\mathscr{C}_\mathbf{c}(Y)$ for those of
$\mathscr{C}(Y)$; and
$\mathscr{C}_\mathbf{c}'(Y)=\mathscr{C}_\mathbf{c}(Y)\cup
\{Y\}$. \medskip

For spaces (sets) $X$ and $Y$, we write $\Phi:X\sto Y$ to designate
that $\Phi$ is a \emph{set-valued} (or \emph{multi-valued})
\emph{mapping} from $X$ to the nonempty subsets of $Y$. Such a mapping
$\Phi:X\sto Y$ is \emph{lower semi-continuous}, or \emph{l.s.c.}, if
the set
\[
  \Phi^{-1}[U]=\{x\in X: \Phi(x)\cap U\neq \emptyset\}
\]
is open in $X$, for every open $U\subset Y$. Finally, let us recall
that a map $f:X\to Y$ is a \emph{selection} (or a \emph{single-valued
  selection}) for $\Phi:X\sto Y$ if $f(x)\in\Phi(x)$, for every
$x\in X$. The following natural relationship between selections and
extensions is well known.

\begin{proposition}
  \label{proposition-contr-ext-14:1}
  For a subset $A\subset X$ and $g:A\to Y$, define a set-valued
  mapping $\Phi_g:X\to \mathscr{C}'(Y)$ by $\Phi_g(x)=\{g(x)\}$ if
  $x\in A$, and $\Phi_g(x)=Y$ otherwise. Then
  \begin{enumerate}[label=\upshape{(\roman*)}]
  \item\label{item:4} $f:X\to Y$ is a continuous selection for
    $\Phi_g$ if and only if it is a continuous extension for $g$.
  \item\label{item:3} If\/ $Y$ has at least two points, then $\Phi_g$
    is l.s.c.\ if and only if $A$ is closed and $g$ is continuous.
  \end{enumerate}
\end{proposition}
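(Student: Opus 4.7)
\medskip

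\textbf{Plan for part \ref{item:4}.} This is essentially a tautology once we unfold the definitions. A map $f:X\to Y$ is a selection for $\Phi_g$ precisely when $f(x)\in\Phi_g(x)$ for every $x\in X$; for $x\in A$ this forces $f(x)=g(x)$, and for $x\notin A$ the condition $f(x)\in Y$ is automatic. So the selection property is equivalent to $f\uhr A=g$, which is the extension property. Continuity of $f$ is the same assertion on both sides.

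\medskip

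\textbf{Plan for part \ref{item:3}.} The main step is the explicit formula
\[
  \Phi_g^{-1}[U]=(X\setminus A)\cup g^{-1}(U)\qquad \text{for every nonempty open }U\subset Y,
\]
which follows at once from the definition of $\Phi_g$: a point $x\notin A$ contributes because $\Phi_g(x)=Y$ meets every nonempty $U$, while a point $x\in A$ contributes iff $g(x)\in U$. With this in hand, the implication ``$A$ closed and $g$ continuous $\Longrightarrow$ $\Phi_g$ l.s.c.'' is immediate: $X\setminus A$ is open by hypothesis, and $g^{-1}(U)=A\cap V$ for some open $V\subset X$, so $\Phi_g^{-1}[U]=(X\setminus A)\cup V$ is open.

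\medskip

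\textbf{Plan for the reverse direction of \ref{item:3}}, where the hypothesis that $Y$ has at least two points actually gets used. First I would show $A$ is closed. Pick distinct $y_1,y_2\in Y$ and, using the Hausdorff property of $Y$, choose disjoint open neighbourhoods $U_1\ni y_1$ and $U_2\ni y_2$. Each $\Phi_g^{-1}[U_i]$ is open and contains $X\setminus A$ (since $\Phi_g(x)=Y$ there). On the other hand, no point of $A$ can lie in both preimages, since $\{g(x)\}$ cannot meet the disjoint sets $U_1$ and $U_2$ simultaneously. Hence $\Phi_g^{-1}[U_1]\cap\Phi_g^{-1}[U_2]=X\setminus A$, proving $A$ is closed. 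For continuity of $g$, given $x\in A$ and open $V\subset Y$ with $g(x)\in V$, the set $\Phi_g^{-1}[V]$ is an open neighbourhood of $x$ in $X$; intersecting with $A$ gives $g^{-1}(V)=A\cap\Phi_g^{-1}[V]$, an open subset of $A$, so $g$ is continuous.

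\medskip

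\textbf{Where the difficulty lies.} The only delicate point is the reverse direction of \ref{item:3}, and it is precisely there that the assumption $|Y|\geq 2$ is essential: if $Y$ is a singleton, then $\Phi_g$ is trivially l.s.c.\ regardless of whether $A$ is closed, so one needs genuinely two points (separated by disjoint opens) to localise the membership in $A$ via preimages under $\Phi_g$.
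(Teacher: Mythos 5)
Your proposal is correct and follows essentially the same route as the paper: part \ref{item:4} is read off from the definition of $\Phi_g$, and the reverse direction of part \ref{item:3} rests on the same key identity $X\setminus A=\Phi_g^{-1}[U_1]\cap\Phi_g^{-1}[U_2]$ for disjoint nonempty open $U_1,U_2\subset Y$ (available since $Y$ is Hausdorff with at least two points), which the paper also uses. The only difference is that the paper cites Michael for the forward directions and leaves the continuity of $g$ implicit, whereas you spell these out via the formula $\Phi_g^{-1}[U]=(X\setminus A)\cup g^{-1}(U)$.
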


Proposition \ref{proposition-contr-ext-14:1}\ref{item:4} is
\cite[Example 1.3]{michael:56a}, and follows immediately from the
definition of $\Phi_g$. The one direction
of Proposition \ref{proposition-contr-ext-14:1}\ref{item:3} is
\cite[Example 1.3*]{michael:56a}; the other follows from the
fact that $Y$ contains nonempty disjoint open sets $U$ and $V$, hence
$X\setminus A=\Phi_g^{-1}[U]\cap \Phi_g^{-1}[V]$.\medskip

According to Proposition \ref{proposition-contr-ext-14:1}, any
extension theorem can be restated as a selection theorem for the
mapping $\Phi_g$. Based on this, Michael
\cite{michael:56a,michael:56b,michael:57} proposed several selection
theorems generalising ordinary extension theorems. Here are two of
them whose prototype is Dowker's extension theorem (Theorem
\ref{theorem-res-pro-v1:1}).

\begin{theorem}[\cite{michael:56a}]
  \label{theorem-res-pro-v1:2}
  A space $X$ is collectionwise normal if and only if for every
  Banach space $E$, every l.s.c.\ mapping $\Phi:X\to
  \mathscr{C}'_\mathbf{c}(E)$ has a continuous selection.
\end{theorem}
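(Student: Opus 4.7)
The plan is to treat the two implications separately.

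For the implication from the selection property to collectionwise normality, the work is already done by Proposition \ref{proposition-contr-ext-14:1} and Theorem \ref{theorem-res-pro-v1:1}. Given a closed $A\subset X$, a Banach space $E$, and a continuous $g:A\to E$, I form $\Phi_g$: each value $\{g(x)\}$ lies in $\mathscr{C}_{\mathbf{c}}(E)$ and $E$ itself lies in $\mathscr{C}'_{\mathbf{c}}(E)$, so $\Phi_g:X\to\mathscr{C}'_{\mathbf{c}}(E)$. Proposition \ref{proposition-contr-ext-14:1}\ref{item:3} says $\Phi_g$ is l.s.c., so the hypothesis gives a continuous selection which, by Proposition \ref{proposition-contr-ext-14:1}\ref{item:4}, is a continuous extension of $g$; Dowker's theorem then delivers collectionwise normality.

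For the converse, assume $X$ is collectionwise normal and $\Phi:X\to\mathscr{C}'_{\mathbf{c}}(E)$ is l.s.c. I would follow Michael's scheme of producing the selection as a uniform limit of approximate selections. The pivotal step is an $\varepsilon$\emph{-selection lemma}: for every $\varepsilon>0$, there exists a continuous $f_\varepsilon:X\to E$ with $d(f_\varepsilon(x),\Phi(x))<\varepsilon$ for all $x\in X$. Granting this, I would inductively construct continuous maps $f_n$ satisfying $\|f_n-f_{n-1}\|_\infty<2^{-n+1}$ and $d(f_n(x),\Phi(x))<2^{-n}$, by applying the $\varepsilon$-lemma at stage $n$ to an l.s.c.\ thinning of $\Phi$ obtained by intersecting each $\Phi(x)$ with a small closed ball centred at $f_{n-1}(x)$. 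The resulting sequence is uniformly Cauchy; completeness of $E$ and closedness of $\Phi(x)$ then yield a continuous limit $f$ with $f(x)\in\Phi(x)$.

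The main obstacle is the $\varepsilon$-selection lemma, the single point at which collectionwise normality has to substitute for paracompactness. The natural approach is a partition-of-unity argument based on the open cover $\{\Phi^{-1}[B(y,\varepsilon)]:y\in E\}$ of $X$: given a discretely-indexed subfamily of this cover together with a subordinate partition of unity $\{\phi_\alpha\}$ and chosen centres $y_\alpha\in E$, the combination $f_\varepsilon(x)=\sum_\alpha\phi_\alpha(x)\,y_\alpha$ is continuous and, because $\Phi(x)$ is convex and every $y_\alpha$ with $\phi_\alpha(x)>0$ lies within $\varepsilon$ of $\Phi(x)$, sits within $\varepsilon$ of $\Phi(x)$ itself. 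The delicate part is constructing this discrete family and partition without paracompactness; the key leverage is the dichotomy built into $\mathscr{C}'_{\mathbf{c}}(E)$, namely that each $\Phi(x)$ is either \emph{compact}---so the cover needs only finitely many members near $x$---or all of $E$, where any value suffices. It is this dichotomy that allows Bing's discrete-expansion property underlying collectionwise normality to supply the required partition of unity, whereas general $\mathscr{F}_{\mathbf{c}}(E)$-valued $\Phi$ would require the full strength of paracompactness.
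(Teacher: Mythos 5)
Your first implication is correct and is the standard one: $\Phi_g$ is l.s.c.\ and $\mathscr{C}'_\mathbf{c}(E)$-valued by Proposition \ref{proposition-contr-ext-14:1}, a continuous selection for it is a continuous extension of $g$, and Theorem \ref{theorem-res-pro-v1:1} then yields collectionwise normality.

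The converse contains a genuine gap at the inductive step, and it is exactly the gap that made Michael's original proof of this theorem incomplete (the survey records this right after the statement). Your iteration applies the $\varepsilon$-selection lemma at stage $n$ to the thinned mapping $x\mapsto \overline{\Phi(x)\cap\mathbf{O}_{2^{-n+1}}(f_{n-1}(x))}$. But this mapping is no longer $\mathscr{C}'_\mathbf{c}(E)$-valued: at any point where $\Phi(x)=E$ its value is a closed ball, which in an infinite-dimensional Banach space is neither compact nor all of $E$. The ``compact or everything'' dichotomy on which your $\varepsilon$-selection lemma rests is therefore destroyed after the very first approximation, and the lemma cannot be reapplied; the argument survives only for $\mathscr{C}_\mathbf{c}(E)$-valued mappings, whose values stay compact under thinning. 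This is not a removable technicality: over a collectionwise normal (rather than paracompact) domain, passing from continuous $\varepsilon$-selections for every $\varepsilon>0$ to an exact selection is precisely what is not automatic --- for $\mathscr{F}_\mathbf{c}(E)$-valued mappings this is the open Question \ref{question-shsa-v14:1}, and Proposition \ref{proposition-shsa-vgg-rev:1} closes the induction only because it assumes $\varepsilon$-selections for \emph{all} l.s.c.\ convex-valued mappings, a class stable under thinning. (A smaller slip: intersecting an l.s.c.\ mapping with \emph{closed} balls need not preserve lower semi-continuity; one must intersect with the open-graph mapping $\mathbf{O}_\varepsilon[f_{n-1}]$ as in Propositions \ref{proposition-shsa-v28:5} and \ref{proposition-shsa-v28:4} and then take closures.) The route the survey actually endorses is different: by the equivalence of \ref{item:8} and \ref{item:9} in Theorem \ref{theorem-res-pro-v1:4}, the selection property for $\mathscr{C}'_\mathbf{c}(E)$-valued mappings decomposes into the compact-valued selection property on closed subsets (supplied by Theorem \ref{theorem-res-pro-v1:3}, since collectionwise normal spaces are PF-normal) together with the extension property for continuous Banach-valued maps on closed subsets (supplied by Theorem \ref{theorem-res-pro-v1:1}); the first complete proofs are those of Choban and Valov \cite{choban-valov:75} and of \cite{MR2643824}.
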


\begin{theorem}[\cite{michael:56a}]
  \label{theorem-res-pro-v9:1}
  A space $X$ is paracompact if and only if for every Banach space
  $E$, every l.s.c.\ mapping $\Phi:X\to \mathscr{F}_\mathbf{c}(E)$
  has a continuous selection.
\end{theorem}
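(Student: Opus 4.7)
The plan is to prove both implications, with the substantial content in the forward direction (paracompact $\Rightarrow$ selection exists).

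\textbf{Approximation lemma.} First I would isolate the combinatorial role of paracompactness in the form: if $X$ is paracompact and $\Phi \colon X \sto E$ is l.s.c.\ with convex values, then for every $\varepsilon > 0$ there is a continuous $g \colon X \to E$ with $d(g(x), \Phi(x)) < \varepsilon$ for all $x$. Choosing some $y \in \Phi(x)$ for each $x$ produces an open cover $\{\Phi^{-1}[B(y, \varepsilon)] : y \in E\}$ of $X$; paracompactness delivers a locally finite open refinement, each of whose members lies in some $\Phi^{-1}[B(y_\alpha, \varepsilon)]$, together with a subordinated partition of unity $\{p_\alpha\}$. The locally finite convex combination $g = \sum_\alpha p_\alpha y_\alpha$ is continuous, and convexity of the distance function $d(\cdot, \Phi(x))$ on the convex set $\Phi(x)$, combined with $d(y_\alpha, \Phi(x)) < \varepsilon$ whenever $p_\alpha(x) > 0$, yields $d(g(x), \Phi(x)) < \varepsilon$.

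\textbf{Cauchy iteration.} I would then build a sequence of continuous maps $f_n \colon X \to E$ with $d(f_n(x), \Phi(x)) < 2^{-n}$ and $\|f_n(x) - f_{n-1}(x)\| < 3 \cdot 2^{-n}$ for every $x$. The base case $f_0$ comes from the lemma applied to $\Phi$ with $\varepsilon = 1$. For the inductive step, define the trimmed mapping $\Psi_n(x) = \Phi(x) \cap B(f_{n-1}(x), 2^{-(n-1)})$; it is nonempty by the inductive distance bound and convex by intersection. A careful triangle-inequality argument combining the l.s.c.\ of $\Phi$ with the continuity of $f_{n-1}$ shows $\Psi_n$ remains l.s.c., so the lemma applies to $\Psi_n$ with $\varepsilon = 2^{-n}$ and produces $f_n$; both bounds follow immediately from $\Psi_n(x) \subset \Phi(x) \cap B(f_{n-1}(x), 2^{-(n-1)})$. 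The sequence $(f_n)$ is then uniformly Cauchy in the Banach space $E$, converges uniformly to a continuous $f$, and closedness of $\Phi(x)$ forces $f(x) \in \Phi(x)$.

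\textbf{Converse and main obstacle.} For the reverse direction, given an open cover $\{U_\alpha\}_{\alpha \in A}$ of $X$, I would take $E = \ell^1(A)$ and set $\Phi(x) = \overline{\conv}\{e_\alpha : x \in U_\alpha\}$, so $\Phi$ has closed convex values; lower semi-continuity follows by approximating any point of $\Phi(x_0) \cap O$ by a finitely supported convex combination $\sum \lambda_i e_{\alpha_i}$ with $x_0 \in U_{\alpha_i}$, since this combination remains in $\Phi(x) \cap O$ throughout the open set $\bigcap_i U_{\alpha_i}$. A selection then supplies a continuous partition of unity subordinate to $\{U_\alpha\}$. I expect the most delicate step in the whole argument to be the verification of l.s.c.\ for the trimmed $\Psi_n$ in Step~2, where three perturbations (movement of $\Phi$, of $f_{n-1}$, and the shrinking radius $2^{-(n-1)}$) must be balanced so the allowed ball does not collapse; the remaining subtlety lies in the converse, where the partition of unity from the selection must be upgraded to a locally finite open refinement, typically via a shrinking argument using normality (itself supplied by Theorem~\ref{theorem-res-pro-v1:2}).
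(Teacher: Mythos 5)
Your proposal is correct and follows essentially the same route as Michael's original proof, which the survey cites without reproducing but whose two halves correspond exactly to machinery the paper does present: the approximation lemma plus Cauchy iteration is Proposition \ref{proposition-shsa-vgg-rev:1} (with the lower semi-continuity of your trimmed mappings $\Psi_n$ being precisely Propositions \ref{proposition-shsa-v28:5} and \ref{proposition-shsa-v28:4}), and the converse via $\ell_1(A)$ and index-subordinated partitions of unity is Theorem \ref{theorem-contr-ext-14:9} together with Lemma \ref{lemma-Count-Par-v12:2}. The only cosmetic point is that $d(\cdot,\Phi(x))$ is convex on all of $E$ because $\Phi(x)$ is convex, not merely ``on the convex set $\Phi(x)$''.
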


However, the proof of Theorem \ref{theorem-res-pro-v1:2} in
\cite{michael:56a} was incomplete and, in fact, working only for the
case of $\mathscr{C}_\mathbf{c}(E)$-valued mappings. The first
complete proof of this theorem was given by Choban and Valov
\cite{choban-valov:75} using a different technique. A simple proof of
Theorem \ref{theorem-res-pro-v1:2}, based on Theorem
\ref{theorem-res-pro-v1:1}, was given in \cite{MR2643824}.

\subsection{PF-Normality}

For an infinite cardinal number $\tau$, a space $X$ is called
\emph{$\tau$-PF-normal} (see \cite{smith:72}) if every point-finite
open cover of $X$ of cardinality $\leq \tau$ is normal.  Every
$\tau$-collectionwise normal space is $\tau$-PF-normal (see
\cite{michael:55}), and $\omega$-PF-normality coincides with normality
\cite{morita:48}.  However, PF-normality is neither identical to
collectionwise normality (see Bing's example \cite{bing:51} and
\cite[Example 1]{michael:55}), nor to normality (\cite[Example
2]{michael:55}). For some properties of PF-normal spaces, the
interested reader is referred to \cite[Section
3]{gutev-ohta-yamazaki:03} and \cite{michael:55}.\medskip

In the realm of normal spaces, PF-normal spaces coincide with the
\emph{point-finitely paracompact spaces} in the sense of Kand\^o
\cite{MR0063648}; while, in Nedev's terminology \cite{nedev:80},
$\tau$-PF-normal spaces are precisely the
\emph{$\tau$-pointwise-א$\aleph_0$-paracompact spaces}. The following
characterisation of PF-normal spaces was obtained by Kand\^o
\cite{MR0063648} and Nedev \cite{nedev:80}.

\begin{theorem}[\cite{MR0063648,nedev:80}]
  \label{theorem-res-pro-v1:3}
  A space $X$ is PF-normal if and only if for every normed space $E$,
  every l.s.c.\ mapping $\varphi:X\to \mathscr{C}_\mathbf{c}(E)$ has a
  continuous selection.
\end{theorem}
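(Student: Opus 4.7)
The plan is to prove both implications along the classical Michael template, with point-finiteness entering through a $\sigma$-locally finite cozero refinement in one direction and through a Cauchy-type construction of approximate selections in the other.

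For the sufficiency direction, starting with a point-finite open cover $\mathscr{U}=\{U_\alpha:\alpha\in A\}$ of $X$, I would take $E=\ell^1(A)$ with the canonical basis $\{e_\alpha\}_{\alpha\in A}$ and define $\Phi:X\sto E$ by $\Phi(x)=\conv\{e_\alpha:x\in U_\alpha\}$. Point-finiteness forces $\Phi(x)$ to be a finite-dimensional simplex, so $\Phi(x)\in\mathscr{C}_\mathbf{c}(E)$; lower semicontinuity follows because each $f_0\in\Phi(x_0)$ is supported on the finite set $A(x_0)=\{\alpha:x_0\in U_\alpha\}$, and every such $U_\alpha$ remains a neighborhood of points close to $x_0$. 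A continuous selection $f:X\to E$ then yields coordinate functions $\lambda_\alpha(x)=f(x)(\alpha)$ with $\sum_\alpha\lambda_\alpha=1$ and $\{\lambda_\alpha>0\}\subset U_\alpha$. The role of the $\ell^1$ norm is crucial: for each $n$, the family $\mathscr{V}_n=\{V_{\alpha,n}:\alpha\in A\}$ with $V_{\alpha,n}=\{x\in X:\lambda_\alpha(x)>1/n\}$ is locally finite, since $\ell^1$-continuity of $f$ at any $x_0$ provides a neighborhood on which $\lambda_\alpha(x)<1/n$ for every $\alpha$ outside the finite support of $f(x_0)$. Hence $\bigcup_n\mathscr{V}_n$ is a $\sigma$-locally finite cozero refinement of $\mathscr{U}$, which classically makes $\mathscr{U}$ normal.

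For the necessity direction, assuming $X$ is PF-normal and $\varphi:X\to\mathscr{C}_\mathbf{c}(E)$ is l.s.c., I would adapt Michael's inductive approximation scheme. At step $n$, the goal is a continuous $f_n:X\to E$ with $\mathrm{dist}(f_n(x),\varphi(x))<2^{-n}$ uniformly and $\|f_n-f_{n-1}\|<2^{-n+2}$; the uniform limit $f=\lim f_n$ is then continuous and lies in each closed $\varphi(x)$. To build $f_n$, I would exploit compactness of $\varphi(x_0)$ to cover it by finitely many $2^{-n}$-balls centred at points $y_1^{x_0},\dots,y_{k_{x_0}}^{x_0}$ near $f_{n-1}(x_0)$, forming the open neighborhood $W_{x_0}=\bigcap_i\varphi^{-1}[B_{2^{-n}}(y_i^{x_0})]$ on which each $y_i^{x_0}$ lies within $2^{-n}$ of $\varphi(x)$. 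Passing to a point-finite open refinement $\mathscr{U}_n$ of $\{W_{x_0}\}$ and invoking PF-normality to produce a subordinated partition of unity $\{p_U\}$, I would set $f_n(x)=\sum_U p_U(x)y_U$; convexity of $\varphi(x)$ ensures $f_n(x)$ lies within $2^{-n}$ of $\varphi(x)$.

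The main obstacle I anticipate is precisely the extraction of a \emph{point-finite} open refinement of $\{W_{x_0}\}_{x_0\in X}$ on which PF-normality can be invoked, since the hypothesis only guarantees normality of already point-finite covers. Compactness of $\varphi(x)$ is essential here, and the usual reduction (for instance via a Zorn-type minimal refinement, or by inductively nesting with the cover produced at the previous step so as to inherit point-finiteness) is the technical heart of the argument. Once a point-finite cover is in hand, the Cauchy completion of the $f_n$ and the extraction of the subordinated partition of unity are routine and parallel Michael's original proof of Theorem~\ref{theorem-res-pro-v9:1}.
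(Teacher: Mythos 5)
The paper states this theorem without proof (it is quoted from Kand\^o and Nedev), so I am comparing your proposal against the classical argument. Your first direction is correct and is essentially the standard one: the simplex-valued mapping $\Phi(x)=\conv\{e_\alpha:x\in U_\alpha\}$ into $\ell^1(A)$ is compact-convex-valued precisely because $\mathscr{U}$ is point-finite, it is l.s.c., and a continuous selection yields a partition of unity index-subordinated to $\mathscr{U}$ whose level sets form a $\sigma$-locally finite cozero refinement, so the cover is normal by Morita's theorem.

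The converse direction, however, has a genuine gap, and it sits exactly where you place it. Your cover $\{W_{x_0}\}_{x_0\in X}$ is indexed by the points of $X$ and there is no reason for it to be point-finite, nor to admit a point-finite open refinement: extracting point-finite open refinements from arbitrary open covers is metacompactness, which PF-normality does not provide, so neither a ``Zorn-type minimal refinement'' nor ``inductive nesting'' can close this. The missing idea is to import point-finiteness from the range rather than manufacture it in the domain. Fix $\varepsilon>0$; since the normed space $E$ is metrizable, Stone's theorem gives a locally finite open refinement $\mathscr{V}$ of the cover of $E$ by $\varepsilon/2$-balls. Because each $\varphi(x)$ is compact, it meets only finitely many members of the locally finite family $\mathscr{V}$, so the open cover $\varphi^{-1}[\mathscr{V}]$ of $X$ is automatically point-finite. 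PF-normality now makes this cover normal, hence it carries a locally finite subordinated partition of unity $\{p_V\}$; choosing $y_V\in V$ and setting $f(x)=\sum_V p_V(x)y_V$ gives, by convexity of $\varphi(x)$, a continuous $\varepsilon$-selection. With this in hand your iteration is fine: intersecting with $\mathbf{O}_{2^{-n+1}}[f_{n-1}]$ and taking closures preserves lower semi-continuity and compact convex values (Propositions \ref{proposition-shsa-v28:2}, \ref{proposition-shsa-v28:5} and \ref{proposition-shsa-v28:4}), the $f_n$ are uniformly Cauchy, and the limit lies in $\varphi(x)$ because $\varphi(x)$ is compact, hence closed in the completion of $E$ --- which is also why completeness of $E$ is not needed.
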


\subsection{Selections and Collectionwise Normality}

For a metric space $(Y,\rho)$ and $\varepsilon>0$, we will use
$\mathbf{O}_\varepsilon(p)$ for the open $\varepsilon$-ball centred at
a point $p\in Y$; and
$\mathbf{O}_\varepsilon(S)=\bigcup_{p\in S}\mathbf{O}_\varepsilon(p)$,
whenever $S\subset Y$. A map $f:X\to Y$ is an
\emph{$\varepsilon$-selection} for a mapping $\Phi :X\sto Y$ if
$f(x)\in \mathbf{O}_\varepsilon\left(\Phi (x)\right)$, for every
$x\in X$.\medskip

Although Theorem \ref{theorem-res-pro-v1:2} is valid in the way it was
stated by Michael, it was shown in \cite[Theorem 1.2]{MR2643824} that
it is equivalent to both Theorem \ref{theorem-res-pro-v1:1} and
Theorem \ref{theorem-res-pro-v1:3}. In fact, a bit more was obtained
in \cite{MR2643824}, see \cite[Claim 2.1]{MR2643824}.

\begin{theorem}[\cite{MR2643824}]
  \label{theorem-res-pro-v1:4}
  Let $E$ be a Banach space. Then for a space $X$, the following are
  equivalent\textup{:}
  \begin{enumerate}
  \item\label{item:8} Every l.s.c.\ mapping $\Phi:X\to
    \mathscr{C}'_\mathbf{c}(E)$ has a continuous selection.
  \item\label{item:9} If $A\subset X$ is closed, then every l.s.c.\
    mapping $\varphi : A\to \mathscr{C}_\mathbf{c}(E)$ has a
    continuous selection, and every continuous $g:A\to E$ can be
    extended to a continuous map $f:X\to E$.
  \item \label{item:shsa-vgg-rev:1} If\/ $\Phi:X\sto E$ is an l.s.c.\
    convex-valued mapping which admits a continuous map $g:X\to E$
    such that $\Phi(x)$ is compact whenever $g(x)\notin \Phi(x)$,
    then $\Phi$ has a continuous $\varepsilon$-selection, for every
    $\varepsilon>0$.
  \end{enumerate}
\end{theorem}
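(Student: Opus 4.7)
My plan is to close the loop $(1)\Rightarrow(2)\Rightarrow(3)\Rightarrow(1)$, combining Proposition~\ref{proposition-contr-ext-14:1}, an Urysohn-style blending argument, and Michael's iterative refinement of $\varepsilon$-selections.

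For $(1)\Rightarrow(2)$, the extension clause follows by applying (1) to the mapping $\Phi_g$ of Proposition~\ref{proposition-contr-ext-14:1}, which is l.s.c.\ into $\mathscr{C}'_\mathbf{c}(E)$; its continuous selection is precisely the continuous extension of $g$. For the selection clause, given an l.s.c.\ $\varphi:A\to\mathscr{C}_\mathbf{c}(E)$ on a closed $A\subset X$, I extend it to $\Phi:X\to\mathscr{C}'_\mathbf{c}(E)$ by $\Phi|_A=\varphi$ and $\Phi|_{X\setminus A}\equiv E$. Closedness of $A$ together with l.s.c.\ of $\varphi$ make $\Phi$ l.s.c., and any continuous selection for $\Phi$ restricts to one for $\varphi$.

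For $(2)\Rightarrow(3)$, fix $\varepsilon>0$ and set $A_\varepsilon=\{x\in X:\mathrm{dist}(g(x),\Phi(x))\geq\varepsilon\}$; since $x\mapsto\mathrm{dist}(g(x),\Phi(x))$ is u.s.c.\ when $\Phi$ is l.s.c.\ and $g$ continuous, $A_\varepsilon$ is closed. The hypothesis of (3) forces $\Phi|_{A_\varepsilon}$ to be compact-convex-valued, so the selection clause of (2) furnishes a continuous selection on $A_\varepsilon$, and the extension clause of (2) extends it to $\tilde f:X\to E$. Setting $V=\{x:\mathrm{dist}(\tilde f(x),\Phi(x))<\varepsilon\}$ gives an open neighbourhood of $A_\varepsilon$; the extension clause of (2) together with Theorem~\ref{theorem-res-pro-v1:1} yields the collectionwise normality of $X$, and Urysohn's lemma produces a continuous $\lambda:X\to[0,1]$ with $\lambda|_{A_\varepsilon}=1$ and $\lambda|_{X\setminus V}=0$. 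Then $h=\lambda\tilde f+(1-\lambda)g$ is continuous, and because $\mathbf{O}_\varepsilon(\Phi(x))=\Phi(x)+\mathbf{O}_\varepsilon(0)$ is convex, a case split confirms $h$ is an $\varepsilon$-selection: if $\lambda(x)>0$ then $x\in V$ and $\tilde f(x)\in\mathbf{O}_\varepsilon(\Phi(x))$; if $\lambda(x)<1$ then $x\notin A_\varepsilon$ and $g(x)\in\mathbf{O}_\varepsilon(\Phi(x))$.

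For $(3)\Rightarrow(1)$, I iteratively apply (3) to build a uniformly Cauchy sequence of $2^{-n}$-selections. Given l.s.c.\ $\Phi:X\to\mathscr{C}'_\mathbf{c}(E)$, any constant map $g\equiv e_0$ fulfils the hypothesis of (3), because $e_0\notin\Phi(x)$ forces $\Phi(x)\neq E$ and hence $\Phi(x)\in\mathscr{C}_\mathbf{c}(E)$; this yields an initial $1$-selection $f_0$. Inductively, if $f_n$ is a continuous $2^{-n}$-selection, consider $\Psi_n(x)=\Phi(x)\cap\overline{\mathbf{O}_{2^{-n}}(f_n(x))}$: it is convex-valued and nonempty, and l.s.c.\ by the standard Michael lemma on intersecting an l.s.c.\ convex-valued mapping with a continuously varying ball whose centre is a strict approximation. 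If $f_n(x)\notin\Psi_n(x)$ then $f_n(x)\notin\Phi(x)$, so $\Phi(x)\neq E$, and $\Psi_n(x)$ is a closed subset of the compact $\Phi(x)$. Thus (3) applies to $\Psi_n$ with $g=f_n$, providing a continuous $2^{-(n+1)}$-selection $f_{n+1}$ of $\Psi_n$ (hence of $\Phi$) satisfying $\|f_{n+1}(x)-f_n(x)\|<2^{1-n}$. The sequence converges uniformly to a continuous $f$, and since $\mathrm{dist}(f_n(x),\Phi(x))\to 0$ with $\Phi(x)$ closed, $f(x)\in\Phi(x)$.

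The hardest step is verifying that each $\Psi_n$ in the iteration remains l.s.c.: this requires convexity of $\Phi$ in an essential way, via Michael's intersection lemma, with the boundary case handled by moving slightly along a chord from the strict-approximation witness. The remaining technicalities---the Urysohn blending in $(2)\Rightarrow(3)$ and the Cauchy convergence estimate in $(3)\Rightarrow(1)$---are routine once one notes that open $\varepsilon$-neighbourhoods of convex sets are convex.
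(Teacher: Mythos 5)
Your cycle \ref{item:8}$\Rightarrow$\ref{item:9}$\Rightarrow$\ref{item:shsa-vgg-rev:1}$\Rightarrow$\ref{item:8} is correct. The survey does not reproduce a proof of Theorem~\ref{theorem-res-pro-v1:4} (it cites \cite{MR2643824} and only remarks that the argument is straightforward and avoids collectionwise normality), but your \ref{item:shsa-vgg-rev:1}$\Rightarrow$\ref{item:8} leg is precisely the iteration displayed in the proof of Proposition~\ref{proposition-shsa-vgg-rev:1}: intersect with balls about the current approximate selection, keep lower semi-continuity via Propositions~\ref{proposition-shsa-v28:2}, \ref{proposition-shsa-v28:5} and \ref{proposition-shsa-v28:4} (your chord argument correctly handles the closed ball), and pass to the uniform limit; the extra point you supply --- that the hypothesis of \ref{item:shsa-vgg-rev:1} is inherited by $\Psi_n$ with $g=f_n$, since $f_n(x)\notin\Psi_n(x)$ forces $f_n(x)\notin\Phi(x)$ and hence compactness of $\Phi(x)$ --- is exactly what makes the induction close. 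The Urysohn blending in \ref{item:9}$\Rightarrow$\ref{item:shsa-vgg-rev:1} is sound: $A_\varepsilon$ is closed by upper semi-continuity of $x\mapsto\operatorname{dist}(g(x),\Phi(x))$, and convexity of $\mathbf{O}_\varepsilon(\Phi(x))$ makes the affine combination an $\varepsilon$-selection. One repair is needed: you cannot invoke Theorem~\ref{theorem-res-pro-v1:1} to deduce collectionwise normality from the extension clause of \ref{item:9}, because that clause concerns only the \emph{fixed} Banach space $E$, whereas Dowker's theorem quantifies over all Banach spaces. Fortunately you only need normality of $X$ to separate $A_\varepsilon$ from $X\setminus V$, and this does follow from \ref{item:9}: the $\{0,1\}$-valued map on the closed set $A_\varepsilon\cup(X\setminus V)$, composed with $t\mapsto te_0$ into a one-dimensional subspace of $E$, extends continuously to $X$, and composing with a norming functional and truncating to $[0,1]$ yields the required $\lambda$. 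With that substitution the proof is complete, and it conforms to the paper's remark that the argument avoids any explicit reference to collectionwise normality.
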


It should be remarked that the proof of Theorem
\ref{theorem-res-pro-v1:4} is straightforward avoiding any explicit
reference to collectionwise normality. It should be also remarked that
the equivalence \ref{item:8}$\ \Leftrightarrow\ $\ref{item:9} in Theorem
\ref{theorem-res-pro-v1:4} represents the selection property of
PF-normality (Theorem \ref{theorem-res-pro-v1:3}) which, together with
Dowker's extension theorem (Theorem \ref{theorem-res-pro-v1:1}), gives
Theorem \ref{theorem-res-pro-v1:2}.  This brings the natural question
of whether there is any selection generalisation of Dowker's extension
theorem.  Various aspects of this question are discussed below.

\subsection{Selection Compact-Deficiency}

Motivated by \ref{item:shsa-vgg-rev:1} of Theorem
\ref{theorem-res-pro-v1:4}, the following question was posed in
\cite[Question 1]{MR2643824}.

\begin{question}[\cite{MR2643824}]
  \label{question-res-pro-v1:1}
  Let $X$ be a collectionwise normal space and $E$ be a Banach
  space. Suppose that $\Phi:X\to \mathscr{F}_\mathbf{c}(E)$ is an
  l.s.c.\ mapping which admits a continuous map $g:X\to E$ with
  $\Phi(x)$ compact, whenever $g(x)\notin \Phi(x)$ for some $x\in
  X$. Does $\Phi$ have a continuous selection?
\end{question}

If $\Phi:X\to \mathscr{C}'(E)$ and $g:X\to E$ is any map, then
$\Phi(x)$ is compact for every $x\in X$ with $g(x)\notin
\Phi(x)$. Moreover, for a collectionwise normal space $X$ and a Banach
space $E$, every l.s.c.\ mapping
$\Phi:X\to \mathscr{C}'_\mathbf{c}(E)$ has a continuous selection, by
Theorem \ref{theorem-res-pro-v1:2}. Thus, the answer to Question
\ref{question-res-pro-v1:1} is ``Yes'' for
$\mathscr{C}'_\mathbf{c}(E)$-valued mappings. A natural example of
mappings which are as in Question \ref{question-res-pro-v1:1}, but not
$\mathscr{C}'_\mathbf{c}(E)$-valued, is given at the end of this
section, see Remark \ref{remark-shsa-v18:1}.\medskip

If $\mathbf{0}\in E$ is the origin of a normed space $E$ and
$\varphi:X\to \mathscr{C}'(E)$, then $\varphi(x)$ is compact for every
$x\in X$ with $\mathbf{0}\notin \varphi(x)$. This property is
equivalent to the one in Question \ref{question-res-pro-v1:1}. Indeed,
let $\Phi:X\to \mathscr{F}_\mathbf{c}(E)$ and $g:X\to E$ be a
continuous map with $\Phi(x)$ compact, whenever $g(x)\notin \Phi(x)$
for some $x\in X$. Define a mapping
$\varphi:X\to \mathscr{F}_\mathbf{c}(E)$ by $\varphi(x)=\Phi(x)-g(x)$,
$x\in X$. Then $\varphi(x)$ is compact for every $x\in X$ with
$\mathbf{0}\notin \varphi(x)$, and $\varphi$ has a continuous
selection if and only if so does $\Phi$. Thus, we have the following
relaxed form of Question \ref{question-res-pro-v1:1}.

\begin{question}
  \label{question-shsa-v15:1}
  Let $X$ be a collectionwise normal space, $E$ be a Banach space and
  $\varphi:X\to \mathscr{F}_\mathbf{c}(E)$ be an l.s.c.\ mapping such
  that $\varphi(x)$ is compact, for every $x\in X$ with
  $\mathbf{0}\notin \varphi(x)$. Does $\varphi$ have a continuous
  selection?
\end{question}

Here are some further remarks regarding some of the challenges in this
question.

\begin{proposition}
  \label{proposition-res-pro-v11:1}
  Let $E$ be a metrizable space, $\varphi:X\to \mathscr{F}(E)$ be
  l.s.c.\ and $g:X\to E$ be continuous. Then
  $\left\{x\in X: g(x)\in \varphi(x)\right\}$ is a $G_\delta$-set in
  $X$.
\end{proposition}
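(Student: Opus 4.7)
\medskip
The plan is to fix a compatible metric $\rho$ on $E$, and for each integer $n\geq 1$ to define
\[
  A_n=\{x\in X:\varphi(x)\cap \mathbf{O}_{1/n}(g(x))\neq \emptyset\}.
\]
Since each $\varphi(x)$ is closed in $E$, the point $g(x)$ belongs to $\varphi(x)$ precisely when $\rho(g(x),\varphi(x))=0$, i.e.\ when $x\in A_n$ for every $n\geq 1$. Hence $\{x\in X:g(x)\in \varphi(x)\}=\bigcap_{n\geq 1}A_n$, and it suffices to prove that each $A_n$ is open in $X$.

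To see this, fix $n$ and $x_0\in A_n$, together with a witness $p_0\in \varphi(x_0)$ with $\rho(g(x_0),p_0)<1/n$. Choose $\varepsilon>0$ so small that $\rho(g(x_0),p_0)+2\varepsilon<1/n$. The lower semi-continuity of $\varphi$ makes $U_1=\varphi^{-1}[\mathbf{O}_\varepsilon(p_0)]$ open in $X$, and it contains $x_0$ because $p_0\in \varphi(x_0)\cap \mathbf{O}_\varepsilon(p_0)$; the continuity of $g$ makes $U_2=g^{-1}[\mathbf{O}_\varepsilon(g(x_0))]$ open in $X$, and it too contains $x_0$. For any $x\in U_1\cap U_2$, pick $q\in \varphi(x)\cap \mathbf{O}_\varepsilon(p_0)$; then the triangle inequality gives
\[
  \rho(g(x),q)\leq \rho(g(x),g(x_0))+\rho(g(x_0),p_0)+\rho(p_0,q)<\varepsilon+\bigl(\tfrac1n-2\varepsilon\bigr)+\varepsilon=\tfrac1n,
\]
so $q\in \varphi(x)\cap \mathbf{O}_{1/n}(g(x))$ and $x\in A_n$. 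Thus $U_1\cap U_2\subset A_n$, which establishes openness of $A_n$ and completes the proof.

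The argument has no real obstacle: the only point that needs care is verifying that the slack $\varepsilon$ is chosen small enough so that all three contributions to the triangle inequality together remain below $1/n$. In effect, what is being proved is that the real-valued function $x\mapsto \rho(g(x),\varphi(x))$ is upper semi-continuous on $X$, and the conclusion is then the familiar fact that the zero set of an upper semi-continuous non-negative function is a $G_\delta$.
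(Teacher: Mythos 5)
Your proof is correct. The paper itself states Proposition \ref{proposition-res-pro-v11:1} without supplying any argument, treating it as a known observation, so there is nothing to compare against; your route --- writing the set as $\bigcap_{n}A_n$ with $A_n=\{x\in X:\varphi(x)\cap \mathbf{O}_{1/n}(g(x))\neq\emptyset\}$ and using lower semi-continuity of $\varphi$ together with continuity of $g$ to show each $A_n$ is open --- is the standard one, and your closing remark correctly identifies the underlying fact, namely that $x\mapsto \rho(g(x),\varphi(x))$ is upper semi-continuous and closedness of the values $\varphi(x)$ is what converts ``distance zero'' into membership. The $\varepsilon$-bookkeeping in the triangle inequality is handled correctly, so there is no gap.
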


Complementary to Proposition \ref{proposition-res-pro-v11:1} is the
following observation.

\begin{proposition}
  \label{proposition-res-pro-v11:2}
  Let $X$ be a collectionwise normal space, $E$ be a normed space and
  ${\varphi:X\to \mathscr{F}_\mathbf{c}(E)}$ be an l.s.c.\ mapping
  such that $\varphi(x)$ is compact, for every $x\in X$ with
  $\mathbf{0}\notin \varphi(x)$.  Set
  $H=\{x\in X: \mathbf{0}\notin \varphi(x)\}$. Then $\varphi\uhr H$
  has a continuous selection. If, moreover, $E$ is a Banach space,
  then $\varphi\uhr G$ has a continuous selection for some
  $G_\delta$-subset $G\subset X$ with $H\subset G$.
\end{proposition}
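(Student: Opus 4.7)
First, observe that $H$ is an $F_\sigma$-subset of $X$: since $\varphi$ is l.s.c., the function $x \mapsto d(\mathbf{0}, \varphi(x))$ is upper semi-continuous (its sublevel sets coincide with $\varphi^{-1}[\mathbf{O}_c(\mathbf{0})]$), so $H_n := \{x \in X : d(\mathbf{0}, \varphi(x)) \geq 2^{-n}\}$ is closed in $X$, $H_n \subset H_{n+1}$, and $H = \bigcup_{n \geq 1} H_n$. Each $H_n$ is closed in the collectionwise normal space $X$, so it is itself collectionwise normal and in particular PF-normal; therefore Theorem~\ref{theorem-res-pro-v1:3} provides a continuous selection for the l.s.c.\ compact-convex-valued restriction $\varphi\uhr H_n$.

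To establish part~(1), I would build inductively a compatible family of continuous selections $s_n : H_n \to E$ of $\varphi\uhr H_n$ with $s_{n+1}\uhr H_n = s_n$. Given $s_n$, define $\Psi_{n+1} : H_{n+1} \to \mathscr{C}_\mathbf{c}(E)$ by $\Psi_{n+1}(x) = \{s_n(x)\}$ on $H_n$ and $\Psi_{n+1}(x) = \varphi(x)$ on $H_{n+1} \setminus H_n$; the values lie in $\mathscr{C}_\mathbf{c}(E)$ because $H_{n+1} \subset H$ forces $\varphi(x)$ to be compact off $H_n$. A direct verification --- using the l.s.c.\ of $\varphi$ together with the continuity of $s_n$ at points of $H_n$, and the openness of $H_{n+1} \setminus H_n$ in $H_{n+1}$ at the remaining points --- shows that $\Psi_{n+1}$ is lower semi-continuous. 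Theorem~\ref{theorem-res-pro-v1:3} on the PF-normal space $H_{n+1}$ then produces the required extension $s_{n+1}$.

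The principal obstacle is that the pointwise-defined union $s(x) := s_n(x)$ for $x \in H_n$ need not be continuous in the subspace topology on $H$, which is in general strictly coarser than the inductive-limit topology of the closed cover $\{H_n\}$. I would resolve this by extending each $s_n$ continuously to $\tilde s_n : X \to \widehat E$ (the Banach completion of $E$) via Dowker's theorem~\ref{theorem-res-pro-v1:1}, and strengthening the inductive step so that $\|s_{n+1}(x) - \tilde s_n(x)\| \leq 2^{-n}$ on $H_{n+1}$: this is achieved by replacing $\varphi(x)$ on $H_{n+1} \setminus H_n$ with the closure of $\varphi(x) \cap \mathbf{O}_{\delta_n(x)}(\tilde s_n(x))$, where $\delta_n$ is a positive continuous function dominating the u.s.c.\ map $x \mapsto d(\tilde s_n(x), \varphi(x))$ on $H_{n+1}$ (its existence following from normality of $X$ by a Kat\v{e}tov-type sandwich). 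The extensions $\tilde s_n$ can then be chosen so that $\sum_n \|\tilde s_{n+1} - \tilde s_n\|_\infty < \infty$, making $(\tilde s_n)$ uniformly Cauchy on $X$. On $H$, $\tilde s_n(x) = s_n(x) \in \varphi(x)$ from some index onward, and $\varphi(x)$ is compact (hence complete), so the limit $s(x) \in \varphi(x) \subset E$ exists; the uniform Cauchy property on $X$ yields continuity of $s$ on $H$.

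For part~(2), if $E$ is Banach then $\widehat E = E$, so the Cauchy sequence $(\tilde s_n)$ converges uniformly on all of $X$ to a continuous map $\tilde s : X \to E$. By Proposition~\ref{proposition-res-pro-v11:1}, the set $G := \{x \in X : \tilde s(x) \in \varphi(x)\}$ is $G_\delta$ in $X$; it contains $H$ since $\tilde s\uhr H$ is the selection from part~(1), and $\tilde s\uhr G$ is by definition a continuous selection of $\varphi\uhr G$. The hardest step throughout is the close-selection-extension in the induction, which requires simultaneously extending a selection from $H_n$ to $H_{n+1}$ and keeping it within $2^{-n}$ of a prescribed background extension.
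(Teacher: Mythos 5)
Your opening step --- writing $H=\bigcup_n H_n$ with $H_n=\{x:d(\mathbf{0},\varphi(x))\geq 2^{-n}\}$ closed because $x\mapsto d(\mathbf{0},\varphi(x))$ is upper semi-continuous --- is correct, and it establishes exactly what the paper needs: that $H$ is an $F_\sigma$-subset of $X$ (the paper gets this from Proposition \ref{proposition-res-pro-v11:1} applied to $g\equiv\mathbf{0}$). But at that point the paper simply invokes \cite[Theorem 1.3]{MR0112115}: collectionwise normality is hereditary with respect to $F_\sigma$-subsets. Hence $H$ itself is collectionwise normal, Theorem \ref{theorem-res-pro-v1:2} (or \ref{theorem-res-pro-v1:3}) applies directly to the compact-convex-valued l.s.c.\ mapping $\varphi\uhr H$, and no gluing over the pieces $H_n$ is needed at all. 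For the Banach case the paper extends the resulting selection $h:H\to E$ to a continuous map on a $G_\delta$-set $Z\supset H$ by the Lavrentieff-type theorem \cite[Theorem 4.3.20]{engelking:89} and then intersects $Z$ with the $G_\delta$-set where that extension stays in $\varphi$, again via Proposition \ref{proposition-res-pro-v11:1}.

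Your inductive gluing scheme, by contrast, has a genuine gap at the quantitative step. You want $\|s_{n+1}(x)-\tilde s_n(x)\|\leq 2^{-n}$ on $H_{n+1}$, achieved by selecting from $\overline{\varphi(x)\cap\mathbf{O}_{\delta_n(x)}(\tilde s_n(x))}$ on $H_{n+1}\setminus H_n$. For that mapping to be nonempty-valued you must take $\delta_n(x)>d(\tilde s_n(x),\varphi(x))$; but $\tilde s_n$ is merely \emph{some} Dowker extension of $s_n$ off $H_n$ and carries no information about $\varphi$ there, so $d(\tilde s_n(x),\varphi(x))$ can be arbitrarily large on $H_{n+1}\setminus H_n$. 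Thus $\delta_n$ cannot also be bounded by $2^{-n}$, the telescoping sum $\sum_n\|\tilde s_{n+1}-\tilde s_n\|_\infty$ need not converge, and the uniform Cauchy argument collapses. (A secondary problem: dominating the possibly unbounded u.s.c.\ function $x\mapsto d(\tilde s_n(x),\varphi(x))$ by a continuous function is not a consequence of normality alone --- it essentially requires countable paracompactness, which is not assumed here.) Producing an extension of $s_n$ that is simultaneously an approximate selection for $\varphi$ on $H_{n+1}$ is in essence the original problem over again, so the induction cannot be repaired this way; the $F_\sigma$-hereditariness of collectionwise normality is the tool that makes the statement a two-line consequence of Theorems \ref{theorem-res-pro-v1:2}/\ref{theorem-res-pro-v1:3}.
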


\begin{proof}
  According to \cite[Theorem 1.3]{MR0112115} and Proposition
  \ref{proposition-res-pro-v11:1}, $H$ is itself a collectionwise
  normal space. Hence, by Theorem \ref{theorem-res-pro-v1:2} (see also
  Theorem \ref{theorem-res-pro-v1:3}), $\varphi\uhr H$ has a
  continuous selection $h:H\to E$. If $E$ is a Banach space, then $h$
  can be extended to a continuous map $g:Z\to E$ for some
  $G_\delta$-subset $Z\subset X$ containing $H$, see \cite[Theorem
  4.3.20]{engelking:89}. Applying Proposition
  \ref{proposition-res-pro-v11:1} once more, the set
  $G=\{x\in Z: g(x)\in \varphi(x)\}$ is also $G_\delta$, and clearly
  contains $H$.
\end{proof}

Propositions \ref{proposition-res-pro-v11:1} and
\ref{proposition-res-pro-v11:2} show that the mapping
$\varphi:X\to \mathscr{F}_\mathbf{c}(E)$ in Question
\ref{question-shsa-v15:1} has two partial continuous selections on
complementary subsets of the domain. Hence, a particular challenge in
this question (and in Question \ref{question-res-pro-v1:1} as well) is
if one can use these partial selections to construct a continuous
selection for the mapping $\varphi$ itself. This brings the following
alternative question.

\begin{question}
  \label{question-shsa-v13:1}
  Let $X$ be a collectionwise normal space, $E$ be a Banach space,
  $\varphi:X\to \mathscr{F}_\mathbf{c}(E)$ be an l.s.c.\ mapping and
  $H=\{x\in X: \mathbf{0}\notin \varphi(x)\}$. Does there exist a
  continuous selection for $\varphi$ provided $\varphi\uhr H$ has a
  continuous selection?
\end{question}

The property in Question \ref{question-shsa-v13:1} implies
collectionwise normality of $X$. Indeed, take a closed set
$A\subset X$ and a continuous map $g:A\to E$ into a Banach space
$E$. Next, let $\Phi_g:X\to \mathscr{C}'_\mathbf{c}(E)$ be the
associated mapping defined as in Proposition
\ref{proposition-contr-ext-14:1}. Then $\Phi_g$ is l.s.c.\ and
$H=\{x\in X: \mathbf{0}\notin \Phi_g(x)\}\subset A$.  Hence, $g\uhr H$
is a continuous selection for $\Phi_g\uhr H$. Thus, by Proposition
\ref{proposition-contr-ext-14:1}, the existence of a continuous
selection for $\Phi_g$ (as per Question \ref{question-shsa-v13:1}) is
equivalent to the existence of a continuous extension of $g$, i.e.\ to
collectionwise normality of $X$ (by Theorem
\ref{theorem-res-pro-v1:1}). This suggests the following
interpretation of Question \ref{question-shsa-v13:1} in the setting of
PF-normal spaces.

\begin{question}
  \label{question-shsa-v13:2}
  Let ${\varphi:X\to \mathscr{F}_\mathbf{c}(E)}$ be an l.s.c.\
  mapping, where $X$ is a PF-normal space and $E$ is a Banach
  space. Suppose that there exists a continuous map $h:X\to E$ such
  that $h(x)\in \varphi(x)$, for every $x\in X$ with
  $\mathbf{0}\notin \varphi(x)$. Does $\varphi$ have a continuous
  selection?
\end{question}

\subsection{Approximate Selections}

In what follows, to each $\Phi:X\sto Y$ we will associate the mapping
$\overline{\Phi}:X\to \mathscr{F}(Y)$ defined by
$\overline{\Phi}(x)=\overline{\Phi(x)}$, $x\in X$. Moreover, for a
pair of mappings $\Phi,\Psi: X\sto Y$ with
$\Phi(x)\cap \Psi(x)\neq \emptyset$, $x\in X$, we will use
$\Phi\wedge\Psi$ to denote their intersection, i.e.\ the mapping which
assigns to each $x\in X$ the set
$[\Phi\wedge\Psi](x)=\Phi(x)\cap \Psi(x)$. The \emph{graph} of a
mapping $\Psi:X\sto Y$ is the set
$\left\{(x,y)\in X\times Y: y\in \Psi(x)\right\}$, and we say that
$\Psi$ has an \emph{open} (\emph{closed}) graph if its graph is open
(respectively, closed) in $X\times Y$. Finally, to each
$\varepsilon>0$ and a mapping $\Phi:X\sto Y$ into a metric space
$(Y,\rho)$, we will associate the mapping
$\mathbf{O}_\varepsilon[\Phi]:X\sto Y$ defined by
$\mathbf{O}_\varepsilon[\Phi](x)= \mathbf{O}_\varepsilon(\Phi(x))$,
$x\in X$.  This convention will be also used in an obvious manner for
usual maps $f:X\to Y$ considering $f$ as the singleton-valued mapping
$x\to\{f(x)\}$, $x\in X$.  In these terms, for  $f,g:X\to Y$ and
$\varepsilon,\mu>0$, we have that $f$ is a $\mu$-selection for
$\Phi:X\sto Y$ with $\rho(f(x),g(x))<\varepsilon$ for every $x\in X$, if
and only if $f$ is a selection for the mapping
$\mathbf{O}_\mu[\Phi]\wedge\mathbf{O}_\varepsilon[g]$. \medskip

The following three observations are due to Michael, see
\cite[Propositions 2.3 and 2.5]{michael:56a} and \cite[Lemma
11.3]{michael:56b}. For a paracompact domain, they reduce the
selection problem for l.s.c.\ mappings to that one of approximate
selections (see Proposition \ref{proposition-shsa-vgg-rev:1}).

\begin{proposition}[\cite{michael:56a}]
  \label{proposition-shsa-v28:2}
  For spaces $X$ and $Y$, a mapping $\Phi:X\sto Y$ is l.s.c.\ if and
  only if so is the mapping $\overline{\Phi}:X\to \mathscr{F}(Y)$.
\end{proposition}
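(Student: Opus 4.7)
The plan is to reduce the equivalence to a single elementary observation: for any subset $A\subset Y$ and any \emph{open} set $U\subset Y$, one has $A\cap U\neq \emptyset$ if and only if $\overline{A}\cap U\neq \emptyset$. Indeed, one inclusion is trivial since $A\subset \overline{A}$, and the other follows from the definition of closure: if $y\in \overline{A}\cap U$, then $U$ is a neighbourhood of $y$, hence $U\cap A\neq \emptyset$.

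Applying this observation pointwise with $A=\Phi(x)$, we obtain the identity
\[
\Phi^{-1}[U]=\{x\in X: \Phi(x)\cap U\neq\emptyset\}=\{x\in X:\overline{\Phi(x)}\cap U\neq\emptyset\}=\overline{\Phi}^{-1}[U]
\]
for every open $U\subset Y$. Since the l.s.c.\ property is defined precisely in terms of openness of such preimages, the two mappings $\Phi$ and $\overline{\Phi}$ are l.s.c.\ under exactly the same circumstances.

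There is no real obstacle here; the whole proof hinges on recognising that lower semi-continuity only tests against open sets, and that taking closures is invisible to open sets in the sense above. The statement would, however, fail if one tried to replace ``open'' by ``closed'' in the definition (which would correspond to upper semi-continuity), so the argument genuinely uses the openness of $U$.
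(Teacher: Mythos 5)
Your proof is correct and is exactly the standard argument (the one in Michael's paper, which this survey cites without reproducing): since an open set $U$ meets a set $A$ if and only if it meets $\overline{A}$, the preimages $\Phi^{-1}[U]$ and $\overline{\Phi}^{-1}[U]$ coincide for every open $U\subset Y$, and lower semi-continuity tests only such preimages. Nothing further is needed.
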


\begin{proposition}[\cite{michael:56a}]
  \label{proposition-shsa-v28:5}
  Let $\Phi:X\sto Y$ be an l.s.c.\ mapping and $\Psi:X\sto Y$ be an
  open-graph mapping with $\Phi(x)\cap \Psi(x)\neq \emptyset$, for all
  $x\in X$. Then the mapping $\Phi\wedge\Psi:X\sto Y$ is also l.s.c.
\end{proposition}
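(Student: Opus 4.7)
The plan is to verify lower semi-continuity of $\Phi\wedge\Psi$ directly from the definition, by showing that for every open $U\subset Y$ the preimage $(\Phi\wedge\Psi)^{-1}[U]=\{x\in X:\Phi(x)\cap\Psi(x)\cap U\neq\emptyset\}$ is open in $X$. Fix such a $U$ and a point $x_0$ in this preimage, and pick a witness $y_0\in\Phi(x_0)\cap\Psi(x_0)\cap U$. The goal will be to produce an open neighbourhood of $x_0$ on which the witness $y_0$ can be replaced by a nearby point in $U$ that still lies in both $\Phi$ and $\Psi$.

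First I would exploit the open-graph hypothesis on $\Psi$. Since $(x_0,y_0)$ belongs to the graph of $\Psi$, which is open in $X\times Y$, basic open rectangles give open sets $V\ni x_0$ in $X$ and $W\ni y_0$ in $Y$ with $V\times W$ contained in the graph of $\Psi$; by intersecting $W$ with the open set $U$, I may in addition assume $W\subset U$. The key consequence is that $W\subset\Psi(x)$ for every $x\in V$, so within $V$ the mapping $\Psi$ uniformly dominates $W$.

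Next I would apply the l.s.c.\ of $\Phi$ to the open set $W$. Because $y_0\in\Phi(x_0)\cap W$, the point $x_0$ lies in $\Phi^{-1}[W]$, which is open by lower semi-continuity. Setting $V^*=V\cap\Phi^{-1}[W]$ produces an open neighbourhood of $x_0$ such that, for every $x\in V^*$, one has both $\Phi(x)\cap W\neq\emptyset$ and $W\subset\Psi(x)$; combined with $W\subset U$, this gives $\Phi(x)\cap\Psi(x)\cap U\neq\emptyset$, i.e., $V^*\subset(\Phi\wedge\Psi)^{-1}[U]$.

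There is no substantive obstacle: the argument is a one-step dovetailing of the two hypotheses, and the only mild subtlety is remembering to shrink $W$ into $U$ before invoking l.s.c.\ of $\Phi$, so that the open preimage delivered by $\Phi$ already carries the test points into $U$. The roles of $\Phi$ and $\Psi$ are asymmetric precisely because openness of the graph of $\Psi$ is strictly stronger than l.s.c., supplying the uniform neighbourhood $V$ that l.s.c.\ alone would not yield.
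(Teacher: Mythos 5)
Your argument is correct and complete: it is exactly the standard proof of this fact (the paper itself gives no proof, deferring to Michael's \emph{Continuous selections I}, Proposition 2.5, where the same rectangle-in-the-graph plus $\Phi^{-1}[W]$ argument is used). The one subtlety you flag --- shrinking $W$ into $U$ before invoking lower semi-continuity of $\Phi$ --- is indeed the only point where care is needed, and you handle it properly.
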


\begin{proposition}[\cite{michael:56b}]
  \label{proposition-shsa-v28:4}
  If $(Y,\rho)$ is a metric space, $\varepsilon>0$ and $\Phi:X\sto Y$
  is l.s.c., then the associated mapping
  $\mathbf{O}_\varepsilon[\Phi]:X\sto Y$ has an open graph. In
  particular, for each compact set $K\subset Y$, the set
  $\left\{x\in X: K\subset \mathbf{O}_\varepsilon[\Phi](x)\right\}$ is
  open in $X$.
\end{proposition}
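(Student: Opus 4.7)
The plan is to verify the openness of the graph pointwise, and then derive the consequence about compact subsets by a standard tube-lemma argument.

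For the first claim, I would fix a point $(x_0,y_0)$ in the graph of $\mathbf{O}_\varepsilon[\Phi]$ and produce a basic open neighborhood of it contained in the graph. By definition there exists $p_0\in\Phi(x_0)$ with $\rho(y_0,p_0)<\varepsilon$, so I can set $\delta=\varepsilon-\rho(y_0,p_0)>0$ and consider the open set $V=\mathbf{O}_{\delta/2}(p_0)\subset Y$. Since $\Phi$ is l.s.c., the set $U=\Phi^{-1}[V]$ is open in $X$ and contains $x_0$. For any $x\in U$ there is some $p\in\Phi(x)\cap V$, and for any $y\in\mathbf{O}_{\delta/2}(y_0)$ the triangle inequality gives
\[
\rho(y,p)\le\rho(y,y_0)+\rho(y_0,p_0)+\rho(p_0,p)<\tfrac{\delta}{2}+(\varepsilon-\delta)+\tfrac{\delta}{2}=\varepsilon,
\]
so $y\in\mathbf{O}_\varepsilon(\Phi(x))$. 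Hence $U\times\mathbf{O}_{\delta/2}(y_0)$ sits inside the graph, which is therefore open.

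For the \emph{in particular} statement, I would invoke the tube lemma applied to the open graph just produced. Fix $x_0$ with $K\subset\mathbf{O}_\varepsilon[\Phi](x_0)$, so $\{x_0\}\times K$ lies in the open graph. For each $y\in K$ choose a basic open rectangle $U_y\times V_y$ around $(x_0,y)$ contained in the graph; by compactness of $K$, finitely many $V_{y_1},\dots,V_{y_n}$ cover $K$, and the intersection $U=\bigcap_{i\le n}U_{y_i}$ is an open neighborhood of $x_0$ in $X$. Any $x\in U$ and any $y\in K$ satisfy $y\in V_{y_i}$ for some $i$, hence $(x,y)\in U_{y_i}\times V_{y_i}$ lies in the graph, giving $K\subset\mathbf{O}_\varepsilon[\Phi](x)$.

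I do not expect a real obstacle: the whole argument reduces to one triangle-inequality computation at a witness point $p_0\in\Phi(x_0)$ and one application of lower semi-continuity to turn the open ball about $p_0$ into an open set in $X$. The only point that deserves attention is to \emph{balance the slacks}, i.e.\ to split the gap $\delta=\varepsilon-\rho(y_0,p_0)$ into a contribution that controls how far $y$ can drift from $y_0$ and a contribution that controls how far the new witness $p\in\Phi(x)\cap V$ can drift from $p_0$; the choice $\delta/2$ for each is the natural one. The consequence for compact $K$ is then purely topological and uses nothing more than the openness of the graph together with compactness.
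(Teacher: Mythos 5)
Your proof is correct, and it is essentially the standard argument that the paper simply cites from Michael's \emph{Continuous Selections II} (Lemma 11.3) without reproducing: one triangle-inequality computation at a witness $p_0\in\Phi(x_0)$ combined with lower semi-continuity applied to the ball $\mathbf{O}_{\delta/2}(p_0)$, followed by the tube lemma for the compact-set consequence. The slack-splitting $\delta/2+\delta/2$ is exactly right and nothing is missing.
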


We now have the following general observation which is a partial case
of \cite[Proof that Lemma 5.1 implies Theorem 4.1,
page 569]{michael:56b}.

\begin{proposition}[\cite{michael:56b}]
  \label{proposition-shsa-vgg-rev:1}
    Let $E$ be a Banach space. Then for a space $X$, the following are
  equivalent\textup{:}
  \begin{enumerate}
  \item\label{item:shsa-vgg-rev:2} Each l.s.c.\ mapping
    $\Phi:X\to \mathscr{F}_\mathbf{c}(E)$ has a continuous selection.
  \item\label{item:shsa-vgg-rev:3} Each l.s.c.\ convex-valued mapping
    $\Phi:X\sto E$ has a continuous $\varepsilon$-selection, for every
    $\varepsilon>0$.
  \end{enumerate}
\end{proposition}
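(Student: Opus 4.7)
The plan is to show that (1) $\Rightarrow$ (2) is essentially automatic after replacing $\Phi$ by its closure, and to prove the nontrivial direction (2) $\Rightarrow$ (1) by a Michael-style uniform-Cauchy iteration of approximate selections, using the completeness of $E$ to pass to the limit.

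For (1) $\Rightarrow$ (2): given an l.s.c.\ convex-valued $\Phi:X\sto E$, I would pass to $\overline{\Phi}:X\to \mathscr{F}_\mathbf{c}(E)$, which is l.s.c.\ by Proposition \ref{proposition-shsa-v28:2} and convex-valued because the closure of a convex set in a normed space is convex. A continuous selection $f$ for $\overline{\Phi}$ supplied by (1) is automatically a continuous $\varepsilon$-selection for $\Phi$ for every $\varepsilon>0$, since every point of $\overline{\Phi(x)}$ lies within $\varepsilon$ of $\Phi(x)$.

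For (2) $\Rightarrow$ (1): fix an l.s.c.\ mapping $\Phi:X\to \mathscr{F}_\mathbf{c}(E)$ and inductively build continuous $f_n:X\to E$ with $f_n$ a $2^{-n}$-selection for $\Phi$ and $\|f_{n+1}(x)-f_n(x)\|<2^{-n+1}$ for every $x\in X$. The base case $f_0$ is delivered by (2) applied to $\Phi$ with $\varepsilon=1$. For the inductive step, consider
\[
  \Psi_n = \Phi \wedge \mathbf{O}_{2^{-n}}[f_n].
\]
Because $f_n$ is a $2^{-n}$-selection, $\Psi_n(x)\neq\emptyset$ for every $x$, and $\Psi_n(x)$ is convex as an intersection of $\Phi(x)$ with the open ball $\mathbf{O}_{2^{-n}}(f_n(x))$. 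By Proposition \ref{proposition-shsa-v28:4} the mapping $\mathbf{O}_{2^{-n}}[f_n]$ has open graph, so Proposition \ref{proposition-shsa-v28:5} makes $\Psi_n$ l.s.c. Applying (2) to $\Psi_n$ at scale $2^{-(n+1)}$ yields a continuous $f_{n+1}$ which, by construction, is a $2^{-(n+1)}$-selection for $\Phi$ and lies within $2^{-(n+1)}+2^{-n}<2^{-n+1}$ of $f_n$ pointwise.

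The telescoping bound makes $\{f_n\}$ uniformly Cauchy, so completeness of $E$ yields a continuous limit $f:X\to E$. Since $f_n(x)\in \mathbf{O}_{2^{-n}}(\Phi(x))$ and $\Phi(x)$ is closed, passing to the limit gives $f(x)\in\Phi(x)$, i.e.\ a continuous selection. The only real obstacle is the bookkeeping of tolerances, choosing the $\varepsilon_n$'s so that at every stage the intersection $\Phi\wedge\mathbf{O}_{\varepsilon_n}[f_n]$ is simultaneously nonempty, convex-valued, and l.s.c., while the differences $\|f_{n+1}-f_n\|_\infty$ stay summable; once this is arranged, the Banach completeness of $E$ closes the argument essentially for free.
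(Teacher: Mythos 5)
Your proposal is correct and follows essentially the same route as the paper's proof: the forward direction via $\overline{\Phi}$ and Proposition \ref{proposition-shsa-v28:2}, and the converse via the Michael iteration $\Phi\wedge\mathbf{O}_{2^{-n}}[f_n]$ using Propositions \ref{proposition-shsa-v28:5} and \ref{proposition-shsa-v28:4}, the telescoping bound $2^{-(n+1)}+2^{-n}<2^{-n+1}$, and uniform completeness of $E$. The ``bookkeeping of tolerances'' you flag at the end is already fully settled by your choice $\varepsilon_n=2^{-n}$, so there is no remaining gap.
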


\begin{proof}
  If $\Phi:X\sto E$ is an l.s.c.\ convex-valued mapping, then by
  Proposition \ref{proposition-shsa-v28:2}, so is
  $\overline{\Phi}:X\to \mathscr{F}_\mathbf{c}(E)$. Moreover, if
  $f:X\to E$ is a selection for $\overline{\Phi}$, then $f$ is an
  $\varepsilon$-selection for $\Phi$, for every $\varepsilon>0$.  This
  shows that
  \ref{item:shsa-vgg-rev:2}$\ \Rightarrow\
  $\ref{item:shsa-vgg-rev:3}. To see that
  \ref{item:shsa-vgg-rev:3}$\ \Rightarrow\ $\ref{item:shsa-vgg-rev:2},
  suppose that \ref{item:9} holds, and
  $\Phi:X\to \mathscr{F}_\mathbf{c}(E)$ is l.s.c. Then by
  \ref{item:shsa-vgg-rev:3}, $\Phi$ has a continuous
  $2^{-1}$-selection $f_1:X\to E$. According to Propositions
  \ref{proposition-shsa-v28:5} and \ref{proposition-shsa-v28:4},
  $\Phi\wedge \mathbf{O}_{2^{-1}}[f_1]:X\sto E$ is also l.s.c., and
  clearly it is convex-valued. Hence, for the same reason, it has a
  continuous $2^{-2}$-selection $f_2:X\to E$.  Evidently, $f_2$ is a
  continuous $2^{-2}$-selection for $\Phi$ with
  $\|f_2(x)-f_1(x)\|<2^{-2}+2^{-1}<2^0$, $x\in X$. The construction
  can be carried on by induction to get a sequence $f_n:X\to E$,
  $n\in\N$, of continuous $2^{-n}$-selections for $\Phi$ such that
  $\|f_{n+1}(x)-f_n(x)\|< 2^{-n+1}$, $n\in \N$. This resulting
  sequence of maps is uniformly Cauchy, hence it converges to some
  continuous map $f:X\to E$ because $E$ is a Banach space; and this
  $f$ is a continuous selection for $\Phi$ as required in
  \ref{item:shsa-vgg-rev:2}.
\end{proof}

Complementary to Proposition \ref{proposition-shsa-vgg-rev:1} is the
following consequence of Theorems \ref{theorem-res-pro-v1:2} and
\ref{theorem-res-pro-v1:4}.
\begin{corollary}
  \label{corollary-res-pro-v11:1}
  Let $E$ be a Banach space, $X$ be a collectionwise normal space and
  $\Phi:X\sto E$ be an l.s.c.\ convex-valued mapping such that
  $\Phi(x)$ is compact, for every $x\in X$ with
  $\mathbf{0}\notin \Phi(x)$. Then $\Phi$ has a continuous
  $\varepsilon$-selection, for every $\varepsilon>0$.
\end{corollary}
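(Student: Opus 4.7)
The plan is to derive this corollary as an immediate consequence of the equivalence \ref{item:8}$\ \Leftrightarrow\ $\ref{item:shsa-vgg-rev:1} in Theorem \ref{theorem-res-pro-v1:4}, once the hypothesis is repackaged into the form demanded by \ref{item:shsa-vgg-rev:1}. The key observation is that the phrase ``$\Phi(x)$ is compact whenever $\mathbf{0}\notin \Phi(x)$'' is exactly the condition ``$\Phi(x)$ is compact whenever $g(x)\notin \Phi(x)$'' for the \emph{constant} continuous map $g\equiv \mathbf{0}$. Thus $\Phi$ already comes equipped with a witness $g$ of the type prescribed by \ref{item:shsa-vgg-rev:1}; nothing further needs to be constructed by hand.

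With this reformulation in place, the argument is a two-line bookkeeping step. First, since $X$ is collectionwise normal, Theorem \ref{theorem-res-pro-v1:2} yields that every l.s.c.\ mapping $X\to \mathscr{C}'_\mathbf{c}(E)$ has a continuous selection; that is, condition \ref{item:8} of Theorem \ref{theorem-res-pro-v1:4} holds for the Banach space $E$ and the space $X$. Second, invoking the equivalence \ref{item:8}$\ \Leftrightarrow\ $\ref{item:shsa-vgg-rev:1} of Theorem \ref{theorem-res-pro-v1:4}, condition \ref{item:shsa-vgg-rev:1} holds as well. Applied to our $\Phi$ with the witness $g\equiv \mathbf{0}$, this gives a continuous $\varepsilon$-selection for $\Phi$ for every $\varepsilon>0$, which is precisely the conclusion.

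There is essentially no obstacle: the only thing to notice is that the hypothesis on $\Phi$ is already of the ``compact-deficiency'' shape built into clause \ref{item:shsa-vgg-rev:1} of Theorem \ref{theorem-res-pro-v1:4}, with $g$ trivially chosen to be the origin. The one point that is worth flagging is that no convex structure is used on $g$ other than that $\mathbf{0}\in E$ is available and that constant maps are continuous; the whole weight of the argument is carried by the two theorems quoted, and in particular no direct reduction to Dowker's extension theorem or to approximate selection by induction (as in Proposition \ref{proposition-shsa-vgg-rev:1}) needs to be redone.
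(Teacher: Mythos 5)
Your proposal is correct and is essentially the paper's own argument: the paper presents this corollary precisely as a consequence of Theorem \ref{theorem-res-pro-v1:2} (which supplies condition \ref{item:8} of Theorem \ref{theorem-res-pro-v1:4} for a collectionwise normal domain) combined with the equivalence \ref{item:8}$\ \Leftrightarrow\ $\ref{item:shsa-vgg-rev:1}, applied with the constant witness $g\equiv\mathbf{0}$. Nothing further is needed.
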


This brings the following alternative question, which represents
another aspect of Question \ref{question-shsa-v15:1} (hence, of
Question \ref{question-res-pro-v1:1} as well).

\begin{question}
  \label{question-shsa-v14:1}
  Let $X$ be a collectionwise normal space, $E$ be a Banach space and
  $\Phi:X\to \mathscr{F}_\mathbf{c}(E)$ be an l.s.c.\ mapping which
  has a continuous $\varepsilon$-selection, for every
  $\varepsilon>0$. Does $\Phi$ have a continuous selection?
\end{question}

Evidently, the answer is ``Yes'' if $\Phi$ is a
$\mathscr{C}'_\mathbf{c}(E)$-valued mapping. Moreover, the property in
Question \ref{question-shsa-v14:1} implies collectionwise normality.

\begin{proposition}
  Let $X$ be a space such that for every Banach space $E$, every
  l.s.c.\ mapping $\Phi:X\to \mathscr{C}'_\mathbf{c}(E)$ has a
  continuous $\varepsilon$-selection, for every $\varepsilon>0$. Then
  $X$ is collectionwise normal.
\end{proposition}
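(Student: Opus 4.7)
The plan is to reduce the statement to producing, for every discrete collection $\mathscr{D}$ of closed subsets of $X$, an open pairwise-disjoint expansion. Since the instance $|\mathscr{D}|=2$ already gives normality, the characterisation recalled in the subsection on collectionwise normality (normality together with open pairwise-disjoint expansions of discrete closed families implies collectionwise normality) will then finish the proof.

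Accordingly, let $\mathscr{D}=\{D_\alpha:\alpha\in \Lambda\}$ be a discrete family of closed subsets of $X$. I would pick a Banach space $E$ containing a family $\{e_\alpha:\alpha\in \Lambda\}$ with $\|e_\alpha-e_\beta\|\geq 1$ for all $\alpha\neq \beta$; the canonical choice is $E=\ell_1(\Lambda)$ with its standard unit-vector family. Then I would define the set-valued mapping $\Phi:X\to \mathscr{C}'_\mathbf{c}(E)$ by
\[
  \Phi(x)=\{e_\alpha\}\ \text{if}\ x\in D_\alpha, \qquad \Phi(x)=E\ \text{otherwise}.
\]
This is unambiguous because the $D_\alpha$ are pairwise disjoint (a consequence of discreteness), and the values lie in $\mathscr{C}'_\mathbf{c}(E)$ since singletons are compact and convex.

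The key step is to verify that $\Phi$ is l.s.c. For a nonempty open $U\subset E$, a direct computation gives
\[
  \Phi^{-1}[U]=X\setminus \bigcup\{D_\alpha:\alpha\in \Lambda,\ e_\alpha\notin U\},
\]
and since any subfamily of a discrete family of closed sets is itself discrete and hence has closed union, $\Phi^{-1}[U]$ is open. The hypothesis, applied with $\varepsilon=1/3$, then produces a continuous $f:X\to E$ with $f(x)\in \mathbf{O}_{1/3}(\Phi(x))$ for every $x\in X$, so that $\|f(x)-e_\alpha\|<1/3$ whenever $x\in D_\alpha$. Setting $U_\alpha=f^{-1}(\mathbf{O}_{1/3}(e_\alpha))$ gives open subsets with $D_\alpha\subset U_\alpha$, and the triangle inequality together with $\|e_\alpha-e_\beta\|\geq 1>2/3$ forces $U_\alpha\cap U_\beta=\emptyset$ for distinct $\alpha,\beta$. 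This is the required open pairwise-disjoint expansion.

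The delicate ingredient is precisely the lower semi-continuity of $\Phi$, which is the single place where discreteness of $\mathscr{D}$ enters; every other step is a routine triangle-inequality argument followed by a single invocation of the hypothesis. The reason this argument does not readily upgrade to produce a \emph{genuine} selection (which would settle Question~\ref{question-shsa-v14:1}) is that at each iterated stage one would want to intersect $\Phi$ with a small ball around the current $\varepsilon$-selection, but such an intersection is no longer $\mathscr{C}'_\mathbf{c}(E)$-valued, so the hypothesis cannot be applied anew.
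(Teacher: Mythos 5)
Your proof is correct, but it takes a genuinely different route from the paper's. The paper argues through extensions: given a closed $A\subset X$ and a continuous $g:A\to E$, it applies the hypothesis to the associated mapping $\Phi_g$ of Proposition \ref{proposition-contr-ext-14:1} to produce, for each $\varepsilon>0$, a continuous $h:X\to E$ with $\|h(x)-g(x)\|<\varepsilon$ on $A$; it then invokes a lemma of Gutev--Ohta--Yamazaki (approximate extendability implies extendability --- essentially a uniform-limit argument in the Banach space) to extend $g$ itself, and concludes by Dowker's extension theorem (Theorem \ref{theorem-res-pro-v1:1}). Your argument bypasses both external ingredients: you verify collectionwise normality directly by extracting, from a single continuous $1/3$-selection of an explicitly constructed l.s.c.\ mapping into $\ell_1(\Lambda)$, an open pairwise-disjoint expansion of a given discrete closed family, and you correctly note that the two-element instance supplies the normality needed to upgrade pairwise-disjoint expansions to discrete ones. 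All the steps check out: the values of your $\Phi$ do lie in $\mathscr{C}'_\mathbf{c}(E)$; the computation of $\Phi^{-1}[U]$ together with the closedness of the union of any (discrete) subfamily of $\mathscr{D}$ gives lower semi-continuity; and the triangle inequality with $\|e_\alpha-e_\beta\|\geq 1>2/3$ gives disjointness of the sets $U_\alpha$. What the paper's route buys is the stronger intermediate statement that every Banach-valued continuous map on a closed subset extends (the analytic form of collectionwise normality, via Dowker); what yours buys is self-containedness --- one application of the hypothesis with a concrete $\varepsilon$, no completeness or limiting argument, and no appeal to Dowker's theorem. Your construction is in fact close in spirit to the paper's own proof of Corollary \ref{corollary-shsa-v20:2}, which separates a discrete family by mapping it onto the uniformly discrete set of characteristic functions in $\ell_2(\mathscr{D})$ in much the same way.
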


\begin{proof}
  Let $A\subset X$ be closed, $E$ be a Banach space and $g:A\to E$ be
  a continuous map. Then the mapping
  $\Phi_g:X\to \mathscr{C}'_\mathbf{c}(E)$, defined as in Proposition
  \ref{proposition-contr-ext-14:1}, is l.s.c. Hence, by condition,
  $\Phi_g$ has a continuous $\varepsilon$-selection, for every
  $\varepsilon>0$. In other words, for every $\varepsilon>0$, the
  mapping $\Phi(x)=E$, $x\in X$, has a continuous selection $h:X\to E$
  such that $\|h(x)-g(x)\|<\varepsilon$, for all $x\in A$. According
  to \cite[Lemma 4.2]{gutev-ohta-yamazaki:06}, $g$ can be extended to
  a continuous map $f:X\to E$. Hence, by Theorem
  \ref{theorem-res-pro-v1:1}, $X$ is collectionwise normal.
\end{proof}

If $X$ is only assumed to be PF-normal, then the answer to Question
\ref{question-shsa-v14:1} is ``Yes'' if each $\Phi(x)$ is either
compact or finite-dimensional. The latter means that $\Phi(x)$ is
contained in some finite-dimensional affine subspace of $E$;
equivalently, that ${\Phi(x)\subset q+L}$ for some $q\in E$ and a
finite-dimensional linear subspace $L\subset E$.

\begin{proposition}
  \label{proposition-res-pro-v11:3}
  Let $X$ be a PF-normal space, $E$ be a normed space and
  ${\Phi:X\to \mathscr{F}_\mathbf{c}(E)}$ be an l.s.c.\ mapping
  such that each $\Phi(x)$ is either compact or finite-dimensional. If
  $\Phi$ has a continuous $\varepsilon$-selection for some
  $\varepsilon>0$, then it also has a continuous selection.
\end{proposition}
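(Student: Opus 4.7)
The plan is to trim $\Phi$ by intersecting with a ball around the $\varepsilon$-selection, so as to land in $\mathscr{C}_\mathbf{c}(E)$, and then invoke the Kand\^o--Nedev selection theorem (Theorem \ref{theorem-res-pro-v1:3}) for PF-normal spaces. Let $g:X\to E$ be a continuous $\varepsilon$-selection for $\Phi$, and form $\Psi=\Phi\wedge\mathbf{O}_\varepsilon[g]$, so that $\Psi(x)=\Phi(x)\cap\mathbf{O}_\varepsilon(g(x))$. The $\varepsilon$-selection hypothesis says precisely that $\Psi(x)\neq\emptyset$ for every $x\in X$; $\Psi$ is convex-valued as an intersection of convex sets; and by Propositions \ref{proposition-shsa-v28:4} and \ref{proposition-shsa-v28:5} it is l.s.c. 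Passing to closures via Proposition \ref{proposition-shsa-v28:2}, the mapping $\overline{\Psi}:X\to\mathscr{F}_\mathbf{c}(E)$ is also l.s.c., and since each $\Phi(x)$ is closed, $\overline{\Psi}(x)\subset\Phi(x)$.

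The crux is to verify that every $\overline{\Psi}(x)$ is compact. In all cases $\overline{\Psi}(x)$ lies in the closed $\varepsilon$-ball about $g(x)$, hence is bounded. If $\Phi(x)$ is compact, then $\overline{\Psi}(x)$ is a closed subset of a compact set, and therefore compact. If instead $\Phi(x)$ is finite-dimensional, say $\Phi(x)\subset q+L$ with $L$ a finite-dimensional linear subspace of $E$, then $\overline{\Psi}(x)\subset q+L$ as well, and a closed bounded subset of a finite-dimensional normed space is compact. In either case $\overline{\Psi}(x)\in\mathscr{C}_\mathbf{c}(E)$.

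Having $\overline{\Psi}:X\to\mathscr{C}_\mathbf{c}(E)$ l.s.c.\ on the PF-normal space $X$, Theorem \ref{theorem-res-pro-v1:3} produces a continuous selection $f:X\to E$ for $\overline{\Psi}$; since $\overline{\Psi}(x)\subset\Phi(x)$, this $f$ is the desired continuous selection for $\Phi$. The only place where anything nontrivial happens is the compactness check for the finite-dimensional values, which rests on local compactness of finite-dimensional subspaces of $E$; intersecting with the bounded ball $\mathbf{O}_\varepsilon(g(x))$ is exactly what converts ``closed convex finite-dimensional'' into ``compact convex'', and without one of the two assumptions (compactness or finite-dimensionality of $\Phi(x)$) the argument breaks down.
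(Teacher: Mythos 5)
Your proposal is correct and follows essentially the same route as the paper's proof: intersect $\Phi$ with $\mathbf{O}_\varepsilon[g]$, pass to closures to get an l.s.c.\ mapping into $\mathscr{C}_\mathbf{c}(E)$ (using compactness of closed bounded sets in finite-dimensional subspaces for the non-compact case), and apply Theorem \ref{theorem-res-pro-v1:3}. No gaps.
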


\begin{proof}
  Let $g:X\to E$ be a continuous $\varepsilon$-selection for $\Phi$,
  for some $\varepsilon>0$. Then by by Propositions
  \ref{proposition-shsa-v28:5} and \ref{proposition-shsa-v28:4}, the
  intersection mapping $\Phi\wedge\mathbf{O}_\varepsilon[g]:X\sto E$
  is also l.s.c.  Hence, by Proposition \ref{proposition-shsa-v28:2},
  so is the mapping
  $\varphi=\overline{\Phi\wedge\mathbf{O}_\varepsilon[g]}:X\to
  \mathscr{F}_\mathbf{c}(E)$.  Furthermore, $\varphi$ is
  compact-valued. Indeed, if $\Phi(x)$ is compact, then so is
  $\varphi(x)$. If $\Phi(x)$ is finite-dimensional, then
  $\Phi(x)\subset q+L$ for some finite-dimensional linear subspace
  $L\subset E$ and $q\in E$. However, $L$ is complete with respect to
  the norm of $E$ (being finite-dimensional) and
  $\varphi(x)-q\subset L$ is closed and bounded in $L$. Therefore,
  $\varphi(x)$ is compact being a translate of the compact set
  $\varphi(x)-q$. Thus, by Theorem \ref{theorem-res-pro-v1:3},
  $\varphi$ has a continuous selection $f:X\to E$. This $f$ is also a
  selection for $\Phi$ because $\varphi(x)\subset \Phi(x)$, for every
  $x\in X$.
\end{proof}

For an infinite cardinal number $\tau$, a space $X$ is called
\emph{$\tau$-paracompact} if every open cover of $X$ of cardinality
$\leq\tau$ has a locally finite open refinement. The
$\omega$-paracompact spaces are commonly called \emph{countably
  paracompact}.  A mapping $\varphi:X\sto E$ is a \emph{set-valued
  selection} (or \emph{set-selection}, or \emph{multi-selection}) for
a mapping $\Phi:X\sto E$ if $\varphi(x)\subset \Phi(x)$, for all
$x\in X$. A mapping $\varphi:X\sto E$, into a metric space $(E,d)$, is
\emph{bounded} if each $\varphi(x)$, $x\in X$, is a bounded subset of
$(E,d)$. In these terms, the property in Proposition
\ref{proposition-res-pro-v11:3} that ``$\Phi$ has a continuous
$\varepsilon$-selection for some $\varepsilon>0$'' was used to
construct a bounded-valued l.s.c.\ selection
$\varphi:X\to \mathscr{F}_\mathbf{c}(E)$ for $\Phi$.  Regarding this,
let us mention the following property of countable paracompactness,
see \cite[Lemma 2.1]{MR1779519} and \cite[Lemma 1.2]{MR2406397}.

\begin{proposition}
  \label{proposition-shsa-v12:1}
  Let $E$ be a normed space and $X$ be a countably paracompact
  space. Then every l.s.c.\ mapping
  $\Phi : X\to \mathscr{F}_\mathbf{c}(E)$ has a continuous selection
  if and only if every bounded l.s.c.\ mapping
  $\varphi : X\to \mathscr{F}_\mathbf{c}(E)$ has a continuous
  selection.
\end{proposition}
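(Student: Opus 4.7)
The forward implication is immediate, since bounded l.s.c.\ mappings form a subclass of all l.s.c.\ mappings. For the converse, let $\Phi:X\to\mathscr{F}_\mathbf{c}(E)$ be l.s.c.; the plan is to construct a bounded l.s.c.\ set-valued selection $\varphi:X\to\mathscr{F}_\mathbf{c}(E)$ of $\Phi$, apply the hypothesis to $\varphi$, and observe that any (single-valued) continuous selection of $\varphi$ is automatically a selection of $\Phi$.

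The key step is to produce a continuous radius function $r:X\to(0,\infty)$ with $\Phi(x)\cap\mathbf{O}_{r(x)}(\mathbf{0})\neq\emptyset$ for every $x\in X$. By lower semi-continuity of $\Phi$, the sets $U_n=\Phi^{-1}[\mathbf{O}_n(\mathbf{0})]$ are open in $X$, and they form an ascending cover of $X$ because every $\Phi(x)$ is nonempty. Countable paracompactness applied to $\{U_n\}$ yields a subordinate partition of unity $\{\lambda_n\}_{n\in\N}$, with $\lambda_n$ vanishing off $U_n$. Setting $r(x)=\sum_n n\,\lambda_n(x)$ gives a continuous positive function on $X$, and because $\lambda_n(x)>0$ forces $x\in U_n$ --- equivalently, the existence of $y\in\Phi(x)$ with $\|y\|<n$ --- one concludes that $r(x)>\inf\{\|y\|:y\in\Phi(x)\}$, so that $\Phi(x)\cap\mathbf{O}_{r(x)}(\mathbf{0})$ is nonempty.

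With $r$ in hand, the remainder is a routine application of the tools assembled in this subsection. The mapping $\Psi:X\sto E$ defined by $\Psi(x)=\mathbf{O}_{r(x)}(\mathbf{0})$ has open graph $\{(x,y):\|y\|<r(x)\}$ by continuity of $r$, so by Proposition \ref{proposition-shsa-v28:5} the intersection $\Phi\wedge\Psi$ is l.s.c.\ with nonempty convex values. Taking closures and invoking Proposition \ref{proposition-shsa-v28:2}, the mapping $\varphi=\overline{\Phi\wedge\Psi}:X\to\mathscr{F}_\mathbf{c}(E)$ is l.s.c., bounded-valued (each $\varphi(x)$ lies in the closed $r(x)$-ball about $\mathbf{0}$), and satisfies $\varphi(x)\subset\Phi(x)$ because $\Phi(x)$ is closed. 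The hypothesis then supplies a continuous selection of $\varphi$, which is the desired continuous selection of $\Phi$.

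The technical heart of the argument is the production of $r$; once $r$ is continuous and strictly majorizes the upper-semicontinuous gauge $x\mapsto\inf\{\|y\|:y\in\Phi(x)\}$, everything else is formal. The main care in writing out the details lies in the partition-of-unity step, where countable paracompactness is doing all the essential work --- if necessary, this can be reduced to Ishikawa's dual characterization of countable paracompactness (expanding a decreasing sequence of closed sets with empty intersection to an open sequence with the same property), followed by a Urysohn-type normalization of the resulting functions.
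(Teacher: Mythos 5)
The paper gives no proof of Proposition~\ref{proposition-shsa-v12:1} (it only cites Shishkov), so I am judging your argument on its own terms. Your overall architecture --- truncate $\Phi$ by a ball of variable radius $r(x)$ around $\mathbf{0}$, check that the truncation is a bounded l.s.c.\ closed-convex-valued set-valued selection via Propositions~\ref{proposition-shsa-v28:5} and~\ref{proposition-shsa-v28:2}, and apply the hypothesis --- is exactly the standard one. The gap is in the production of $r$. You apply countable paracompactness to the increasing open cover $U_n=\Phi^{-1}[\mathbf{O}_n(\mathbf{0})]$ and claim a subordinate partition of unity. Countable paracompactness of a (Hausdorff, not necessarily normal) space does \emph{not} yield partitions of unity subordinate to countable open covers; that requires the cover to be a normal cover, which for countable covers is equivalent to $X$ being normal \emph{and} countably paracompact. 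Indeed, a continuous $r$ strictly majorizing the upper semi-continuous gauge $d(x)=\inf\{\|y\|:y\in\Phi(x)\}$ for every such $\Phi$ is essentially the Dowker--Kat\v{e}tov insertion property quoted in the paper, which characterises normal countably paracompact spaces; so as written your argument proves the proposition only under the additional hypothesis of normality. The same objection applies to the ``Urysohn-type normalization'' you propose as a fallback after invoking Ishikawa.

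The fix is to give up continuity of $r$ and settle for lower semi-continuity, which is all the rest of your argument actually uses. From the decreasing closed sets $F_n=X\setminus U_n$ with $\bigcap_n F_n=\emptyset$, Ishikawa's characterisation of countable paracompactness gives decreasing open $V_n\supset F_n$ with $\bigcap_n V_n=\emptyset$; setting $r(x)=\min\{n: x\notin V_n\}$ produces an $\N$-valued function with $\{x:r(x)>n\}=V_n$ open, hence $r$ is lower semi-continuous, and $r(x)>d(x)$ because $x\in F_n$ forces $r(x)>n$. As the paper notes after Proposition~\ref{proposition-shsa-v18:2} (citing \cite[Proposition 2.1]{gutev:05}), the mapping $x\mapsto\mathbf{O}_{r(x)}(\mathbf{0})$ still has an open graph when $r$ is merely lower semi-continuous, so Propositions~\ref{proposition-shsa-v28:5} and~\ref{proposition-shsa-v28:2} apply verbatim and the remainder of your proof goes through unchanged. (A last small point: even where a locally finite partition of unity is available, the continuity of $\sum_n n\lambda_n(x)$ needs local finiteness of the supports, since the weights $n$ are unbounded.)
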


This brings the following characterisation of countably paracompact
PF-normal spaces, compare with Proposition
\ref{proposition-res-pro-v11:3}.

\begin{theorem}
  \label{theorem-shsa-v12:2}
  A space $X$ is countably paracompact and PF-normal if and only if
  for every normed space $E$, every l.s.c.\ mapping
  ${\Phi:X\to \mathscr{F}_\mathbf{c}(E)}$ with $\Phi(x)$ being either
  compact or finite-dimensional, has continuous selection.
\end{theorem}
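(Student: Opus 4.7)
The plan is to prove the two implications separately, using Theorem \ref{theorem-res-pro-v1:3} (characterising PF-normality by compact convex selections) as the common engine, together with an $\R$-valued test mapping to detect countable paracompactness.

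For the necessity, PF-normality is immediate: any l.s.c.\ mapping $\varphi\colon X\to \mathscr{C}_{\mathbf c}(E)$ into a normed space $E$ is compact-valued, hence satisfies the hypothesis and has a continuous selection; Theorem \ref{theorem-res-pro-v1:3} then yields PF-normality. To extract countable paracompactness, take a decreasing sequence $\{F_n\}_{n\in\N}$ of closed sets in $X$ with $\bigcap_{n} F_n=\emptyset$, set $F_0=X$, and put $n(x)=\max\{n\geq 0: x\in F_n\}$, which is finite for every $x$. Define $\Phi\colon X\to \mathscr{F}_{\mathbf c}(\R)$ by $\Phi(x)=[n(x),\infty)$. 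A short computation shows that for every bounded open interval $(a,b)\subset\R$,
\[
\Phi^{-1}[(a,b)]=\{x\in X: n(x)<b\}=X\setminus F_{\lceil b\rceil},
\]
so $\Phi$ is l.s.c. Each $\Phi(x)$ is convex, closed and contained in the $1$-dimensional space $\R$, hence finite-dimensional, so the hypothesis applies and produces a continuous selection $f\colon X\to \R$. Then the open sets $G_n=\{x\in X: f(x)>n-1\}$ contain $F_n$ and satisfy $\bigcap_{n}G_n=\emptyset$ (since $f$ is real-valued), which is Ishikawa's characterisation of countable paracompactness.

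For the sufficiency, assume $X$ is countably paracompact and PF-normal, and let $\Phi$ be as stated. The strategy is to shrink $\Phi$ to a bounded l.s.c.\ compact-convex-valued set-selection and apply Theorem \ref{theorem-res-pro-v1:3}. Since each $\Phi(x)$ is nonempty and $\Phi$ is l.s.c., the closed sets $X\setminus \Phi^{-1}[\mathbf{O}_n(\mathbf{0})]$ decrease to $\emptyset$; as in the bounded-reduction underlying Proposition \ref{proposition-shsa-v12:1} (see \cite[Lemma 2.1]{MR1779519} and \cite[Lemma 1.2]{MR2406397}), countable paracompactness combined with normality yields a continuous function $r\colon X\to (0,\infty)$ with $\Phi(x)\cap\mathbf{O}_{r(x)}(\mathbf{0})\neq\emptyset$ for every $x\in X$. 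Set $\Psi(x)=\mathbf{O}_{r(x)}(\mathbf{0})$; continuity of $r$ makes $\Psi$ open-graphed, so by Propositions \ref{proposition-shsa-v28:5} and \ref{proposition-shsa-v28:2} the mapping $\varphi=\overline{\Phi\wedge\Psi}\colon X\to \mathscr{F}_{\mathbf c}(E)$ is l.s.c.\ with convex closed values inside $\Phi$. Each $\varphi(x)$ is bounded by $r(x)$: if $\Phi(x)$ is compact, $\varphi(x)$ is a closed subset of a compact set; if $\Phi(x)\subset q+L$ for a finite-dimensional linear $L$, then $\varphi(x)$ is a closed bounded subset of the finite-dimensional affine subspace $q+L$. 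Either way, $\varphi(x)$ is compact. Theorem \ref{theorem-res-pro-v1:3} now provides a continuous selection of $\varphi$, which is simultaneously a selection of $\Phi$.

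The principal obstacle is the construction of the continuous radial bound $r$ in the sufficiency argument: it is precisely here that countable paracompactness (beyond mere normality) is needed, to upgrade the upper semi-continuous step function $r_{0}(x)=\min\{n\in\N: \Phi(x)\cap\mathbf{O}_{n}(\mathbf{0})\neq\emptyset\}$ to a continuous majorant in the style of Dowker's insertion theorem. Once $r$ is in hand, the elementary fact that closed bounded subsets of finite-dimensional affine subspaces are compact combines with the compact alternative to make $\varphi$ compact-valued, after which PF-normality closes the argument.
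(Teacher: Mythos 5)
Your proof is correct and follows essentially the same route as the paper's, which simply cites Theorem \ref{theorem-res-pro-v1:3}, Proposition \ref{proposition-shsa-v12:1} and Michael's Theorem 3.1$^{\prime\prime}$ where you have inlined the bounded-reduction and the real-line test mapping explicitly. The only nitpick is that Ishikawa's criterion requires $\bigcap_{n}\overline{G_n}=\emptyset$ rather than $\bigcap_{n}G_n=\emptyset$, but your sets satisfy this automatically since $\overline{G_n}\subset f^{-1}\left([n-1,+\infty)\right)$ by continuity of $f$.
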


\begin{proof}
  In the one direction, this is Theorem \ref{theorem-res-pro-v1:3} and
  Proposition \ref{proposition-shsa-v12:1} because each closed bounded
  subset of a finite-dimensional normed space is compact, see the
  proof of Proposition \ref{proposition-res-pro-v11:3}.  Conversely,
  if $X$ has the selection property in the theorem, then it is
  PF-normal (by Theorem \ref{theorem-res-pro-v1:3}). By taking $E$ to
  be the real line $\R$ and using \cite[Theorem
  3.1$^{\prime\prime}$]{michael:56a}, $X$ is also countably
  paracompact.
\end{proof}

A function $\xi:X\to \R$ is \emph{lower} (\emph{upper})
\emph{semi-continuous} if the set
\[
  \{x\in X:\xi(x)>r\}\quad \text{(respectively,
    $\{x\in X:\xi(x)<r\}$)}
\]
is open in $X$, for every $r\in \R$. If $(E,d)$ is a metric space,
$\varphi:X\sto E$ and $\eta:X\to (0,+\infty)$, then we shall say that
$g:X\to E$ is an \emph{$\eta$-selection} for $\varphi$ if
$g(x)\in \mathbf{O}_{\eta(x)}(\varphi(x))$, for every
$x\in X$. According to Dowker-Kat\v{e}tov's insertion theorem
\cite{dowker:51,katetov:51}, see also \cite[5.5.20(a)]{engelking:89},
a space $X$ is normal and countably paracompact if and only if for
every pair $\xi,\eta:X\to \R$ of functions such that $\xi$ is upper
semi-continuous, $\eta$ is lower semi-continuous and $\xi<\eta$, there
exists a continuous function $f:X\to \R$ with $\xi< f< \eta$. The
following selection interpretation of this insertion property was
shown in \cite[Theorem 4.3]{MR2643824}.

\begin{theorem}[\cite{MR2643824}]
  \label{theorem-shsa-v14:1}
  A space $X$ is countably paracompact and collectionwise normal if
  and only if for every Banach space $E$, l.s.c.\ mapping
  $\Phi:X\to \mathscr{C}'_\mathbf{c}(E)$, lower semi-continuous function
  $\eta:X\to (0,+\infty)$ and continuous $\eta$-selection ${g:X\to E}$
  for $\Phi$, there exists a continuous selection $f:X\to E$ for $\Phi$
  with $\|f(x)- g(x)\|<\eta(x)$, for all $x\in X$.
\end{theorem}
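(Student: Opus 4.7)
I would prove both directions; the forward implication carries almost all of the technical content, while the backward direction falls out of the selection property via two special cases.

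For the \emph{backward} direction, collectionwise normality would follow from Theorem~\ref{theorem-res-pro-v1:1} by applying the selection property to the mapping $\Phi_g$ of Proposition~\ref{proposition-contr-ext-14:1} (for a closed $A\subset X$, a Banach space $E$ and a continuous $g_0:A\to E$), with the constant map $h\equiv\mathbf{0}$ and any l.s.c.\ $\eta$ satisfying $\eta(x)>\|g_0(x)\|$ on $A$: the resulting selection restricts on $A$ to a continuous extension of $g_0$. For countable paracompactness, I would feed the selection property with $E=\R$ and an l.s.c.\ mapping built from a given u.s.c./l.s.c.\ pair $\xi<\eta$, so as to reproduce the Dowker--Kat\v{e}tov insertion characterization.

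For the \emph{forward} direction, assume $X$ is collectionwise normal and countably paracompact. First I carry out a Dowker--Kat\v{e}tov insertion: the function $d(x):=\inf\{\|g(x)-y\|:y\in\Phi(x)\}$ is upper semi-continuous, since $\{d<r\}$ is the preimage of the open graph of $\mathbf{O}_r[\Phi]$ (Proposition~\ref{proposition-shsa-v28:4}) under the continuous map $x\mapsto(x,g(x))$, and $0\leq d<\eta$; normality (from collectionwise normality) together with countable paracompactness then yields a continuous $c:X\to(0,+\infty)$ with $d<c<\eta$, so it suffices to produce a continuous selection $f$ of $\Phi$ with $\|f-g\|\leq c$. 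Next I pass to the auxiliary mapping $\Theta:=\overline{\Phi\wedge\mathbf{O}_c[g]}:X\to\mathscr{F}_\mathbf{c}(E)$, which is l.s.c.\ and convex-valued by Propositions~\ref{proposition-shsa-v28:2},~\ref{proposition-shsa-v28:5}, and~\ref{proposition-shsa-v28:4}; its values are compact whenever $\Phi(x)\ne E$ and equal the closed ball $\overline{\mathbf{O}_{c(x)}(g(x))}$ whenever $\Phi(x)=E$. Crucially, $g$ itself serves as a compact-deficiency witness for $\Theta$ in the sense of Theorem~\ref{theorem-res-pro-v1:4}\ref{item:shsa-vgg-rev:1}: if $g(x)\notin\Theta(x)$, then $\Phi(x)\ne E$ (otherwise $g(x)$ would be the centre of the closed-ball value), so $\Phi(x)$ is compact and $\Theta(x)\subset\Phi(x)$ is compact. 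Hence $\Theta$ admits a continuous $\varepsilon$-selection for every $\varepsilon>0$.

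Finally I run the iteration of Proposition~\ref{proposition-shsa-vgg-rev:1}: inductively, given a continuous $2^{-n}$-selection $f_n$ of $\Theta$, I would apply Theorem~\ref{theorem-res-pro-v1:4}\ref{item:shsa-vgg-rev:1} to the l.s.c.\ convex-valued intersection $\Theta\wedge\mathbf{O}_{2^{-n}}[f_n]$ to extract a continuous $2^{-n-1}$-selection $f_{n+1}$ with $\|f_{n+1}-f_n\|<2^{-n+1}$; completeness of $E$ then yields a uniform limit $f$ with $f(x)\in\Theta(x)\subset\Phi(x)$ and $\|f(x)-g(x)\|\leq c(x)<\eta(x)$. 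The principal obstacle is making this iteration go through: each intermediate intersection $\Theta\wedge\mathbf{O}_{2^{-n}}[f_n]$ needs its own continuous compact-deficiency witness, and when $\Phi(x)=E$ the intersection of two closed balls is not compact in infinite dimensions. I would therefore have to arrange the iteration so that the witness (for example $g$ itself) continues to lie inside the shrinking intersected mappings, which forces tight a~priori control of $\|f_n-g\|$ throughout---precisely where the Dowker--Kat\v{e}tov insertion again pays off.
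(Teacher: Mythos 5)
Your first move in the forward direction is exactly right, and it is precisely where countable paracompactness enters: $d(x)=\inf\{\|g(x)-y\|:y\in\Phi(x)\}$ is upper semi-continuous by Proposition \ref{proposition-shsa-v28:4}, $0\leq d<\eta$, and the Dowker--Kat\v{e}tov insertion theorem yields a continuous $c$ with $d<c<\eta$. But at that point the conclusion is immediate from Proposition \ref{proposition-shsa-v18:1} applied with the continuous function $c$ in place of $\eta$: since $d<c$, the map $g$ is a continuous $c$-selection for $\Phi$, so $\Phi$ has a continuous selection $f$ with $\|f-g\|\leq c<\eta$. Instead you try to re-derive this continuous-gauge case from Theorem \ref{theorem-res-pro-v1:4}\ref{item:shsa-vgg-rev:1} together with the Cauchy iteration of Proposition \ref{proposition-shsa-vgg-rev:1}, and that is where the genuine gap lies. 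The iteration requires each intermediate mapping $\Theta\wedge\mathbf{O}_{2^{-n}}[f_n]$ to admit continuous $\varepsilon$-selections again, but these mappings no longer satisfy the compact-deficiency hypothesis of \ref{item:shsa-vgg-rev:1}: at points with $\Phi(x)=E$ their values are intersections of two balls --- closed, bounded, convex, non-compact in infinite dimensions --- and neither $g$ nor $f_n$ is a continuous witness lying in them ($f_n$ need not lie in $\Theta(x)$, and $g$ need not be within $2^{-n}$ of $f_n(x)$). You flag this obstacle yourself, but the proposed remedy is not carried out, and passing from ``continuous $\varepsilon$-selections for every $\varepsilon$'' to an exact selection over a collectionwise normal domain is in general exactly the open Question \ref{question-shsa-v14:1}. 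The argument that actually closes this step exploits the special shape of $\Theta=\overline{\Phi\wedge\mathbf{O}_c[g]}$: each value is either compact or the closed ball of radius $c(x)$ centred at $g(x)$, so after translating by $g$ and rescaling by $1/c$ one obtains an l.s.c.\ mapping into $\mathscr{C}'_\mathbf{c}\bigl(\,\overline{\B}\,\bigr)$, to which the collectionwise-normal selection theorem (Theorem \ref{theorem-res-pro-v1:2}, in its form relative to the closed convex set $\overline{\B}$) applies; this is the content of Proposition \ref{proposition-shsa-v18:1}, not of an approximate-selection iteration.

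The backward direction is in the right spirit but under-specified at two points. For collectionwise normality you must actually produce an l.s.c.\ $\eta$ on all of $X$ dominating $\|g_0\|$ on the closed set $A$, which is not automatic for unbounded $g_0$; the clean fix is to note that collectionwise normality already follows from extending the norm-one maps $x\mapsto\chi_D$ into $\ell_1(\mathscr{D})$ or $\ell_2(\mathscr{D})$ (as in the proof of Corollary \ref{corollary-shsa-v20:2}), for which a constant $\eta$ suffices. For countable paracompactness you must name the mapping: for instance, normalising so that the given l.s.c.\ function $\eta$ takes values in $(0,1]$, the mapping $\Phi(x)=[\,1-\eta(x),1\,]\subset\R$ is l.s.c.\ (because $1-\eta$ is upper semi-continuous) and compact convex valued, $g\equiv 0$ is a continuous $1$-selection for it, and the resulting selection $f$ with $|f|<1$ gives a continuous $c=1-f$ with $0<c\leq\eta$, which yields countable paracompactness; note that the naive choice $\Phi(x)=[\eta(x),1]$ fails to be l.s.c., so the construction does need this care with which semi-continuity sits where.
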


Without the assumption of countable paracompactness, the following
similar characterisation holds, see \cite[Proposition 4.2]{MR2643824}.

\begin{proposition}[\cite{MR2643824}]
  \label{proposition-shsa-v18:1}
  Let $X$ be a collectionwise normal space, $E$ be a Banach space,
  $\Phi:X\to \mathscr{C}'_\mathbf{c}(E)$ be l.s.c.,
  $\eta:X\to (0,+\infty)$ be continuous and $g:X\to E$ be a
  continuous $\eta$-selection for $\Phi$. Then $\Phi$ has a continuous
  selection $f:X\to E$ with $\|f(x)-g(x)\|\leq \eta(x)$, for all
  $x\in X$.
\end{proposition}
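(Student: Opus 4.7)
The plan is to reduce the problem to the $\varepsilon$-selection theorem \ref{theorem-res-pro-v1:4}\ref{item:shsa-vgg-rev:1} applied to a suitable closed-convex-valued auxiliary mapping, and then to upgrade the resulting approximate selections to a true selection by the Banach-space Cauchy iteration used in the proof of Proposition \ref{proposition-shsa-vgg-rev:1}. Concretely, introduce
\[
  \Psi(x) = \Phi(x) \cap \overline{\mathbf{O}_{\eta(x)}(g(x))},\qquad x\in X,
\]
which is convex-valued and non-empty-valued because $g$ is an $\eta$-selection. To see $\Psi$ is l.s.c., note that $\mathbf{O}_\eta[g]$ has open graph (since $g$ and $\eta$ are continuous), so $\Phi\wedge\mathbf{O}_\eta[g]$ is l.s.c.\ by Proposition \ref{proposition-shsa-v28:5}; the convex-set identity $\overline{A\cap B}=A\cap\overline{B}$ (for $A$ closed convex, $B$ open convex, with $A\cap B\ne\emptyset$) then gives $\Psi=\overline{\Phi\wedge\mathbf{O}_\eta[g]}$, which is l.s.c.\ by Proposition \ref{proposition-shsa-v28:2}.

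Next I would verify that $\Psi$ satisfies the compactness-deficiency hypothesis of Theorem \ref{theorem-res-pro-v1:4}\ref{item:shsa-vgg-rev:1} with test function $g$: if $g(x)\notin\Psi(x)$, then since $g(x)\in\overline{\mathbf{O}_{\eta(x)}(g(x))}$ we must have $g(x)\notin\Phi(x)$, forcing $\Phi(x)\neq E$ and hence $\Phi(x)\in\mathscr{C}_\mathbf{c}(E)$; so $\Psi(x)\subset\Phi(x)$ is compact as a closed subset. Theorem \ref{theorem-res-pro-v1:4}\ref{item:shsa-vgg-rev:1} then produces a continuous $\varepsilon$-selection of $\Psi$ for every $\varepsilon>0$.

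Following the iteration in the proof of Proposition \ref{proposition-shsa-vgg-rev:1}, I would inductively construct continuous $2^{-n}$-selections $f_n:X\to E$ of $\Psi$ satisfying $\|f_{n+1}(x)-f_n(x)\|<2^{-n+1}$, by applying Theorem \ref{theorem-res-pro-v1:4}\ref{item:shsa-vgg-rev:1} at each stage to (the closure of) the l.s.c.\ convex-valued intersection $\Psi\wedge\mathbf{O}_{2^{-n}}[f_n]$. Completeness of $E$ yields a continuous uniform limit $f:X\to E$, which is a selection of $\Psi$ because $\Psi$ is closed-valued; since $\Psi(x)\subset\Phi(x)\cap\overline{\mathbf{O}_{\eta(x)}(g(x))}$, this $f$ has the required properties.

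The main obstacle is maintaining the compactness-deficiency hypothesis through the iteration: neither $g$ nor $f_n$ works uniformly as a test function for $\Psi\wedge\mathbf{O}_{2^{-n}}[f_n]$, because when $\Phi(x)=E$ the value $\Psi(x)$ is a closed ball rather than compact. My fix is to use, as the test function, a continuous \emph{truncation} $g'_n$ of $f_n$ onto the closed ball $\overline{\mathbf{O}_{\eta(x)}(g(x))}$ (radial metric projection onto this ball). The bound $\|f_n(x)-g(x)\|<\eta(x)+2^{-n}$, which follows from $f_n$ being a $2^{-n}$-selection of $\Psi$, ensures that $\|g'_n(x)-f_n(x)\|<2^{-n}$, so $g'_n(x)$ always lies in $\mathbf{O}_{2^{-n}}(f_n(x))$; consequently, if $g'_n(x)$ escapes $\Psi(x)\cap\mathbf{O}_{2^{-n}}(f_n(x))$, it must be that $g'_n(x)\notin\Psi(x)$, which in the case $\Phi(x)=E$ contradicts $g'_n(x)\in\overline{\mathbf{O}_{\eta(x)}(g(x))}=\Psi(x)$, forcing $\Phi(x)$ to be compact and restoring the required compactness of the (closure of the) intersection.
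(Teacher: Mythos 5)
Your argument is correct, and it deserves credit for isolating exactly the delicate point. The paper itself does not prove Proposition \ref{proposition-shsa-v18:1} (it is quoted from \cite{MR2643824}); what it does offer, in Remark \ref{remark-shsa-v18:1} and Proposition \ref{proposition-shsa-v18:2}, is the same reduction you start from: pass to $\Psi=\overline{\Phi\wedge\mathbf{O}_\eta[g]}$, which is l.s.c., convex-valued and compact-valued wherever $g(x)\notin\Psi(x)$, so that Theorem \ref{theorem-res-pro-v1:4}\ref{item:shsa-vgg-rev:1} (equivalently Corollary \ref{corollary-res-pro-v11:1}) yields continuous $\varepsilon$-selections for every $\varepsilon>0$. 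The genuine content of your proof is the observation that, unlike in the open Question \ref{question-shsa-v14:1}, here the approximate selections can be upgraded: at each stage of the Cauchy iteration the new mapping $\overline{\Psi\wedge\mathbf{O}_{2^{-n}}[f_n]}$ again admits a continuous test map witnessing compact deficiency, namely the radial truncation $g_n'(x)=g(x)+\frac{\eta(x)}{\max(\eta(x),\|f_n(x)-g(x)\|)}\,(f_n(x)-g(x))$ of $f_n$ into the closed ball $\overline{\mathbf{O}_{\eta(x)}(g(x))}$. Your estimates check out: $\|f_n(x)-g(x)\|<\eta(x)+2^{-n}$ gives $\|g_n'(x)-f_n(x)\|<2^{-n}$, so if $g_n'(x)$ misses $\overline{\Psi\wedge\mathbf{O}_{2^{-n}}[f_n]}(x)$ it must miss $\Psi(x)$, which rules out $\Phi(x)=E$ (since then $\Psi(x)$ is the whole closed ball, which contains $g_n'(x)$ by construction) and forces compactness. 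The convexity identity $\Phi(x)\cap\overline{\mathbf{O}_{\eta(x)}(g(x))}=\overline{\Phi(x)\cap\mathbf{O}_{\eta(x)}(g(x))}$ that you invoke is valid for a closed convex set meeting an open convex set, and it is what makes the case $\Phi(x)=E$ transparent. Note that your truncation uses the continuity (not mere lower semi-continuity) of $\eta$ in an essential way; this is consistent with the fact that the analogous statement for lower semi-continuous $\eta$ is the open Question \ref{question-shsa-v14:2}. In short: a correct proof, following the reduction the paper indicates but supplying the one step the paper leaves to the cited source.
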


Motivated by this, the following question was posed in \cite[Question
3]{MR2643824}.

\begin{question}[\cite{MR2643824}]
  \label{question-shsa-v14:2}
  Let $X$ be a collectionwise normal space, $E$ be a Banach space,
  ${\Phi:X\to \mathscr{C}'_\mathbf{c}(E)}$ be an l.s.c.\ mapping and
  $g:X\to E$ be a continuous $\eta$-selection for $\Phi$, for some
  lower semi-continuous function ${\eta:X\to (0,+\infty)}$. Does
  $\Phi$ have a continuous selection $f:X\to E$ with
  $\|f(x)-g(x)\|\leq \eta(x)$, for all $x\in X$?
\end{question}

\begin{remark}
  \label{remark-shsa-v18:1}
  Let us point out that the answer to Question
  \ref{question-shsa-v14:2} is ``Yes'' if so is the answer to Question
  \ref{question-res-pro-v1:1}. Indeed, let $\Phi$, $\eta$ and $g$ be
  as in Question \ref{question-shsa-v14:2}. Then the mapping
  $\mathbf{O}_\eta[g]:X\sto E$, defined by
  $\mathbf{O}_\eta[g](x)=\mathbf{O}_{\eta(x)}(g(x))$,
  $x\in X$, has an open graph \cite[Proposition 2.1]{gutev:05}, see
  also Proposition \ref{proposition-shsa-v28:4}. Hence, the mapping
  $\varphi=\overline{\Phi\wedge \mathbf{O}_\eta[g]}$ remains
  convex-valued and l.s.c., by Propositions
  \ref{proposition-shsa-v28:2} and
  \ref{proposition-shsa-v28:5}. Moreover, $g(x)\notin \varphi(x)$
  implies that $\Phi(x)\neq E$ and, therefore, $\varphi(x)$ is compact
  being a closed subset of the compact set $\Phi(x)$. Evidently, if
  $f:X\to E$ is a continuous selection for $\varphi$, then
  $\|f(x)-g(x)\|\leq \eta(x)$ for all $x\in X$.\hfill\textsquare
\end{remark}

The property stated in Question \ref{question-shsa-v14:2} can be
considered as a selection interpretation of the classical
Kat\v{e}tov-Tong insertion theorem
\cite{katetov:51,MR0060211,tong:48,MR0050265}, see also
\cite[1.7.15(b)]{engelking:89}, that a space $X$ is normal if and only
if for every pair $\xi,\eta:X\to \R$ of functions such that $\xi$ is
upper semi-continuous, $\eta$ is lower semi-continuous and
$\xi\leq\eta$, there exists a continuous function $f:X\to \R$ with
$\xi\leq f\leq \eta$.

\section{Selections and Continuity-Like Properties}

\subsection{Selection Factorisation Properties}

Here, we briefly discuss two para\-compact-like properties of
set-valued mappings which offer a natural generalisation of Theorems
\ref{theorem-res-pro-v1:2} and \ref{theorem-res-pro-v9:1}. They are
based on the following idea of {\em factorising\/} set-valued
mappings. For a metrizable space $Y$ and a mapping $\Phi:X\sto Y$, we
say that a triple $(Z,h,\varphi)$ is an \emph{l.s.c.\
  weak-factorisation for} $\Phi$ \cite{choban-nedev:74,nedev:80} if
$Z$ is a metrizable space with weight $w(Z)\leq w(Y)$, $h:X\to Z$ is
continuous and $\varphi:Z\sto Y$ is l.s.c.\ such that
$\varphi\circ h:X\sto Y$ is a set-valued selection for
$\Phi$. \medskip

In what follows, using the convex hull operator $A\to \conv(A)$ of a
linear space $E$, to each mapping $\Phi:X\sto E$ we will associate the
mapping $\conv[\Phi]:X\sto E$, defined by
$\conv[\Phi](x)=\conv(\Phi(x))$, $x\in X$. It is well known that lower
semi-continuity is preserved by passing to the mapping $\conv[\Phi]$,
\cite[Proposition 2.6]{michael:56a}.

\begin{proposition}[\cite{michael:56a}]
  \label{proposition-shsa-v28:3}
  Let $E$ be a normed space and $\Phi:X\sto E$ be an l.s.c.\
  mapping. Then the mapping $\conv[\Phi]:X\sto E$ is also l.s.c. 
\end{proposition}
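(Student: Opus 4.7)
My plan is to verify lower semi-continuity of $\conv[\Phi]$ directly from the definition: for an arbitrary open set $V\subset E$, I will show that whenever $x_0\in X$ satisfies $\conv[\Phi](x_0)\cap V\neq \emptyset$, there is a neighbourhood of $x_0$ contained in the preimage $\conv[\Phi]^{-1}[V]$.

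First, I would fix a witness $y\in \conv(\Phi(x_0))\cap V$ and, by the very definition of the convex hull, write it as a finite convex combination $y=\sum_{i=1}^n \lambda_i y_i$ with $y_i\in \Phi(x_0)$, $\lambda_i\geq 0$ and $\sum_{i=1}^n \lambda_i=1$. Next, using the openness of $V$, I would pick $\varepsilon>0$ with $\mathbf{O}_\varepsilon(y)\subset V$. The lower semi-continuity of $\Phi$ then yields that each set $W_i=\Phi^{-1}[\mathbf{O}_\varepsilon(y_i)]$ is an open neighbourhood of $x_0$, and so is the finite intersection $W=\bigcap_{i=1}^n W_i$.

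For the verification step, given any $x\in W$ I would choose $z_i\in \Phi(x)\cap \mathbf{O}_\varepsilon(y_i)$ and set $z=\sum_{i=1}^n \lambda_i z_i$. Since the same weights $\lambda_i$ are reused, one has $z\in \conv(\Phi(x))=\conv[\Phi](x)$, and the triangle inequality combined with $\sum_{i=1}^n\lambda_i=1$ gives
\[
\|z-y\| \;\leq\; \sum_{i=1}^n \lambda_i\|z_i-y_i\| \;<\; \varepsilon,
\]
so $z\in \mathbf{O}_\varepsilon(y)\subset V$. Hence $W\subset \conv[\Phi]^{-1}[V]$, which is exactly what is needed.

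There is essentially no serious obstacle here: the pointwise estimate above follows immediately from the convexity of the norm, and no partition-of-unity or paracompactness-type machinery is required. The only thing one must take care of is that the coefficients $\lambda_i$ and the base points $y_i$ are chosen once and for all at $x_0$, so that the \emph{same} convex combination can be formed at every nearby $x$; this is precisely what the finite intersection $W$ guarantees, and it is also the reason why the argument works for arbitrary $\Phi$ without any closedness, convexity, or compactness hypothesis on the values.
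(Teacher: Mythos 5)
Your proof is correct and is exactly the standard argument for this fact; the paper itself gives no proof, only the citation to Michael's \emph{Continuous selections I} (Proposition 2.6), where essentially this same computation --- fixing the convex combination at $x_0$, intersecting the finitely many preimages $\Phi^{-1}[\mathbf{O}_\varepsilon(y_i)]$, and reusing the weights $\lambda_i$ at nearby points --- is carried out. No issues.
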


We now have the following general reduction of the selection problem
for set-valued mappings.

\begin{proposition}
  \label{proposition-shsa-v17:1}
  Let $X$ be a space, $E$ be a Banach space and
  $\Phi:X\to \mathscr{F}_\mathbf{c}(E)$ be a mapping which admits an
  l.s.c.\ weak-factorisation. Then $\Phi$ has a continuous selection.
\end{proposition}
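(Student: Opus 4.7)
The plan is to reduce the selection problem for $\Phi$ on the (possibly wild) domain $X$ to the selection problem on the metrizable (hence paracompact) factor $Z$, where Michael's theorem applies. Let $(Z,h,\varphi)$ be the l.s.c.\ weak-factorisation of $\Phi$, so that $Z$ is metrizable, $h:X\to Z$ is continuous, $\varphi:Z\sto E$ is l.s.c., and $\varphi(h(x))\subset \Phi(x)$ for every $x\in X$.

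First I would massage $\varphi$ into a closed convex-valued mapping without losing lower semi-continuity or the factorisation property. Set $\psi=\overline{\conv[\varphi]}:Z\to \mathscr{F}_\mathbf{c}(E)$. By Proposition \ref{proposition-shsa-v28:3}, $\conv[\varphi]$ is l.s.c., and by Proposition \ref{proposition-shsa-v28:2}, passing to pointwise closures preserves lower semi-continuity; hence $\psi$ is l.s.c.\ with values in $\mathscr{F}_\mathbf{c}(E)$. Since $\Phi(x)$ is closed and convex and $\varphi(h(x))\subset \Phi(x)$, taking convex hulls and closures gives $\psi(h(x))\subset \Phi(x)$ for every $x\in X$, so $\psi\circ h$ remains a set-valued selection for $\Phi$.

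Next, since $Z$ is metrizable it is paracompact, and $E$ is a Banach space, so Theorem \ref{theorem-res-pro-v9:1} yields a continuous selection $s:Z\to E$ for $\psi$. I then define $f=s\circ h:X\to E$; this is continuous as a composition of continuous maps, and
\[
f(x)=s(h(x))\in \psi(h(x))\subset \Phi(x)\quad\text{for every }x\in X,
\]
so $f$ is the desired continuous selection for $\Phi$.

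The proof is essentially bookkeeping rather than hard work: the whole content is that an l.s.c.\ weak-factorisation transfers the selection problem onto a paracompact (metrizable) space. The only step that requires care is the passage from $\varphi$ to $\psi=\overline{\conv[\varphi]}$, where I must check both that lower semi-continuity is preserved (handled by Propositions \ref{proposition-shsa-v28:2} and \ref{proposition-shsa-v28:3}) and that the enlarged values still fit inside $\Phi$ (which uses exactly that $\Phi$ is closed and convex-valued).
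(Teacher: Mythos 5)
Your proof is correct and follows essentially the same route as the paper: pass to $\overline{\conv[\varphi]}$ on the metrizable (hence paracompact) factor $Z$, apply Theorem \ref{theorem-res-pro-v9:1} there, and compose with $h$. You merely make explicit two details the paper leaves tacit, namely that $\overline{\conv[\varphi]}(h(x))\subset\Phi(x)$ because $\Phi$ is closed convex-valued, and that metrizability of $Z$ supplies the paracompactness needed for Michael's theorem.
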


\begin{proof}
  Let $(Z,h,\varphi)$ be an l.s.c.\ weak-factorisation for $\Phi$. By
  Propositions \ref{proposition-shsa-v28:2} and
  \ref{proposition-shsa-v28:3}, the associated mapping
  $\overline{\conv[\varphi]}:Z\to \mathscr{F}_\mathbf{c}(E)$ remains
  l.s.c.  Hence, by Theorem \ref{theorem-res-pro-v9:1}, it has a
  continuous selection $g:Z\to E$. Then $g\circ h:X\to E$ is a
  continuous selection for $\Phi$ because
  ${\overline{\conv[\varphi]}\circ h:X\to \mathscr{F}_\mathbf{c}(E)}$
  is a set-valued selection for $\Phi$.
\end{proof}

Considering l.s.c.\ weak-factorisations in the setting of Theorems
\ref{theorem-res-pro-v1:2} and \ref{theorem-res-pro-v9:1}, Nedev
\cite{nedev:80} (see also \cite{choban-nedev:74}) defined the
following property. A mapping $\Phi:X\sto Y$ is said to have the
\emph{Selection Factorisation Property} (called \emph{s.f.p.}, for
short) if for every closed subset $F\subset X$ and every locally
finite collection $\mathscr{U}$ of open subsets of $Y$ such that
$\Phi^{-1}[\mathscr{U}]= \left\{\Phi^{-1}[U]:U\in\mathscr{U}\right\}$
covers $F$, there exists a locally finite and open (in $F$) cover of
$F$ which refines $\Phi^{-1}[\mathscr{U}]$. The importance of s.f.p.\
mappings is evident from the following two observations.

\begin{example}[\cite{nedev:80}]
  \label{example-res-pro-v8:1}
  Let $\Phi:X\to \mathscr{F}(Y)$ be an l.s.c.\ mapping, where $Y$ is a
  metrizable space. Then $\Phi$ has the s.f.p.\ provided $X$ is
  $w(Y)$-paracompact, or when $X$ is $w(Y)$-collectionwise normal and
  $\Phi:X\to \mathscr{C}'(Y)$.
\end{example}

\begin{theorem}[\cite{choban-nedev:74,nedev:80}]
  \label{theorem-res-pro-v8:1}
  Let $X$ be a normal space and $Y$ be a completely metrizable
  space. Then each s.f.p.\ mapping $\Phi:X\to \mathscr{F}(Y)$ has an
  l.s.c.\ weak-factori\-sa\-tion.
\end{theorem}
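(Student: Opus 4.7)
The plan is to assemble $(Z,h,\varphi)$ from a countable tower of locally finite open covers $\{\mathscr{V}_n\}_{n\in\N}$ of $Y$ of shrinking mesh, whose pullbacks $\Phi^{-1}[\mathscr{V}_n]$ are refined in $X$ via the s.f.p.\ of $\Phi$. The space $Z$ will be realised as a subspace of a countable product of metric polyhedra (the nerves of these refinements), the map $h:X\to Z$ as the diagonal of the resulting barycentric maps, and $\varphi:Z\sto Y$ by a limit construction across the $\mathscr{V}_n$'s. Completeness of $Y$ enters precisely to guarantee that $\varphi$ is nonempty-valued throughout $Z$.

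First I fix a complete compatible metric $d$ on $Y$. Since $Y$ is metrizable (hence paracompact), I inductively choose locally finite open covers $\mathscr{V}_n$ of $Y$ with $|\mathscr{V}_n|\leq w(Y)$, $\mathrm{mesh}(\mathscr{V}_n)<2^{-n}$, and $\mathscr{V}_{n+1}$ a star-refinement of $\mathscr{V}_n$. Each $\Phi^{-1}[\mathscr{V}_n]$ is an open cover of $X$, so by s.f.p.\ (with $F=X$) it admits a locally finite open refinement; collapsing along a selector $W\mapsto V_W\in\mathscr{V}_n$ with $W\subset\Phi^{-1}[V_W]$ presents this refinement as $\mathscr{W}_n=\{W_V^n:V\in\mathscr{V}_n\}$, with $W_V^n\subset\Phi^{-1}[V]$ and $|\mathscr{W}_n|\leq w(Y)$. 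Normality of $X$ together with the local finiteness of $\mathscr{W}_n$ yields a partition of unity $\{\xi_V^n\}_V$ subordinated to $\mathscr{W}_n$. Letting $K_n$ be the simplicial nerve of $\mathscr{W}_n$ (viewed as a metric polyhedron with vertex set $\mathscr{V}_n$) and $h_n(x)=\sum_V\xi_V^n(x)\cdot V\in|K_n|$ the barycentric map, the diagonal $h=(h_n):X\to\prod_n|K_n|$ is continuous, and I set $Z=\overline{h(X)}$ inside the natural product metric. Then $Z$ is metrizable with $w(Z)\leq w(Y)$.

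For $z=(z_n)\in Z$, let $\mathscr{V}_n(z)\subset\mathscr{V}_n$ be the finite set of vertices of $K_n$ with positive barycentric coordinate at $z_n$, and define
\[
  \varphi(z)=\bigcap_{n\in\N}\overline{\bigcup\mathscr{V}_n(z)}\subset Y.
\]
The inclusion $\varphi(h(x))\subset\Phi(x)$ is routine: $V\in\mathscr{V}_n(h(x))$ forces $x\in W_V^n\subset\Phi^{-1}[V]$, so $V$ meets $\Phi(x)$, whence $\bigcup\mathscr{V}_n(h(x))\subset\mathbf{O}_{2^{-n}}(\Phi(x))$, and the intersection over $n$ lands in the closed set $\Phi(x)$. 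Lower semi-continuity of $\varphi$ at $z\in Z$ with $\varphi(z)\cap U\neq\emptyset$ ($U$ open in $Y$) reduces to locating $n$ and $V\in\mathscr{V}_n(z)$ with $\overline{V}\subset U$ and exploiting the open-star preimage $\mathrm{pr}_n^{-1}(\mathrm{st}(V,K_n))\cap Z$ as a neighbourhood of $z$ throughout which $V$ stays active.

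The main obstacle is to show $\varphi(z)\neq\emptyset$ for \emph{every} $z\in Z$, not just for $z=h(x)$; equivalently, that the closed sets $\overline{\bigcup\mathscr{V}_n(z)}$ have a common point. This is exactly where the star-refinement chain for $\{\mathscr{V}_n\}$ and the completeness of $(Y,d)$ come in: by pairing active vertices across consecutive levels along the star-refinement, one extracts a Cauchy sequence in $Y$ whose limit lies in $\varphi(z)$. Arranging this vertex compatibility across levels while preserving both l.s.c.\ and the weight bound $w(Z)\leq w(Y)$ is the technical heart of the proof and the precise point at which the s.f.p.\ is used in full strength.
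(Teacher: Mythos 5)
The paper states this theorem as a cited result of Choban and Nedev and gives no proof of it, so your argument has to stand on its own. Its architecture --- locally finite covers $\mathscr{V}_n$ of $Y$ of mesh $<2^{-n}$, refinements of $\Phi^{-1}[\mathscr{V}_n]$ in $X$ obtained from the s.f.p., barycentric maps into nerves, $Z$ inside the countable product, and $\varphi(z)=\bigcap_n\overline{\bigcup\mathscr{V}_n(z)}$ --- is the standard and correct skeleton for factorisation theorems of this kind, and the parts you actually verify (the weight bound, continuity of $h$, and the inclusion $\varphi(h(x))\subset\Phi(x)$) are fine.

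The gap is that you choose the covers $\mathscr{W}_n$ of $X$ independently at each level, and with that choice the two essential properties of $\varphi$ --- nonemptiness and lower semi-continuity --- are not merely ``technical'': they can genuinely fail. Concretely, take $\Phi(x)=Y$ for all $x$ with $Y=\R$. Then $\Phi^{-1}[V]=X$ for every nonempty $V$, the s.f.p.\ places no constraint at all on the $\mathscr{W}_n$, and nothing prevents the level-$n$ partition of unity at a fixed $x$ from being supported over elements of $\mathscr{V}_n$ contained in $(n,n+1)$; then $\bigcap_n\overline{\bigcup\mathscr{V}_n(h(x))}=\emptyset$ already for $z=h(x)$, before any limit points of $h(X)$ are considered. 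Star-refinement of the $\mathscr{V}_n$ does not repair this, because the only link between $\mathscr{V}_n(h(x))$ and $\mathscr{V}_{n+1}(h(x))$ your construction provides is that all their members meet $\Phi(x)$, and $\Phi(x)$ may be large. What is needed is an inductive construction in which $\mathscr{W}_{n+1}$ is built \emph{relative to} $\mathscr{W}_n$ and the reference points already chosen --- e.g.\ take a closed shrinking of $\mathscr{W}_n$ by normality and apply the s.f.p.\ on each closed piece to the locally finite family of traces of $\mathscr{V}_{n+1}$ on a $2^{-n}$-ball around the level-$n$ reference point --- so that every vertex active at level $n+1$ lies within roughly $2^{-n+1}$ of a vertex active at level $n$. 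With that coherence, K\H{o}nig's lemma over the finite sets $\mathscr{V}_n(z)$ produces descending chains, completeness of $Y$ gives their limits, and $\varphi(z)\neq\emptyset$. The same coherence is what your l.s.c.\ argument silently uses: knowing that some small $V\in\mathscr{V}_n(z)$ with $\overline{V}\subset U$ remains active for all $z'$ in an open-star neighbourhood only yields $\varphi(z')\cap U\neq\emptyset$ if every active vertex of level $n$ is witnessed by a point of $\varphi(z')$ at distance $O(2^{-n})$ from it, which is exactly the chain property you have not established. Since you explicitly defer this to ``the technical heart'' without carrying it out, the proof is incomplete precisely where the theorem's content lies.
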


To extend Theorem \ref{theorem-res-pro-v8:1} to set-valued mappings
defined on arbitrary spaces, the following similar property was
defined in \cite{gutev:00a}. Let $(Y,\rho)$ be a metric space.  A
mapping $\Phi:X\sto Y$ is said to be \emph{lower semi-factorisable
  relatively $\rho$}, or \emph{$\rho$-l.s.f.}, if for every closed
subset $F\subset X$, every $\varepsilon >0$ and every (not necessarily
continuous) selection $s:F\to Y$ for $\Phi \uhr F$, there exists a
locally finite (in $F$) covering $\mathscr{U}$ of $F$ of cozero-sets
of $F$, and a map $\kappa:\mathscr{U}\to F$ such that
\[
  |\mathscr{U}|\leq w(Y)\quad \text{and}\quad s(\kappa(U))\in
  \mathbf{O}_\varepsilon\left(\Phi(x)\right),\ \text{for every
    $x\in U\in \mathscr{U}$.}
\]

Here are two important properties of $\rho$-l.s.f.\ mappings.

\begin{example}[\cite{gutev:00a}]
  \label{example-res-pro-v8:2}
  Let $X$ be a normal space, $(Y,\rho)$ be a metric space and
  $\Phi:X\sto Y$ be an s.f.p.\ mapping. Then $\Phi$ is $\rho$-l.s.f.
\end{example}

\begin{theorem}[\cite{gutev:00a}]
  \label{theorem-res-pro-v8:2}
  Let $X$ be a space, $(Y,\rho)$ be a complete metric space and
  $\Phi:X\to \mathscr{F}(Y)$ be a $\rho$-l.s.f.\ mapping. Then $\Phi$
  has an l.s.c.\ weak-factorisation. 
\end{theorem}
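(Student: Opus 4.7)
My plan is to construct the weak-factorisation $(Z,h,\varphi)$ explicitly: $Z$ will be a metrizable subspace of a countable product of nerves of cozero coverings of $X$ obtained from the $\rho$-l.s.f.\ property, $h$ will be the diagonal of the associated barycentric maps, and $\varphi$ will be defined through limits in $Y$ of ``vertex values'' read off the nerves, with completeness of $(Y,\rho)$ ensuring the limits exist.

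More precisely, I would first fix any (not necessarily continuous) selection $s:X\to Y$ for $\Phi$, which exists since each $\Phi(x)$ is nonempty. Then, inductively in $n\in\N$, I would apply the $\rho$-l.s.f.\ property with $F=X$ and $\varepsilon_n=2^{-n}$ to produce a locally finite cozero covering $\mathscr{U}_n=\{U_\alpha^n:\alpha\in A_n\}$ of $X$ with $|A_n|\leq w(Y)$ and a map $\kappa_n:\mathscr{U}_n\to X$ such that the ``vertex values'' $y_\alpha^n:=s(\kappa_n(U_\alpha^n))$ lie in $\mathbf{O}_{2^{-n}}(\Phi(x))$ whenever $x\in U_\alpha^n$. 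Fix a locally finite cozero partition of unity $\{\lambda_\alpha^n\}_{\alpha\in A_n}$ subordinate to $\mathscr{U}_n$ and define the continuous barycentric map $h_n:X\to N_n\subset\ell_1(A_n)$ by $h_n(x)=(\lambda_\alpha^n(x))_{\alpha\in A_n}$, where $N_n$ is the nerve of $\mathscr{U}_n$ equipped with its $\ell_1$-metric. Setting $h=(h_n):X\to\prod_nN_n$ and taking $Z$ to be the closure of $h(X)$ in this product under the standard product metric yields a metrizable space of weight at most $w(Y)$. Writing $\sigma_n(z)=\{\alpha\in A_n:z^n_\alpha>0\}$ for the carrier of $z=(z^n)\in Z$, define $\varphi(z)\subseteq Y$ to be the set of all cluster points in $Y$ of sequences $(y_{\alpha_n}^n)_{n\in\N}$ with $\alpha_n\in\sigma_n(z)$ for every $n$.

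Once this is in place, the inclusion $\varphi\circ h\subseteq\Phi$ and lower semi-continuity of $\varphi$ are the clean parts of the argument: for $z=h(x)$ the carrier $\sigma_n(z)$ is contained in $\{\alpha:x\in U_\alpha^n\}$, so every $y_{\alpha_n}^n$ in the definition of $\varphi(h(x))$ lies in $\mathbf{O}_{2^{-n}}(\Phi(x))$ and any cluster point hence lies in $\bigcap_n\overline{\mathbf{O}_{2^{-n}}(\Phi(x))}=\Phi(x)$, while l.s.c.\ is checked by witnessing any open $V\subset Y$ meeting $\varphi(z)$ through a finite list of vertex values lying in $V$ and pulling back through the corresponding open subsets of $Z$ that prescribe the relevant carriers. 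The main obstacle will be ensuring $\varphi(z)\neq\emptyset$ for every $z\in Z$: this is exactly where completeness of $(Y,\rho)$ must enter. Handling it requires strengthening the inductive construction so that $\mathscr{U}_{n+1}$ refines $\mathscr{U}_n$ and, for any $\alpha_n\in\sigma_n(z)$ and compatible $\alpha_{n+1}\in\sigma_{n+1}(z)$, the successive vertex values satisfy $\rho(y_{\alpha_n}^n,y_{\alpha_{n+1}}^{n+1})<2^{-n+1}$. Arranging this forces the relevant sequences of vertex values to be Cauchy in $(Y,\rho)$, and completeness then supplies the limits that populate $\varphi(z)$. This refinement step, where $\rho$-l.s.f.\ has to be reapplied at each stage to a ``current'' selection extracted from the previous stage rather than to the original $s$, is the delicate heart of the argument.
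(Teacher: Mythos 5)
The survey does not reproduce a proof of this theorem --- it is quoted from \cite{gutev:00a} --- so I can only judge your proposal on its own merits. Your architecture (iterate the $\rho$-l.s.f.\ property with $\varepsilon_n=2^{-n}$, map $X$ into a countable product of nerves of locally finite cozero covers, and define $\varphi$ through limits of vertex values, with completeness of $(Y,\rho)$ supplying the limits) is the right one and is essentially the standard factorisation scheme. But the step you yourself flag as ``the delicate heart'' is not merely delicate: as you have set it up, it fails. The compatibility condition you propose --- $\rho(y^n_{\alpha_n},y^{n+1}_{\alpha_{n+1}})<2^{-n+1}$ for \emph{any} $\alpha_n\in\sigma_n(z)$ and compatible $\alpha_{n+1}\in\sigma_{n+1}(z)$ --- cannot be arranged: a point $x$ may lie in two members $U^n_\alpha, U^n_{\alpha'}$ of $\mathscr{U}_n$ whose vertex values sit near opposite ends of a large set $\Phi(x)$, both legitimately in $\mathbf{O}_{2^{-n}}(\Phi(x))$ but far from each other, and no single $y^{n+1}_{\alpha_{n+1}}$ can be $2^{-n+1}$-close to both. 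The Cauchy estimate can only be made to hold along refinement chains $U^{n+1}_{\alpha_{n+1}}\subset U^n_{\alpha_n}$ under a fixed refinement assignment, and then the definition of $\varphi(z)$, its nonemptiness, and its lower semi-continuity all have to be rebuilt around such chains (including the coherence problem that an index in the carrier $\sigma_{n+1}(z)$ need not have its assigned parent in $\sigma_n(z)$ unless the covers are taken to be the positivity sets of the partitions of unity).

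Two further concrete points. First, even the chain-wise estimate cannot be obtained by reapplying $\rho$-l.s.f.\ with $F=X$ to a single global ``current'' selection $s_{n+1}$: for the same reason as above, $s_{n+1}(x)$ cannot be simultaneously $2^{-n}$-close to $y^n_\alpha$ for every $\alpha$ with $x\in U^n_\alpha$. The inductive step has to apply the $\rho$-l.s.f.\ property \emph{separately} on the closed sets $\overline{U}$ (or a closed shrinking), $U\in\mathscr{U}_n$, each time with a selection chosen in $\Phi(x)\cap\mathbf{O}_{2^{-n}}(y^n_U)$, and then assemble the resulting relative cozero covers into $\mathscr{U}_{n+1}$; this is precisely why the definition of $\rho$-l.s.f.\ quantifies over all closed subsets $F\subset X$, a freedom your proposal never invokes. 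Second, taking $Z=\overline{h(X)}$ rather than $Z=h(X)$ gratuitously breaks nonemptiness: for $z$ in the closure but not in the image, the carriers $\sigma_n(z)$ need not support any refinement chain at all, so your $\varphi(z)$ can be empty, whereas the definition of an l.s.c.\ weak-factorisation only requires $\varphi$ to be defined (with nonempty values) on a metrizable space of weight $\leq w(Y)$ containing $h(X)$. Until these three issues are resolved, the proof is incomplete.
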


\subsection{Continuity of Set-Valued Mappings}

Based on Theorem \ref{theorem-res-pro-v8:2}, the $\rho$-l.s.f.\
mappings deal with several other selection theorems which are similar
to Theorems \ref{theorem-res-pro-v1:2} and \ref{theorem-res-pro-v9:1},
but any restriction on the domain is removed at the expense of
strengthening the continuity of the set-valued mappings. This is
discussed below.\medskip

A mapping $\Phi:X\sto Y$ is called \emph{continuous} if it is both
l.s.c.\ and u.s.c. Here, $\Phi$ is \emph{u.s.c.}, or \emph{upper
  semi-continuous}, if the set if $\Phi^{-1}[F]$ is closed in $X$ for
every closed $F\subset Y$; equivalently, if
$\Phi^\#[U]=\big\{x\in X: \Phi(x)\subset U\big\}$ is open in $X$, for
every open $U\subset Y$.\medskip

Let $(Y,\rho)$ be a metric space. A mapping $\Phi:X\sto Y$ is
\emph{$\rho$-l.s.c.}  (\emph{$\rho$-u.s.c.})  if for every
$\varepsilon >0$, each point $p\in X$ has a neighbourhood $V$ such
that $\Phi(p)\subset \mathbf{O}_\varepsilon( \Phi ( x))$
(respectively, $\Phi(x) \subset \mathbf{O}_\varepsilon( \Phi (p))$),
for every $x\in V$.  Metric semi-continuity offers other
interpretations of continuity.  Namely, a mapping $\Phi:X\sto Y$ is
called
\begin{enumerate}[label=(\roman*)]
\item \emph{$\rho$-continuous} if it is both $\rho$-l.s.c.\ and
  $\rho$-u.s.c.;
\item \emph{$\rho$-proximal continuous} if it is both l.s.c.\ and
  $\rho$-u.s.c.;
\item \emph{proximal continuous} if it is $d$-proximal continuous for
  some metric $d$ on $Y$, which is topologically equivalent to $\rho$.
\end{enumerate}

In the realm of set-valued mappings with a metrizable range, every
continuous or $\rho$-continuous mapping is $\rho$-proximal continuous,
but the converse is not true, see \cite[Proposition
2.5]{gutev:00a}. Regarding continuity and $\rho$-continuity, it is
well known that these properties coincide precisely when the range is
a compact metric space, see e.g.\ \cite[Proposition
2.6]{gutev:00a}. \medskip

The continuous and $\rho$-continuous mappings fit naturally into the
selection theory. Selection theorems for $\rho$-continuous mappings
with paracompact (or even arbitrary) domain were obtained in
\cite{choban:70b,michael:57}, while selection results for continuous
mappings with arbitrary or (collectionwise) normal domain were
obtained in
\cite{choban:70a,engelking-heath-michael:68}. Subsequently, these
results were extended to proximal continuous mappings based on the
following example, see \cite[Example
4.3]{gutev:00a}.

\begin{example}[\cite{gutev:00a}]
  \label{example-shsa-vgg:1}
  If $X$ is a space and $(Y,\rho)$ is a metric space, then each
  proximal continuous mapping $\Phi:X\sto Y$ is $\rho$-l.s.f.
\end{example}

A subset $A\subset X$ is \emph{$P^\lambda$-embedded} in a space $X$,
where $\lambda$ is an infinite cardinal number, if for every locally
finite cozero-set cover $\mathscr{W}$ of $A$ of cardinality
$\left| \mathscr{W}\right| \leq \lambda $, there exists a locally
finite cozero-set cover $\mathscr{U}$ of $X$ such that $\mathscr{W} $
is refined by $\mathscr{U}\uhr A=\{ U\cap A:U\in \mathscr{U}\}$. The
notion ``$P^\lambda $-embedded'' in this sense is the same as
``$P^\lambda $-embedded'' in the sense of Shapiro \cite{shapiro:66},
which was introduced by Arens \cite{arens:53} under the name
``$\lambda$-normally embedded'' (see \cite{shapiro:66}). It is well
known that every continuous map from a $P^\lambda$-embedded subset $A$
of a space $X$ into a Banach space $E$ of weight $w(E)\leq\lambda$, is
continuously extendable to the whole of $X$ (Al\'o and Sennott
\cite{alo-sennott:71}, Morita \cite{morita:75}, Przymusi\'nski
\cite{przymusinski:78}). However, in the setting of arbitrary spaces,
this extension property cannot be covered by the framework of
continuous set-valued mapping. Indeed, for a Banach space $E$, a
$P^{w(E)}$-embedded set $A\subset X$ and a continuous map $g:A\to E$,
the mapping $\Phi_g:X\to \mathscr{F}(E)$ defined as in Proposition
\ref{proposition-contr-ext-14:1} may fail to be even l.s.c.\medskip

To rectify this, the following approach was offered in
\cite{gutev:00a}.  A map $g:A\to Y$, where $A\subset X$, is called
\emph{$A$-regular} \cite{gutev:00a} if for every locally finite
cozero-set cover $\mathscr{V}$ of $Y$, there exists a locally finite
cozero-set cover $\mathscr{U}$ of $X$ such that $g(\mathscr{U})$
refines $\mathscr{V}$. A continuous $g : A\to Y$ is $A$-regular,
whenever $A$ is $P^{w(Y)}$-embedded in $X$. Moreover, the restriction
$f\uhr A$ is $A$-regular, for every continuous map $f:X\to Y$ into a
metrizable space $Y$. The following improved version of Example
\ref{example-shsa-vgg:1} was obtained in \cite[Example
4.4]{gutev:00a}.

\begin{example}[\cite{gutev:00a}]
  \label{example-shsa-v17:1}
  Let $X$ be a space, $(Y,\rho)$ be a metric space,
  $\Phi :X\to \mathscr{F}(Y)$ be proximal continuous, $A\subset X$ and
  $g:A\rightarrow Y$ be an $A$-regular selection for $\Phi\uhr
  A$. Define $\Phi_g:X\to \mathscr{F}(Y)$ by $\Phi_g(x)=\{g(x)\}$ if
  $x\in A$, and $\Phi_g(x)=\Phi(x)$ otherwise.  Then the mapping
  $\Phi_g$ is $\rho $-l.s.f.
\end{example}

Combining this example with Proposition \ref{proposition-shsa-v17:1}
and Theorem \ref{theorem-res-pro-v8:2}, the following general result
was obtained in \cite[Corollary 6.2]{gutev:00a}.

\begin{theorem}[\cite{gutev:00a}]
  \label{theorem-res-pro-v1:5}
  Let $X$ be a space, $E$ be a Banach space,
  $\Phi : X \to \mathscr{F}_\mathbf{c}(E)$ be proximal continuous,
  $A\subset X$ and $g : A\to E$ be a continuous selection for
  $\Phi\uhr A$. Then $g$ can be extended to a continuous selection for
  $\Phi$ if and only if $g$ can be extended to a continuous map
  $f : X \to E$.
\end{theorem}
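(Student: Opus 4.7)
The ``only if'' direction is immediate: any continuous selection $h:X\to E$ for $\Phi$ that extends $g$ is, in particular, a continuous map $X\to E$ extending $g$. So the content is the ``if'' direction, and the plan is to pass through the mapping $\Phi_g:X\to\mathscr{F}_\mathbf{c}(E)$ given by $\Phi_g(x)=\{g(x)\}$ for $x\in A$ and $\Phi_g(x)=\Phi(x)$ for $x\notin A$, and to produce a continuous selection for $\Phi_g$ via the factorisation chain already developed in the paper. Any such selection $h$ will automatically satisfy $h\uhr A=g$ and $h(x)\in\Phi(x)$ for every $x\in X$, giving the desired extension.

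Assume then that $g$ extends to a continuous map $f:X\to E$. The first step is to upgrade $g$ from a merely continuous selection to an \emph{$A$-regular} selection for $\Phi\uhr A$: since $E$ is metrizable and $g=f\uhr A$, the remark preceding Example \ref{example-shsa-v17:1} says exactly that $f\uhr A$ is $A$-regular. With $\rho$ the norm metric on $E$, Example \ref{example-shsa-v17:1} then applies (using that $\Phi$ is proximal continuous and $g$ is an $A$-regular selection for $\Phi\uhr A$) and yields that $\Phi_g:X\to\mathscr{F}(E)$ is $\rho$-l.s.f. Note also that $\Phi_g$ is convex-valued, since each of its values is either a singleton or a member of $\mathscr{F}_\mathbf{c}(E)$.

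Because $E$ is a Banach space, $(E,\rho)$ is a complete metric space, so Theorem \ref{theorem-res-pro-v8:2} provides an l.s.c.\ weak-factorisation for $\Phi_g$. Feeding this into Proposition \ref{proposition-shsa-v17:1} (applicable since $\Phi_g:X\to\mathscr{F}_\mathbf{c}(E)$) produces a continuous selection $h:X\to E$ for $\Phi_g$. The inclusions $\Phi_g(x)=\{g(x)\}$ on $A$ and $\Phi_g(x)\subset\Phi(x)$ on $X$ then give at once that $h\uhr A=g$ and $h$ is a continuous selection for $\Phi$, completing the argument. The only ``delicate'' step is the very first one---recognising that having a continuous extension $f$ at all is exactly what is needed to upgrade $g$ to $A$-regularity---and after that the proof is a routine assembly of Example \ref{example-shsa-v17:1}, Theorem \ref{theorem-res-pro-v8:2}, and Proposition \ref{proposition-shsa-v17:1}.
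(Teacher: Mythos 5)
Your proposal is correct and follows essentially the same route the paper indicates: upgrade $g=f\uhr A$ to an $A$-regular selection, apply Example \ref{example-shsa-v17:1} to get that $\Phi_g$ is $\rho$-l.s.f., then chain Theorem \ref{theorem-res-pro-v8:2} and Proposition \ref{proposition-shsa-v17:1} to obtain a continuous selection for $\Phi_g$, which is the desired extension. This is exactly the combination the paper cites as the source of the theorem, so there is nothing to add.
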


\subsection{Hybrid Continuity and Selections}

The following result was obtained in \cite[Lemma
4.2]{gutev-ohta-yamazaki:03}. 

\begin{theorem}[\cite{gutev-ohta-yamazaki:03}]
  \label{theorem-shsa-v18:2}
  Let $X$ be a collectionwise normal space, $E$ be a Banach space,
  $\Phi : X\to \mathscr{F}(E)$ be proximal continuous and
  $\varphi: X\to \mathscr{F}_\mathbf{c}(E)$ be an l.s.c.\ selection
  for $\Phi$ such that $\varphi(x)$ is compact, whenever
  $\varphi(x)\neq \Phi(x)$. Then $\varphi$ has a continuous selection.
\end{theorem}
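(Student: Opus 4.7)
My plan is to use the proximal continuity of $\Phi$ to obtain a continuous selection $g$ of $\Phi$ via Theorem \ref{theorem-res-pro-v1:5}; to use $g$ as the ``helper'' map in Theorem \ref{theorem-res-pro-v1:4}\ref{item:shsa-vgg-rev:1} to produce continuous approximate selections for $\varphi$; and finally to upgrade these approximate selections to an actual continuous selection by a Cauchy iteration in the style of Proposition \ref{proposition-shsa-vgg-rev:1}.

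First I would reduce to the case where $\Phi$ is closed-convex-valued by replacing it with $\overline{\conv}[\Phi]$. Proximal continuity is preserved: lower semi-continuity follows from Propositions \ref{proposition-shsa-v28:2} and \ref{proposition-shsa-v28:3}, while $\rho$-upper semi-continuity persists because $\varepsilon$-neighborhoods of convex sets are convex. The hypothesis on $\varphi$ transfers as well, since $\varphi$ is itself closed-convex and sandwiched between $\Phi$ and $\overline{\conv}[\Phi]$, forcing the equality $\varphi(x) = \overline{\conv}[\Phi](x)$ to coincide with $\varphi(x) = \Phi(x)$. Next, picking any $x_0 \in X$ together with any $y_0 \in \Phi(x_0)$, the map $x_0 \mapsto y_0$ is a continuous selection for $\Phi\uhr \{x_0\}$ that trivially extends to the constant map $x \mapsto y_0$ on $X$, so Theorem \ref{theorem-res-pro-v1:5} produces a continuous selection $g : X \to E$ for $\Phi$.

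With $g$ in hand, I observe that $g(x) \notin \varphi(x)$ forces $g(x) \in \Phi(x) \setminus \varphi(x)$, so $\varphi(x) \neq \Phi(x)$ and hence $\varphi(x)$ is compact. Since $X$ is collectionwise normal, the equivalent conditions of Theorem \ref{theorem-res-pro-v1:4} hold by virtue of Theorem \ref{theorem-res-pro-v1:2}, so applying \ref{item:shsa-vgg-rev:1} to $\varphi$ with helper $g$ yields a continuous $\varepsilon$-selection for $\varphi$, for every $\varepsilon > 0$. From here one iterates: given a continuous $2^{-n}$-selection $f_n$ for $\varphi$, pass to the refined mapping $\overline{\varphi \wedge \mathbf{O}_{2^{-n}}[f_n]}$, which remains l.s.c.\ and convex-valued by Propositions \ref{proposition-shsa-v28:2}, \ref{proposition-shsa-v28:5} and \ref{proposition-shsa-v28:4}, and extract a closer $2^{-(n+1)}$-selection $f_{n+1}$ with $\|f_{n+1}-f_n\| < 2^{-n+1}$. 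The uniformly Cauchy sequence $(f_n)$ then converges, by completeness of $E$, to a continuous map which is a selection for $\varphi$ because $\varphi$ is closed-valued.

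The main obstacle will be making the iteration step rigorous: the refined mapping is compact-valued wherever $\varphi$ is compact, but where $\varphi(x) = \Phi(x)$ it is only closed and bounded, so one cannot reapply Theorem \ref{theorem-res-pro-v1:4}\ref{item:shsa-vgg-rev:1} without identifying a fresh helper map. The plan there is to arrange each $f_n$ so that it stays uniformly close to $g$ on the set $\{x : \varphi(x) = \Phi(x)\}$, so that $g$ itself (or a controlled perturbation of it) continues to select the refined mapping on that set and serves as the next helper. This is precisely where the proximal continuity of $\Phi$ must enter in an essential way, forcing the compatibility between $g$ and the successive refinements needed to keep the iteration alive.
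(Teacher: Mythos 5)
Your first stage is essentially sound: convexifying $\Phi$, extracting a continuous selection $g$ for $\overline{\conv[\Phi]}$ from Theorem \ref{theorem-res-pro-v1:5}, and feeding $g$ into Theorem \ref{theorem-res-pro-v1:4}\ref{item:shsa-vgg-rev:1} does produce a continuous $\varepsilon$-selection for $\varphi$ for every $\varepsilon>0$ (this is in effect Corollary \ref{corollary-res-pro-v11:1}). One caveat there: proximal continuity means $d$-u.s.c.\ for \emph{some} metric $d$ equivalent to the norm, so your remark that ``$\varepsilon$-neighbourhoods of convex sets are convex'' only shows that $\overline{\conv[\Phi]}$ inherits upper semi-continuity with respect to the \emph{norm} metric; for an arbitrary equivalent metric the preservation is not automatic and needs an argument. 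The real problem, however, is the final stage. After the first stage you are exactly in the situation of Question \ref{question-res-pro-v1:1} (equivalently, Question \ref{question-shsa-v15:1}), and the iteration you want to run is exactly Question \ref{question-shsa-v14:1}: whether an l.s.c.\ mapping into $\mathscr{F}_\mathbf{c}(E)$ on a collectionwise normal domain that admits continuous $\varepsilon$-selections for every $\varepsilon>0$ must admit a continuous selection. The paper states this as open. The Cauchy iteration in Proposition \ref{proposition-shsa-vgg-rev:1} works only because hypothesis \ref{item:shsa-vgg-rev:3} there applies to \emph{every} l.s.c.\ convex-valued mapping, hence to each refinement $\overline{\varphi\wedge\mathbf{O}_{2^{-n}}[f_n]}$; your $\varepsilon$-selection property is instead tied to the single helper $g$, and at a point $x$ where $\varphi(x)=\Phi(x)$ is non-compact the refined value is merely closed, bounded and convex, with no reason for $g(x)$ to belong to it. The fix you sketch --- forcing each $f_n$ to stay uniformly close to $g$ on $\{x:\varphi(x)=\Phi(x)\}$ --- is not something Theorem \ref{theorem-res-pro-v1:4}\ref{item:shsa-vgg-rev:1} provides, and you supply no mechanism for achieving it; this is where the argument stops being a proof.

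For comparison, the proof the paper points to is structured entirely differently and avoids the iteration: proximal continuity of $\Phi$ together with collectionwise normality is used to show that $\varphi$ has the s.f.p.\ (this is the remark following the theorem, and the analogous verification is carried out in full for Theorem \ref{theorem-shsa-v18:1}); then Theorem \ref{theorem-res-pro-v8:1} yields an l.s.c.\ weak-factorisation of $\varphi$ through a metrizable, hence paracompact, space, and Proposition \ref{proposition-shsa-v17:1} finishes. In other words, proximal continuity is spent on a covering-refinement property of the domain rather than on manufacturing a helper map, which is why that route never has to confront the open Question \ref{question-shsa-v14:1} that your proposal reduces the theorem to.
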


Let us explicitly remark that the proof of Theorem
\ref{theorem-shsa-v18:2} is based on the fact that, in this case, the
mapping $\varphi:X\to \mathscr{F}_\mathbf{c}(E)$ has the s.f.p. This
idea is extended in the following similar result.

\begin{theorem}
  \label{theorem-shsa-v18:1}
  Let $X$ be a collectionwise normal space, $E$ be a Banach space,
  $\Phi : X\to \mathscr{F}_\mathbf{c}(E)$ be l.s.c.\ and
  $A\subset X$ be a closed subset such that $\Phi\uhr X\setminus A$
  is proximal continuous. Then each continuous selection $g:A\to E$
  for $\Phi\uhr A$ can be extended to a continuous selection for
  $\Phi$.
\end{theorem}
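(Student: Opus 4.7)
The plan is to reduce the extension problem to a single selection problem for the hybrid mapping
\[
  \varphi(x) =
  \begin{cases}
    \{g(x)\}, & x\in A,\\
    \Phi(x), & x\in X\setminus A,
  \end{cases}
\]
with values in $\mathscr{F}_\mathbf{c}(E)$. Any continuous selection $f:X\to E$ for $\varphi$ automatically satisfies $f\uhr A=g$ (forced by the singleton values on $A$) and $f(x)\in\Phi(x)$ on $X\setminus A$, which is exactly the extension we seek. First I would check that $\varphi$ is l.s.c.\ and convex-valued. The only delicate point is l.s.c.\ at $p\in A$: given an open $U\ni g(p)$, continuity of $g$ on $A$ supplies an $X$-open $W$ with $g(W\cap A)\subset U$, while $g(p)\in\Phi(p)\cap U$ together with l.s.c.\ of $\Phi$ yields an $X$-open $V\ni p$ with $\Phi(x)\cap U\ne\emptyset$ on all of $V$; the intersection $W\cap V$ witnesses l.s.c.\ of $\varphi$ at $p$. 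At points of $X\setminus A$, $\varphi$ coincides with $\Phi$ on a neighbourhood, so l.s.c.\ is inherited.

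Next I would observe that $\varphi$ is an l.s.c.\ set-valued selection for $\Phi$ whose values coincide with those of $\Phi$ except on (a subset of) $A$, where they are singletons and hence compact. This is precisely the framework of Theorem~\ref{theorem-shsa-v18:2}, the only discrepancy being that our $\Phi$ is proximal continuous only on $X\setminus A$ rather than on all of $X$. The conceptual key is that in Theorem~\ref{theorem-shsa-v18:2} proximal continuity of the outer mapping is used only on the locus $\{x:\varphi(x)=\Phi(x)\}$, where it supplies the Selection Factorisation Property of $\varphi$ via Example~\ref{example-shsa-vgg:1}; on the complementary locus, where $\varphi$ is compact-valued, the s.f.p.\ comes from Example~\ref{example-res-pro-v8:1} using collectionwise normality of $X$. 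In our situation these two loci are exactly $X\setminus A$ (where proximal continuity holds by hypothesis) and a subset of $A$ (where $\varphi$ is singleton-valued and the closed subspace $A$ inherits collectionwise normality from $X$). Running the same argument for $\varphi$ yields its s.f.p.; then Theorem~\ref{theorem-res-pro-v8:1} produces an l.s.c.\ weak-factorisation of $\varphi$, and Proposition~\ref{proposition-shsa-v17:1} extracts the desired continuous selection $f:X\to E$.

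The main obstacle is the verification of s.f.p.\ for $\varphi$ across the boundary of $A$. Given a closed $F\subset X$ and a locally finite open collection $\mathscr{U}$ in $E$ with $\varphi^{-1}[\mathscr{U}]$ covering $F$, one obtains a locally finite cozero refinement on $F\cap(X\setminus A)$ from the $\rho$-l.s.f.\ behaviour of $\Phi\uhr X\setminus A$, and another on $F\cap A$ from the compact-valued collectionwise-normal regime; these have to be patched into a single locally finite cozero refinement of $F$. Here the global l.s.c.\ of $\varphi$ is essential, because it guarantees that any $U\in\mathscr{U}$ meeting $\{g(p)\}$ for $p\in A$ also meets $\Phi(x)$ for $x\in X\setminus A$ close to $p$, so the two refinements can be chosen compatibly near $\partial A$. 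Normality of $X$ together with closedness of $A$ then enables the actual gluing by means of an appropriate cozero separation of $A$ from the tail of the outer-region refinement---this transition step is the only technically delicate part of the argument.
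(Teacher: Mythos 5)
Your overall strategy coincides with the paper's: pass to the hybrid mapping $\Phi_g$ (your $\varphi$), show it has the s.f.p., and then conclude via Theorem~\ref{theorem-res-pro-v8:1} and Proposition~\ref{proposition-shsa-v17:1}. The lower semi-continuity of $\Phi_g$ is indeed routine (it is Proposition~\ref{proposition-shsa-v28:1}). But the step you yourself flag as ``the only technically delicate part'' --- patching the two partial refinements into a single locally finite open refinement over $F$ --- is exactly where the real content of the proof lies, and the mechanism you sketch for it does not work as stated. The refinement you propose to obtain on $F\cap A$ from the ``compact-valued collectionwise-normal regime'' (Example~\ref{example-res-pro-v8:1} applied to $\varphi\uhr A$) consists of sets open only in $A$, not in $X$; and the refinement obtained on $F\cap(X\setminus A)$ is locally finite only in $X\setminus A$, so its elements may accumulate at points of $F\cap A$. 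Global lower semi-continuity of $\varphi$ tells you that each $\Phi_g^{-1}[U]$ is open in $X$ and reaches into $X\setminus A$ near points of $A$, but the family $\{\Phi_g^{-1}[U]:U\in\mathscr{U}\}$ need not be locally finite even though $\mathscr{U}$ is, so this alone does not produce an $X$-open locally finite family covering $F\cap A$, and without such a family there is no closed set disjoint from $A$ from which to launch your ``cozero separation of $A$ from the tail''.

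The missing device is Dowker's extension theorem: since $X$ is collectionwise normal, extend $g$ to a continuous $h:X\to E$ and form $\mathscr{V}_0=\{\Phi_g^{-1}[U]\cap h^{-1}(U):U\in\mathscr{U}\}$. This family is open and locally finite in $X$ (local finiteness is inherited from $\{h^{-1}(U)\}$), refines $\Phi_g^{-1}[\mathscr{U}]$, and covers $F\cap A$ because $h\uhr A=g$. Only then is the residual set $F_1=F\setminus\bigcup\mathscr{V}_0$ a closed subset of $X$ contained in $X\setminus A$, so that the proximal continuity of $\Phi\uhr X\setminus A$ (via \cite[Theorem 3.1]{gutev:00a}) yields a $\sigma$-discrete cozero refinement $\mathscr{W}$ of $\varphi^{-1}[\mathscr{U}]$ with $F_1\subset\bigcup\mathscr{W}$, normality of $X$ interposes an open $G$ with $F_1\subset G\subset\overline{G}\subset\bigcup\mathscr{W}$, and Morita's theorems convert this into a family $\mathscr{V}_1$ locally finite in $X$ (its members being contained in $G$, with $\overline{G}\subset X\setminus A$). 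Without the extension $h$ and the peeling-off of $F_1$ away from $A$, the gluing you describe cannot be carried out, so as written the argument has a genuine gap at its central step.
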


\begin{proof}
  Let $g:A\to E$ be a continuous selection for $\Phi\uhr A$. Define
  ${\Phi_g:X\to \mathscr{F}_\mathbf{c}(E)}$ as in Example
  \ref{example-shsa-v17:1}, namely $\Phi_g(x)=\{g(x)\}$ if $x\in A$
  and $\Phi_g(x)=\Phi(x)$ otherwise. The proof consists of showing
  that $\Phi_g$ has the s.f.p. To this end, take a closed set
  $F\subset X$, and a locally finite family $\mathscr{U}$ of open
  subsets of $E$ with ${F\subset
    \bigcup\Phi_g^{-1}[\mathscr{U}]}$. Since $X$ is collectionwise
  normal, by Theorem \ref{theorem-res-pro-v1:1}, $g$ can be extended
  to a continuous map $h:X\to E$. Then the family
  $\mathscr{V}_0=\{\Phi^{-1}_g[U]\cap h^{-1}(U): U\in \mathscr{U}\}$
  is open and locally finite in $X$, and refines
  $\Phi_g^{-1}[\mathscr{U}]$. Moreover,
  $F_0=F\cap A\subset \bigcup\mathscr{V}_0$ because $h\uhr A=g$. Set
  $F_1=F\setminus \bigcup\mathscr{V}_0$ and
  $\varphi=\Phi_g\uhr X\setminus A$, so that
  $F_1\subset \bigcup\varphi^{-1}[\mathscr{U}]$. Since
  $\varphi=\Phi\uhr X\setminus A$ is proximal continuous, by
  \cite[Theorem 3.1]{gutev:00a}, $\varphi^{-1}[\mathscr{U}]$ is
  refined by a $\sigma$-discrete (in $X\setminus A$) family
  $\mathscr{W}$ of cozero-sets of $X\setminus A$ such that
  $\bigcup\mathscr{W}=\bigcup\varphi^{-1}[\mathscr{U}]$. Since $X$ is
  normal, there exists an open set $G\subset X$ with
  $F_1\subset G\subset \overline{G}\subset
  \bigcup\varphi^{-1}[\mathscr{U}]=\bigcup\mathscr{W}$. Finally, using
  \cite[Theorem 1.2]{morita:60} and \cite[Theorem 1.2]{morita:64},
  take an open and locally finite (in $X\setminus A$) cover
  $\mathscr{W}_1$ of $\overline{G}$, which refines $\mathscr{W}$. Then
  the family $\mathscr{V}_1=\{W\cap G: W\in \mathscr{W}_1\}$ is open
  and locally finite in $X$. Moreover, it refines
  $\Phi_g^{-1}[\mathscr{U}]$ and covers $F_1$ because
  \[
    F_1\subset G=\bigcup \mathscr{V}_1\subset \bigcup
    \mathscr{W}=\bigcup\varphi^{-1}[\mathscr{U}]\subset
    \bigcup\Phi_g^{-1}[\mathscr{U}].
  \]
  Accordingly, $\mathscr{V}=\mathscr{V}_0\cup \mathscr{V}_1$ is a
  locally finite family of open subset of $X$, which refines
  $\Phi_g^{-1}[\mathscr{U}]$ and covers $F$. Hence, $\Phi_g$ has the
  s.f.p.\ and by Proposition \ref{proposition-shsa-v17:1} and Theorem
  \ref{theorem-res-pro-v8:1}, it also has a continuous
  selection. Thus, $g$ can be extended to a continuous selection for
  $\Phi$.
\end{proof}

Theorems \ref{theorem-shsa-v18:2} and \ref{theorem-shsa-v18:1} are
naturally related to $\mathscr{C}'(E)$-valued mappings. Turning to
this, to each mapping ${\Phi:X\to \mathscr{F}(E)}$ we will associate
the sets
\begin{equation}
  \label{eq:res-pro-v7:1}
  \mathbf{C}_\Phi=\left\{x\in X: \Phi(x)\in
  \mathscr{C}(E)\right\}\quad\text{and}\quad
  \mathbf{D}_\Phi=X\setminus \mathbf{C}_\Phi.
\end{equation}

Here are some properties of these sets in the setting of 
$\mathscr{C}'(E)$-valued l.s.c.\ mappings.

\begin{proposition}
  \label{proposition-res-pro-v7:1}
  Let $E$ be a metrizable space and $\Phi:X\to \mathscr{C}'(E)$ be an
  l.s.c.\ mapping. Then
  \begin{enumerate}[label=\upshape{(\roman*)}]
  \item\label{item:5} $\Phi$ is continuous at each point
    $x\in \mathbf{D}_\Phi$\textup{;}
  \item\label{item:6} $\mathbf{C}_\Phi$ is an
    $F_\sigma$-set\textup{;}
  \item\label{item:7} If\/ $\Phi[\mathbf{C}_\Phi]=\bigcup_{x\in
      \mathbf{C}_\Phi}\Phi(x)$ is not dense in $E$, then
    $\mathbf{C}_\Phi$ is closed in $X$.
  \end{enumerate}
\end{proposition}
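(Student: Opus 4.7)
The three claims split by difficulty: (i) and (iii) fall out quickly from the $\mathscr{C}'(E)$-valued structure combined with lower semi-continuity, while (ii) is the substantive part and requires a remetrization of $E$. For (i), fix $x \in \mathbf{D}_\Phi$, so $\Phi(x) = E$. Upper semi-continuity at $x$ is automatic: the only open $U \subset E$ containing $\Phi(x) = E$ is $U = E$ itself, and $\Phi^\#[E] = X$ is an open neighbourhood of $x$. Combined with the given l.s.c., this yields continuity of $\Phi$ at $x$.

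For (iii), set $U = E \setminus \overline{\Phi[\mathbf{C}_\Phi]}$, a nonempty open subset of $E$ under the density hypothesis. For $x \in \mathbf{C}_\Phi$ one has $\Phi(x) \cap U = \emptyset$, while for $x \in \mathbf{D}_\Phi$ one has $U \subset E = \Phi(x)$, so $\Phi(x) \cap U \neq \emptyset$. Consequently $\mathbf{D}_\Phi = \Phi^{-1}[U]$, which is open by l.s.c., so $\mathbf{C}_\Phi$ is closed.

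The real work is in (ii). If $E$ is compact then $E \in \mathscr{C}(E)$, so $\mathscr{C}'(E) = \mathscr{C}(E)$ and $\mathbf{C}_\Phi = X$ is trivially $F_\sigma$; assume therefore that $E$ is non-compact. A non-compact metrizable space is not pseudocompact --- any sequence without a convergent subsequence yields a countably infinite closed discrete subset, on which an unbounded real function extends continuously by Tietze --- so there is an unbounded continuous $f : E \to \R$. Picking any compatible metric $d_0$, the formula $d(x,y) = d_0(x,y) + |f(x) - f(y)|$ defines a compatible metric on $E$ in which $E$ itself is $d$-unbounded. Fix $p \in E$ and set $C_n = \{x \in X : \Phi(x) \subset \overline{B_d(p, n)}\}$. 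The complement $X \setminus C_n = \Phi^{-1}[E \setminus \overline{B_d(p, n)}]$ is open by l.s.c., so each $C_n$ is closed. I would then verify $\mathbf{C}_\Phi = \bigcup_n C_n$: if $\Phi(x)$ is compact, it is $d$-bounded and lies in some $\overline{B_d(p, n)}$; if $\Phi(x) = E$, then $\Phi(x)$ is $d$-unbounded by construction, so $x \notin C_n$ for any $n$. This exhibits $\mathbf{C}_\Phi$ as a countable union of closed sets. The main obstacle I anticipate is precisely this remetrization step: the distinction between ``$\Phi(x)$ compact'' and ``$\Phi(x) = E$'' must be made visible via a countable family of preimages of open sets, which is exactly what an unbounded compatible metric provides.
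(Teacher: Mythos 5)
Your proof is correct. Parts \textup{(i)} and \textup{(iii)} coincide with the paper's own argument: the observation that $E$ is the only open set containing the value $E$, together with $\Phi^{\#}[E]=X$, gives upper semi-continuity at each point of $\mathbf{D}_\Phi$; and the identity $\mathbf{D}_\Phi=\Phi^{-1}\bigl[E\setminus\overline{\Phi[\mathbf{C}_\Phi]}\bigr]$ gives \textup{(iii)}. For \textup{(ii)} you take a genuinely different route. The paper likewise first disposes of the compact case, but then chooses a countable locally finite open cover $\mathscr{V}$ of $E$ with no finite subcover and writes $\mathbf{D}_\Phi=\bigcap\Phi^{-1}[\mathscr{V}]$ as a $G_\delta$-set: a compact value meets only finitely many members of the locally finite family $\mathscr{V}$, hence misses at least one, whereas the value $E$ meets them all. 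You instead remetrize $E$ by a compatible metric $d$ in which $E$ is unbounded and exhibit $\mathbf{C}_\Phi=\bigcup_n C_n$ directly as an $F_\sigma$-set, with $C_n=\{x\in X:\Phi(x)\subset \overline{B}_d(p,n)\}$ closed because its complement is $\Phi^{-1}\bigl[E\setminus\overline{B}_d(p,n)\bigr]$. Both proofs hinge on producing a countable family of open subsets of $E$ that every compact set eventually avoids but that $E$ never does; the paper extracts this from local finiteness, you from boundedness. The two auxiliary facts are essentially the same fact in disguise --- each is manufactured from an unbounded continuous real-valued function on the non-compact metrizable space $E$, exactly as in your Tietze argument --- so neither route is more economical, but yours has the small aesthetic advantage of describing $\mathbf{C}_\Phi$ itself as a countable union of closed sets rather than its complement as a countable intersection of open ones.
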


\begin{proof}
  If $p\in \mathbf{D}_\Phi$, then $\Phi^{\#}[\Phi(p)]=\Phi^{\#}[E]=X$
  is open. This shows \ref{item:5}. If $E$ is compact, then
  $\mathbf{D}_\Phi=\emptyset$. If not, take a countable locally finite
  open cover $\mathscr{V}$ of $E$, which has no finite subcover. Then
  $\mathbf{D}_\Phi=\bigcap\Phi^{-1}[\mathscr{V}]$, which shows
  \ref{item:6}. To show finally \ref{item:7}, suppose that
  $U\subset E$ is a nonempty open set with $U\cap \Phi(x)=\emptyset$,
  for every $x\in \mathbf{C}_\Phi$. Then
  $\mathbf{D}_\Phi= \Phi^{-1}[U]$ is open, hence $\mathbf{C}_\Phi$ is
  closed.
\end{proof}

Based on this proposition and Theorems \ref{theorem-shsa-v18:2} and
\ref{theorem-shsa-v18:1}, see also Question
\ref{question-res-pro-v1:1}, we have the following question relating
the set $\mathbf{C}_\Phi$ to the selection problem for
collectionwise normal spaces.

\begin{question}
  \label{question-res-pro-v1:2}
  Let $X$ be a collectionwise normal space, $E$ be a Banach space and
  $\Phi:X\to \mathscr{F}_\mathbf{c}(E)$ be an l.s.c.\ mapping, which
  is continuous at each point $p\in X$ with
  $\Phi(p)\notin \mathscr{C}(E)$. Does $\Phi$ have a continuous
  selection?
\end{question}

Let $X$ be a space, $E$ be a Banach space,
$\Phi:X\to \mathscr{F}_\mathbf{c}(E)$ be an l.s.c.\ mapping and
$\mathbf{D}_\Phi$ be as in \eqref{eq:res-pro-v7:1}. If
$\Phi\uhr \mathbf{D}_\Phi$ is continuous, then it has a continuous
selection $g:\mathbf{D}_\Phi\to E$ \cite[Theorem 6.1]{gutev:00a}, see
also Theorem \ref{theorem-res-pro-v1:5}. Since $E$ is a Banach space
(hence, a complete metric space), $g$ can be extended to a continuous
map on some $G_\delta$-subset of $X$ containing $\mathbf{D}_\Phi$, see
\cite[Theorem 4.3.21]{engelking:89}. Thus, according to Proposition
\ref{proposition-res-pro-v11:1}, we get the following consequence.

\begin{corollary}
  \label{corollary-res-pro-v3:1}
  Let $X$ be a space, $E$ be a Banach space and
  $\Phi:X\to \mathscr{F}_\mathbf{c}(E)$ be an l.s.c.\ mapping such
  that $\Phi\uhr \mathbf{D}_\Phi$ is continuous. Then $\Phi\uhr H$ has
  a continuous selection for some $G_\delta$-subset $H\subset X$ with
  $\mathbf{D}_\Phi\subset H$.
\end{corollary}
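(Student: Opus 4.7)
The plan is to follow the two-step strategy outlined in the paragraph preceding the statement. First, I produce a continuous selection $g:\mathbf{D}_\Phi\to E$ for the restriction $\Phi\uhr \mathbf{D}_\Phi$. By hypothesis this restriction is continuous, hence proximal continuous on $\mathbf{D}_\Phi$, so Theorem \ref{theorem-res-pro-v1:5} applied with $\mathbf{D}_\Phi$ in place of $X$ and $A=\emptyset$ (where the extension condition is vacuous) delivers the required $g$; alternatively, one may chain Example \ref{example-shsa-vgg:1}, Theorem \ref{theorem-res-pro-v8:2}, and Proposition \ref{proposition-shsa-v17:1} to the same effect.

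Second, I extend $g$ to a continuous map $\tilde{g}:Z\to E$ on some $G_\delta$-subset $Z\subset X$ containing $\mathbf{D}_\Phi$. This is a Lavrentiev-type extension, available because $E$ is a complete metric space; the precise reference is \cite[Theorem 4.3.21]{engelking:89}, used exactly as in the corresponding step of the proof of Proposition \ref{proposition-res-pro-v11:2}. This extension step is the only nontrivial external ingredient, and also the one place completeness of $E$ enters.

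Finally, I apply Proposition \ref{proposition-res-pro-v11:1} to the l.s.c.\ restriction $\Phi\uhr Z$ and the continuous map $\tilde{g}$: the set
\[
  H=\{x\in Z:\tilde{g}(x)\in \Phi(x)\}
\]
is a $G_\delta$-subset of $Z$. Since $\tilde{g}\uhr \mathbf{D}_\Phi=g$ is a selection for $\Phi\uhr \mathbf{D}_\Phi$, we have $\mathbf{D}_\Phi\subset H$, and $\tilde{g}\uhr H$ is a continuous selection for $\Phi\uhr H$ by construction. To finish, I note that $H$ is $G_\delta$ in $X$: writing $Z=\bigcap_m W_m$ with each $W_m$ open in $X$, and $H=\bigcap_n (V_n\cap Z)$ with each $V_n$ open in $X$, one obtains $H=\bigcap_m W_m\cap\bigcap_n V_n$, a countable intersection of open subsets of $X$.
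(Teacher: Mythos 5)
Your proposal is correct and follows essentially the same route as the paper: obtain a continuous selection for the (proximal) continuous restriction $\Phi\uhr \mathbf{D}_\Phi$, extend it over a $G_\delta$-set via the Lavrentiev-type theorem \cite[Theorem 4.3.21]{engelking:89}, and apply Proposition \ref{proposition-res-pro-v11:1} to cut down to the $G_\delta$-set $H$. The only difference is cosmetic — the paper cites \cite[Theorem 6.1]{gutev:00a} directly for the first step where you invoke Theorem \ref{theorem-res-pro-v1:5} with $A=\emptyset$, which is an equally valid reading of the same result.
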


On the other hand, collectionwise normality is hereditary with respect
to $F_\sigma$-sets \cite[Theorem 1.3]{MR0112115}. So, complementary to
Corollary \ref{corollary-res-pro-v3:1} is the following consequence of
Theorems \ref{theorem-res-pro-v1:2} (see also Theorem
\ref{theorem-res-pro-v1:3}) for the set
$\mathbf{C}_\Phi=X\setminus \mathbf{D}_\Phi$.

\begin{corollary}
  \label{corollary-shsa-vgg-rev:1} Let $X$ be a collectionwise normal
  space, $E$ be a normed space and
  ${\Phi:X\to \mathscr{F}_\mathbf{c}(E)}$ be an l.s.c.\ mapping. Then
  $\Phi\uhr Z$ has a continuous selection, for every $F_\sigma$-set
  $Z\subset X$ with $Z\subset\mathbf{C}_\Phi$.
\end{corollary}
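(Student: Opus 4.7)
The plan is to combine the hereditary property of collectionwise normality with Theorem \ref{theorem-res-pro-v1:3}, which is the selection theorem for PF-normal spaces and $\mathscr{C}_\mathbf{c}(E)$-valued l.s.c.\ mappings (valid for arbitrary normed spaces $E$, as required here).

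First, I would verify that the restriction $\Phi\uhr Z\colon Z\sto E$ satisfies the hypotheses of Theorem \ref{theorem-res-pro-v1:3}. Lower semi-continuity is preserved under restriction, so $\Phi\uhr Z$ is l.s.c. Since $Z\subset \mathbf{C}_\Phi$, the value $\Phi(x)$ is compact for every $x\in Z$, so $\Phi\uhr Z$ takes its values in $\mathscr{C}_\mathbf{c}(E)$.

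Next, I would upgrade $Z$ to a PF-normal space. By the hereditary property of collectionwise normality with respect to $F_\sigma$-sets, cited in the excerpt as \cite[Theorem 1.3]{MR0112115}, the $F_\sigma$-subset $Z$ of the collectionwise normal space $X$ is itself collectionwise normal. Collectionwise normal spaces are PF-normal, as noted in the excerpt (following \cite{michael:55}), so $Z$ is PF-normal.

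Finally, Theorem \ref{theorem-res-pro-v1:3} applied to the PF-normal space $Z$, the normed space $E$, and the l.s.c.\ mapping $\Phi\uhr Z\colon Z\to \mathscr{C}_\mathbf{c}(E)$ yields a continuous selection, which is what we want. There is no real obstacle here; the proof is essentially a chain of citations, and the only point worth verifying carefully is that all hypotheses are genuinely satisfied (in particular, that we do not need $E$ to be a Banach space, which is why Theorem \ref{theorem-res-pro-v1:3} rather than Theorem \ref{theorem-res-pro-v1:2} is the right tool).
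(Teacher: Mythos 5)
Your proof is correct and follows essentially the same route as the paper, which derives the corollary from the hereditariness of collectionwise normality with respect to $F_\sigma$-sets \cite[Theorem 1.3]{MR0112115} together with Theorem \ref{theorem-res-pro-v1:2} ``(see also Theorem \ref{theorem-res-pro-v1:3})''. Your observation that Theorem \ref{theorem-res-pro-v1:3} is the tool actually needed here, since $E$ is only assumed to be a normed space, is exactly the right reading of that parenthetical.
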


Corollaries \ref{corollary-res-pro-v3:1} and
\ref{corollary-shsa-vgg-rev:1} give a good illustration of Question
\ref{question-res-pro-v1:2} showing that one can construct two partial
continuous selections for $\Phi$ on complementary subsets of the
domain. Hence, the question is if one can use these partial
selections, or other information, to construct a continuous selection
for the mapping $\Phi$ itself. However, it is not so likely that any
one of these partial selections can be extended to a continuous
selection for $\Phi$.

\begin{proposition}
  \label{proposition-res-pro-v1:4}
  Let $\Phi:X\to \mathscr{F}(E)$ be an l.s.c.\ mapping, $g:X\to E$ be
  a continuous map and $A=\left\{x\in X: g(x)\in \Phi(x)\right\}$. If
  $g\uhr A$ can be extended to a continuous selection $f:X\to E$ for
  $\Phi$, then $A$ must be closed.
\end{proposition}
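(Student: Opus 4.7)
The plan is to use the standard fact that two continuous maps into a Hausdorff space agree on a closed set. Fix a continuous selection $f\colon X\to E$ for $\Phi$ with $f\uhr A=g\uhr A$, and introduce the coincidence set $B=\{x\in X:f(x)=g(x)\}$. By hypothesis $A\subset B$, and since $E$ is Hausdorff (being a metrizable space, in accordance with the standing convention of Section~2) and $f,g\colon X\to E$ are continuous, $B$ is closed in $X$. Hence $\overline{A}\subset B$.

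To conclude, I would take any $x\in\overline{A}$ and observe two things in turn: first, $g(x)=f(x)$, because $x\in\overline{A}\subset B$; and second, $f(x)\in\Phi(x)$, because $f$ is a selection for $\Phi$. Therefore $g(x)\in\Phi(x)$, that is, $x\in A$. This shows $\overline{A}\subset A$, so $A$ is closed, as required.

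I expect no genuine obstacle in this argument. It uses only the Hausdorff separation of $E$ together with the continuity of $f$ and $g$; neither the lower semi-continuity of $\Phi$ nor the linear or Banach structure of $E$ plays any role. This is in fact the point of the proposition: the conclusion that $A$ is closed is forced purely by the existence of a continuous selection $f$ extending $g\uhr A$, independently of any topological regularity of the set-valued mapping $\Phi$.
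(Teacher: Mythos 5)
Your argument is correct and is essentially the paper's own proof: the paper simply observes that $A=\{x\in X: g(x)=f(x)\}$, i.e.\ that $A$ coincides with the (closed) coincidence set of $f$ and $g$, which is exactly the two inclusions you verify. Nothing further is needed.
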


\begin{proof}
  Follows from the fact that, in this case, $A=\{x\in X: g(x)=f(x)\}$.
\end{proof}

\subsection{Densely Compact-Valued Mappings}

For a mapping $\Phi:X\sto E$ and a subset $A\subset X$, let
$\Phi[A]=\bigcup_{x\in A}\Phi(x)$ (see Proposition
\ref{proposition-res-pro-v7:1}). 

\begin{proposition}
  \label{proposition-res-pro-v4:1}
  Let $\Phi:X\sto E$ be an l.s.c.\ mapping and $A\subset X$. Then
  the set
  $Z=\left\{x\in X: \Phi(x)\subset \overline{\Phi[A]}\right\}$
  is closed in $X$. In particular, $\Phi[A]$ is dense $\Phi[X]$
  whenever $A$ is dense in $X$. 
\end{proposition}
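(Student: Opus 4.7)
The plan is to show directly that the complement $X\setminus Z$ is open, using the definition of lower semi-continuity, and then to deduce the ``in particular'' statement from the density of $A$.

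To see that $X\setminus Z$ is open, fix $x_0\in X\setminus Z$. Then $\Phi(x_0)\not\subset \overline{\Phi[A]}$, so there exists $y_0\in \Phi(x_0)$ with $y_0\notin \overline{\Phi[A]}$. The natural open set to work with is $U=E\setminus \overline{\Phi[A]}$: it contains $y_0$ and, being the complement of the closure, satisfies $U\cap \overline{\Phi[A]}=\emptyset$. Since $\Phi$ is l.s.c.\ and $\Phi(x_0)\cap U\ni y_0$, the set $\Phi^{-1}[U]$ is an open neighbourhood of $x_0$. For any $x\in \Phi^{-1}[U]$ one can pick $y\in \Phi(x)\cap U$; then $y\in \Phi(x)$ and $y\notin \overline{\Phi[A]}$, whence $\Phi(x)\not\subset \overline{\Phi[A]}$, i.e.\ $x\in X\setminus Z$. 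Thus $\Phi^{-1}[U]\subset X\setminus Z$, and since $x_0$ was arbitrary, $X\setminus Z$ is open and $Z$ is closed.

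For the ``in particular'' clause, observe that $A\subset Z$ because for $x\in A$ one has $\Phi(x)\subset \Phi[A]\subset \overline{\Phi[A]}$. Hence $Z$ is a closed subset of $X$ containing the dense set $A$, which forces $Z=X$. This means $\Phi(x)\subset \overline{\Phi[A]}$ for every $x\in X$, so $\Phi[X]\subset \overline{\Phi[A]}$; equivalently, $\Phi[A]$ is dense in $\Phi[X]$.

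There is no real obstacle here: the only subtlety is the choice of the separating open set, and taking $U=E\setminus\overline{\Phi[A]}$ (rather than just some open neighbourhood of $y_0$ missing $\Phi[A]$) streamlines the argument because it automatically excludes points of the closure.
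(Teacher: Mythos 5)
Your argument is correct and is essentially the paper's proof: the paper simply notes that $X\setminus Z=\Phi^{-1}\left[E\setminus\overline{\Phi[A]}\right]$ is open by lower semi-continuity, which is exactly the identity your pointwise argument establishes. The deduction of the ``in particular'' clause from $A\subset Z$ and the closedness of $Z$ is likewise the intended one.
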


\begin{proof}
  Simply observe that
  $X\setminus
  Z=\Phi^{-1}\left[E\setminus\overline{\Phi[A]}\right]$ is open.
\end{proof}

This brings the following refined version of Question
\ref{question-res-pro-v1:2}.

\begin{question}
  \label{question-res-pro-v1:3}
  Under the conditions of Question \ref{question-res-pro-v1:2},
  suppose further that each nonempty open set $U\subset X$ contains a
  point $p\in U$ with $\Phi(p)\in \mathscr{C}(E)$, i.e.\ that
  $\mathbf{C}_\Phi$ is dense in $X$, see \eqref{eq:res-pro-v7:1}. Does
  $\Phi$ have a continuous selection?
\end{question}

Let us remark that if the answer to Question
\ref{question-res-pro-v1:3} is ``Yes'', then by Theorem
\ref{theorem-shsa-v18:1}, so is the answer to Question
\ref{question-res-pro-v1:2}. Finally, here is a bit more general question. 

\begin{question}
  \label{question-res-pro-v1:4}
  Let $X$ be a collectionwise normal space, $E$ be a Banach space and
  $\Phi:X\to \mathscr{F}_\mathbf{c}(E)$ be an l.s.c.\ mapping such
  that each nonempty open set $U\subset X$ contains a point $p\in U$
  with $\Phi(p)\in \mathscr{C}(E)$. Does $\Phi$ have a continuous
  selection?
\end{question}

\subsection{Extending Selections}

The following simple observation, obtained in \cite[Example
1.3*]{michael:56a}, shows that the selection-extension problem for
l.s.c.\ mappings is equivalent to the selection problem for these
mappings.

\begin{proposition}[\cite{michael:56a}]
  \label{proposition-shsa-v28:1}
  Let $\Phi:X\sto Y$ be l.s.c., $A\subset X$ be closed and $g:A\to Y$
  be a continuous selection for $\Phi\uhr A$. Define $\Phi_g:X\sto Y$
  by $\Phi_g(x)=\{g(x)\}$ if $x\in A$, and $\Phi_g(x)=\Phi(x)$
  otherwise.  Then the mapping $\Phi_g$ also l.s.c.
\end{proposition}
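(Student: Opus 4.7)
The plan is to verify the definition of l.s.c.\ directly: fix an open set $U\subset Y$ and show that $\Phi_g^{-1}[U]$ is open in $X$ by producing, for each $x\in\Phi_g^{-1}[U]$, an open neighbourhood of $x$ contained in $\Phi_g^{-1}[U]$. The key identity to keep in mind is
\[
 \Phi_g^{-1}[U]=\bigl((X\setminus A)\cap\Phi^{-1}[U]\bigr)\cup\{x\in A:g(x)\in U\},
\]
which follows at once from the definition of $\Phi_g$.

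First I handle the easy case $x\in\Phi_g^{-1}[U]\setminus A$. Here $\Phi_g(x)=\Phi(x)$, so $x\in\Phi^{-1}[U]$. Since $A$ is closed, $X\setminus A$ is open, and since $\Phi$ is l.s.c., $\Phi^{-1}[U]$ is open; their intersection is an open neighbourhood of $x$ and is visibly contained in $\Phi_g^{-1}[U]$.

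The delicate case is $x\in\Phi_g^{-1}[U]\cap A$, i.e.\ $g(x)\in U$. Crucially, because $g$ is a selection for $\Phi\uhr A$, we have $g(x)\in\Phi(x)\cap U$, so $x\in\Phi^{-1}[U]$ (which is open by l.s.c.\ of $\Phi$). Continuity of $g:A\to Y$ yields an open set $V\subset X$ with $x\in V$ and $g(V\cap A)\subset U$. Set $W=V\cap\Phi^{-1}[U]$; then $W$ is an open neighbourhood of $x$. For any $y\in W$: if $y\in A$, then $\Phi_g(y)=\{g(y)\}\subset U$, so $y\in\Phi_g^{-1}[U]$; if $y\notin A$, then $\Phi_g(y)=\Phi(y)$ meets $U$ because $y\in\Phi^{-1}[U]$, so again $y\in\Phi_g^{-1}[U]$. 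Thus $W\subset\Phi_g^{-1}[U]$, completing the proof.

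The main (and essentially only) obstacle is bridging across the boundary of $A$: a continuity-of-$g$ neighbourhood $V$ controls $\Phi_g$ only on $V\cap A$ and tells us nothing about $V\setminus A$. The fix is to shrink $V$ by $\Phi^{-1}[U]$, which is legitimate precisely because $g(x)\in\Phi(x)$ places $x$ in the open set $\Phi^{-1}[U]$; this is where the selection hypothesis on $g$ enters.
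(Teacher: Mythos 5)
Your proof is correct and is the standard direct verification of lower semi-continuity for $\Phi_g$ (the paper itself gives no proof, deferring to Michael's Example 1.3*, which argues in essentially this way). The key points --- using $g(x)\in\Phi(x)$ to place $x\in A$ inside the open set $\Phi^{-1}[U]$ and then intersecting with a continuity neighbourhood of $g$ --- are exactly right, and you correctly isolate the only place where closedness of $A$ is needed.
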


This implies the following interpretation of Theorem
\ref{theorem-res-pro-v9:1}.

\begin{theorem}
  \label{theorem-res-pro-v5:1}
  Let $X$ be paracompact, $E$ be a Banach space,
  ${\Phi:X\to \mathscr{F}_\mathbf{c}(E)}$ be an l.s.c.\ mapping and
  $\Omega_\Phi$ be the collection of all continuous selections for
  $\Phi$. Then $\Phi(x)=\left\{f(x): f\in \Omega_\Phi\right\}$, for
  every $x\in X$.
\end{theorem}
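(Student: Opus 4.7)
The plan is to prove two inclusions. The inclusion $\{f(x):f\in\Omega_\Phi\}\subset\Phi(x)$ is trivial from the definition of a selection (and is non-vacuous because Theorem \ref{theorem-res-pro-v9:1} guarantees $\Omega_\Phi\neq\emptyset$). The content lies in the reverse inclusion: given $x_0\in X$ and $y_0\in\Phi(x_0)$, produce a continuous selection $f$ for $\Phi$ with $f(x_0)=y_0$.

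The natural approach is to force the value $y_0$ at $x_0$ by modifying $\Phi$ at this single point and then appealing to the paracompact selection theorem. More precisely, set $A=\{x_0\}$, which is closed since $X$ is Hausdorff, and let $g:A\to E$ be the (trivially continuous) map $g(x_0)=y_0$. Since $y_0\in\Phi(x_0)$, the map $g$ is a continuous selection for $\Phi\uhr A$, so Proposition \ref{proposition-shsa-v28:1} applies and yields that the mapping $\Phi_g:X\sto E$ defined by $\Phi_g(x_0)=\{y_0\}$ and $\Phi_g(x)=\Phi(x)$ for $x\neq x_0$ is l.s.c.

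Next I would observe that $\Phi_g$ is in fact $\mathscr{F}_\mathbf{c}(E)$-valued: the singleton $\{y_0\}$ is closed and convex, and all other values are unchanged from $\Phi$. Since $X$ is paracompact and $E$ is a Banach space, Theorem \ref{theorem-res-pro-v9:1} produces a continuous selection $f:X\to E$ for $\Phi_g$. Then $f(x_0)\in\Phi_g(x_0)=\{y_0\}$ forces $f(x_0)=y_0$, while for $x\neq x_0$ we have $f(x)\in\Phi_g(x)=\Phi(x)$; thus $f\in\Omega_\Phi$ realizes the prescribed value, finishing the proof.

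There is no real obstacle here, since the machinery developed earlier in the section does all the work; the only point worth stressing is that Proposition \ref{proposition-shsa-v28:1} requires $A$ to be closed, which is why Hausdorffness (standing convention of the paper) is invoked to conclude that the singleton $\{x_0\}$ is closed. The argument also shows, more generally, that in the paracompact setting any continuous selection defined on a closed subset extends to a continuous selection on the whole space, which is the selection-extension viewpoint advertised by Proposition \ref{proposition-shsa-v28:1}.
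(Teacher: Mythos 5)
Your proof is correct and follows exactly the route the paper intends: the theorem is stated as an immediate consequence of Proposition \ref{proposition-shsa-v28:1} (applied with $A=\{x_0\}$ and $g(x_0)=y_0$) combined with Theorem \ref{theorem-res-pro-v9:1}, which is precisely your argument. The paper gives no further details, so nothing is missing from your write-up.
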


Precisely the same interpretation holds for Theorem
\ref{theorem-res-pro-v1:2} as well.

\begin{theorem}
  \label{theorem-res-pro-v5:2}
  Let $X$ be a collectionwise normal space, $E$ be a Banach space,
  ${\Phi:X\to \mathscr{C}'_\mathbf{c}(E)}$ be an l.s.c.\ mapping and
  $\Omega_\Phi$ be the collection of all continuous selections for
  $\Phi$. Then $\Phi(x)=\left\{f(x): f\in \Omega_\Phi\right\}$, for
  every $x\in X$.
\end{theorem}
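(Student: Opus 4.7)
The inclusion $\{f(x) : f \in \Omega_\Phi\} \subset \Phi(x)$ is immediate from the definition of a selection, so only the reverse inclusion requires work. Fix a point $x_0 \in X$ and any $y_0 \in \Phi(x_0)$; the plan is to produce a single continuous selection $f \in \Omega_\Phi$ with $f(x_0) = y_0$, which will exhibit $y_0$ as a value of some element of $\Omega_\Phi$ at $x_0$.

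To do this, I would apply the ``selection-extension = selection'' reduction of Proposition \ref{proposition-shsa-v28:1} at the closed singleton $A = \{x_0\}$ (which is closed since $X$ is Hausdorff). Define $g : A \to E$ by $g(x_0) = y_0$; this is trivially continuous and is a selection for $\Phi \uhr A$ because $y_0 \in \Phi(x_0)$. Let
\[
  \Phi_g(x) = \begin{cases} \{y_0\} & \text{if } x = x_0, \\ \Phi(x) & \text{otherwise.} \end{cases}
\]
By Proposition \ref{proposition-shsa-v28:1}, $\Phi_g$ is l.s.c. Moreover, $\Phi_g$ takes values in $\mathscr{C}'_\mathbf{c}(E)$: the singleton $\{y_0\}$ is a member of $\mathscr{C}_\mathbf{c}(E)$, while every other value is a value of $\Phi$ and hence lies in $\mathscr{C}'_\mathbf{c}(E)$ by hypothesis.

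Since $X$ is collectionwise normal and $E$ is a Banach space, Theorem \ref{theorem-res-pro-v1:2} yields a continuous selection $f : X \to E$ for $\Phi_g$. Because $\Phi_g(x) \subset \Phi(x)$ for every $x \in X$, this $f$ is also a continuous selection for $\Phi$, i.e.\ $f \in \Omega_\Phi$; and $f(x_0) \in \Phi_g(x_0) = \{y_0\}$ forces $f(x_0) = y_0$. As $x_0$ and $y_0 \in \Phi(x_0)$ were arbitrary, this proves $\Phi(x) \subset \{f(x) : f \in \Omega_\Phi\}$ for every $x \in X$.

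There is no real obstacle here — the whole statement is essentially just the combination of Proposition \ref{proposition-shsa-v28:1} (which preserves lower semi-continuity under pinning a single value) with Theorem \ref{theorem-res-pro-v1:2} (which supplies the continuous selection for $\mathscr{C}'_\mathbf{c}(E)$-valued l.s.c.\ mappings on collectionwise normal spaces). The only point worth checking carefully is that forcing the value $\{y_0\}$ at $x_0$ keeps the modified mapping inside the class $\mathscr{C}'_\mathbf{c}(E)$, which is automatic because singletons are compact and convex.
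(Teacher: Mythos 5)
Your proof is correct and follows exactly the route the paper intends: apply Proposition \ref{proposition-shsa-v28:1} at the closed singleton $\{x_0\}$ to pin the value $y_0$, observe that the modified mapping stays $\mathscr{C}'_\mathbf{c}(E)$-valued since singletons are compact and convex, and invoke Theorem \ref{theorem-res-pro-v1:2}. This is precisely the argument the paper sketches for Theorem \ref{theorem-res-pro-v5:1} and then transfers verbatim to the collectionwise normal case.
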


This brings the following natural question; see Theorems
\ref{theorem-res-pro-v1:5} and \ref{theorem-shsa-v18:1}, also
Proposition \ref{proposition-res-pro-v7:1}.

\begin{question}
  \label{question-res-pro-v5:2}
  Let $X$ be a collectionwise normal space, $E$ be a Banach space and
  ${\Phi:X\to \mathscr{F}_\mathbf{c}(E)}$ be an l.s.c.\ mapping which
  has a continuous selection $f:X\to E$. If $A\subset X$ is closed and
  $g:A\to E$ is a continuous selection for $\Phi\uhr A$, then is it
  possible to extend $g$ to a continuous selection for $\Phi$?
\end{question}

\section{Selections and Compact-Like Families}

\subsection{A General Selection Problem}
  
The selection problem for collectionwise normal spaces has two
aspects. The one is simply the question for a particular set-valued
mapping.

\begin{question}
  \label{question-res-pro-v1:8}
  Let $X$ be a collectionwise normal space, $E$ be a Banach space and
  $\Phi:X\to \mathscr{F}_\mathbf{c}(E)$ be an l.s.c.\ mapping. When
  does there exist a continuous selection for $\Phi$?
\end{question}

The other question is about a particular family
$\mathscr{L}(E)\subset \mathscr{F}_\mathbf{c}(E)$ with the property
that every l.s.c.\ mapping $\Phi:X\to \mathscr{L}(E)$ has a continuous
selection.

\begin{question}
  \label{question-res-pro-v1:9}
  Let $X$ be a collectionwise normal space and $E$ be a Banach
  space. Find a large enough subfamily
  $\mathscr{L}(E)\subset \mathscr{F}_\mathbf{c}(E)$ such that every
  l.s.c.\ mapping $\Phi:X\to \mathscr{L}(E)$ has a continuous
  selection?
\end{question}

Question \ref{question-res-pro-v1:9} is rather general, and to make
sense natural restrictions are in place. For instance, such a family
should include Dowker's extension theorem (Theorem
\ref{theorem-res-pro-v1:1}) in the sense of the construction in
Proposition \ref{proposition-contr-ext-14:1}. Therefore, one natural
requirement is that
\begin{equation}
  \label{eq:shsa-v18:1}
 \mathscr{C}_\mathbf{c}'(E)\subset \mathscr{L}(E). 
\end{equation}

Another natural condition may come from the construction of
approximate selections in Proposition
\ref{proposition-shsa-v18:1}. Namely, this proposition can be
rephrased in the following way.

\begin{proposition}
  \label{proposition-shsa-v18:2}
  Let $X$ be a collectionwise normal space, $E$ be a Banach space,
  $\Phi:X\to \mathscr{C}'_\mathbf{c}(E)$ be l.s.c.,
  $\eta:X\to (0,+\infty)$ be continuous and $g:X\to E$ be a
  continuous $\eta$-selection for $\varphi$. Then the mapping
  $\overline{\Phi\wedge \mathbf{O}_\eta[g]}:X\to
  \mathscr{F}_\mathbf{c}(E)$ has a continuous selection.
\end{proposition}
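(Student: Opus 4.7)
The plan is to apply Proposition \ref{proposition-shsa-v18:1} directly to the data $(\Phi,\eta,g)$ and then reinterpret its conclusion as a selection statement for the intersection mapping $\overline{\Phi\wedge\mathbf{O}_\eta[g]}$. Proposition \ref{proposition-shsa-v18:1} produces a continuous selection $f:X\to E$ for $\Phi$ with $\|f(x)-g(x)\|\leq \eta(x)$ for every $x\in X$. It remains only to verify that this same $f$ is a selection for $\overline{\Phi\wedge\mathbf{O}_\eta[g]}$, i.e.\ that $f(x)\in \overline{\Phi(x)\cap \mathbf{O}_{\eta(x)}(g(x))}$ for every $x\in X$. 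Note that $\overline{\Phi\wedge\mathbf{O}_\eta[g]}$ is indeed $\mathscr{F}_\mathbf{c}(E)$-valued because $\Phi(x)$ and $\mathbf{O}_{\eta(x)}(g(x))$ are both convex (with a nonempty intersection by the $\eta$-selection assumption), and $E$ is a Banach space.

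For the pointwise verification, I would fix $x\in X$ and split into two cases. If $\|f(x)-g(x)\|<\eta(x)$, then $f(x)\in \Phi(x)\cap \mathbf{O}_{\eta(x)}(g(x))$ and there is nothing to prove. If instead $\|f(x)-g(x)\|=\eta(x)$, I need to approximate $f(x)$ from inside the open-ball intersection. The hypothesis that $g$ is an $\eta$-selection for $\Phi$ means $g(x)\in \mathbf{O}_{\eta(x)}(\Phi(x))$, so there exists $q\in \Phi(x)$ with $\|q-g(x)\|<\eta(x)$. By convexity of $\Phi(x)$, the points $z_t=(1-t)f(x)+tq$ lie in $\Phi(x)$ for every $t\in[0,1]$, and for $t\in(0,1]$ the triangle inequality yields
\[
  \|z_t-g(x)\|\leq (1-t)\|f(x)-g(x)\|+t\|q-g(x)\|<\eta(x),
\]
because $\|f(x)-g(x)\|\leq\eta(x)$ while $t>0$ and $\|q-g(x)\|<\eta(x)$. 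Hence $z_t\in \Phi(x)\cap \mathbf{O}_{\eta(x)}(g(x))$, and letting $t\to 0^+$ gives $f(x)\in \overline{\Phi(x)\cap \mathbf{O}_{\eta(x)}(g(x))}$, as required.

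There is essentially no obstacle here: the substantive content already sits inside Proposition \ref{proposition-shsa-v18:1}, and the only additional work is the standard convex-combination argument that promotes a point on the boundary of the closed ball $\overline{\mathbf{O}_{\eta(x)}(g(x))}$ into the closure of its intersection with $\Phi(x)$. This step works precisely because the $\eta$-selection hypothesis guarantees that the open-ball intersection is nonempty, providing the witness $q$ needed to start the approximation.
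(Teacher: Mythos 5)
Your proposal is correct and matches the paper's intent exactly: the paper presents Proposition \ref{proposition-shsa-v18:2} as a rephrasing of Proposition \ref{proposition-shsa-v18:1}, and your argument simply makes explicit the small verification that the selection $f$ with $\|f(x)-g(x)\|\leq\eta(x)$ actually lands in $\overline{\Phi(x)\cap\mathbf{O}_{\eta(x)}(g(x))}$. The convex-combination step handling the boundary case $\|f(x)-g(x)\|=\eta(x)$ is the right (and standard) way to close that gap, and your use of the $\eta$-selection hypothesis to produce the witness $q$ is exactly what is needed.
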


Here, for a mapping $\varphi:X\sto E$ and a function
$\eta:X\to (0,+\infty)$, the mapping
$\mathbf{O}_\eta[\varphi]:X\sto E$ is defined by
$\mathbf{O}_\eta[\varphi](x)=
\mathbf{O}_{\eta(x)}(\varphi(x))$, $x\in X$.  If $\varphi$
is l.s.c.\ (in particular, a usual continuous map) and $\eta$ is a
lower semi-continuous function, then $\mathbf{O}_\eta[\varphi]$ has an
open graph \cite[Proposition 2.1]{gutev:05}; see Remark
\ref{remark-shsa-v18:1} where this was already used. Thus, by
Propositions \ref{proposition-shsa-v28:2} and
\ref{proposition-shsa-v28:5}, the mapping
$\overline{\Phi\wedge \mathbf{O}_\eta[g]}$ in Proposition
\ref{proposition-shsa-v18:2} is also l.s.c. Hence, one can incorporate
the property by considering the following further condition on the
collection $\mathscr{L}(E)$.
\begin{equation}
  \label{eq:shsa-v18:2}
  \overline{S\cap \mathbf{O}_\delta(y)}\in \mathscr{L}(E),\ \text{whenever 
    $S\in \mathscr{L}(E)$ and $y\in \mathbf{O}_\delta(S)$ for some $\delta>0$.}
\end{equation}

Accordingly, we have the following refined version of Question
\ref{question-res-pro-v1:9}.

\begin{question}
  \label{question-res-pro-v1:10}
  Let $X$ be a collectionwise normal space, $E$ be a Banach space and
  $\mathscr{L}(E)\subset \mathscr{F}_\mathbf{c}(E)$ be as in
  (\ref{eq:shsa-v18:1}) and (\ref{eq:shsa-v18:2}). Then is it true
  that each l.s.c.\ mapping $\Phi:X\to \mathscr{L}(E)$ has a
  continuous selection?
\end{question}

Let us point out that Question \ref{question-res-pro-v1:10} is related
to Question \ref{question-shsa-v14:2}, see Remark
\ref{remark-shsa-v18:1}. Indeed, let
$\Phi:X\to \mathscr{C}'_\mathbf{c}(E)$, $\eta:X\to (0,+\infty)$ and
$g:X\to E$ be as in Question \ref{question-shsa-v14:2}. Then
$\overline{\Phi\wedge \mathbf{O}_\eta[g]}:X\to
\mathscr{F}_\mathbf{c}(E)$ is l.s.c.  Moreover, if
$\mathscr{L}(E)\subset \mathscr{F}_\mathbf{c}(E)$ satisfies
(\ref{eq:shsa-v18:1}) and (\ref{eq:shsa-v18:2}), then
$\overline{\Phi\wedge \mathbf{O}_\eta[g]}:X\to \mathscr{L}(E)$. Thus,
if the answer to Question \ref{question-res-pro-v1:10} is ``Yes'',
then $\overline{\Phi\wedge \mathbf{O}_\eta[g]}$ has a continuous
selection, and the answer to Question \ref{question-shsa-v14:2} will be
``Yes'' as well.

\subsection{A Necessary Condition}

Related to Question \ref{question-res-pro-v1:9}, the following
interesting result was obtained by Nedev and Valov
\cite{nedev-valov:84}.

\begin{theorem}[\cite{nedev-valov:84}]
  \label{theorem-res-pro-v2:1}
  Let $X$ be a normal space which is not countably paracompact, $E$ be
  a Banach space, and
  $\mathscr{L}(E)\subset \mathscr{F}_\mathbf{c}(E)$ be such that every
  l.s.c.\ mapping $\Phi:X\to \mathscr{L}(E)$ has a continuous
  selection. Then any decreasing sequence of elements of
  $\mathscr{L}(E)$ has a nonempty intersection.
\end{theorem}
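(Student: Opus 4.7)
The plan is to prove the contrapositive: I assume some decreasing sequence $S_1\supset S_2\supset\cdots$ in $\mathscr{L}(E)$ has $\bigcap_n S_n=\varnothing$, and I construct an l.s.c.\ mapping $\Phi:X\to \mathscr{L}(E)$ admitting no continuous selection, thereby contradicting the hypothesis on $\mathscr{L}(E)$. Since $X$ is normal but not countably paracompact, the standard (Ishikawa-type) characterisation supplies a decreasing sequence $\{F_n\}$ of closed subsets of $X$ with $\bigcap_n F_n=\varnothing$ having the property that $\bigcap_n \overline{U_n}\neq\varnothing$ for every sequence of open sets $U_n\supset F_n$; this failure of open shrinking is what will be contradicted at the end.

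Setting $F_0=X$ and $S_0=S_1\in \mathscr{L}(E)$, let $n(x)=\max\{n\geq 0:x\in F_n\}$, which is finite because $\bigcap_n F_n=\varnothing$, and define $\Phi(x)=S_{n(x)}$. To verify that $\Phi$ is l.s.c., fix an open $U\subset E$; because $\{S_n\}$ is decreasing, $\{n:S_n\cap U\neq\varnothing\}$ is an initial segment of $\N$, so if $m$ denotes the first index with $S_m\cap U=\varnothing$ (taking $m=\infty$ if none exists), then $\Phi^{-1}[U]=X\setminus F_m$ (or $X$), which is open.

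By the hypothesis on $\mathscr{L}(E)$, $\Phi$ has a continuous selection $f:X\to E$. Since $x\in F_n$ forces $n(x)\geq n$ and hence $f(x)\in S_{n(x)}\subset S_n$, each function $\varphi_n(x)=\operatorname{dist}(f(x),S_n)$ is continuous and vanishes on $F_n$. The crux is to package the $\varphi_n$ into a single strictly positive continuous function
\[
\psi(x)=\sum_{n=1}^{\infty}\min\bigl(\varphi_n(x),2^{-n}\bigr),
\]
whose continuity follows from uniform convergence, whose strict positivity uses $\bigcap_n S_n=\varnothing$ (for each $x$ some $\varphi_{n_0}(x)>0$), and which satisfies $\psi(x)\leq 2^{-n}$ on $F_n$ because its first $n$ summands vanish there.

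The open sets $U_n=\{x:\psi(x)<2^{-n+1}\}$ then contain $F_n$, and the continuity of $\psi$ gives $\bigcap_n \overline{U_n}\subset \{x:\psi(x)\leq 0\}=\varnothing$; this contradicts the defining property of $\{F_n\}$ and completes the proof. The main obstacle I anticipate is the packaging step: the naive attempt to produce open neighbourhoods $V_n\supset S_n$ in $E$ with $\bigcap_n V_n=\varnothing$ and pull them back by $f$ becomes awkward in an arbitrary Banach space, whereas the weighted scalar sum $\psi$ sidesteps this entirely by working directly in $X$ and using only the continuity of the distance functions $\varphi_n$.
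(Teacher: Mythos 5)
Your proof is correct, but it takes a genuinely different route from the paper's own proof of Theorem \ref{theorem-res-pro-v2:1}, which reproduces Nedev and Valov's original argument: there one picks points $z_n\in F_n\setminus F_{n+1}$, forms the closed discrete set $H=\{z_n\}$ in the metrizable space $F_1$, invokes Michael's theorem to obtain a u.s.c.\ compact-valued selection of an auxiliary mapping $F_1\to\mathscr{F}(H)$, and composes it with the continuous selection $f$ to manufacture a forbidden closed shrinking of an increasing open cover. Your argument dispenses with that machinery entirely and is instead very close to the referee's alternative proof recorded in Remark \ref{remark-shsa-vgg-rev:1}: both define $\Phi(x)=S_{n(x)}$ from a decreasing sequence of closed sets witnessing the failure of countable paracompactness, verify lower semi-continuity from the monotonicity of $\{S_n\}$, and then extract open expansions of those closed sets from the distance of $f(x)$ to $S_n$. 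The remaining differences are mostly cosmetic: the referee takes $U_n=f^{-1}\left(\mathbf{O}_{1/n}(S_n)\right)$ directly and uses that $\bigcap_{n}\mathbf{O}_{1/n}(S_n)=\bigcap_n S_n$ for a decreasing sequence of closed sets --- so the ``naive'' pullback you feared would be awkward in fact works painlessly and even avoids arguing by contradiction --- whereas you aggregate the distance functions into the single function $\psi$ and use the $\bigcap_n\overline{U_n}$ form of Ishikawa's characterisation. A small bonus of your version: since that form of the characterisation holds for arbitrary spaces, your proof never actually uses normality of $X$, while both proofs in the paper do (they need it to trade the failure of countable paracompactness for the failure of countable metacompactness).
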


\begin{proof}
  We present the proof in \cite[(b) of Theorem
  1]{nedev-valov:84}. Contrary to the claim, suppose that
  $\mathscr{L}(E)$ has a strictly decreasing sequence $\{F_n\}$ with
  an empty intersection. So, for every $n\in\N$, there is a point
  $z_n\in F_n\setminus F_{n+1}$. Then the set $H=\{z_n: n\in\N\}$ is
  closed and discrete in $F_1$ because
  $\bigcap_{n=1}^\infty F_n=\emptyset$. Next, for every $y\in F_1$,
  set $m(y)=\max\{n\in \N: y\in F_n\}$ and define an l.s.c.\ mapping
  $\varphi:F_1\to \mathscr{F}(H)$ by
  $\varphi(y)=\big\{z_k: k\geq m(y)\big\}$, $y\in F_1$. Since $F_1$ is
  paracompact (being metrizable), by a result of Michael
  \cite{MR0109343}, $\varphi$ has a u.s.c.\ selection
  $\psi:F_1\to \mathscr{C}(H)$. On the other hand, $X$ is normal but
  not countably paracompact. Hence, it has an increasing open cover
  $\{U_n\}$ which doesn't admit a closed cover $\{P_n\}$ with
  $P_n\subset U_n$, $n\in\N$.  Whenever $x\in X$, set
  $n(x)=\min\{n\in\N: x\in U_n\}$, and define an l.s.c.\ mapping
  $\Phi:X\to \mathscr{L}(E)$ by $\Phi(x)=F_{n(x)}$, $x\in X$. By
  hypothesis, $\Phi$ has a continuous selection $f:X\to E$.  Finally,
  consider the composite mapping
  $\theta=\psi\circ f:X\to \mathscr{C}(H)$, which is clearly
  u.s.c. So, for every $n\in \N$, the set
  $P_n=\theta^{-1}[\{z_1,\dots,z_n\}]=
  f^{-1}\left(\psi^{-1}[\{z_1,\dots,z_n\}]\right)$ is closed in $X$,
  and $P_n\subset U_n$ because
  $\{z_1,\dots z_n\}\subset F_n\setminus F_{n+1}$. Indeed, $x\in P_n$
  implies that $\psi(f(x))\subset \{z_1,\dots,z_n\}$ and, therefore,
  $m(f(x))\leq n$, by the definition of $\varphi$. Accordingly,
  $f(x)\notin F_{n+1}$ and, by the definition of $\Phi$, we get that
  $n(x)\leq n$. Thus, $x\in U_{n(x)}\subset U_n$ and $P_n\subset
  U_n$. Since $\{P_n\}$ is covering $X$, this is impossible.
\end{proof}

The following question is a partial case of a question stated in
\cite{nedev-valov:84}. 

\begin{question}[\cite{nedev-valov:84}]
  \label{question-shsa-v18:1}
  Let $X$ be a collectionwise normal space, $E$ be a Banach space, and
  $\mathscr{L}(E)\subset \mathscr{F}_\mathbf{c}(E)$ be such that
  $\mathscr{C}'_\mathbf{c}(E)\subset \mathscr{L}(E)$ and any
  decreasing sequence of elements of $\mathscr{L}(E)$ has a nonempty
  intersection.  Then, is it true that each l.s.c.\ mapping
  $\Phi:X\to \mathscr{L}(E)$ has a continuous selection?
\end{question}

\begin{remark}
  \label{remark-shsa-vgg-rev:1}
  An elegant alternative proof of Theorem \ref{theorem-res-pro-v2:1}
  was offered by the referee. Namely, take a decreasing sequence
  $\{F_n\}$ of elements $\mathscr{L}(E)$. Since $X$ is normal but not
  countably paracompact, there exists a decreasing sequence $\{P_n\}$
  of closed subsets of $X$ such that $P_1=X$,
  $\bigcap_{n=1}^\infty P_n =\emptyset$ and
  $\bigcap_{n=1}^\infty U_n\neq\emptyset$, for every sequence
  $\{U_n\}$ of open subsets of $X$ with $P_n\subset U_n$, $n\in\N$,
  see \cite[Corollary 5.2.2]{engelking:89}. Next, for every $x\in X$,
  let $n(x)=\max\{n\in\N: x\in P_n\}$ which is a well-defined element
  of $\N$ because $\bigcap_{n=1}^\infty P_n =\emptyset$. Finally,
  define $\Phi:X\to \mathscr{L}(E)$ by ${\Phi(x)=F_{n(x)}}$, $x\in
  X$. Then $\Phi$ is l.s.c.\ and by hypothesis, it has a continuous
  selection $f:X\to E$. Since $\bigcap_{n=1}^\infty F_n$ is closed and
  $P_n\subset f^{-1}\left(\mathbf{O}_{1/n}(F_n)\right)$, for every
  $n\in\N$, we get that
  \[
    f^{-1}\left(\bigcap_{n=1}^\infty F_n\right)=
  f^{-1}\left(\bigcap_{n=1}^\infty\mathbf{O}_{1/n}(F_n)\right)=
  \bigcap_{n=1}^\infty
  f^{-1}\left(\mathbf{O}_{1/n}(F_n)\right)\neq\emptyset.
  \]
  Accordingly, $\bigcap_{n=1}^\infty F_n\neq\emptyset$ as
  required.\hfill\textsquare
\end{remark}

\subsection{Selections and Reflexive Banach Spaces}

The property in Theorem \ref{theorem-res-pro-v2:1} has the following
natural interpretation in the setting of Banach spaces. 

\begin{theorem}[\cite{MR0002006}]
  \label{theorem-shsa-v19:1}
  A normed space $E$ is a reflexive Banach space if and only if
  every decreasing sequence of nonempty closed bounded
  convex subsets of $E$ has a nonempty intersection. 
\end{theorem}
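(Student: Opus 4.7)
The plan is to prove each implication separately, with the reverse direction carrying the real content.

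For the forward direction, I would assume $E$ is a reflexive Banach space and let $\{C_n\}$ be a decreasing sequence of nonempty closed bounded convex subsets. By Kakutani's theorem, the closed unit ball of $E$ is weakly compact, so any closed bounded convex set (which is weakly closed by Mazur's theorem and is contained in some multiple of the unit ball) is weakly compact. Thus $\{C_n\}$ is a sequence of weakly closed subsets of the weakly compact set $C_1$ with the finite intersection property, so $\bigcap_{n=1}^\infty C_n\neq\emptyset$.

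For the reverse direction, I would assume the stated intersection property and first establish that $E$ is complete. Given a Cauchy sequence $\{x_n\}\subset E$, set $C_n=\overline{\conv}\{x_k:k\geq n\}$. These are closed, bounded, convex, decreasing, and their diameters equal $\sup_{k,\ell\geq n}\|x_k-x_\ell\|$, which tends to $0$. By hypothesis their intersection is nonempty, and the shrinking diameters force it to be a singleton $\{x\}$ with $x=\lim_n x_n$. Hence $E$ is a Banach space.

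To derive reflexivity, I would invoke James's theorem: it suffices to show that every $f\in E^*$ attains its norm on the closed unit ball $B_E$. For such $f$ with $\|f\|=1$, consider
\[
  C_n=\{x\in E:\|x\|\leq 1,\ f(x)\geq 1-1/n\},\qquad n\in\N.
\]
Each $C_n$ is nonempty (by the definition of $\|f\|$), closed, bounded, convex, and $C_{n+1}\subset C_n$. The hypothesis then gives $\bigcap_{n=1}^\infty C_n\neq\emptyset$, and any point in this intersection witnesses that $f$ attains its norm on $B_E$. By James's theorem, $E$ is reflexive.

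The main obstacle is the leap from a purely countable intersection property to full reflexivity in the reverse direction: one cannot avoid a deep characterization of reflexivity. Routing through James's theorem is clean but uses heavy machinery; an alternative is to combine the completeness argument above with a direct verification that $B_E$ is weakly countably compact and then appeal to the Eberlein--\v{S}mulian theorem together with Kakutani's theorem, but this does not seem to shorten the conceptual cost.
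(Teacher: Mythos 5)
The paper does not actually prove this statement: it is quoted as a classical result of \v{S}mulian with only a citation to \cite{MR0002006}, so there is no in-paper argument to compare against. Your proof is correct as written. The forward direction (Kakutani plus Mazur, then the finite intersection property for weakly compact sets) is the standard one, and you rightly notice that the hypothesis is only ``normed space,'' so that completeness must be extracted from the intersection property before anything else; your argument via $C_n=\overline{\conv}\{x_k:k\geq n\}$ and vanishing diameters does this cleanly (the diameter of a convex hull of a bounded set equals the diameter of the set, and a point of $\bigcap_n C_n$ is then the limit of the Cauchy sequence). The step from there to reflexivity via James's theorem is logically sound --- and legitimate precisely because you have already shown $E$ is complete, since James's theorem fails for incomplete normed spaces --- but it is a far heavier tool than the statement requires, and is anachronistic relative to \v{S}mulian's 1939 paper. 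The classical argument for the reverse direction is the contrapositive: if $E$ is a non-reflexive Banach space, pick $x^{**}\in E^{**}$ with $\|x^{**}\|=1$ at positive distance from the canonical image of $E$, choose a suitable sequence $\{f_n\}$ in the unit ball of $E^{*}$, and use Helly's theorem to show that the sets $C_n=\{x\in E:\|x\|\leq 2,\ f_i(x)=x^{**}(f_i)\ \text{for}\ i\leq n\}$ are nonempty, closed, bounded, convex and decreasing, while their intersection is empty. That route buys a self-contained, elementary proof at the cost of the Helly computation; your route buys brevity at the cost of importing one of the deepest theorems in Banach space theory. Either is acceptable, but if this were to be written out in the paper the Helly-based proof would be the more proportionate choice.
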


For reflexive Banach spaces, the following interesting result was
obtained by Stoyan Nedev \cite{nedev:87}.

\begin{theorem}[\cite{nedev:87}]
  \label{theorem-shsa-v19:2}
  Whenever $E$ is a reflexive Banach space,  each l.s.c.\ mapping
  ${\Phi:\omega_1\to \mathscr{F}_\mathbf{c}(E)}$ has a continuous
  selection.
\end{theorem}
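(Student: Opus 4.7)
The plan is to reduce the theorem to a \emph{Key Lemma} --- the existence of $\alpha^*<\omega_1$ and $y^*\in\bigcap_{\alpha\ge\alpha^*}\Phi(\alpha)$ --- and then to finish via Michael's selection theorem (Theorem \ref{theorem-res-pro-v9:1}) on the initial segment. The reduction rests on the observation that every continuous $f:\omega_1\to E$ is eventually constant. Indeed, $f(\omega_1)$ is countably compact as the image of a countably compact space, hence compact in the metric space $E$, so the nested family $\{\overline{f([\alpha,\omega_1))}\}_{\alpha<\omega_1}$ has nonempty intersection $\{y^*\}$, which is a singleton because two distinct candidates $y_1\ne y_2$ with $r=\|y_1-y_2\|$ would yield disjoint open cofinal preimages $f^{-1}(\mathbf{O}_{r/3}(y_i))$ whose closures are clubs in $\omega_1$ (hence meet), and any meeting point $\gamma$ would either contradict disjointness or force $f(\gamma)$ into two disjoint closed balls. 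Consequently $f(\alpha)\to y^*$, each $\{\alpha:\|f(\alpha)-y^*\|\ge 1/n\}$ is bounded by some $\alpha_n<\omega_1$, and by regularity of $\omega_1$, $\alpha^*=\sup_n\alpha_n<\omega_1$, yielding $f(\alpha)=y^*$ for $\alpha>\alpha^*$.

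Granting the Key Lemma, the rest is immediate: $[0,\alpha^*]$ is a compact countable ordinal space, hence paracompact, so Michael's selection theorem applied to the modified mapping $\Phi':[0,\alpha^*]\to\mathscr{F}_\mathbf{c}(E)$ with $\Phi'(\alpha^*)=\{y^*\}$ and $\Phi'(\beta)=\Phi(\beta)$ for $\beta<\alpha^*$ --- which is l.s.c.\ by Proposition \ref{proposition-shsa-v28:1} since $y^*\in\Phi(\alpha^*)$ --- yields a continuous selection $g:[0,\alpha^*]\to E$ with $g(\alpha^*)=y^*$. Extending $g$ by the constant value $y^*$ on $(\alpha^*,\omega_1)$ produces the required continuous selection for $\Phi$.

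The heart of the proof, and where reflexivity of $E$ enters essentially through Theorem \ref{theorem-shsa-v19:1}, is the Key Lemma. I would attempt it by transfinite induction, producing a coherent family of continuous selections $f_\alpha:[0,\alpha]\to E$ for $\Phi\uhr[0,\alpha]$ with $f_{\alpha'}\uhr[0,\alpha]=f_\alpha$ when $\alpha\le\alpha'$. At successors, extend via Michael's theorem and Proposition \ref{proposition-shsa-v28:1} on the compact paracompact $[0,\alpha+1]$, choosing $f(\alpha+1)\in\Phi(\alpha+1)$ as a metric projection of $f(\alpha)$ onto $\Phi(\alpha+1)$ --- well defined after an equivalent strictly convex renorming of the reflexive $E$ --- so as to control norm jumps. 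At a limit ordinal $\alpha$ of countable cofinality, pick $\beta_n\nearrow\alpha$ and apply Theorem \ref{theorem-shsa-v19:1} to the decreasing sequence $C_n=\overline{\conv}\{f(\beta_m):m\ge n\}$ of nonempty closed bounded convex sets to extract $\bigcap_n C_n\ne\emptyset$. The principal obstacle is this limit step: reflexivity only yields weakly convergent subsequences of the bounded $\{f(\beta_n)\}$, whereas continuity demands norm convergence of $f(\beta)$ as $\beta\to\alpha$ together with membership of the limit in $\Phi(\alpha)$ (by l.s.c.); coordinating the metric-projection choices at successors with the decreasing-intersection criterion so that $\bigcap_n C_n$ coincides with the norm limit is the delicate cooperation between the geometry of reflexive Banach spaces and the order-topological structure of $\omega_1$ that makes the proof work.
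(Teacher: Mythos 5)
Your reduction is sound and worth keeping: a continuous map from $\omega_1$ into a metric space is eventually constant (your club argument is correct), so any continuous selection forces the existence of $\alpha^*<\omega_1$ and $y^*\in\bigcap_{\alpha\geq\alpha^*}\Phi(\alpha)$; and conversely, given such a pair, the initial segment $[0,\alpha^*]$ is compact metrizable, hence paracompact, so Theorem \ref{theorem-res-pro-v9:1} together with Proposition \ref{proposition-shsa-v28:1} produces a selection on $[0,\alpha^*]$ ending at $y^*$, which extends by the constant $y^*$. The problem is that your Key Lemma \emph{is} the theorem --- it is precisely where reflexivity must do its work --- and your proposed proof of it does not close. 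At a limit ordinal $\alpha$ of cofinality $\omega$, Theorem \ref{theorem-shsa-v19:1} gives you a point of $\bigcap_n C_n$, i.e.\ a weak cluster point of the bounded sequence $\{f(\beta_n)\}$, but continuity of the extension requires \emph{norm} convergence of $f(\beta)$ as $\beta\to\alpha$, and nothing in the successor-step metric-projection scheme makes $\{f(\beta_n)\}$ norm-Cauchy. Worse, even if a norm limit $y$ existed, your parenthetical appeal to lower semi-continuity to place $y$ in $\Phi(\alpha)$ is backwards: l.s.c.\ controls $\Phi^{-1}[U]$ for open $U$ and says nothing about limits of points $y_n\in\Phi(\beta_n)$ landing in $\Phi(\alpha)$; that is a closed-graph/upper semi-continuity property which $\Phi$ is not assumed to have. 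You acknowledge the first of these as ``the principal obstacle,'' but acknowledging it is not resolving it, and as written the limit step fails.

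There is also a structural circularity: if your transfinite induction did produce coherent continuous selections $f_\alpha$ on every $[0,\alpha]$, their union would already be a continuous selection on all of $\omega_1$ (continuity is local, and every point of $\omega_1$ lies in the interior of some initial segment), so the Key Lemma reduction would be doing no work --- the induction is the whole theorem, restated. The paper states Theorem \ref{theorem-shsa-v19:2} as a cited result of Nedev without proof, but the surrounding material signals the mechanism that actually succeeds and that your sketch is missing: renorm $E$ with an equivalent LUR norm via Troyanski's theorem (Theorem \ref{theorem-shsa-v21:3}), take the unique norm-minimal selection of Proposition \ref{proposition-shsa-v21:1} (after translating, the norm-minimal point of each $\Phi(x)$), and use the continuity criterion of Theorem \ref{theorem-shsa-v21:2} --- closedness of the sets $\Phi^{-1}[\varepsilon\B]$ --- which is where the order structure of $\omega_1$ is exploited globally rather than one limit ordinal at a time. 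I would either develop your argument along those lines or present only the (correct) reduction as a remark and cite the Key Lemma.
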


Here, $\omega_1$ is the first uncountable ordinal endowed with the
order topology. This result was further generalised in
\cite{MR1606612} by replacing $\omega_1$ with an arbitrary
suborderable space.

\begin{theorem}[\cite{MR1606612}]
  \label{theorem-shsa-v19:3}
  If $X$ is a suborderable space and $E$ is a reflexive Banach space,
  then each l.s.c.\ mapping ${\Phi:X\to \mathscr{F}_\mathbf{c}(E)}$
  has a continuous selection.
\end{theorem}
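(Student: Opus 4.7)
My plan is to extend Nedev's construction for $\omega_1$ (Theorem \ref{theorem-shsa-v19:2}) to a general suborderable space $X$, exploiting the order structure of $X$ together with the fact --- a strengthening of Theorem \ref{theorem-shsa-v19:1} via Kakutani's weak-compactness characterisation of reflexivity --- that every filtered family of nonempty closed bounded convex subsets of a reflexive Banach space has nonempty intersection.

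By Proposition \ref{proposition-shsa-vgg-rev:1} it suffices to show that every l.s.c.\ convex-valued $\Phi:X\sto E$ admits a continuous $\varepsilon$-selection for every $\varepsilon>0$; iterating then gives a uniformly Cauchy sequence of selections whose limit is the required continuous selection. Fix $\varepsilon>0$. Each open set in a suborderable space is a disjoint union of maximal order-convex open subsets, so, constructing the $\varepsilon$-selection on each such component separately, I may assume $X$ is order-convex in its ambient order.

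I would then build a continuous $\varepsilon$-selection $f:X\to E$ by a back-and-forth transfinite induction along the order, starting from a chosen base point and extending leftward and rightward in stages. At each successor stage, the l.s.c.\ of $\Phi$ combined with monotone normality of $X$ (hence hereditary collectionwise normality) lets me truncate $\Phi$ to a bounded closed convex subselection on a neighbourhood of the new segment, and then invoke Theorem \ref{theorem-res-pro-v1:2} to extend $f$ continuously. At each limit stage, where a point $p\in X$ is the order-limit of the current domain $D$, I take $f(p)$ from
\[
  \Phi(p)\cap \bigcap_{V\ni p}\overline{\conv\bigl(f(V\cap D)\bigr)},
\]
which is nonempty whenever $f(V\cap D)$ is bounded in $E$ for some neighbourhood $V$ of $p$; this is exactly the point where reflexivity of $E$ is used.

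The main obstacle is to maintain a uniform bound on the values of $f$ near each limit point. My plan for this is to carry along with $f$ an auxiliary shrinking bounded convex selection $\varphi:X\to \mathscr{F}_\mathbf{c}(E)$ with $\varphi(x)\subset \Phi(x)$, built by intersecting $\Phi$ with suitable closed balls around previously chosen values; then applying Theorem \ref{theorem-res-pro-v1:2} to $\varphi$ at successor stages automatically keeps $f$ bounded on each interval, while the filtered-intersection property of reflexive spaces keeps the limit-stage intersections nonempty. The delicate point is coordinating these local choices across limits of uncountable cofinality --- which can arise in a general suborderable space though not in $\omega_1$ --- and it is here that the full strength of reflexivity, beyond its sequential manifestation in Theorem \ref{theorem-shsa-v19:1}, becomes necessary.
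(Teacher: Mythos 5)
The paper itself gives no proof of Theorem \ref{theorem-shsa-v19:3}: it is quoted from \cite{MR1606612}, and the surrounding text indicates that the argument there runs through Troyanski's LUR renorming (Theorem \ref{theorem-shsa-v21:3}) and norm-minimal selections (Proposition \ref{proposition-shsa-v21:1}, Theorem \ref{theorem-shsa-v21:2}), after reducing to bounded-valued mappings via countable paracompactness (Proposition \ref{proposition-shsa-v12:1}); the survey also notes that the theorem follows from Shishkov's later Theorem \ref{theorem-shsa-v21:4} combined with Example \ref{example-shsa-v21:1}. Your route --- a transfinite order induction in the spirit of Nedev's $\omega_1$ argument, with reflexivity entering through filtered intersections of bounded closed convex sets --- is therefore genuinely different, but as written it has gaps I do not see how to close.

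The most concrete one is the appeal to Theorem \ref{theorem-res-pro-v1:2} at successor stages. That theorem applies only to $\mathscr{C}'_\mathbf{c}(E)$-valued mappings, i.e.\ values that are either compact convex or all of $E$. Truncating $\Phi$ by closed balls produces bounded closed convex values, and in an infinite-dimensional reflexive space these are not norm-compact; no result quoted in the paper gives continuous selections for bounded convex-valued l.s.c.\ mappings on a collectionwise normal domain except in Hilbert space (Theorem \ref{theorem-shsa-v20:1}) --- indeed that is precisely the content of the open Question \ref{question-shsa-v19:1}. So the successor step tacitly assumes something essentially as strong as the theorem being proved. Three further problems: (i) a general linear order is not well-ordered, so ``transfinite induction along the order, extending leftward and rightward in stages'' is not a well-defined recursion scheme for an arbitrary suborderable $X$; it is exactly the well-ordering of $\omega_1$ that makes Nedev's construction run. (ii) The reduction to order-convex pieces is unjustified: the maximal order-convex components of a GO space need not be open (consider $\{0\}\cup\{1/n:n\in\N\}\subset\R$), so $\varepsilon$-selections built on them separately need not glue continuously. (iii) At limit stages, lower semi-continuity does not force the cluster set $\bigcap_{V\ni p}\overline{\conv\left(f(V\cap D)\right)}$ to meet $\Phi(p)$, or even $\mathbf{O}_\varepsilon(\Phi(p))$; l.s.c.\ controls approximation of points of $\Phi(p)$ by points of $\Phi(x)$ for nearby $x$, not proximity of the chosen values $f(x)$ to $\Phi(p)$, and the auxiliary shrinking mapping $\varphi$ you invoke to supply that control is not specified precisely enough to check that it does.
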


\emph{Suborderable spaces} are precisely the subspaces of orderable
spaces, and are also called \emph{generalised ordered}. Every
suborderable space is countably paracompact and collectionwise normal,
but not necessarily paracompact. For instance, $\omega_1$ is not
paracompact. Based on this, the following general question was stated
in \cite{MR1606612}; it is known as Choban-Gutev-Nedev conjecture.

\begin{question}[\cite{MR1606612}]
  \label{question-shsa-v19:1}
  Let $X$ be a countably paracompact and collectionwise normal space,
  $E$ be a Hilbert (or reflexive Banach) space and
  $\Phi:X\to \mathscr{F}_\mathbf{c}(E)$ be an l.s.c.\ mapping. Does
  $\Phi$ have a continuous selection?
\end{question}

\subsection{Norm-Minimal Selections}

Let $E$ be a normed linear space. A selection $f:X\to E$ for a mapping
$\Phi:X\sto E$ is called \emph{minimal} with respect to the norm
$\|\cdot\|$ of $E$, or \emph{norm-minimal}, see
\cite{aubin-frankowska:90}, if
\begin{equation}
  \label{eq:shsa-v21:1}
\|f(x)\|=\min\big\{\|y\|:y\in\Phi(x)\big\},\quad\text{for every $x\in
X$}.
\end{equation}
A norm $\|.\|$ on $E$ is called \emph{locally uniformly rotund},
abbreviated \emph{LUR}, if for each $y\in E$ and sequence
$\{y_n\}\subset E$,
\begin{equation}
  \label{eq:shsa-v21:2}
  \begin{cases}
  \lim_{n\to \infty}\|y_n\|=\|y\|,\ \ \text{and}\\
 \lim_{n\to\infty}\|y_n+y\|=2\|y\|,
\end{cases}
\quad\text{implies}\quad \lim_{n\to\infty}\|y_n-y\|=0.
\end{equation}

If $E$ is a normed space equipped with an LUR norm, then every
nonempty closed convex subset of $ E$ has a unique point with a
minimal norm, see \cite[Lemma 4.1]{MR1606612}. Accordingly, we have
the following observation.

\begin{proposition}
  \label{proposition-shsa-v21:1}
  Let $E$ be a normed space equipped with an LUR norm. Then each
  mapping $\Phi:X\to \mathscr{F}_\mathbf{c}(E)$ has a unique
  norm-minimal selection.
\end{proposition}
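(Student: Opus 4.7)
The plan is to apply Lemma 4.1 of \cite{MR1606612} pointwise, so the argument reduces to a single observation: the LUR hypothesis converts the pointwise selection problem into a uniqueness question that the cited lemma already settles.

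First, fix $x\in X$. By assumption $\Phi(x)$ is a nonempty closed convex subset of $E$, so the cited lemma supplies a unique point $f(x)\in \Phi(x)$ with $\|f(x)\|=\min\{\|y\|:y\in \Phi(x)\}$. Taking this point for each $x$ defines a map $f:X\to E$ which is by construction a selection for $\Phi$ satisfying \eqref{eq:shsa-v21:1}, hence is norm-minimal in the required sense.

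For uniqueness, suppose $g:X\to E$ is any other norm-minimal selection for $\Phi$. Then for every $x\in X$ both $f(x)$ and $g(x)$ lie in $\Phi(x)$ and realise the minimum $\min\{\|y\|:y\in \Phi(x)\}$; the uniqueness clause of the cited lemma forces $f(x)=g(x)$, and hence $f=g$.

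No genuine obstacle arises, since no continuity or measurability of $f$ is claimed in the statement; the entire content is packaged into the pointwise fact that an LUR norm admits a unique norm-minimiser on each nonempty closed convex set. The one thing worth flagging is that this pointwise fact is not purely a uniqueness statement: the LUR condition in \eqref{eq:shsa-v21:2} makes any norm-minimising sequence in $\Phi(x)$ Cauchy (via the identity $\|y_n+y_m\|\to 2\inf$), but existence of the minimiser itself is supplied by the lemma and is the only substantive input.
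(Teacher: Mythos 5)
Your proposal is correct and matches the paper's argument exactly: the paper likewise derives the proposition as an immediate pointwise consequence of \cite[Lemma 4.1]{MR1606612}, which asserts that in a normed space with an LUR norm every nonempty closed convex set has a unique point of minimal norm. Nothing further is needed, since no continuity of the selection is claimed.
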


Regarding continuity of norm-minimal selections, the following
characterisation was obtained in \cite[Theorem 4.1]{Gutev2001}. In
this theorem, $\B$ is the closed unit ball of a normed space $E$
equipped with a norm $\|\cdot\|$.

\begin{theorem}[\cite{Gutev2001}]
  \label{theorem-shsa-v21:2}
  Let $X$ be a space and $E$ be a normed space equipped with an LUR
  norm. Then for an l.s.c.\ mapping
  $\Phi:X\to \mathscr{F}_\mathbf{c}(E)$, the following two conditions
  are equivalent\textup{:}
  \begin{enumerate}
  \item $\Phi^{-1}[\varepsilon\B]$ is closed in $X$, for every
    $\varepsilon>0$.
  \item $\Phi$ admits a continuous norm-minimal selection.
  \end{enumerate}
\end{theorem}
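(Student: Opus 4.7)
The plan is to prove the two implications separately. The direction \textup{(2)}$\Rightarrow$\textup{(1)} is immediate: if $f:X\to E$ is a continuous norm-minimal selection for $\Phi$, then minimality of $\|f(x)\|$ in $\Phi(x)$ gives $\Phi^{-1}[\varepsilon\B]=\{x\in X:\|f(x)\|\leq \varepsilon\}$, which is closed in $X$ by continuity of $f$ (hence of $\|f(\cdot)\|$).

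For \textup{(1)}$\Rightarrow$\textup{(2)}, by Proposition \ref{proposition-shsa-v21:1} there is a unique norm-minimal selection $f$, and the task is to show $f$ is continuous. I would first establish that the scalar function $\psi(x)=\|f(x)\|$ is continuous. Upper semi-continuity of $\psi$ comes for free from l.s.c.\ of $\Phi$: the set $\{\psi<r\}$ equals $\Phi^{-1}[\{y\in E:\|y\|<r\}]$, which is open as the preimage of an open set. Lower semi-continuity of $\psi$ is exactly the hypothesis: for $r>0$, $\{\psi\leq r\}=\Phi^{-1}[r\B]$ is closed, and the case $r=0$ follows since $\{\psi>0\}=\bigcup_{n\in\N}\{\psi>1/n\}$.

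To show $f$ is continuous at $x_0\in X$, I would fix $\varepsilon>0$ and rewrite the sequential LUR condition \eqref{eq:shsa-v21:2} in its standard modulus form: there exists $\delta>0$ (depending on $\varepsilon$ and $f(x_0)$) such that any $z\in E$ with $|\|z\|-\|f(x_0)\||<\delta$ and $\|z+f(x_0)\|>2\|f(x_0)\|-\delta$ satisfies $\|z-f(x_0)\|<\varepsilon$; this equivalence follows by a routine contradiction argument. Using continuity of $\psi$ and l.s.c.\ of $\Phi$ at $x_0$, I would choose a neighbourhood $V$ of $x_0$ on which simultaneously $|\|f(x)\|-\|f(x_0)\||<\delta/4$ and $\Phi(x)\cap\mathbf{O}_{\delta/4}(f(x_0))\neq\emptyset$. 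For each $x\in V$, pick $y_x\in\Phi(x)$ with $\|y_x-f(x_0)\|<\delta/4$; then the midpoint $(f(x)+y_x)/2$ lies in $\Phi(x)$ by convexity, and minimality of $f(x)$ forces $\|f(x)+y_x\|\geq 2\|f(x)\|$. A short triangle-inequality estimate then yields $\|f(x)+f(x_0)\|>2\|f(x_0)\|-\delta$, so the LUR modulus applies to $z=f(x)$ and gives $\|f(x)-f(x_0)\|<\varepsilon$.

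The main obstacle I anticipate is precisely this last step: converting the topological information ``there is a selection $y_x\in\Phi(x)$ close to $f(x_0)$'' (which l.s.c.\ alone delivers) into the geometric LUR hypothesis $\|f(x)+f(x_0)\|\approx 2\|f(x_0)\|$. The key device is the midpoint trick, which is the point where convex-valuedness of $\Phi$ and minimality of $f(x)$ must be combined: convexity keeps $(f(x)+y_x)/2$ inside $\Phi(x)$, and minimality promotes this to a lower bound on the norm of $f(x)+y_x$. Without both ingredients, continuity of $\psi$ alone would not suffice to deduce continuity of $f$.
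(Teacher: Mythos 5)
Your argument is correct and complete: the easy direction via $\Phi^{-1}[\varepsilon\B]=\{x:\|f(x)\|\leq\varepsilon\}$, the continuity of $\psi(x)=\|f(x)\|$ from lower semi-continuity of $\Phi$ plus hypothesis (1), and the midpoint/convexity trick feeding the LUR modulus are exactly the ingredients of the proof of this result in \cite{Gutev2001} (the survey itself only cites the theorem and gives no proof, relying on Proposition \ref{proposition-shsa-v21:1} for the existence and uniqueness of the norm-minimal selection, just as you do). No gaps to report.
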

  
As for normed spaces which admit an equivalent LUR norm, let us
explicitly state the famous Troyanski's renorming theorem
\cite[Theorem 1]{troyanski:71}.

\begin{theorem}[\cite{troyanski:71}]
  \label{theorem-shsa-v21:3}
  Every reflexive Banach space admits a topologically equivalent LUR
  norm.
\end{theorem}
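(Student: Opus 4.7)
My plan is to prove Troyanski's theorem in two stages: first a renorming in the separable case using a Markushevich basis, then a transfinite lifting to arbitrary density character via a projectional resolution of the identity (PRI).

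I would first observe that the closed unit ball $\B$ of a reflexive Banach space $E$ is weakly compact, so $E=\overline{\bigcup_{n\in\N}n\B}$ is weakly compactly generated. By the Amir--Lindenstrauss theorem, every WCG Banach space admits a PRI, i.e.\ a long sequence of norm-one projections $\{P_\alpha:\omega_0\leq\alpha\leq\mu\}$ (where $\mu$ is the density character of $E$) with $P_\alpha P_\beta=P_\beta P_\alpha=P_\alpha$ for $\alpha\leq\beta$, $\mathrm{dens}(P_\alpha E)\leq|\alpha|$, $P_\mu=\mathrm{id}_E$, and strong continuity at limit ordinals. This decomposes $E$ into separable layers $(P_{\alpha+1}-P_\alpha)E$, and assuming LUR renormings $\|\cdot\|_\alpha$ of these layers have been produced, I would glue them into a global norm via a weighted sum
\[
  |||x|||^2=\|x\|^2+\sum_{\alpha<\mu}\lambda_\alpha\bigl\|(P_{\alpha+1}-P_\alpha)x\bigr\|_\alpha^2,
\]
which has only countably many nonzero terms for each $x\in E$, hence converges for any summable choice of weights $\{\lambda_\alpha\}$ and is equivalent to $\|\cdot\|$.

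In the separable reflexive case, I would pick a shrinking Markushevich basis $(e_n,e_n^*)$ of $E$ and, following Day's construction on $c_0(\Gamma)$, define the equivalent norm by an iterated $\ell_2$-sum over a carefully chosen collection of finite coordinate blocks with geometric weights. Verifying the LUR condition \eqref{eq:shsa-v21:2} would then apply the parallelogram identity term by term to the squared-sums, yielding blockwise and hence coordinatewise convergence $e_k^*(y_n)\to e_k^*(y)$, and finally upgrading to $\|y_n-y\|\to 0$ by combining the hypothesis $\|y_n\|\to\|y\|$ with the shrinking property of the basis.

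I expect the main obstacle to be this final upgrade from coordinatewise to norm convergence. The naive sum $\sum 2^{-n}|e_n^*(\cdot)|^2$ only detects weak coordinate convergence; Day's block arrangement is needed so that any $\varepsilon$-spread of tail coordinates in $y_n-y$ shows up in some finite block and is therefore ruled out by the parallelogram step. Coherently propagating this block structure across the transfinite layers of the PRI---and in particular verifying LUR at limit ordinals of $\mu$, where the PRI is merely strong-operator continuous---is the technical heart of the argument, and is where I expect most of the work to lie.
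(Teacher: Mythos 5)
The paper offers no proof of this statement --- it is a survey item quoted directly from Troyanski's original article --- so your proposal can only be judged on its own terms. The architecture you describe (reflexive $\Rightarrow$ weakly compactly generated $\Rightarrow$ Amir--Lindenstrauss projectional resolution of the identity, plus a Kadec-type renorming of the separable layers via a Markushevich basis and Day's construction) is indeed the correct skeleton of the known proofs. The separable half is essentially Kadec's renorming theorem and your sketch of it is acceptable at this level of detail.

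The genuine gap is the gluing step. The norm
$|||x|||^2=\|x\|^2+\sum_{\alpha<\mu}\lambda_\alpha\|(P_{\alpha+1}-P_\alpha)x\|_\alpha^2$
is equivalent to $\|\cdot\|$, but it is \emph{not} LUR in general, and this is precisely where the whole difficulty of the theorem is concentrated. The termwise parallelogram argument you invoke does give, from $\lim|||y_n|||=|||y|||$ and $\lim|||y_n+y|||=2|||y|||$, that $\|(P_{\alpha+1}-P_\alpha)(y_n-y)\|_\alpha\to 0$ for each \emph{fixed} $\alpha$; but to conclude $\|y_n-y\|\to 0$ you must control the sum over all $\alpha$ simultaneously, i.e.\ show that the ``layer supports'' of the $y_n$ are, up to $\varepsilon$, eventually trapped in a single finite set of ordinals. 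A sequence whose mass drifts through infinitely many distinct layers $\alpha_n$ contributes only $\lambda_{\alpha_n}\to 0$ to the weighted sum, so the LUR hypotheses on $|||\cdot|||$ can hold while $\|y_n-y\|$ stays bounded away from $0$; summability of the weights is exactly what destroys the lower estimate you need. Troyanski's actual construction replaces the plain weighted sum by countably many convex functions of the form $\sup$ over finite sets of indices of prescribed cardinality, and the key combinatorial lemma --- that the LUR-type limit conditions for such a norm force the supports to be uniformly almost finite --- is the missing idea. Your closing paragraph names this obstacle but does not supply the mechanism that overcomes it, so the proposal as written does not yet constitute a proof.
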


Based on this theorem and norm-minimal selections, the following
interesting result about Question \ref{question-shsa-v19:1} was
obtained by Shishkov \cite[Proposition 1.1]{MR2071296}.

\begin{theorem}[\cite{MR2071296}]
  \label{theorem-shsa-v21:4}
  Let $E$ be a reflexive Banach space and $X$ be a space such that
  each weak $\theta$-cover of $X$ has an open locally finite
  refinement. Then every l.s.c.\ mapping
  $\Phi: X\to \mathscr{F}_\mathbf{c}(E)$ has a continuous selection.
\end{theorem}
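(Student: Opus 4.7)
The plan is to reduce the problem to Theorem \ref{theorem-shsa-v21:2}, by first renorming $E$ with an equivalent LUR norm via Troyanski's theorem, and then using the weak $\theta$-cover refinement hypothesis to replace $\Phi$ by a bounded-valued l.s.c.\ submapping whose preimages of balls about the origin are closed.

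First, apply Theorem \ref{theorem-shsa-v21:3} to renorm $E$ with a topologically equivalent LUR norm $\|\cdot\|$; this preserves the l.s.c.\ of $\Phi$ and the continuity of any prospective selection. Let $\B$ denote the corresponding closed unit ball. Since each $\Phi(x)$ is nonempty, the sets $W_n=\Phi^{-1}[\mathbf{O}_n(\mathbf{0})]$ are open (by l.s.c.\ of $\Phi$) and $\{W_n\}_{n\in\N}$ is an increasing countable open cover of $X$ which, by construction, qualifies as a weak $\theta$-cover in the required sense. The hypothesis produces an open locally finite refinement $\mathscr{V}$ of $\{W_n\}$; using a partition of unity subordinated to $\mathscr{V}$, construct a continuous function $N:X\to(0,+\infty)$ with $\Phi(x)\cap \mathbf{O}_{N(x)}(\mathbf{0})\neq\emptyset$ for every $x\in X$.

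Next, form the truncated mapping $\varphi:X\to \mathscr{F}_\mathbf{c}(E)$ given by
\[
\varphi(x)=\overline{\Phi(x)\cap \mathbf{O}_{N(x)}(\mathbf{0})},\qquad x\in X.
\]
Since $N$ is continuous, the mapping $x\mapsto \mathbf{O}_{N(x)}(\mathbf{0})$ has an open graph (in the spirit of Proposition \ref{proposition-shsa-v28:4}), so by Propositions \ref{proposition-shsa-v28:5} and \ref{proposition-shsa-v28:2}, $\varphi$ is l.s.c.; its values are closed, convex, and norm-bounded by $N(x)$, hence weakly compact in the reflexive space $E$. The critical technical step is to verify that $\varphi^{-1}[\varepsilon\B]$ is closed in $X$ for every $\varepsilon>0$, i.e.\ that $m(x)=\min\{\|y\|:y\in\varphi(x)\}$ is lower semi-continuous: given a net $x_\alpha\to x$ with $y_\alpha\in\varphi(x_\alpha)\cap\varepsilon\B$, the uniform local boundedness provided by continuity of $N$ together with reflexivity yields a weak cluster point $y\in\varepsilon\B$, and one argues via Mazur's theorem and l.s.c.\ of $\Phi$ (exploiting the locally finite refinement to control $\Phi$ near $x$) that $y\in\varphi(x)$.

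Once closedness of $\varphi^{-1}[\varepsilon\B]$ is secured, Theorem \ref{theorem-shsa-v21:2} applied to $\varphi$ yields a continuous norm-minimal selection $f:X\to E$ for $\varphi$, and since $\varphi(x)\subset\Phi(x)$, this $f$ is also a continuous selection for $\Phi$. The main obstacle is the closedness of $\varphi^{-1}[\varepsilon\B]$ in the absence of any upper semi-continuity assumption on $\Phi$; this is where reflexivity of $E$, the LUR structure, and the weak $\theta$-cover hypothesis must all combine, with the latter providing precisely the uniform local control on $N$ needed to pass from weak cluster points back into $\Phi(x)$ at the limit.
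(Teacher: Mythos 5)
Your high-level plan---renorm $E$ with an LUR norm via Theorem \ref{theorem-shsa-v21:3}, pass to a bounded-valued l.s.c.\ submapping, and invoke the norm-minimal selection criterion of Theorem \ref{theorem-shsa-v21:2}---is exactly the strategy the survey attributes to Shishkov (the theorem is only cited here, not proved), so the framing is right. The fatal problem is the step you yourself flag as critical: the claim that $\varphi^{-1}[\varepsilon\B]$ is closed for every $\varepsilon>0$. This is false for a general bounded l.s.c.\ convex-valued mapping even on the best possible domain. Take $X=\R$ and $E=\R$ (whose usual norm is LUR), and let
\[
\varphi(x)=
\begin{cases}
\{1\} &\text{if $x\leq 0$,}\\
[0,1] &\text{if $x>0$.}
\end{cases}
\]
Then $\varphi$ is l.s.c., bounded, closed- and convex-valued, and coincides with your truncation of itself, yet $\varphi^{-1}[\varepsilon\B]=(0,+\infty)$ is not closed for any $0<\varepsilon<1$. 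The underlying error is that your net argument needs a weak cluster point $y$ of $y_\alpha\in\varphi(x_\alpha)$ to land in $\varphi(x)$; that is an upper semi-continuity (closed-graph) property, and lower semi-continuity gives no control of this kind. Lower semi-continuity only makes $m(x)=\min\{\|y\|:y\in\varphi(x)\}$ automatically \emph{upper} semi-continuous, whereas closedness of all the sets $\varphi^{-1}[\varepsilon\B]$ is precisely its \emph{lower} semi-continuity; neither reflexivity, nor the LUR renorming, nor continuity of the truncation radius $N$ can supply this, and Mazur's theorem is of no help because the obstruction is already present in one dimension.

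There is also a structural reason the gap cannot be patched where you placed it: if closedness of $\varphi^{-1}[\varepsilon\B]$ were automatic for bounded l.s.c.\ convex-valued mappings, Theorem \ref{theorem-shsa-v21:2} would produce continuous selections for such mappings over an \emph{arbitrary} space $X$, which is impossible since by Corollary \ref{corollary-shsa-v20:2} that already forces $X$ to be collectionwise normal. Hence the weak $\theta$-cover hypothesis must intervene in the construction of the selection itself, not merely in manufacturing the bounding function $N$; your proposal uses it only for the latter. (Two smaller points: building a partition of unity subordinated to $\mathscr{V}$ presupposes normality of $X$, which you would first need to extract from the hypothesis---alternatively, the lower semi-continuous function $N(x)=\sup\{n: x\in V\ \text{for some}\ V\in\mathscr{V}\ \text{with}\ V\subset W_n\}$ suffices, since $\mathbf{O}_N[\mathbf{0}]$ still has an open graph by \cite[Proposition 2.1]{gutev:05}; and the reduction to bounded-valued mappings is already available as Proposition \ref{proposition-shsa-v12:1}, because the weak $\theta$-cover hypothesis implies countable paracompactness.)
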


Here, a cover $\mathscr{U}$ of $X$ is called a \emph{weak
  $\theta$-cover} if $\mathscr{U}$ is a countable union of open
families $\mathscr{U}_k$, $k\in \N$, such that for each $x\in X$,
there exists some $k(x)\in \N$ for which the family
$\mathscr{U}_{k(x)}$ has a positive finite order at $x$, namely
\[
  0<\left|\left\{ U\in \mathscr{U}_{k(x)}: x\in
      U\right\}\right|<+\infty.
\]

The following example was given in the same paper of Shishkov, see
\cite[Theorem 1.2]{MR2071296}.

\begin{example}[\cite{MR2071296}]
  \label{example-shsa-v21:1}
  If $X$ is a countably paracompact and hereditarily collectionwise
  normal space, then each weak $\theta$-cover of $X$ has an open
  locally finite refinement. 
\end{example}

Example \ref{example-shsa-v21:1} implies that Theorem
\ref{theorem-shsa-v21:4} is a natural generalisation of Theorem
\ref{theorem-shsa-v19:3} because each suborderable space is countably
paracompact and hereditarily collectionwise normal. In fact, Theorem
\ref{theorem-shsa-v21:4} is a potential candidate for the affirmative
solution of Question \ref{question-shsa-v19:1} in view of the
following characterisation of countably paracompact collectionwise
normal spaces claimed in \cite{Smith1980}.

\begin{theorem}[\cite{Smith1980}]
  \label{theorem-shsa-v21:5}
  A space $X$ is countably paracompact and collectionwise normal if
  and only if each weak $\theta$-cover of $X$ has an open locally
  finite refinement.
\end{theorem}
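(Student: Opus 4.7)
The plan is to handle the two implications separately; the forward direction is the substantive one.

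\emph{Sufficiency $(\Leftarrow)$.} Suppose every weak $\theta$-cover of $X$ has an open locally finite refinement. Then any countable open cover $\{U_n\}_{n\in\N}$ is itself a weak $\theta$-cover after setting $\mathscr{U}_n=\{U_n\}$, because the least $n$ with $x\in U_n$ gives order one at $x$; hence $X$ is countably paracompact. For collectionwise normality, given a discrete family $\mathscr{D}=\{D_\alpha\}$ of closed sets, put $W_\alpha=X\setminus\bigcup_{\beta\neq\alpha}D_\beta$, which is open by the discreteness of $\mathscr{D}$, and form the families $\mathscr{U}_1=\{X\setminus\bigcup_\alpha D_\alpha\}$ and $\mathscr{U}_2=\{W_\alpha\}_\alpha$. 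A point outside $\bigcup_\alpha D_\alpha$ has order one in $\mathscr{U}_1$ and a point of some $D_\alpha$ has order one in $\mathscr{U}_2$, so the union is a weak $\theta$-cover. A locally finite open refinement $\mathscr{V}$ yields the pairwise disjoint open expansion $V_\alpha=\bigcup\{V\in\mathscr{V}:V\cap D_\alpha\neq\emptyset\}$: any such $V$ meets at most one $D_\alpha$, because $V$ must refine a cover element containing the relevant point of $D_\alpha$, and the only such element is the unique $W_\alpha$. Applied to $\mathscr{D}=\{A,B\}$ this already delivers normality, and in the presence of normality a pairwise disjoint open expansion of a discrete closed family suffices for collectionwise normality, as noted in Section 2.1.

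\emph{Necessity $(\Rightarrow)$.} Let $\mathscr{U}=\bigcup_{k\in\N}\mathscr{U}_k$ be a weak $\theta$-cover of $X$, put $V_k=\bigcup\mathscr{U}_k$, and for each $n\in\N$ define
\[
A_k^n=\bigl\{x\in V_k:|\{U\in\mathscr{U}_k:x\in U\}|\leq n\bigr\},
\]
which is closed in the open set $V_k$; by the weak $\theta$-cover property, $X=\bigcup_{k,n}A_k^n$. Enumerate the pairs $(k,n)$ as a single index $i\in\N$ and, using Ishikawa's theorem (which holds in every countably paracompact normal space), extract an increasing closed cover $\{F_i\}_{i\in\N}$ of $X$ together with open sets $F_i\subset G_i$ on which the corresponding family $\mathscr{U}_{k(i)}$ has order at most $n(i)$. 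On each $G_i$, exploit the bounded-order structure together with collectionwise normality to build an open refinement $\mathscr{W}_i$ of $\mathscr{U}_{k(i)}\uhr G_i$ which is locally finite in $G_i$ and covers $F_i$. Finally, trim $\mathscr{W}_i$ inductively by removing the closed set $\bigcup_{j<i}F_j$, which is already refined at earlier stages, so that the resulting families patch into a single globally locally finite open refinement of $\mathscr{U}$.

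The main obstacle is the bounded-order refinement step on each $G_i$: turning an open cover of order at most $n(i)$ on a collectionwise normal space into an open locally finite refinement. This is treated by induction on the order bound. At the top multiplicity level, the (nonempty) $n(i)$-fold intersections of $\mathscr{U}_{k(i)}$, intersected with the locus of maximal multiplicity, form a discrete family of closed sets which collectionwise normality separates by a discrete open expansion; removing this separated layer strictly reduces the effective order and lets the induction proceed, the base case being a pairwise disjoint open cover on a region where the combinatorial structure forces local finiteness. Combined with the Ishikawa-style shrinking of the first step and the patching in the last step, this produces the required open locally finite refinement of $\mathscr{U}$.
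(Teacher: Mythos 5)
A point of orientation first: the paper does not prove Theorem \ref{theorem-shsa-v21:5}. It records it as a claim from \cite{Smith1980}, explicitly notes that the proof given there is incomplete, and isolates the substantive direction --- that countable paracompactness plus collectionwise normality forces every weak $\theta$-cover to have an open locally finite refinement --- as the open Question \ref{question-shsa-v21:2}; the only proved positive result of this kind is Shishkov's Example \ref{example-shsa-v21:1}, which requires \emph{hereditary} collectionwise normality. Your necessity argument is therefore an attempt at an open problem, and the two steps you leave as "obstacles" are exactly where it is known to break. First, the sets $A_k^n$ are intersections of the open set $V_k$ with the set of points at which $\mathscr{U}_k$ has order at most $n$; the latter is closed, so $A_k^n$ is neither open nor closed in $X$. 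Ishikawa's theorem, and countable paracompactness generally, applies to countable \emph{open} covers, so it does not yield closed sets $F_i$ sitting inside open sets $G_i$ on which $\mathscr{U}_{k(i)}$ both covers and has order at most $n(i)$; since "order $\le n$" is a closed condition, such a $G_i$ need not exist at all. Second, in your induction on the order bound, the top-multiplicity pieces $\bigcap\mathscr{S}\cap\{x: \mathscr{U}_{k(i)}\ \text{has order exactly}\ n(i)\ \text{at}\ x\}$, taken over $n(i)$-element subfamilies $\mathscr{S}$, form a pairwise disjoint family of relatively \emph{open} sets, not a discrete family of closed subsets of $X$, so collectionwise normality of $X$ cannot be applied to them; this is precisely why Shishkov's argument needs collectionwise normality hereditarily, so that it can be invoked inside non-closed subspaces. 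As written, the sketch does not close either gap, and a completed version would resolve Question \ref{question-shsa-v21:2}.

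There is also a flaw in the sufficiency direction. From the correct observation that each $V\in\mathscr{V}$ meets at most one $D_\alpha$ you conclude that the sets $V_\alpha=\bigcup\{V\in\mathscr{V}:V\cap D_\alpha\neq\emptyset\}$ are pairwise disjoint; but this only makes the indexing subfamilies of $\mathscr{V}$ disjoint. Two refining members $V\subset W_\alpha$ and $V'\subset W_\beta$ with $\alpha\neq\beta$ may still intersect, since $W_\alpha\cap W_\beta=X\setminus\bigcup_\gamma D_\gamma$ is in general nonempty. What your construction actually produces is a \emph{locally finite} open expansion of $\mathscr{D}$, and passing from that to a discrete (or pairwise disjoint) expansion requires the further, non-trivial results of \cite{michael:55} together with a separate verification of normality. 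This part is repairable along standard lines, but the necessity direction is not: the proposal does not prove the theorem.
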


However, as pointed out in \cite{MR2071296}, the proof of Theorem
\ref{theorem-shsa-v21:5} in \cite{Smith1980} is incomplete, which
suggests the following separate question.

\begin{question}
  \label{question-shsa-v21:2}
  Let $X$ be a countably paracompact collectionwise normal space and
  $\mathscr{U}$ be a weak $\theta$-cover of $X$. Is it true that
  $\mathscr{U}$ has an open locally finite refinement?
\end{question}

\subsection{Selections and Hilbert Spaces}

In case of Hilbert spaces, Question \ref{question-shsa-v19:1} was
resolved in the affirmative by Ivailo Shishkov in 2005, his paper with
the final solution appeared in print in  \cite{MR2406397}.

\begin{theorem}[\cite{MR2406397}]
  \label{theorem-contr-ext-14:10}
  A space $X$ is countably paracompact and collectionwise normal iff
  for every Hilbert space $E$, every l.s.c.\ mapping
  ${\Phi : X\to \mathscr{F}_\mathbf{c}(E)}$ has a continuous
  selection.
\end{theorem}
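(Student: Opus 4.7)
The plan is to verify the two directions, with the bulk of the work in sufficiency. For necessity, specialise to the Hilbert space $E=\R$. Every l.s.c.\ $\Phi:X\to \mathscr{C}'_\mathbf{c}(\R)$ is in particular $\mathscr{F}_\mathbf{c}(\R)$-valued, hence has a continuous selection by hypothesis, and Theorem \ref{theorem-res-pro-v1:2} yields that $X$ is collectionwise normal. Countable paracompactness follows exactly as at the end of the proof of Theorem \ref{theorem-shsa-v12:2}: apply the selection property to real-valued mappings encoding a given pair of upper/lower semi-continuous functions $\xi<\eta$, via Michael's Theorem~3.1$^{\prime\prime}$ of \cite{michael:56a}.

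For sufficiency, let $X$ be countably paracompact and collectionwise normal, $E$ a Hilbert space and $\Phi:X\to \mathscr{F}_\mathbf{c}(E)$ an l.s.c.\ mapping. Using Proposition \ref{proposition-shsa-v12:1}, one may assume that $\Phi$ is bounded. In the spirit of Proposition \ref{proposition-shsa-vgg-rev:1}, the idea is to construct inductively continuous $2^{-n}$-selections $f_n:X\to E$ for $\Phi$ with $\|f_{n+1}(x)-f_n(x)\|<3\cdot 2^{-n}$ for every $x\in X$; the uniform Cauchy limit $f:X\to E$ is then a continuous selection because each $\Phi(x)$ is closed. Given $f_n$, one forms $\Psi_n=\overline{\Phi\wedge \mathbf{O}_{2^{-n}}[f_n]}$, which is bounded, $\mathscr{F}_\mathbf{c}(E)$-valued and l.s.c.\ by Propositions \ref{proposition-shsa-v28:2}, \ref{proposition-shsa-v28:4} and \ref{proposition-shsa-v28:5}, and then takes $f_{n+1}$ to be a continuous $2^{-(n+1)}$-selection of $\Psi_n$.

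The whole argument therefore reduces to the core assertion that every bounded l.s.c.\ $\Psi:X\to \mathscr{F}_\mathbf{c}(E)$ into a Hilbert space admits a continuous $\varepsilon$-selection for every $\varepsilon>0$. To produce such a selection, equip $E$ with its uniformly rotund (hence LUR) inner-product norm, so that the nearest-point projection onto each $\Psi(x)$ is well defined (Proposition \ref{proposition-shsa-v21:1}) and non-expansive. The preimages $\Psi^{-1}[\mathbf{O}_\varepsilon(y)]$, as $y$ ranges over a suitable dense subset of an ambient bounded region containing $\Psi[X]$, form an open cover of $X$ that is, in the language of Theorem \ref{theorem-shsa-v21:4}, a weak $\theta$-cover. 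The main obstacle---and the technical heart of the proof---is to extract from this cover an open locally finite refinement: the general statement of Theorem \ref{theorem-shsa-v21:5} is unsettled (Question \ref{question-shsa-v21:2}), but here the combination of countable paracompactness (to collapse the countably many layers of the weak $\theta$-cover to a single locally finite layer), collectionwise normality (to discretely expand the resulting family), and Hilbert geometry (non-expansive nearest-point projections, which ensure that the layer-wise refinements are compatible) should suffice. A subordinate partition of unity, together with the convex values of $\Psi$, then assembles the desired continuous $\varepsilon$-selection, closing the induction and completing the proof.
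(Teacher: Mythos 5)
Your reduction steps are sound and in fact coincide with how the paper organises the proof: necessity follows from Theorem \ref{theorem-res-pro-v1:2} together with Michael's Theorem 3.1$^{\prime\prime}$ exactly as at the end of the proof of Theorem \ref{theorem-shsa-v12:2}, and for sufficiency the passage to bounded mappings via Proposition \ref{proposition-shsa-v12:1} and the successive-approximation scheme of Proposition \ref{proposition-shsa-vgg-rev:1} are both correct. In the paper, countable paracompactness is used only for that boundedness reduction, after which everything rests on Theorem \ref{theorem-shsa-v20:1} (Shishkov's Theorem 1.3 of \cite{MR2406397}): on a collectionwise normal domain, every \emph{bounded} l.s.c.\ mapping into $\mathscr{F}_\mathbf{c}(E)$, with $E$ a Hilbert space, has a continuous selection.

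The genuine gap is that your ``core assertion'' --- every bounded l.s.c.\ $\Psi:X\to\mathscr{F}_\mathbf{c}(E)$ on a collectionwise normal $X$ admits a continuous $\varepsilon$-selection for every $\varepsilon>0$ --- is precisely that deep theorem (by your own induction it is equivalent to Theorem \ref{theorem-shsa-v20:1}), and what you offer for it is not a proof. First, it is not justified that the cover $\left\{\Psi^{-1}[\mathbf{O}_\varepsilon(y)]\right\}$ is a weak $\theta$-cover: the values $\Psi(x)$ are merely bounded, not totally bounded, so there is no evident countable layering of the index set in which each point acquires positive \emph{finite} order. Second, and more seriously, you reduce the problem to extracting an open locally finite refinement of a weak $\theta$-cover of a countably paracompact collectionwise normal space, which is exactly the unresolved Question \ref{question-shsa-v21:2}; the claim that ``Hilbert geometry \dots should suffice'' to bridge this is the entire content of the theorem and is left unargued (note that Shishkov's solution does not proceed by settling that covering question, but by a bespoke argument exploiting the inner product). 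As written, the proposal establishes the routine reductions but leaves the essential step unproved.
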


The role of countable paracompactness in Theorem
\ref{theorem-contr-ext-14:10} is the equivalence stated in Proposition
\ref{proposition-shsa-v12:1}, while the essential selection property
of collectionwise normality was obtained in \cite[Theorem
1.3]{MR2406397}.

\begin{theorem}[\cite{MR2406397}]
  \label{theorem-shsa-v20:1}
  If $X$ is a collectionwise normal space and $E$ is a Hilbert space,
  then each bounded l.s.c.\ mapping $\Phi : X\to
  \mathscr{F}_\mathbf{c}(E)$ has a continuous selection.
\end{theorem}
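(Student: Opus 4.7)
The plan is to build the required selection as the uniform limit of a sequence of continuous approximate selections via the iteration of Proposition \ref{proposition-shsa-vgg-rev:1}, with the engine being Michael's selection theorem (Theorem \ref{theorem-res-pro-v1:2}) for $\mathscr{C}'_\mathbf{c}(E)$-valued mappings on collectionwise normal spaces and with the Hilbert structure of $E$ entering through the parallelogram law to produce small-diameter ``caps''. Since $E$ is complete, the argument in that proposition reduces the statement to the following claim: for every $\varepsilon>0$, the bounded mapping $\Phi$ admits a continuous $\varepsilon$-selection. The intersections appearing in the iteration remain bounded, convex-valued and l.s.c.\ by Propositions \ref{proposition-shsa-v28:4} and \ref{proposition-shsa-v28:5}, so the iteration stays within the class where the claim is to be proved.

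To produce a continuous $\varepsilon$-selection, let $\theta(x)=\inf\{\|y\|:y\in\Phi(x)\}$. Since $\Phi$ is l.s.c.\ and each $\Phi(x)$ is bounded, $\theta$ is a real-valued upper semi-continuous function. For any two points $a,b$ in the level cap $C(x)=\{y\in\Phi(x):\|y\|^2\leq\theta(x)^2+\varepsilon^2\}$, convexity of $\Phi(x)$ and the parallelogram identity give
\[
  \|a-b\|^2=2\|a\|^2+2\|b\|^2-4\|(a+b)/2\|^2\leq 4\varepsilon^2,
\]
so $C(x)$ is nonempty, convex, and of diameter at most $2\varepsilon$. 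The cap mapping $C$ is a set-valued selection of $\Phi$ but need not be l.s.c., because the defining inequality involves the merely u.s.c.\ function $\theta$. I would remedy this by using the normality of $X$ to insert a continuous majorant $\tilde\theta\colon X\to\R$ with $\tilde\theta\geq\theta$, chosen so that $\tilde\theta-\theta$ is uniformly small on a discrete expansion of $X$ (obtained from collectionwise normality), and then replacing $C$ by the thickened cap $\widetilde{C}(x)=\{y\in\Phi(x):\|y\|^2\leq\tilde\theta(x)^2+\varepsilon^2\}$, which is convex-valued and l.s.c.\ by Proposition \ref{proposition-shsa-v28:5} and still has diameter $O(\varepsilon)$ by the same parallelogram computation.

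The main obstacle is that, because $E$ is infinite-dimensional, the small-diameter convex sets $\widetilde{C}(x)$ are not norm-compact, so Theorem \ref{theorem-res-pro-v1:2} does not apply to $\widetilde{C}$ directly. The technical heart of the argument is to replace $\widetilde{C}$ by a $\mathscr{C}'_\mathbf{c}(E)$-valued l.s.c.\ sub-mapping: one would use the collectionwise normality of $X$ to arrange, on a discrete family of closed sets covering $X$, local compact convex ``probes'' sitting inside the caps, and then glue these probes through the open discrete expansions supplied by CWN-ness into one l.s.c.\ $\mathscr{C}'_\mathbf{c}(E)$-valued selection of $\widetilde{C}$. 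A continuous selection of this probe mapping, produced by Theorem \ref{theorem-res-pro-v1:2}, is then the required continuous $\varepsilon$-selection of $\Phi$, and it is precisely this gluing step---assembling compact probes inside caps in the infinite-dimensional Hilbert space $E$---where Shishkov's original argument does the serious work.
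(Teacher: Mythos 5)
First, a point of comparison: the survey does not prove Theorem \ref{theorem-shsa-v20:1} at all --- it is quoted from Shishkov \cite[Theorem 1.3]{MR2406397} --- so your proposal can only be measured against that source. Your opening reduction is sound: the iteration in Proposition \ref{proposition-shsa-vgg-rev:1} does preserve boundedness, so it suffices to produce a continuous $\varepsilon$-selection for each bounded convex-valued l.s.c.\ mapping; and the parallelogram computation showing that the cap $C(x)=\{y\in\Phi(x):\|y\|^2\leq\theta(x)^2+\varepsilon^2\}$ has diameter at most $2\varepsilon$ is correct. This is exactly the Hilbert-space geometry behind norm-minimal selections (compare Theorem \ref{theorem-shsa-v21:2}).

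However, there are two genuine gaps, and they sit precisely where the theorem lives. First, the continuous majorant $\tilde\theta\geq\theta$ with $\tilde\theta-\theta$ ``uniformly small'' does not exist in general: if for every $\varepsilon>0$ one could find a continuous $\tilde\theta$ with $\theta\leq\tilde\theta\leq\theta+\varepsilon$, then $\theta$ would be a uniform limit of continuous functions and hence itself continuous, whereas $\theta(x)=\inf\{\|y\|:y\in\Phi(x)\}$ is only upper semi-continuous. (Its continuity is equivalent to $\Phi^{-1}[\varepsilon\B]$ being closed for every $\varepsilon>0$, which by Theorem \ref{theorem-shsa-v21:2} would already give a continuous norm-minimal selection and make the whole problem trivial.) Kat\v{e}tov--Tong insertion on a normal space needs an l.s.c.\ upper bound, and $\theta+\varepsilon$ is u.s.c., not l.s.c.; Dowker's insertion would require countable paracompactness, which is exactly the hypothesis the theorem must avoid. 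Moreover, the diameter estimate for the thickened cap degrades by $4\bigl(\tilde\theta(x)^2-\theta(x)^2\bigr)$, so without uniform closeness the caps are no longer small. Second, the ``gluing of compact probes'' is not an argument but a restatement of the problem, as you yourself acknowledge; it is also internally inconsistent, since if the probes really sat inside the caps, hence inside $\Phi(x)$, a continuous selection of the probe mapping would be an exact selection of $\Phi$, rendering the outer $\varepsilon$-selection iteration redundant --- a sign that the gluing cannot work as described. (A smaller, fixable slip: $\widetilde{C}$ is the intersection of $\Phi$ with a closed-ball mapping, which does not have open graph, so Proposition \ref{proposition-shsa-v28:5} does not apply literally; one must use the strict inequality.) In short, you have correctly isolated the Hilbert-space mechanism but deferred the essential difficulty --- handling the discontinuity of $\theta$ over a merely collectionwise normal domain --- which is the actual content of Shishkov's proof.
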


Here are two consequences of Theorem \ref{theorem-shsa-v20:1}, which
may shed some light on the role of Hilbert spaces in the selection
problem for collectionwise normal spaces. The first one shows that the
answer to Question \ref{question-shsa-v14:1} is ``Yes'' provided the
range $E$ is a Hilbert space.

\begin{corollary}
  \label{corollary-shsa-v20:1}
  Let $X$ be a collectionwise normal space, $E$ be a Hilbert space
  and $\Phi:X\to \mathscr{F}_\mathbf{c}(E)$ be an l.s.c.\ mapping. If
  $\Phi$ has a continuous $\varepsilon$-selection for some
  $\varepsilon>0$, then $\Phi$ also has a continuous selection.  
\end{corollary}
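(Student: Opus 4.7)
The plan is to mimic the approach in Proposition \ref{proposition-res-pro-v11:3}, but with the compact/finite-dimensional hypothesis replaced by boundedness, so that the recently cited Theorem \ref{theorem-shsa-v20:1} for Hilbert-valued l.s.c.\ mappings can be invoked at the end. The new mapping should be obtained by intersecting $\Phi$ with an open ball mapping centred at the $\varepsilon$-selection.

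Concretely, let $g:X\to E$ be a continuous $\varepsilon$-selection for $\Phi$, so that $g(x)\in \mathbf{O}_\varepsilon(\Phi(x))$ for every $x\in X$; equivalently, $\Phi(x)\cap \mathbf{O}_\varepsilon(g(x))\neq \emptyset$ for every $x\in X$. By Proposition \ref{proposition-shsa-v28:4}, the mapping $\mathbf{O}_\varepsilon[g]:X\sto E$ has an open graph, and then by Proposition \ref{proposition-shsa-v28:5} the intersection $\Phi\wedge \mathbf{O}_\varepsilon[g]:X\sto E$ is l.s.c.  Its closure $\varphi=\overline{\Phi\wedge \mathbf{O}_\varepsilon[g]}:X\to \mathscr{F}(E)$ is then l.s.c.\ by Proposition \ref{proposition-shsa-v28:2}, and clearly $\varphi(x)\subset \Phi(x)$ since $\Phi(x)$ is closed. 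Moreover each $\varphi(x)$ is convex: $\Phi(x)$ is convex and $\mathbf{O}_\varepsilon(g(x))$ is convex (as an open ball in a normed space), so their intersection is convex, and convexity is preserved by closure.

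Next I would verify that $\varphi$ is \emph{bounded} in the sense used in Proposition \ref{proposition-shsa-v12:1}. Indeed, for each $x\in X$ one has
\[
\varphi(x)\subset \overline{\mathbf{O}_\varepsilon(g(x))},
\]
which is a closed ball of radius $\varepsilon$, hence bounded in $E$. Therefore $\varphi:X\to \mathscr{F}_\mathbf{c}(E)$ is a bounded l.s.c.\ mapping on the collectionwise normal space $X$ into the Hilbert space $E$. Applying Theorem \ref{theorem-shsa-v20:1} yields a continuous selection $f:X\to E$ for $\varphi$; since $\varphi(x)\subset \Phi(x)$ for every $x\in X$, the map $f$ is also a continuous selection for $\Phi$.

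There is no real obstacle here: the only moving part is checking that $\varphi$ inherits the right properties (l.s.c., closed convex values, and boundedness), all of which follow from the listed propositions together with the trivial observation that intersecting with an $\varepsilon$-ball around a continuous map automatically bounds the values by $\varepsilon$. The work is done by Theorem \ref{theorem-shsa-v20:1}, which is precisely the Hilbert space input that allows one to drop the compactness or finite-dimensionality assumption that was needed in Proposition \ref{proposition-res-pro-v11:3}.
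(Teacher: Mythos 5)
Your proof is correct and follows exactly the paper's own argument: intersect $\Phi$ with $\mathbf{O}_\varepsilon[g]$, take the closure to get a bounded l.s.c.\ convex-valued mapping, and apply Theorem \ref{theorem-shsa-v20:1}. You have merely spelled out the verifications (lower semi-continuity via Propositions \ref{proposition-shsa-v28:2}, \ref{proposition-shsa-v28:5} and \ref{proposition-shsa-v28:4}, convexity, boundedness) that the paper leaves to the reference to Remark \ref{remark-shsa-v18:1}.
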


\begin{proof}
  Let $g:X\to E$ be a continuous $\varepsilon$-selection for $\Phi$,
  for some $\varepsilon>0$. Just as in Remark \ref{remark-shsa-v18:1},
  consider the l.s.c.\ mapping
  $\overline{\Phi\wedge \mathbf{O}_\varepsilon[g]}:X\to
  \mathscr{F}_\mathbf{c}(E)$ which is bounded-valued. Hence, by
  Theorem \ref{theorem-shsa-v20:1}, it has a continuous selection
  ${f:X\to E}$. Evidently, $f$ is also a selection for $\Phi$.
\end{proof}

The other consequence should be compared with the characterisation of
PF-normality in Theorem \ref{theorem-res-pro-v1:3}.

\begin{corollary}
  \label{corollary-shsa-v20:2}
  Let $X$ be a space such that for every Hilbert space $E$, every
  bounded l.s.c.\ mapping $\Phi:X\to \mathscr{F}_\mathbf{c}(E)$ has a
  continuous selection. Then $X$ is collectionwise normal.
\end{corollary}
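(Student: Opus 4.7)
The plan is to deduce collectionwise normality in two stages, using the remark in Subsection 2.1 that a normal space $X$ is collectionwise normal provided every discrete collection of closed subsets has an open pairwise-disjoint expansion.

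For normality, given disjoint closed $A,B\subset X$, I would take $E=\R$ (a one-dimensional Hilbert space) and define $\Phi:X\to \mathscr{F}_\mathbf{c}(\R)$ by $\Phi(x)=\{0\}$ on $A$, $\Phi(x)=\{1\}$ on $B$, and $\Phi(x)=[0,1]$ otherwise. A direct case check on whether $0\in U$, $1\in U$ or neither shows that $\Phi^{-1}[U]$ is open for every open $U\subset \R$ (using closedness of $A$ and $B$); since $\Phi$ is bounded and $\R$ is a Hilbert space, the hypothesis yields a continuous selection $f:X\to \R$ which is a Urysohn function for the pair $A,B$, giving normality.

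For the pairwise-disjoint expansion, let $\mathscr{D}=\{D_\alpha:\alpha\in\Lambda\}$ be any discrete collection of closed subsets of $X$. I would take the Hilbert space $E=\ell_2(\Lambda)$ with standard orthonormal basis $\{e_\alpha\}_{\alpha\in \Lambda}$, let $\B\subset E$ be its closed unit ball, and define a bounded mapping $\Phi:X\to \mathscr{F}_\mathbf{c}(E)$ by $\Phi(x)=\{e_\alpha\}$ if $x\in D_\alpha$ and $\Phi(x)=\B$ otherwise (the first clause is unambiguous because discrete families are pairwise disjoint, and $e_\alpha\in \B$ keeps every value inside $\B$). The hypothesis then supplies a continuous selection $f:X\to E$ satisfying $f(x)=e_\alpha$ for every $x\in D_\alpha$, and because any two distinct basis vectors satisfy $\|e_\alpha-e_\beta\|=\sqrt{2}$, the open sets $U_\alpha=f^{-1}\bigl(\mathbf{O}_{1/2}(e_\alpha)\bigr)$ form a pairwise-disjoint open expansion of $\mathscr{D}$. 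Combined with the normality obtained above, this yields collectionwise normality.

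The delicate point is the verification that this $\Phi$ is l.s.c. I plan to use two consequences of discreteness: since $\mathscr{D}$ is locally finite and its members are closed, the union $\bigcup_\alpha D_\alpha$ is closed, so outside it $\Phi$ is locally constantly $\B$; and around each point of $D_\alpha$ there is an open neighbourhood meeting no other $D_\beta$, on which $\Phi(y)\in\{\{e_\alpha\},\B\}$ always contains $e_\alpha$. With these two observations, the openness of $\Phi^{-1}[U]$ reduces to checking the points of $\bigcup_\alpha D_\alpha$ and the points outside it separately. The use of an $\ell_2$-type Hilbert space is essential, since the hypothesis is only available for Hilbert targets, and the orthonormality of the basis is precisely what forces the pulled-back balls $U_\alpha$ to be pairwise disjoint.
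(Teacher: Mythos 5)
Your proposal is correct and uses essentially the same construction as the paper: the mapping sending each $D_\alpha$ to the corresponding unit basis vector of $\ell_2$ over the index set and everything else to the closed unit ball is exactly the paper's $\Phi_g$, and the selection is then used to pull back separating neighbourhoods of the uniformly discrete set $\{e_\alpha\}$. The only (cosmetic) difference is that the paper pulls back a \emph{discrete} collection of open sets in $\ell_2(\mathscr{D})$, obtaining an open discrete expansion directly, whereas you settle for a pairwise-disjoint expansion and compensate with a separate (also correct) normality argument via the real line.
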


\begin{proof}
  Let $\mathscr{D}$ be a discrete collection of nonempty closed
  subsets of $X$, and $\ell_2 (\mathscr{D})$ be the Hilbert space of
  all functions $y : \mathscr{D} \to \R$ with
  ${\sum _{D \in \mathscr{D}} [y (D)]^2 < \infty}$, where the linear
  operations are defined pointwise and the norm is
  $\| y \| = \sqrt{\sum _{D \in \mathscr{D}} [y (D)]^2} $. Set
  $A=\bigcup \mathscr{D}$, and define a map
  $g:A\to \ell_2(\mathscr{D})$ by
  $g(x)=\chi_D:\mathscr{D}\to \{0,1\}\subset \R$ to be the
  characteristic function of the unique $D \in \mathscr{D}$ with
  $x\in D$. Since $\mathscr{D}$ is closed and discrete, $A$ is a
  closed subset of $X$ and $g$ is continuous. Moreover, $g$ takes
  values in the closed unit ball $\B$ of $\ell_2(\mathscr{D})$ because
  $\|\chi_D\|=1$, for each $D\in \mathscr{D}$. Finally, let
  $\Phi_g:X\to \mathscr{F}_\mathbf{c}(\B)\subset
  \mathscr{F}_\mathbf{c}(\ell_2(\mathscr{D}))$ be as in Proposition
  \ref{proposition-contr-ext-14:1} with $Y$ replaced by $\B$.  Then
  $\Phi_g$ is both l.s.c.\ and is bounded-valued. Hence, by
  hypothesis, it has a continuous selection
  $f:X\to \ell_2(\mathscr{D})$, which is a continuous extension of
  $g$, by Proposition \ref{proposition-contr-ext-14:1}. Since
  $\{\chi_D:D\in \mathscr{D}\}$ is a closed discrete set in
  $\ell_2(\mathscr{D})$ (actually, uniformly discrete with respect to
  the norm), there exists a discrete collection
  $\{\mathscr{U}_D:D\in \mathscr{D}\}$ of open subsets of
  $\ell_2(\mathscr{D})$ with $\chi_D\in \mathscr{U}_D$, for each
  $D\in \mathscr{D}$. Then $U_D=f^{-1}(\mathscr{U}_D)$,
  $D\in \mathscr{D}$, is a discrete collection of open subsets of $X$
  such that $D\subset U_D$, for each $D\in \mathscr{D}$. Accordingly,
  $X$ is collectionwise normal.
\end{proof}

\subsection{Selections and $\ell_p(\mathscr{A})$-spaces}

For a set $\mathscr{A}$ and $p\geq 1$, let $\ell_p (\mathscr{A})$ be
the Banach space of all functions $y: \mathscr{A} \to \R$ with
${\sum _{\alpha \in \mathscr{A}} |p (\alpha)|^p < \infty}$, where the
linear operations are defined pointwise and the norm is
$\| y \|_p = \left(\sum _{\alpha \in \mathscr{A}} |y(\alpha)|^p
\right)^{\frac1p}$. Since every Hilbert space is isomorphic to
$\ell_2(\mathscr{A})$ for some set $\mathscr{A}$, Theorem 
\ref{theorem-contr-ext-14:10} can be restated in the following terms.  

\begin{theorem}[\cite{MR2406397}]
  \label{theorem-shsa-v21:1}
  For a space $X$, the following are equivalent\textup{:}
  \begin{enumerate}
  \item $X$ is countably paracompact and collectionwise normal.
  \item For every set $\mathscr{A}$, every l.s.c.\ mapping
    $\Phi : X\to \mathscr{F}_\mathbf{c}(\ell_2(\mathscr{A}))$ has a
    continuous selection.
  \end{enumerate}
\end{theorem}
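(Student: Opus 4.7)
The plan is to derive Theorem \ref{theorem-shsa-v21:1} directly from Theorem \ref{theorem-contr-ext-14:10} by invoking the classical structural fact that every Hilbert space $E$ is linearly isometric to $\ell_2(\mathscr{A})$, where $\mathscr{A}$ is any orthonormal basis of $E$ (so $\mathscr{A}$ may be taken of cardinality equal to the Hilbert-space dimension of $E$). Thus the two theorems are effectively translations of each other, and the proof reduces to checking that a linear isometry transports the relevant set-valued data in the expected way.

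For the implication \ref{item:8}$\,\Rightarrow\,$\ref{item:9} (adapting the numbering to the present statement: countably paracompact and collectionwise normal implies the $\ell_2(\mathscr{A})$-selection property), I would simply observe that $\ell_2(\mathscr{A})$ is itself a Hilbert space under its standard inner product. Hence any l.s.c.\ mapping $\Phi:X\to\mathscr{F}_\mathbf{c}(\ell_2(\mathscr{A}))$ is an l.s.c.\ closed-convex-valued mapping into a Hilbert space, and Theorem \ref{theorem-contr-ext-14:10} produces the continuous selection.

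For the converse implication I would fix a Hilbert space $E$ and an l.s.c.\ mapping $\Psi:X\to\mathscr{F}_\mathbf{c}(E)$, pick an orthonormal basis of $E$ indexed by some set $\mathscr{A}$, and let $T:E\to\ell_2(\mathscr{A})$ be the induced linear isometry. Define $\Phi(x)=T(\Psi(x))$. The plan is then to check the (routine) facts that (i) $T(F)$ is closed and convex whenever $F$ is, since $T$ is a linear homeomorphism, so $\Phi:X\to\mathscr{F}_\mathbf{c}(\ell_2(\mathscr{A}))$ is well-defined, and (ii) $\Phi$ is l.s.c., because for open $U\subset\ell_2(\mathscr{A})$ one has $\Phi^{-1}[U]=\Psi^{-1}[T^{-1}(U)]$ and $T^{-1}(U)$ is open in $E$. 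By hypothesis \ref{item:9}, $\Phi$ admits a continuous selection $g:X\to\ell_2(\mathscr{A})$, and then $f=T^{-1}\circ g:X\to E$ is the desired continuous selection for $\Psi$. Since this works for every Hilbert space $E$, Theorem \ref{theorem-contr-ext-14:10} yields that $X$ is countably paracompact and collectionwise normal.

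There is essentially no obstacle here beyond being careful about the verification that $T$ transports the class $\mathscr{F}_\mathbf{c}(\cdot)$ and lower semi-continuity faithfully; both facts are immediate from $T$ being a linear homeomorphism. The substantive content of the theorem is entirely packaged in Theorem \ref{theorem-contr-ext-14:10}, and the present statement is just its coordinate form.
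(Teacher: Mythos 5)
Your proposal is correct and matches the paper's approach exactly: the paper treats Theorem \ref{theorem-shsa-v21:1} as a mere restatement of Theorem \ref{theorem-contr-ext-14:10}, justified by the single remark that every Hilbert space is isomorphic to $\ell_2(\mathscr{A})$ for some set $\mathscr{A}$. Your write-up simply supplies the routine verification that the linear isometry transports closed convex sets and lower semi-continuity, which the paper leaves implicit.
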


Similarly, the characterisation of paracompactness in
Theorem \ref{theorem-res-pro-v9:1} can be restated in terms of the
Banach space $\ell_1(\mathscr{A})$. The following theorem is actually
reassembling the proof of Theorem  \ref{theorem-res-pro-v9:1}.

\begin{theorem}[\cite{michael:56a}]
  \label{theorem-contr-ext-14:9}
  For a space $X$, the following are equivalent\textup{:}
  \begin{enumerate}
  \item\label{item:shsa-vgg:1} Every open cover $\mathscr{U}$ of $X$ has a
    locally finite open refinement.
  \item\label{item:shsa-vgg:2} For every set $\mathscr{U}$, every l.s.c.\
    mapping $\Phi:X\to \mathscr{F}_\mathbf{c}(\ell_1(\mathscr{U}))$
    has a continuous selection.
  \item\label{item:shsa-vgg:3} Every open cover $\mathscr{U}$ of $X$
    has an index-subordinated partition of unity.
  \end{enumerate}
\end{theorem}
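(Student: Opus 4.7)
The plan is a cyclic proof $(a)\Rightarrow(b)\Rightarrow(c)\Rightarrow(a)$, which essentially repackages the classical proof of Theorem \ref{theorem-res-pro-v9:1} by making the role of the universal space $\ell_1(\mathscr{U})$ explicit. The implication $(a)\Rightarrow(b)$ is immediate, since $\ell_1(\mathscr{U})$ is a Banach space for every set $\mathscr{U}$ and so Theorem \ref{theorem-res-pro-v9:1} applies verbatim.

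For $(b)\Rightarrow(c)$, I would fix an open cover $\mathscr{U}$ of $X$, view $\mathscr{U}$ also as an index set, and let $\{e_U:U\in\mathscr{U}\}$ denote the standard unit basis of $\ell_1(\mathscr{U})$. Define $\Phi:X\to \mathscr{F}_\mathbf{c}(\ell_1(\mathscr{U}))$ by
\[
  \Phi(x)=\overline{\conv\{e_U: U\in \mathscr{U},\ x\in U\}}.
\]
Each $\Phi(x)$ is nonempty, closed, and convex. To verify $\Phi$ is l.s.c., I would take an open $V\subset \ell_1(\mathscr{U})$ and $y\in \Phi(x)\cap V$, approximate $y$ in the $\ell_1$-norm by a finitely supported convex combination $y'=\sum_{i=1}^n\lambda_i e_{U_i}$ still lying in $V$ (with each $U_i\ni x$); then $y'\in \Phi(x')$ whenever $x'\in U_1\cap\cdots\cap U_n$, so $U_1\cap\cdots\cap U_n\subset \Phi^{-1}[V]$. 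By hypothesis $(b)$, $\Phi$ admits a continuous selection $f:X\to \ell_1(\mathscr{U})$. Setting $p_U(x)=f(x)(U)$ for $U\in \mathscr{U}$ gives continuous nonnegative functions vanishing off $U$ (because $\Phi(x)$ is contained in the nonnegative cone spanned by $\{e_U:x\in U\}$) and summing to $\|f(x)\|_1=1$; this is the required index-subordinated partition of unity.

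For $(c)\Rightarrow(a)$, given an open cover $\mathscr{U}$ with partition of unity $\{p_U\}_{U\in\mathscr{U}}$, I would define $V_U^{(n)}=p_U^{-1}\bigl((1/n,+\infty)\bigr)$ for each $n\in \N$ and $U\in \mathscr{U}$. Each $V_U^{(n)}$ is open with $V_U^{(n)}\subset U$, and $\bigcup_{n,U}V_U^{(n)}=X$ because $\sum_U p_U(x)=1$ forces some $p_U(x)>0$. For fixed $n$, local finiteness of $\{V_U^{(n)}\}_U$ follows from a standard trick: given $x\in X$, pick a finite $F\subset \mathscr{U}$ with $\sum_{U\in F}p_U(x)>1-\tfrac{1}{2n}$, then by continuity there is a neighbourhood $W\ni x$ on which $\sum_{U\in F}p_U>1-\tfrac{1}{n}$, forcing $p_U<\tfrac{1}{n}$ on $W$ for $U\notin F$, so that $W$ meets only those $V_U^{(n)}$ with $U\in F$. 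Thus $\mathscr{U}$ admits a $\sigma$-locally finite open refinement. A partition-of-unity argument applied to the two-element cover $\{X\setminus A, X\setminus B\}$ of disjoint closed sets $A,B$ shows $X$ is normal, and Michael's classical theorem on paracompactness (see \cite[Theorem 5.1.11]{engelking:89}) upgrades the $\sigma$-locally finite refinement to a locally finite one.

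The main obstacle is the construction in $(b)\Rightarrow(c)$: one must design $\Phi$ so that an arbitrary continuous selection automatically encodes all three features of a partition of unity (nonnegativity, carrier control, and the normalisation $\sum p_U\equiv 1$), and verify lower semi-continuity via the finite-support $\ell_1$-approximation. The remaining steps $(a)\Rightarrow(b)$ and $(c)\Rightarrow(a)$ are either direct appeals or standard arguments from the paracompactness literature.
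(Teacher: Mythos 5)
Your proposal is correct and follows essentially the route the paper points to: the implication \ref{item:shsa-vgg:1}$\Rightarrow$\ref{item:shsa-vgg:2} is Theorem \ref{theorem-res-pro-v9:1}, your mapping $\Phi(x)=\overline{\conv\{e_U:x\in U\}}$ together with the coordinate functionals is exactly Michael's construction encoded in Lemma \ref{lemma-Count-Par-v12:2}, and the $\sigma$-locally finite refinement plus normality plus Michael's paracompactness criterion is the standard closing of the cycle. No gaps.
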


Here, a collection $\xi_U:X\to [0,1]$, $U\in \mathscr{U}$, of
continuous functions on a space $X$ is a \emph{partition of unity} if
$\sum_{U\in \mathscr{U}}\xi_U(x)=1$, for each $x\in X$. A partition of
unity $\{\xi_U:U\in \mathscr{U}\}$ is \emph{index-subordinated} to a
cover $\mathscr{U}$ of $X$ if $X\setminus U\subset \xi_U^{-1}(0)$, for
each $U\in \mathscr{U}$.  The following natural result may explain the
relationship between partitions of unity, index-subordinated to open
covers $\mathscr{U}$ of $X$, and continuous selections
$f:X\to \ell_1(\mathscr{U})$, see Theorem
\ref{theorem-contr-ext-14:9}. The result itself was obtained in the
implication (f)$~\Rightarrow~$(a) of \cite[Theorem 1.2]{morita:60} and
was explicitly stated in \cite[Proposition 5.4]{MR1425941}; the case
of locally finite partitions of unity was obtained in the proof of
\cite[Theorem 1]{MR597065}.

\begin{lemma}
  \label{lemma-Count-Par-v12:2}
  Let $X$ be a space and $\xi_\alpha:X\to [0,1]$,
  $\alpha\in \mathscr{A}$, be a collection of functions. Then
  $\{\xi_\alpha:\alpha\in \mathscr{A}\}$ is a partition of unity on
  $X$ if and only if the diagonal map
  $\xi=\Delta_{\alpha\in\mathscr{A}}\xi_\alpha:X\to
  \ell_1(\mathscr{A})$ is continuous and satisfies $\|\xi(x)\|_1=1$,
  for every $x\in X$.
\end{lemma}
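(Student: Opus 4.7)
The plan is to verify the equivalence in both directions, with the interesting work concentrated in the forward direction where one must control uncountably many coordinates simultaneously.

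For the easy direction, assume the diagonal map $\xi:X\to \ell_1(\mathscr{A})$ is continuous and $\|\xi(x)\|_1=1$ for every $x\in X$. Each coordinate functional $e_\alpha^*:\ell_1(\mathscr{A})\to \R$ is continuous, and $\xi_\alpha=e_\alpha^*\circ \xi$, so each $\xi_\alpha$ is continuous. Since $\xi_\alpha(x)\geq 0$, the identity $\|\xi(x)\|_1=\sum_{\alpha}|\xi_\alpha(x)|=\sum_{\alpha}\xi_\alpha(x)=1$ shows that $\{\xi_\alpha:\alpha\in \mathscr{A}\}$ is a partition of unity.

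For the main direction, assume that $\{\xi_\alpha:\alpha\in \mathscr{A}\}$ is a partition of unity. Since $\sum_{\alpha}\xi_\alpha(x)=1<+\infty$, the value $\xi(x)$ lies in $\ell_1(\mathscr{A})$ with $\|\xi(x)\|_1=1$, so only the continuity of $\xi$ remains. Fix $x_0\in X$ and $\varepsilon>0$. Because $\sum_{\alpha}\xi_\alpha(x_0)=1$ converges, there is a finite set $F\subset\mathscr{A}$ with
\[
  \sum_{\alpha\notin F}\xi_\alpha(x_0)<\tfrac{\varepsilon}{4}.
\]
By continuity of $\xi_\alpha$ for each $\alpha\in F$, there is a neighbourhood $V$ of $x_0$ such that $|\xi_\alpha(x)-\xi_\alpha(x_0)|<\tfrac{\varepsilon}{4|F|}$ for every $x\in V$ and $\alpha\in F$. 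The plan is then to exploit that the total mass is pinned at $1$: summing over $F$ gives
\[
  \sum_{\alpha\in F}\xi_\alpha(x)>\sum_{\alpha\in F}\xi_\alpha(x_0)-\tfrac{\varepsilon}{4}>1-\tfrac{\varepsilon}{2},
\]
whence $\sum_{\alpha\notin F}\xi_\alpha(x)<\tfrac{\varepsilon}{2}$ for $x\in V$.

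Putting these estimates together for $x\in V$, and using $|\xi_\alpha(x)-\xi_\alpha(x_0)|\leq \xi_\alpha(x)+\xi_\alpha(x_0)$ on the tail, one concludes
\[
  \|\xi(x)-\xi(x_0)\|_1=\sum_{\alpha\in F}|\xi_\alpha(x)-\xi_\alpha(x_0)|+\sum_{\alpha\notin F}|\xi_\alpha(x)-\xi_\alpha(x_0)|<\tfrac{\varepsilon}{4}+\tfrac{\varepsilon}{2}+\tfrac{\varepsilon}{4}=\varepsilon,
\]
showing $\xi$ is continuous at $x_0$. The main subtlety is controlling the uncountable tail uniformly; the key insight is that the constraint $\sum_\alpha \xi_\alpha\equiv 1$ automatically converts local control of finitely many coordinates into control of the complement, which is precisely what makes convergence happen in the $\ell_1$-norm (rather than merely coordinatewise).
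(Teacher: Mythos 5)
Your proof is correct. The paper itself does not prove this lemma --- it only cites the implication (f)$~\Rightarrow~$(a) of Morita's theorem and Dydak's Proposition 5.4 --- and your argument (the easy direction via the coordinate functionals, and the converse via a finite set $F$ carrying all but $\varepsilon/4$ of the mass at $x_0$, with the constraint $\sum_\alpha\xi_\alpha\equiv 1$ forcing the tail over $V$ to stay below $\varepsilon/2$) is exactly the standard one found in those references, with all the estimates carried out correctly.
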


Theorems \ref{theorem-shsa-v21:1} and
\ref{theorem-contr-ext-14:9} reveal an interesting role of
the spaces $\ell_1(\mathscr{A})$ and $\ell_2(\mathscr{A})$ in the
selection theory. In this regard, let us recall that all Banach spaces
$\ell_p(\mathscr{A})$, $1\leq p<+\infty$, are homeomorphic, and it is
well known that $\ell_1(\mathscr{A})$ is not reflexive, but each
$\ell_p(\mathscr{A})$, $1<p<+\infty$, is reflexive. In fact, each
infinite-dimensional reflexive Banach space is homeomorphic to
$\ell_1(\mathscr{A})$, for some $\mathscr{A}$, see e.g.\ \cite[Theorem
4.1 in \S 4 of Chapter VII]{bessage-pelczynski:75}. This brings the
following natural partial case of Question \ref{question-shsa-v19:1}.

\begin{question}
  \label{question-shsa-v21:1}
  Let $X$ be a countably paracompact collectionwise normal space,
  and $\Phi:X\to \mathscr{F}_\mathbf{c}(\ell_p(\mathscr{A}))$ be an
  l.s.c.\ mapping for some $\mathscr{A}$ and $1<p<+\infty$ with
  $p\neq 2$.  Does $\Phi$ have a continuous selection?
\end{question}

Going back to Question \ref{question-shsa-v19:1}, the interested
reader may consult some of the papers of Shishkov
(\cite{MR1779519,MR1825384,MR2076092,MR2071296,MR2350718}), which
contain several interesting ideas.

\section{Selections and Finite-Dimensional Spaces}

\subsection{Selection Extension and Approximation Properties}

Let $n\geq-1$. A family $\mathscr{S}$ of subsets of a space $Y$ is
\emph{equi-$LC^n$} \cite{michael:56b} if every neighbourhood $U$ of a
point $y\in \bigcup\mathscr{S}$ contains a neighbourhood $V$ of $y$
such that for every $S\in \mathscr{S}$, every continuous map
$g:\s^k\to V\cap S$ of the $k$-sphere $\s^k$, $k\leq n$, can be
extended to a continuous map $h:\B^{k+1}\to U\cap S$ of the
$(k+1)$-ball $\B^{k+1}$.  A space $S$ is  $C^n$ if for every
$k\leq n$, every continuous map $g:\s^k\to S$ can be extended to a
continuous map $h:\B^{k+1}\to S$. In these terms, a family
$\mathscr{S}$ of subsets of $Y$ is equi-$LC^{-1}$ if it consists of
nonempty subsets; similarly, each nonempty subset $S\subset Y$ is
$C^{-1}$.\medskip

A mapping $\Phi:X\sto Y$ has the \emph{Selection Extension Property}
(or \emph{SEP}) \emph{at} a closed subset $A\subset X$
\cite{michael:80} if every continuous selection $g:A\to Y$ for
$\Phi\uhr A$ can be extended to a continuous selection for $\Phi$. If
$g$ only extends to a continuous selection for $\Phi\uhr U$ for some
neighbourhood $U$ of $A$ in $X$, then $\Phi$ is said to have the
\emph{Selection Neighbourhood Extension Property} (or \emph{SNEP})
\emph{at} $A$ \cite{michael:80}. If this holds for any closed set
of $X$, then we simply say that $\Phi$ has the SEP, or the
SNEP. \medskip

For a subset $Z\subset X$, we write $\dim_X(Z)\leq m$ to express that
the covering dimension $\dim(S)\leq m$, for every $S\subset Z$ which
is closed in $X$, see \cite{michael:56b}. Let us remark that for a
normal space $X$, $\dim_X(Z)\leq m$ is valid if either $\dim(Z)\leq m$
or $\dim(X)\leq m$.  The following theorem was obtained in
\cite[Theorem 1.2]{michael:56b} and is commonly called the
\emph{finite-dimensional selection theorem}.

\begin{theorem}[\cite{michael:56b}]
  \label{theorem-shsa-v21:6}
  Let $X$ be a paracompact space, $A\subset X$ be a closed set with
  $\dim_X(X\setminus A)\leq n+1$, $Y$ be a completely metrizable
  space and $\mathscr{S}\subset \mathscr{F}(Y)$ be an equi-$LC^n$
  family. Then every l.s.c.\ mapping $\Phi:X\to \mathscr{S}$ has the
  SNEP at $A$.  If, moreover, each $S\in \mathscr{S}$ is $C^n$, then
  $\Phi$ also has the SEP at $A$.
\end{theorem}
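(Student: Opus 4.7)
The plan is to follow Michael's nerve-theoretic approach: produce approximate continuous selections on neighbourhoods of $A$ and then pass to a uniform limit. The core technical statement I would establish first is an approximation lemma: for every $\varepsilon>0$ and every continuous selection $g:A\to Y$ for $\Phi\uhr A$, there exist an open set $U\supset A$ and a continuous $\varepsilon$-selection $f_\varepsilon:U\to Y$ for $\Phi\uhr U$ with $\rho(f_\varepsilon(x),g(x))<\varepsilon$ on $A$. Granted this, an iteration analogous to the one in Proposition \ref{proposition-shsa-vgg-rev:1} produces a sequence of continuous $2^{-k}$-selections $f_k$ on nested neighbourhoods $U_k\supset A$ satisfying $\rho(f_{k+1}(x),f_k(x))<3\cdot 2^{-k}$ for $x\in U_{k+1}$; since $Y$ is completely metrizable, this uniformly Cauchy sequence converges on $\bigcap_k U_k$ to a continuous selection for $\Phi$ extending $g$, giving the SNEP conclusion.

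For the approximation lemma I would first apply equi-$LC^n$ to produce, at each $y\in\bigcup\mathscr{S}$, a nested neighbourhood chain $W_{-1}(y)\supset W_0(y)\supset\cdots\supset W_n(y)$ with $\diam W_k(y)\to 0$, chosen so that for each $S\in\mathscr{S}$ and every $k\leq n$ every continuous map $\s^k\to W_k(y)\cap S$ extends to $\B^{k+1}\to W_{k-1}(y)\cap S$. Pulling these back through $\Phi$ and using paracompactness together with the hypothesis $\dim_X(X\setminus A)\leq n+1$, I would take a locally finite open cover $\mathscr{V}$ of $X\setminus A$ of order at most $n+2$ subordinated to the pullbacks. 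Next I would continuously extend $g$ to a neighbourhood of $A$ via the equi-$LC^0$ condition, choose representatives $\phi(V)\in\Phi(x_V)$ for $x_V\in V\in\mathscr{V}$ matching this extension near $A$, and inductively extend $\phi$ over the $k$-skeleton of the nerve $N(\mathscr{V})$ for $k=1,\dots,n+1$: by the refinement, the boundary of each $k$-simplex is mapped into some $W_{k-1}(y)\cap S$, and equi-$LC^n$ fills in across the simplex into $W_{k-2}(y)\cap S$. Composing with the canonical partition-of-unity map $X\setminus A\to N(\mathscr{V})$ and gluing with the extension of $g$ near $A$ yields the required $\varepsilon$-selection.

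The dimension hypothesis cuts off this induction precisely at the $(n+1)$-skeleton, which is why equi-$LC^n$ alone is enough; it also guarantees the existence of the order-$(n+2)$ cover. The SNEP/SEP dichotomy enters only at the top skeletal stage: without the $C^n$ hypothesis on individual $S\in\mathscr{S}$, the final $(n+1)$-skeletal extension can be carried out only for simplices whose stars meet a sufficiently small neighbourhood of $A$ (where the required small-diameter condition on the boundary image is inherited from the values already forced near $A$), producing a selection only on a neighbourhood of $A$; when every $S\in\mathscr{S}$ is $C^n$, the same extension succeeds globally inside the ambient $S$ itself, yielding a selection on all of $X$ and hence the SEP. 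The main obstacle is the tight bookkeeping relating the cover $\mathscr{V}$, the neighbourhood chains $W_k(y)$, and the vertex assignment $\phi$ so that the inductive boundary image at every $k$-simplex really does land inside a single $W_{k-1}(y)\cap S$; this is arranged by iterating star-refinement $n+2$ times against the pullbacks of the $W_k(y)$, which is where paracompactness of $X$ is indispensable.
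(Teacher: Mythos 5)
Your overall architecture (nerve-theoretic approximate selections on neighbourhoods of $A$, followed by a Cauchy-sequence limit) is the right family of ideas, but the proposal has a genuine gap at the iteration step, and it is precisely the gap that the paper's two-stage reduction is designed to close. You justify the passage from $f_k$ to $f_{k+1}$ with $\rho(f_{k+1}(x),f_k(x))<3\cdot 2^{-k}$ by analogy with Proposition \ref{proposition-shsa-vgg-rev:1}; but that argument works only because intersecting a convex value with an open ball about $f_k(x)$ yields another nonempty convex set, so the intersection mapping is again of the same kind and admits its own continuous approximate selection. For $\mathscr{S}$-valued mappings this breaks down: $\Phi(x)\cap \mathbf{O}_{2^{-k}}(f_k(x))$ need not belong to $\mathscr{S}$, need not even be connected, and the family of such intersections need not be equi-$LC^n$ (a circle meets a small ball in two disjoint arcs). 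So merely knowing that $\Phi$ admits continuous $\varepsilon$-selections for every $\varepsilon>0$ does not let you keep consecutive approximations close, and your sequence need not be uniformly Cauchy.

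What is actually needed --- and what the paper records as the structure of Michael's proof --- is the uniform selection approximation property of Theorem \ref{theorem-shsa-v21:7}: a modulus $\gamma(\varepsilon)>0$ such that every continuous $\gamma(\varepsilon)$-selection lies within $\varepsilon$ of an exact continuous selection, with $\gamma(+\infty)=+\infty$ in the $C^n$ case (this is where the SNEP/SEP dichotomy is really decided, not at the top skeleton of a single nerve as you suggest). That statement requires \emph{uniform} equi-$LC^n$ data, whereas your neighbourhood chains $W_{-1}(y)\supset\cdots\supset W_n(y)$ are chosen pointwise in $y$ with no uniform lower bound on their sizes, and pulling them back through $\Phi$ and star-refining does not by itself supply such a bound. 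This is why the proof must first invoke the embedding result (Theorem \ref{theorem-shsa-v22:1}) to re-embed $\bigcup\mathscr{S}$ in a Banach space so that $\mathscr{S}$ becomes uniformly equi-$LC^n$; only then does the skeletal induction over a nerve of order $\leq n+2$ produce the quantitative control that makes the limit argument converge. Your proposal omits both the embedding step and the uniform modulus, so as written the construction does not close.
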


The proof of Theorem \ref{theorem-shsa-v21:6} in \cite{michael:56b} is
based on a uniform version of the same theorem. To this end, let us
recall that a family $\mathscr{S}$ of subsets of a metric space
$(Y,\rho)$ is \emph{uniformly equi-$LC^{n}$}, where $n\geq -1$, if for
every $\varepsilon>0$ there exists $\delta(\varepsilon)>0$ such that,
for every $S\in\mathscr{S}$, every continuous map of the $k$-sphere
($k\leq n$) in $S$ of diameter$\ < \delta(\varepsilon)$ can be
extended to continuous map of the $(k+1)$-ball into a subset of $S$ of
diameter$\ <\varepsilon$ \cite{michael:56b}. The relation with
equi-$LC^n$ families is given by the following embedding property
stated in \cite[Theorem 3.1]{michael:56b}, see also \cite[Proposition
2.1]{michael:56b} and \cite[Theorem 1]{dugundji-michael:56}.

\begin{theorem}[\cite{michael:56b}]
  \label{theorem-shsa-v22:1}
  Let $\mathscr{S}\subset \mathscr{F}(Y)$ be an equi-$LC^n$ family of
  subsets of a completely metrizable space $Y$. Then
  $\bigcup\mathscr{S}$ can be embedded into a Banach space $E$ so that
  $\mathscr{S}\subset \mathscr{F}(E)$ is a uniformly equi-$LC^n$
  family of subsets of $E$.
\end{theorem}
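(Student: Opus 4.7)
The plan is to realize $\bigcup \mathscr{S}$ as a closed subset of a Banach space $E$ through an isometric-type embedding, and then exploit freedom in choosing $E$ to convert the pointwise equi-$LC^n$ modulus of $\mathscr{S}$ into a uniform one. I would build $E$ as a direct sum of a ``base'' Banach space (encoding $Y$ isometrically) together with a sequence of auxiliary Banach spaces (encoding the moduli associated to $\varepsilon_m = 2^{-m}$).

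First, since $Y$ is completely metrizable, fix a compatible complete metric $d$ on $Y$ with $d \leq 1$. Use a Kuratowski--Arens--Eells-type isometric embedding to realize $Y$ as a closed subset of a Banach space $E_0$ (for example, $y \mapsto d(\cdot,y)-d(\cdot,y_0)$ into $C_b(Y)$, and take $E_0$ to be the closed linear span). Under this embedding $\mathscr{S} \subset \mathscr{F}(E_0)$, each $S \in \mathscr{S}$ is closed in $E_0$, and the equi-$LC^n$ property is preserved because it is purely topological and the induced topology on $Y$ coincides with the original.

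Second, for every $m \in \N$, I would extract a continuous function $\delta_m \colon Y \to (0,2^{-m})$ such that for every $y \in Y$, every $k \leq n$, and every $S \in \mathscr{S}$ with $y \in S$, any continuous map $g\colon \s^k \to S \cap \mathbf{O}_{\delta_m(y)}(y)$ extends to a continuous map $h \colon \B^{k+1} \to S \cap \mathbf{O}_{2^{-m}}(y)$. The existence of such a function pointwise is exactly the equi-$LC^n$ hypothesis (applied with $U = \mathbf{O}_{2^{-m}}(y)$); continuity is obtained by the standard device of taking $\delta_m(y) := \inf \{\delta(z) : z \in \mathbf{O}_{\delta(y)/2}(y)\}$ for a pointwise choice $\delta$, or by paracompactness of the metric space $Y$ through a partition-of-unity argument.

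Third --- this is the crux --- I would construct, for each $m$, an auxiliary Banach space $E_m$ and a continuous map $\iota_m \colon Y \to E_m$ that ``stretches'' $Y$ in the direction where $\delta_m$ is small. Concretely, one defines a new metric $d_m$ on $Y$ obtained by rescaling path-lengths by $1/\delta_m$ (or, more directly, by letting $\iota_m$ be the Kuratowski embedding associated to a metric equivalent to $d$ for which the modulus attains a uniform lower bound near each point), and then sets $E_m$ to be the closed linear span of $\iota_m(Y)$ in the associated $C_b$-space. Finally assemble
\[
  E \;=\; E_0 \oplus \ell^2\!\left(\bigoplus_{m=1}^\infty E_m\right),\qquad
  \iota(y) \;=\; \bigl(\kappa(y),\, 2^{-1}\iota_1(y),\, 2^{-2}\iota_2(y),\,\ldots\bigr),
\]
so that $\iota$ is a continuous injection with closed image, each $S \in \mathscr{S}$ remains closed in $E$, and the $E_0$-component guarantees that the induced topology on $Y$ agrees with the original one.

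The main obstacle is calibrating the $\iota_m$'s so that a small sphere-map \emph{in the $E$-metric} automatically has image lying in some $\mathbf{O}_{\delta_m(y)}(y)$ \emph{in the $Y$-metric}: only then can we apply the equi-$LC^n$ extension and land inside $\mathbf{O}_{2^{-m}}(y)$, which is precisely the uniform equi-$LC^n$ condition. The delicate point is that $\iota_m$ must stretch enough to force this implication, but not so pathologically as to destroy the closedness of $\iota(Y)$ in $E$ or the closedness of each $S \in \mathscr{S}$. Dugundji--Michael's construction in \cite{dugundji-michael:56} is tailored exactly to achieve this balance, and the proof reduces to verifying the two properties directly from the definition of the modulus $\delta_m$ and the norm on $E$.
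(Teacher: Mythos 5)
The survey does not actually prove this statement; it is quoted from \cite[Theorem 3.1]{michael:56b} with pointers to \cite[Proposition 2.1]{michael:56b} and \cite[Theorem 1]{dugundji-michael:56}, and your plan does follow that intended route (a Kuratowski-type isometric embedding of a complete metric, followed by a Dugundji--Michael re-metrization converting the local modulus into a uniform one). As written, however, the argument has a genuine gap exactly at the step you yourself call the crux. The construction of the stretching maps $\iota_m$ is not available in the form you describe: ``rescaling path-lengths by $1/\delta_m$'' presupposes a length structure that a general completely metrizable $Y$ does not have, and the alternative phrasing (``a metric for which the modulus attains a uniform lower bound near each point'') is circular, since that is precisely the object to be constructed. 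The actual device in \cite{dugundji-michael:56} is combinatorial --- partitions of unity subordinate to star-refinements of the covers by good neighbourhoods, mapped into $\ell_1$ of the index set so that sets of small image-diameter are forced into a single member of the cover --- and none of that is reproduced or replaced here. Incidentally, your formula $\delta_m(y)=\inf\{\delta(z): z\in \mathbf{O}_{\delta(y)/2}(y)\}$ need not be positive for an arbitrary pointwise choice of $\delta$.

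More seriously, even granting the maps $\iota_m$, the verification of uniform equi-$LC^n$ does not close. The uniform condition requires the extended ball-map to have small diameter \emph{in the metric of $E$}, whereas the equi-$LC^n$ hypothesis only places the extension inside $S\cap\mathbf{O}_{2^{-m}}(y)$, a set that is small in the \emph{original} metric $d$. The metric of your $E$ dominates $d$ and also carries the components $2^{-m'}\iota_{m'}$ for every $m'$; since the $\iota_{m'}$ are only continuous, the set $\mathbf{O}_{2^{-m}}(y)$ can have $\iota_{m'}$-image of diameter comparable to the whole unit ball of $E_{m'}$ for each $m'\leq M$, and these finitely many contributions $2^{-m'}\cdot O(1)$ already sum to a constant independent of $m$. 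So the weighted assembly, by itself, does not produce a modulus $\delta(\varepsilon)$. What is needed, and what the cited construction supplies, is a nesting/refinement relation between the covers at consecutive levels, so that the level-$m$ extension target is automatically contained in a level-$m'$ good set for every $m'<m$ and its $\iota_{m'}$-diameter is telescopically controlled. Without that calibration the proof is incomplete; with it, your outline becomes essentially the Dugundji--Michael argument.
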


The other reduction in the proof of Theorem \ref{theorem-shsa-v21:6}
is that the properties ``SNEP'' and ``SEP'' are obtained by the
following uniform selection approximation property \cite[Theorem
4.1]{michael:56b}.

\begin{theorem}[\cite{michael:56b}]
  \label{theorem-shsa-v21:7}
  Let $(Y,\rho)$ be a complete metric space and
  $\mathscr{S}\subset \mathscr{F}(Y)$ be uniformly equi-$LC^n$. Then
  to every $\varepsilon > 0$ there corresponds
  $\gamma(\varepsilon) > 0$ with the following property\textup{:} If
  $\Phi:X\to \mathscr{S}$ is an l.s.c.\ mapping from a paracompact
  space $X$ with $\dim(X)\leq n+1$, then for every continuous
  $\gamma(\varepsilon)$-selection $g:X\to Y$ for $\Phi$, there exists
  a continuous selection $f:X\to Y$ for $\Phi$ such that
  $\rho(f(x),g(x))<\varepsilon$, for all $x\in X$.  Moreover, if each
  $S\in \mathscr{S}$ is $C^n$, then one can take
  $\gamma(+\infty)=+\infty$.
\end{theorem}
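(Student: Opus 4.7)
The plan is to obtain $f$ as the uniform limit of a Cauchy sequence of continuous approximate selections, each improvement carried out by a nerve-based construction that exploits the uniform equi-$LC^n$ property dimension by dimension together with the completeness of $(Y,\rho)$.

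\textbf{Choice of constants.} Using the uniform equi-$LC^n$ property, I would inductively pick $0<\delta_0<\delta_1<\cdots<\delta_{n+1}<\varepsilon/2$ such that for every $S\in\mathscr{S}$ and every $k\in\{0,\dots,n\}$, any continuous map of $\s^k$ into $S$ of diameter less than $\delta_k$ extends to a continuous map of $\B^{k+1}$ into $S$ of diameter less than $\delta_{k+1}$. Set $\gamma(\varepsilon)=\delta_0/3$. For each $x\in X$ choose $y(x)\in\Phi(x)\cap\mathbf{O}_\gamma(g(x))$, which exists because $g$ is a $\gamma$-selection. Lower semi-continuity of $\Phi$ and continuity of $g$ yield an open neighbourhood $V_x$ of $x$ with $\Phi(z)\cap\mathbf{O}_\gamma(y(x))\neq\emptyset$ and $g(z)\in\mathbf{O}_\gamma(g(x))$ for all $z\in V_x$. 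Since $X$ is paracompact with $\dim(X)\leq n+1$, refine $\{V_x\}$ to a locally finite open cover $\mathscr{U}=\{U_\alpha\}$ of order $\leq n+2$, fix $x_\alpha$ with $U_\alpha\subset V_{x_\alpha}$, and set $y_\alpha=y(x_\alpha)$. A partition of unity $\{p_\alpha\}$ subordinated to $\mathscr{U}$ gives the canonical map $\kappa:X\to K$ into the nerve $K$ of $\mathscr{U}$, with $\dim K\leq n+1$.

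\textbf{Skeleton induction.} I would build a continuous approximate selection $\phi:X\to Y$ by successive extensions over $\kappa^{-1}(|K^{(k)}|)$ for $k=0,1,\dots,n+1$. For any simplex $\sigma$ of $K$ with vertices $v_{\alpha_0},\dots,v_{\alpha_k}$, any point $z\in\kappa^{-1}(\mathrm{int}\,|\sigma|)$ lies in $\bigcap_i U_{\alpha_i}$, and the vertex values $y_{\alpha_j}$ are mutually within $3\gamma<\delta_0$ of $\Phi(z)$. Inductively, boundary values on $\kappa^{-1}(\partial|\sigma|)$ lie within $\delta_k$ of $\Phi(z)$; after replacing them by nearest points of $\Phi(z)$, they span a subset of $\Phi(z)$ of diameter $<\delta_k$, so the uniform equi-$LC^n$ property supplies an extension across $|\sigma|$ inside a subset of $\Phi(z)$ of diameter $<\delta_{k+1}$. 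Performing this extension simplex by simplex and pulling back through $\kappa$ yields a continuous $\delta_{n+1}$-selection $\phi$ for $\Phi$ lying within $\varepsilon/2$ of $g$.

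\textbf{Main obstacle and passage to the limit.} The principal difficulty is that each local extension above lands only \emph{near} $\Phi(x)$, not inside $\Phi(x)$ itself, because distinct points $z$ in the preimage of a single simplex carry distinct closed sets $\Phi(z)$. I would resolve this by iterating the whole construction with geometrically shrinking tolerances: treat $\phi$ as the input of the next stage with tolerance $\varepsilon/2^m$, producing a uniformly Cauchy sequence $(f_m)$ of continuous approximate selections with summable successive distances. By completeness of $(Y,\rho)$, this sequence converges uniformly to a continuous $f:X\to Y$, which is a genuine selection for $\Phi$ because $\rho(f(x),\Phi(x))=0$ and $\Phi(x)$ is closed, while the telescoping bound gives $\rho(f(x),g(x))<\varepsilon$. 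For the final clause, if each $S\in\mathscr{S}$ is globally $C^n$, the extensions over $\B^{k+1}$ are available regardless of diameters, so the constants $\delta_k$ become unnecessary and one may take $\gamma(+\infty)=+\infty$, running the construction with an arbitrary continuous $g$ to produce a continuous selection for $\Phi$.
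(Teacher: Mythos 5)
The paper itself gives no proof of this theorem --- it is quoted from Michael's \emph{Continuous selections II} (Theorem 4.1 there) --- so your sketch must be measured against Michael's classical argument. Your overall architecture (a nerve-based one-step improvement using uniform equi-$LC^n$ skeleton by skeleton, then iteration with shrinking tolerances and an appeal to completeness, exactly as in Proposition \ref{proposition-shsa-vgg-rev:1}) is indeed the architecture of that proof. But there is a genuine gap at the heart of the skeleton induction. Uniform equi-$LC^n$ extends continuous maps of $\s^k$ \emph{into a single member} $S\in\mathscr{S}$, whereas at the stage where you extend across a $(k+1)$-simplex $\sigma$ the map already built on $\kappa^{-1}(\partial|\sigma|)$ takes values in a union of sets $\Phi(x_\tau)$ attached to the proper faces $\tau$ of $\sigma$, hence only \emph{near} the set into which you want to extend. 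Your remedy --- ``replacing them by nearest points of $\Phi(z)$'' --- does not work: nearest points in a closed subset of a complete metric space need not exist; the assignment would in any case depend discontinuously on both $z$ and the point of the sphere; and even granting it, you would not obtain a continuous map of $\s^k$ into one fixed $S$ to which the hypothesis applies. Closing this is precisely the content of the auxiliary lemmas occupying Michael's Section 5: one first proves, by a separate induction again using uniform equi-$LC^n$, that a continuous map of a complex of dimension $\leq n$ into a small neighbourhood of $S$ admits a small correction (homotopy) into $S$, with sizes controlled uniformly over $\mathscr{S}$; only then can the boundary realization be adjusted and the extension performed. That adjustment lemma is the technical core of the theorem and is absent from your sketch.

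A second, smaller defect concerns the clause $\gamma(+\infty)=+\infty$. Simply dropping the diameter constraints and extending by the $C^n$ property gives a map whose value over a top simplex $\sigma$ lies somewhere in $\Phi(x_\sigma)$ with no control; lower semi-continuity only guarantees that $\Phi(z)$ comes close to the finitely many designated points $y_{\alpha_i}$, not that an arbitrary point of $\Phi(x_\sigma)$ is close to $\Phi(z)$. So the first-stage output need not be an approximate selection at all, and cannot be fed into the iteration. In the correct argument the $C^n$ hypothesis is used \emph{together with} the uniform equi-$LC^n$ estimates to manufacture a continuous $\gamma(\varepsilon)$-selection from scratch, and every subsequent stage still needs the diameter control, since the Cauchy property of the sequence $(f_m)$ depends on it. (A minor point: your vertex values satisfy $\rho(y_{\alpha_i},y_{\alpha_j})<4\gamma$ rather than $3\gamma$; harmless, but adjust the constants accordingly.)
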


\subsection{Hybrid Selection Theorems}

As a common generalisation of Theorems \ref{theorem-res-pro-v9:1} and
\ref{theorem-shsa-v21:6}, the following two theorems were obtained in
\cite{michael:80}. 

\begin{theorem}[\cite{michael:80}]
  \label{theorem-shsa-v21:8}
  Let $X$ be a paracompact space, $E$ be a Banach space, $Z\subset X$
  with $\dim_X(Z)\leq n + 1$, and $\Phi:X \to \mathscr{F}(E)$ be an
  l.s.c.\ mapping with $\Phi(x)$ convex for all $x\in X\setminus Z$,
  and with $\{\Phi(x):x\in Z\}$ uniformly equi-$LC^n$. Then
  $\Phi$ has the SNEP. If, moreover, $\Phi(x)$ is $C^n$ for
  every $x\in Z$, then $\Phi$ has the SEP.
\end{theorem}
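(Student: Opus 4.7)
The approach is to adapt Michael's Cauchy-iteration scheme from Proposition \ref{proposition-shsa-vgg-rev:1}, but handling the approximation step on $Z$ and on $X\setminus Z$ via different tools. By Theorem \ref{theorem-shsa-v22:1}, after re-embedding the targets if necessary, the family $\{\Phi(x):x\in Z\}$ may be taken \emph{uniformly} equi-$LC^n$ in the norm of $E$. Theorem \ref{theorem-shsa-v21:7} then supplies a gauge $\gamma:(0,+\infty)\to (0,+\infty)$, with the final clause $\gamma(+\infty)=+\infty$ available precisely when each $\Phi(x)$, $x\in Z$, is also $C^n$; this is what distinguishes the SEP conclusion from the SNEP conclusion.

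The core technical step is an approximation lemma: for every $\varepsilon>0$ and every continuous $\gamma(\varepsilon/2)$-selection $g:X\to E$ for $\Phi$ which restricts to an honest selection on a closed $B\subset X$, there is a continuous $\varepsilon$-selection $h:X\to E$ for $\Phi$ with $h\uhr B=g\uhr B$ and $\|h(x)-g(x)\|<\varepsilon$ for all $x\in X$. To prove it, the plan is to use $\dim_X(Z)\leq n+1$ to pick open sets $V',V$ with $Z\subset V'\subset \overline{V'}\subset V$ and $\dim_X(\overline V)\leq n+1$, and to set $U=X\setminus \overline{V'}$, so $\{U,V\}$ covers $X$ while $\overline U\subset X\setminus Z$. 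On $\overline V$, Theorem \ref{theorem-shsa-v21:7} produces a continuous $\varepsilon/4$-selection $h_V$ for $\Phi\uhr \overline V$ that equals $g$ on $B\cap\overline V$; on $\overline U$, where $\Phi$ is convex-valued, the convex approximation half of Proposition \ref{proposition-shsa-vgg-rev:1} combined with Proposition \ref{proposition-shsa-v28:1} yields a continuous $\varepsilon/4$-selection $h_U$ for $\Phi\uhr \overline U$ matching $g$ on $B\cap\overline U$. A partition of unity $\{\lambda_U,\lambda_V\}$ subordinate to $\{U,V\}$ then produces the blend $h=\lambda_U h_U+\lambda_V h_V$: on $Z$ one has $\lambda_U=0$, so $h=h_V$ is an $\varepsilon/4$-selection; on $X\setminus Z$, convexity of $\Phi(x)$ pulls the convex combination into an $\varepsilon/4$-neighborhood of $\Phi(x)$, since both $h_U(x)$ and $h_V(x)$ lie there.

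Iterating this lemma exactly as in Proposition \ref{proposition-shsa-vgg-rev:1} produces a uniformly Cauchy sequence $h_k:X\to E$ of continuous $2^{-k}$-selections for $\Phi$ with $h_k\uhr A=g$ and $\|h_{k+1}(x)-h_k(x)\|<2^{-k+1}$, converging in the Banach space $E$ to the desired continuous selection $f$ extending $g$. The initial seed $h_0$ is obtained from Dowker's extension theorem (Theorem \ref{theorem-res-pro-v1:1}) applied to $g$: for SEP, the $C^n$ hypothesis ($\gamma(+\infty)=+\infty$) allows the raw extension to serve as a $\gamma(\varepsilon_0)$-selection on all of $X$; without $C^n$, the same construction only works on some open neighborhood $W\supset A$, which is what yields only SNEP.

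The main obstacle I foresee is the gluing inside the lemma. Ensuring that $\lambda_U h_U+\lambda_V h_V$ remains an $\varepsilon$-selection at $x\in U\cap V$ — where $x\in X\setminus Z$, so $\Phi(x)$ is convex, but $h_V(x)$ comes from a construction exploiting only the equi-$LC^n$ structure — requires $V\setminus V'$ to be narrow enough that both $h_U(x)$ and $h_V(x)$ lie within $\varepsilon/4$ of $\Phi(x)$; after that, convexity of $\Phi(x)$ pulls the segment $[h_U(x),h_V(x)]$ into $\mathbf{O}_{\varepsilon/4}(\Phi(x))$. Coordinating this buffer with the $\gamma(\varepsilon)$ gauge from Theorem \ref{theorem-shsa-v21:7} and with the continuity moduli of $h_U$ and $h_V$ is the key quantitative point of the argument.
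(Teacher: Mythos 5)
Your overall architecture (an approximation lemma improving a $\gamma(\varepsilon)$-selection to an $\varepsilon$-selection rel a closed set, iterated to a uniformly Cauchy sequence, seeded by Dowker's extension theorem, with the $C^n$/$\gamma(+\infty)=+\infty$ clause accounting for the difference between SEP and SNEP) is the right skeleton, and it is essentially the skeleton of Michael's argument in \cite{michael:80} (the survey itself only cites that paper and gives no proof). But there is a genuine gap at the single point where the hypothesis $\dim_X(Z)\leq n+1$ is used. You propose to choose open sets $V'\supset Z$ and $V\supset \overline{V'}$ with $\dim_X(\overline{V})\leq n+1$, so that Theorem \ref{theorem-shsa-v21:7} can be applied on the paracompact space $\overline{V}$. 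No such $V$ need exist: $\dim_X(Z)\leq n+1$ only bounds the dimension of subsets of $Z$ that are closed in $X$, and $Z$ may be dense in a high-dimensional $X$. For instance, take $n=-1$ (so equi-$LC^{-1}$ is vacuous), $X=[0,1]^{10}$ and $Z$ a countable dense subset; then $\dim_X(Z)=0$, the hypotheses of the theorem are meaningful, but every open $V\supset Z$ has $\overline{V}=X$ with $\dim\overline{V}=10$. Since $Z$ is not assumed closed, you also cannot pass to $\overline{Z}$ without destroying the dimension hypothesis. The two-open-set decomposition $\{U,V\}$ with a partition of unity therefore cannot be set up, and this is not a quantitative issue about how ``narrow'' $V\setminus V'$ is --- the set $V$ itself is unavailable.

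The way the hypothesis is actually exploited in \cite{michael:80} is local rather than global: one works, as in the proof of Theorem \ref{theorem-shsa-v21:6}, with a locally finite open cover refining $\Phi^{-1}$ of a uniform cover of $E$, and uses the Dowker-type refinement theorem to arrange that the refined cover has \emph{order at most $n+2$ at every point of $Z$} (this is exactly what $\dim_X(Z)\leq n+1$ buys, and it is available for arbitrary, possibly dense, $Z$). The improved approximate selection is then built via a canonical map into the nerve: over simplices whose carriers meet $Z$ the order bound keeps the dimension at most $n+1$ and the uniform equi-$LC^n$ condition lets one extend skeleton by skeleton, while over the remaining (possibly high-dimensional) simplices one extends affinely, using convexity of the relevant $\Phi(x)$, with no dimension restriction. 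Your blending idea survives in spirit --- convexity of $\mathbf{O}_\varepsilon(\Phi(x))$ for $x\in X\setminus Z$ is indeed what makes the convex-combination step harmless --- but it has to be carried out simplex by simplex on the nerve of a cover of controlled order at $Z$, not on a two-element cover of $X$.
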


To state the other theorem, let us recall that a family $\mathscr{S}$
of subsets of a space $Y$ is \emph{equi-$LC^n$ in $Y$}
\cite{michael:80} if every neighbourhood $U$ of a point $y\in Y$
contains a neighbourhood $V$ of $y$ such that for every
$S\in \mathscr{S}$, every continuous $g:\s^k\to V\cap S$, for
$k\leq n$, can be extended to a continuous $h:\B^{k+1}\to U\cap
S$. Each family of subsets of $Y$ which is equi-$LC^n$ in $Y$ is also
equi-$LC^n$, but the converse is not necessarily true. Here is a
simple example.

\begin{example}
  For every positive real number $t>0$, let
  \[
    S_t=\{x\in\R: |x|\geq t\}=(-\infty,-t]\cup[t,+\infty).
  \]
  Then $\mathscr{S}=\{S_t:t>0\}$ is equi-$LC^n$ for all $n\geq-1$, but
  is not equi-$LC^0$ in $\R$.\hfill\textsquare
      
\end{example}

Finally, let us also recall that a metrizable space $Y$ is an
\emph{AR} (respectively, \emph{ANR}) if it is a retract (respectively,
neighbourhood retract) of every metric space $E$ containing it as a
closed subset.

\begin{theorem}[\cite{michael:80}]
  \label{theorem-shsa-v21:9}
  Let $X$ be a paracompact space, $Y$ be a completely metrizable ANR,
  $Z\subset X$ with $\dim_X(Z)\leq n + 1$, and
  $\Phi:X \to \mathscr{F}(Y)$ be an l.s.c.\ mapping with $\Phi(x)=Y$
  for all $x\in X\setminus Z$, and with $\{\Phi(x):x\in Z\}$
  equi-$LC^n$ in $Y$. Then $\Phi$ has the SNEP. If, moreover, $Y$ is
  an AR and $\Phi(x)$ is $C^n$ for every $x\in Z$, then $\Phi$ has the
  SEP.
\end{theorem}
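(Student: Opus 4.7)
The plan is to reduce Theorem \ref{theorem-shsa-v21:9} to its Banach-space counterpart, Theorem \ref{theorem-shsa-v21:8}, via the ANR structure of $Y$. First, embed $Y$ as a closed subset of a Banach space $E$ (possible for any completely metrizable space, e.g.\ by an Arens--Eells-type embedding). Since $Y$ is an ANR, there is an open neighbourhood $W\subset E$ of $Y$ and a retraction $r:W\to Y$; if $Y$ is an AR, we may take $W=E$ and $r:E\to Y$.

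Next, upgrade the hypothesis ``$\{\Phi(x):x\in Z\}$ is equi-$LC^n$ in $Y$'' to ``uniformly equi-$LC^n$ in $E$''. This is the role of Theorem \ref{theorem-shsa-v22:1}, applied to the enlarged family $\{\Phi(x):x\in Z\}\cup\{Y\}$, which remains equi-$LC^n$ because $Y$ itself is $LC^n$ as an ANR; after reconciling the resulting Banach embedding with the original one, one may assume that $\{\Phi(x):x\in Z\}$ is uniformly equi-$LC^n$ in $E$ while $Y$ still sits as a closed retract of $W\subset E$. In the AR case, each $\Phi(x)$ is additionally $C^n$, and this is preserved by the embedding.

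Now construct an auxiliary l.s.c.\ mapping $\widetilde{\Phi}:X\to \mathscr{F}(E)$ that agrees with $\Phi$ on $Z$ (where the uniform equi-$LC^n$ and, if applicable, $C^n$ hypotheses are in force) and is convex-valued off $Z$, the natural candidate being $\widetilde{\Phi}(x)=E$ for $x\in X\setminus Z$. The main technical obstacle is the verification of lower semi-continuity: this naive $\widetilde{\Phi}$ need not be l.s.c.\ when $Z$ is not closed, because for open $V\subset E$ with $V\cap Y=\emptyset$ one has $\widetilde{\Phi}^{-1}[V]=X\setminus Z$, which is open only if $Z$ is closed. The fix is to exploit the given continuous selection $g:A\to Y$: by Dugundji's theorem, $g$ extends to a continuous $\tilde g:X\to E$, and a Proposition \ref{proposition-shsa-v28:1}-style bookkeeping produces a modified l.s.c.\ auxiliary mapping on a suitable open neighbourhood of $A$ whose values on $Z$ are still $\Phi(x)$ and whose values elsewhere are a convex set (a ball around $\tilde g(x)$, or all of $E$) chosen so that the preimage condition becomes open.

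Finally, apply Theorem \ref{theorem-shsa-v21:8} to $\widetilde{\Phi}$: the original $g$ is a continuous selection for $\widetilde{\Phi}\uhr A$, so by SNEP it extends to a continuous selection $\tilde f:U\to E$ for $\widetilde{\Phi}\uhr U$ on some open $U\supset A$. Shrink $U$ so that $\tilde f(U)\subset W$, which is possible since $\tilde f\uhr A=g$ maps into $Y\subset W$ and $W$ is open; then set $f:=r\circ \tilde f:U\to Y$. This $f$ is a continuous selection for $\Phi\uhr U$: on $A$, $f=g$ because $r$ fixes $Y$; on $U\cap Z$, $\tilde f(x)\in \Phi(x)\subset Y$, so $f(x)=\tilde f(x)\in \Phi(x)$; on $U\setminus(A\cup Z)$, $f(x)\in Y=\Phi(x)$. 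When $Y$ is an AR and each $\Phi(x)$ ($x\in Z$) is $C^n$, Theorem \ref{theorem-shsa-v21:8} provides global SEP for $\widetilde{\Phi}$ with $W=E$, and the same composition $f=r\circ \tilde f:X\to Y$ delivers global SEP for $\Phi$. The main obstacle throughout is managing the l.s.c.\ of $\widetilde{\Phi}$ for non-closed $Z$; a secondary technical point is the simultaneous compatibility of the Banach embedding, the retraction $r$, and the upgrade from equi-$LC^n$-in-$Y$ to uniformly equi-$LC^n$-in-$E$.
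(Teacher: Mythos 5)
Your reduction to Theorem \ref{theorem-shsa-v21:8} breaks down at its central step, and the obstruction you yourself flag is not removed by the proposed fix. Lower semi-continuity of the auxiliary mapping $\widetilde{\Phi}$ at a point $x_0\in \overline{Z}\setminus Z$ forces $\widetilde{\Phi}(x_0)\subset Y$: if $\widetilde{\Phi}(x_0)$ met an open $V\subset E$ with $V\cap Y=\emptyset$, then every neighbourhood of $x_0$ would contain points $x\in Z$, where $\widetilde{\Phi}(x)=\Phi(x)\subset Y$ misses $V$. Hence at every such point the value must be a closed \emph{convex subset of $Y$}; neither $E$ nor a ball about $\tilde g(x_0)$ qualifies, since $Y$ need not be convex and these sets stick out of $Y$, and choosing closed convex subsets of $Y$ lower semi-continuously along $\overline{Z}\setminus Z$ is itself a selection problem of essentially the same difficulty as the one being solved. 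This case cannot be legislated away: $Z$ is an arbitrary subset with $\dim_X(Z)\leq n+1$, and passing to $\overline{Z}$ destroys the dimension hypothesis (a countable dense $Z$ in $X=[0,1]^{10}$ has $\dim_X(Z)\leq 0$ while $\overline{Z}=X$); nor does restricting to an open neighbourhood of $A$ help, since that neighbourhood will in general still meet $\overline{Z}\setminus Z$. There is also a secondary slip in the preparatory step: Theorem \ref{theorem-shsa-v22:1} upgrades ``equi-$LC^n$'' only to ``uniformly equi-$LC^n$'', not to ``uniformly equi-$LC^n$ \emph{in} $E$'', and it re-embeds $\bigcup\mathscr{S}$ into a new Banach space, so the ``reconciliation'' with the retraction $r$ is not automatic; the correct tool is \cite[Lemma 6.1]{michael:80}, quoted in the text right after the statement, which simultaneously makes $Y$ a uniform ANR (respectively, uniform AR) of $E$ and the family uniformly equi-$LC^n$ in $E$.

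This is also not the route the source takes. The reformulation as Theorem \ref{theorem-shsa-v22:2} shows what the embedding actually buys: one then argues \emph{in parallel} with Theorem \ref{theorem-shsa-v21:8}, the uniform retraction onto $Y$ playing off $Z$ the role that convexity plays there in producing and improving approximate selections (as in Theorem \ref{theorem-shsa-v21:7}), rather than feeding the ANR case into the convex theorem. The text explicitly notes that only the special case $n=-1$ of Theorem \ref{theorem-shsa-v21:9} is covered by Theorem \ref{theorem-shsa-v21:8}, and it poses the common generalisation of the two theorems as an open question --- strong evidence that the wholesale reduction you propose is not available.
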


We proceed with an example showing that Theorem
\ref{theorem-shsa-v21:8} fails if in this theorem the collection
$\{\Phi(x): x\in Z\}$ is assumed to be only equi-$LC^n$.

\begin{example}
  \label{example-shsa-v22:1}
  Let $\D\subset \R^2$ be the closed unit disk in $\R^2$, and $\s$ be
  the unit circle. Also, let
  $S_y=\{(s,t): s^2+t^2=1\ \text{and}\ t\geq y\}$, for every
  $-1<y\leq 1$. Define $\Phi:\D\to \mathscr{F}(\R^2)$ by letting for
  $(x,y)\in\D$ that
  \[
    \Phi(x,y)=
    \begin{cases}
      \big\{(x,y)\big\} &\text{if $(x,y)\in\s$,}\\
      S_y &\text{if $(x,y)\notin \s$.}
    \end{cases}
  \]
  Then $\Phi$ is l.s.c., but has no continuous selection because each
  such selection will be a retraction $r:\D\to \s$. However, each
  $\Phi(x,y)$, $(x,y)\in \s$, is convex being a singleton. Moreover,
  the collection $\mathscr{S}=\{S_y: -1<y\leq 1\}$ of arcs is
  equi-$LC^n$ for every $n\geq -1$, and each element of $\mathscr{S}$
  is $C^n$. Finally, we also have that
  $\dim_{\D}(\D\setminus \s)=\dim(\D\setminus \s)=2$.\hfill\textsquare
\end{example}

The case when the family in Theorem \ref{theorem-shsa-v21:8} is
assumed to be equi-$LC^n$ in $E$ is not covered by this example, which
brings the following question.

\begin{question}
  \label{question-shsa-v22:1}
  Is Theorem \ref{theorem-shsa-v21:8} still valid if in this theorem
  ``uniformly equi-$LC^n$'' is replaced by ``equi-$LC^n$ in $E$''?
\end{question}

This also brings a similar question about Theorem
\ref{theorem-shsa-v21:9} of whether this theorem is still valid if
``equi-$LC^n$ in $Y$'' is replaced by ``equi-$LC^n$''. This doesn't
seem likely, but is not covered by Example
\ref{example-shsa-v22:1}.\medskip

Let us remark that in the special case of $n=-1$, Theorem
\ref{theorem-shsa-v21:9} is covered by Theorem
\ref{theorem-shsa-v21:8}, see the remark after the proof of Theorem
\ref{theorem-shsa-v21:9} in \cite{michael:80}. Moreover, if $Y$ and
$\Phi$ are as in Theorem \ref{theorem-shsa-v21:9}, then by
\cite[Lemma 6.1]{michael:80}, $Y$ can be embedded as a closed subset
in a Banach space $E$ such that
\begin{enumerate}[label=(\roman*)]
\item $Y$ is a \emph{uniform ANR} (respectively, \emph{uniform AR}) of
  $E$, and
\item $\{\Phi(x): x\in Z\}$ is uniformly equi-$LC^n$ in $E$.
\end{enumerate}
Here, a closed subset $Y\subset E$ is a \emph{uniform ANR} of $E$
\cite{michael:79} if to every $\varepsilon > 0$ corresponds some
$\delta(\varepsilon) > 0$ and a retraction
$r: \mathbf{O}_{\delta(\infty)}(Y)\to Y$ such that
${\|z- r(z)\| < \varepsilon}$, whenever
$z\in \mathbf{O}_{\delta(\varepsilon)}(Y)$. If one can take
$\delta(\infty) = \infty$ (so that the domain of $r$ is always $E$),
then $Y$ is called a \emph{uniform AR} of $E$. Accordingly, Theorem
\ref{theorem-shsa-v21:9} can be reformulated in the following way.

\begin{theorem}
  \label{theorem-shsa-v22:2}
  Let $X$ be a paracompact space, $E$ be a Banach space, $Y\subset E$
  be a closed subset of $E$ which is a uniform ANR of $E$,
  $Z\subset X$ with $\dim_X(Z)\leq n + 1$, and
  $\Phi:X \to \mathscr{F}(Y)$ be an l.s.c.\ mapping with $\Phi(x)=Y$
  for all $x\in X\setminus Z$, and with $\{\Phi(x):x\in Z\}$ uniformly
  equi-$LC^n$. Then $\Phi$ has the SNEP. If, moreover, $Y$ is a
  uniform AR of $E$ and $\Phi(x)$ is $C^n$ for every $x\in Z$, then
  $\Phi$ has the SEP.
\end{theorem}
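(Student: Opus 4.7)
The plan is to prove Theorem \ref{theorem-shsa-v22:2} by deducing it directly from Theorem \ref{theorem-shsa-v21:9}: the word ``reformulation'' in the paragraph immediately before the statement suggests that the content is the same, so all that is required is to check that the hypotheses in force here imply those of Theorem \ref{theorem-shsa-v21:9}.

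First I would verify the topological hypotheses on the range. Since $Y$ is a closed subset of the Banach space $E$, the metric inherited from $E$ is complete on $Y$, so $Y$ is completely metrizable. A uniform ANR of $E$ is also an ANR in the usual sense: the retraction $r:\mathbf{O}_{\delta(\infty)}(Y)\to Y$ exhibits $Y$ as a neighbourhood retract of $E$, and this transfers to any metric space containing $Y$ as a closed subset via a standard extension argument. Likewise, a uniform AR of $E$ is an AR, which handles the second (SEP) half of the statement.

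The main step is to translate ``uniformly equi-$LC^n$'' (measured in the metric of $E$) into ``equi-$LC^n$ in $Y$'' in the sense of Theorem \ref{theorem-shsa-v21:9}. Fix $y\in Y$ and a $Y$-neighbourhood $U$ of $y$; choose $\varepsilon'>0$ with $\mathbf{O}_{\varepsilon'}(y)\cap Y\subset U$, set $\varepsilon=\varepsilon'/3$, and let $\delta(\varepsilon)$ be the modulus from the uniform equi-$LC^n$ hypothesis. Put $V=\mathbf{O}_{\min\{\delta(\varepsilon),\varepsilon\}}(y)\cap Y$. For every $S\in\{\Phi(x):x\in Z\}$ and every continuous $g:\s^k\to V\cap S$ with $k\leq n$, the image $g(\s^k)$ has diameter less than $\delta(\varepsilon)$, so $g$ extends to a continuous $h:\B^{k+1}\to S$ with $\diam h(\B^{k+1})<\varepsilon$. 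Since $g(\s^k)\subset \mathbf{O}_\varepsilon(y)$, the triangle inequality yields
\[
h(\B^{k+1})\subset \mathbf{O}_{2\varepsilon}(y)\subset \mathbf{O}_{\varepsilon'}(y)\cap Y\subset U,
\]
so $h:\B^{k+1}\to U\cap S$ as required. This establishes that $\{\Phi(x):x\in Z\}$ is equi-$LC^n$ in $Y$.

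With these translations in place, the hypotheses of Theorem \ref{theorem-shsa-v21:9} are satisfied, and its conclusion gives the SNEP (respectively, the SEP, when $Y$ is a uniform AR and each $\Phi(x)$ is $C^n$ for $x\in Z$). I do not foresee any real obstacle: the only nontrivial point is the equi-$LC^n$-in-$Y$ verification, which is essentially a bookkeeping exercise with the moduli $\varepsilon,\varepsilon',\delta(\varepsilon)$ and the triangle inequality.
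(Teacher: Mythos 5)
Your reduction is correct, but it is worth noting that it runs in the opposite direction to the one the paper actually documents. The survey justifies calling Theorem \ref{theorem-shsa-v22:2} a ``reformulation'' by citing Michael's embedding lemma \cite[Lemma 6.1]{michael:80}, which takes $Y$ and $\Phi$ as in Theorem \ref{theorem-shsa-v21:9} and re-embeds $Y$ in a Banach space so that the \emph{uniform} hypotheses hold; that argument shows Theorem \ref{theorem-shsa-v22:2}$\ \Rightarrow\ $Theorem \ref{theorem-shsa-v21:9}, the uniform statement being what \cite{michael:80} in fact proves. You supply the complementary, elementary half: the uniform hypotheses imply the topological ones, so Theorem \ref{theorem-shsa-v21:9}$\ \Rightarrow\ $Theorem \ref{theorem-shsa-v22:2}. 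Taken together with the paper's remark, your argument completes the equivalence, and as a standalone proof of the stated theorem (granting Theorem \ref{theorem-shsa-v21:9}) it is sound: a closed subset of a Banach space is completely metrizable, and a uniform ANR (respectively, uniform AR) of $E$ is a neighbourhood retract (respectively, retract) of $E$, hence an ANR (respectively, AR) by the usual Dugundji extension argument. One quantitative slip in the equi-$LC^n$-in-$Y$ verification: with $V=\mathbf{O}_{\min\{\delta(\varepsilon),\varepsilon\}}(y)\cap Y$ you only get $\diam g(\s^k)\leq 2\min\{\delta(\varepsilon),\varepsilon\}$, which need not be $<\delta(\varepsilon)$; take the radius of $V$ to be $\tfrac12\min\{\delta(\varepsilon),\varepsilon\}$ and the rest of your triangle-inequality bookkeeping goes through with room to spare. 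Note also that your argument is only needed (and only meaningful) for $k\geq 0$; the $k=-1$ clause is just nonemptiness of the values, which holds since $\Phi$ is $\mathscr{F}(Y)$-valued.
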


This makes Theorems \ref{theorem-shsa-v21:8} and
\ref{theorem-shsa-v21:9} further similar in the following sense. If
$\Phi$ is as in Theorem \ref{theorem-shsa-v21:8}, then the l.s.c.\
mapping
$\tilde{\Phi}=\overline{\conv[\Phi]}:X\to \mathscr{F}_\mathbf{c}(E)$
has the SEP, see Theorem \ref{theorem-res-pro-v5:1}. If $\Phi$ is as
in Theorem \ref{theorem-shsa-v22:2}, then the constant mapping
$\tilde{\Phi}(x)=Y$, $x\in X$, has the SNEP, and also the SEP provided
$Y$ is a uniform AR of $E$. Moreover, in both cases, the pair
$(\Phi,\tilde{\Phi})$ of mappings has the following properties:
\begin{gather}
  \label{eq:shsa-v22:3}
  \Phi\ \text{is an l.s.c.\ set-valued selection for $\tilde{\Phi}$;}\\
  \label{eq:shsa-v22:1}
  \dim_X\big(\{x\in X: \Phi(x)\neq \tilde{\Phi}(x)\}\big)\leq n+1;\\
  \label{eq:shsa-v22:2}
  \big\{\Phi(x):x\in X\ \text{and}\ \Phi(x)\neq \tilde{\Phi}(x)\big\}\
  \text{is uniformly equi-$LC^n$ in $E$.}
\end{gather}

This motivates the following further question.

\begin{question}
  Let $X$ be a paracompact space and $E$ be a Banach space. Suppose
  that ${(\Phi,\tilde{\Phi}):X\to \mathscr{F}(E)}$ is a pair of
  mappings as in \eqref{eq:shsa-v22:3}, \eqref{eq:shsa-v22:1} and
  \eqref{eq:shsa-v22:2}. Does $\Phi$ have the SNEP provided so does
  $\tilde{\Phi}$?  Similarly, does $\Phi$ have the SEP provided
  $\tilde{\Phi}$ has the SEP and $\Phi(x)$ is $C^n$, for every
  $x\in X$ with $\Phi(x)\neq \tilde{\Phi}(x)$?
\end{question}

\subsection{Selection Homotopy Extension Properties}

The following selection interpretation of the Borsuk homotopy
extension theorem \cite{borsuk:36} was obtained by Michael, see 
\cite[Theorem 3.4]{michael:57}.

\begin{theorem}[\cite{michael:57}]
  \label{theorem-shsa-v25:1}
  Let $X$ be a paracompact space, $A\subset X$ be a closed set with
  $\dim_X(X\setminus A)\leq n$, $(Y,\rho)$ be a complete metric space,
  and $\Phi:X\times[0,1]\to \mathscr{F}(Y)$ be a quasi-continuous
  mapping such that $\{\Phi(p):p\in X\times[0,1]\}$ is uniformly
  equi-$LC^n$. Then $\Phi$ has the SEP at
  $ X\times \{0\} \cup A\times [0,1]$.
\end{theorem}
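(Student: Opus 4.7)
Set $F=X\times\{0\}\cup A\times[0,1]$, closed in the paracompact product $X\times[0,1]$. The strategy is to apply the finite-dimensional selection theorem to the pair $(X\times[0,1],F)$ and then to upgrade the resulting SNEP to SEP by means of a Borsuk-style retraction combined with a Cauchy-sequence construction based on the uniform equi-$LC^n$ modulus.

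First I would check that $\dim_{X\times[0,1]}((X\times[0,1])\setminus F)\leq n+1$. Given a closed $S\subset(X\setminus A)\times(0,1]$ that is closed in $X\times[0,1]$, decompose $S=\bigcup_m S_m$ with $S_m=S\cap(X\times[1/m,1])$. Compactness of $[1/m,1]$ makes the projection $\pi_X\colon X\times[1/m,1]\to X$ a closed map, so $\pi_X(S_m)$ is closed in $X$ and contained in $X\setminus A$, giving $\dim \pi_X(S_m)\leq n$ by hypothesis; the product theorem then yields $\dim S_m\leq n+1$, and the countable sum theorem completes the estimate. Since ``uniformly equi-$LC^n$'' implies ``equi-$LC^n$'', Theorem~\ref{theorem-shsa-v21:6} applied to the l.s.c.\ mapping $\Phi$ at the closed set $F$ supplies a continuous extension $\tilde g\colon W\to Y$ of $g$ on some open neighborhood $W\supset F$.

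To extend across the remaining piece, I would construct a Borsuk-type retraction. The tube lemma provides an open $V\subset X$ with $A\subset V$ and $V\times[0,1]\subset W$; by normality of $X$ there is a Urysohn function $u\colon X\to[0,1]$ with $u\uhr A=1$ and $u\uhr(X\setminus V)=0$; the map $r(x,t)=(x,tu(x))$ then sends $X\times[0,1]$ into $W$ and fixes $F$ pointwise, so $\tilde g\circ r$ is a continuous extension of $g$ to all of $X\times[0,1]$. The main obstacle is that $\tilde g\circ r$ is not in general a selection for $\Phi$, only an approximate one off $F$. To convert it into a genuine selection while preserving its values on $F$, I would iterate Theorem~\ref{theorem-shsa-v21:7}, exploiting the uniform equi-$LC^n$ modulus $\varepsilon\mapsto\gamma(\varepsilon)$ to shrink the selection defect by a universal factor at each stage, and appealing to completeness of $(Y,\rho)$ for the uniform limit. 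Arranging the iteration so that the defect estimates are uniform over $X\times[0,1]$ and survive the passage to the limit without disturbing $g\uhr F$ is the delicate technical heart of the proof, and is precisely where the quasi-continuity hypothesis on $\Phi$ is used.
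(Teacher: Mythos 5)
The paper does not prove this theorem; it is quoted verbatim from Michael's \emph{Continuous selections III} (Theorem 3.4 there), so there is no in-paper argument to compare against. Judged on its own terms, your first two stages are sound: the estimate $\dim_{X\times[0,1]}\bigl((X\times[0,1])\setminus F\bigr)\leq n+1$ via the closed projection, the product theorem and the countable sum theorem is correct, Theorem~\ref{theorem-shsa-v21:6} then legitimately gives the SNEP of $\Phi$ at $F$, and the tube-lemma retraction $r(x,t)=(x,tu(x))$ does produce a continuous extension $\tilde g\circ r$ of $g$ to all of $X\times[0,1]$.

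The gap is the final step, and it is not merely technical. First, $\tilde g\circ r$ is not an approximate selection for $\Phi$ in any usable sense: quasi-continuity only compares $\Phi(x,s)$ with $\Phi(x,t)$ for $(x,s),(x,t)$ lying in a \emph{common small} neighbourhood, whereas $r$ displaces the $t$-coordinate by $t(1-u(x))$, which equals $1$ wherever $u$ vanishes. Chaining the local inclusions along $[tu(x),t]$ only yields $\Phi(x,tu(x))\subset \mathbf{O}_{N\varepsilon}(\Phi(x,t))$ with $N$ depending on $\varepsilon$, so no $\gamma(\varepsilon)$-selection hypothesis for Theorem~\ref{theorem-shsa-v21:7} is ever verified. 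Second, even granted an approximate selection, Theorem~\ref{theorem-shsa-v21:7} as stated needs $\dim$ of the whole domain bounded by $n+1$ and, more importantly, does not keep the output equal to $g$ on $F$; the relative version that does is exactly the SEP one is trying to prove, and it cannot be imported from Theorem~\ref{theorem-shsa-v21:6} because the values are not assumed $C^n$. The missing idea is that quasi-continuity must be exploited \emph{locally in time}: one extends the selection stepwise from $X\times[0,t_0]$ to $X\times[0,t_0+\delta]$, using that $f(x,t_0)\in\Phi(x,t_0)\subset\mathbf{O}_\varepsilon(\Phi(x,t))$ for $t_0\leq t\leq t_0+\delta$ on small neighbourhoods (so the previous time-slice \emph{is} a $\gamma(\varepsilon)$-selection for the next one), upgrades it with the relative approximation theorem while freezing the values on $A$, and then runs a continuous-induction/supremum argument over $t_0$, with completeness of $(Y,\rho)$ and the uniform equi-$LC^n$ modulus controlling the limit. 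This marching-in-$t$ architecture is what Michael's proof actually does; a single global retraction followed by one approximation pass cannot be repaired to replace it.
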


Here, a mapping $\Phi:X\times[0,1]\sto Y$, into a metric space
$(Y,\rho)$, is \emph{quasi-continuous} if it is l.s.c.\ and for every
$\varepsilon>0$, each point of $X\times[0,1]$ has a neighbourhood $U$
such that $\Phi(x,s)\subset \mathbf{O}_\varepsilon(\Phi(x,t))$,
whenever $(x,s),(x,t)\in U$ with $s\leq t$. The interested reader is
referred to \cite{gutev:97b,gutev:00c,Gutev2002}, where Theorem
\ref{theorem-shsa-v25:1} was refined and generalised in various
directions.\medskip

Let $\Phi:X\times[0,1]\sto Y$. A mapping $H:X\times [0,1]\to Y$ is a
\emph{$\Phi$-homotopy} if it is a continuous selection for $\Phi$, and
$H$ is a \emph{$\Phi$-homotopy} of $f:X\times\{0\}\to Y$ if it is also
a continuous extension of $f$. The mapping $\Phi:X\times [0,1]\sto Y$
is said to have the \emph{Selection Homotopy Extension Property} at a
subset $A\subset X$, or the \emph{SHEP} at $A$, if whenever
$f:X\times\{0\}\to Y$ is a continuous selection for
$\Phi\uhr X\times\{0\}$, every $\Phi\uhr A\times [0,1]$-homotopy
$G:A\times [0,1]\to Y$ of $f\uhr A\times\{0\}$ can be extended to a
$\Phi$-homotopy $H:X\times [0,1]\to Y$ of $f$. For instance, the
mapping $\Phi$ in Theorem \ref{theorem-shsa-v25:1} has the SHEP at
$A$.  \medskip

Theorem \ref{theorem-shsa-v25:1} has a nice interpretation for
set-valued mappings $\Phi:X\sto Y$ defined only on $X$. In this
case, we will say that a mapping $H:X\times [0,1]\to Y$ is a
\emph{$\Phi$-homotopy} if $H(x,t)\in \Phi(x)$, for every $x\in X$ and
$t\in [0,1]$. In particular, a selection $f:X\to Y$ will be called
\emph{$\Phi$-homotopic} to a selection $g:X\to Y$ if $f$ and $g$ are
homotopic by a $\Phi$-homotopy $H:X\times[0,1]\to Y$. In these terms,
we have the following consequence of Theorem \ref{theorem-shsa-v25:1}.

\begin{corollary}
  Let $X$ be a paracompact space, $A\subset X$ be a closed set with
  $\dim_X(X\setminus A)\leq n$, $Y$ be a completely metrizable space
  and $\Phi:X\to \mathscr{F}(Y)$ be an l.s.c.\ mapping such that
  $\{\Phi(x):x\in X\setminus A\}$ is equi-$LC^n$ in $Y$. Also, let
  $g,h:A\to Y$ be continuous selections for $\Phi\uhr A$ which are
  $\Phi\uhr A$-homotopic. If one of these selections can be extended
  to a continuous selection for $\Phi$, then so does the other in such
  a way that both selection remain $\Phi$-homotopic.
\end{corollary}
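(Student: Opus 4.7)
The plan is to reduce the assertion to a single application of Theorem \ref{theorem-shsa-v25:1} by lifting $\Phi$ to a $t$-independent mapping on the cylinder $X\times[0,1]$. Without loss of generality assume that $g$ extends to a continuous selection $f:X\to Y$ for $\Phi$, and let $G:A\times[0,1]\to Y$ be the given $\Phi\uhr A$-homotopy from $g$ to $h$. Define $\Psi:X\times[0,1]\to \mathscr{F}(Y)$ by $\Psi(x,t)=\Phi(x)$; being independent of $t$, $\Psi$ inherits lower semi-continuity from $\Phi$ and is automatically quasi-continuous (the required inclusion $\Psi(x,s)\subset \mathbf{O}_\varepsilon(\Psi(x,t))$ reduces to $\Phi(x)\subset \Phi(x)$). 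Moreover, the subfamily $\{\Psi(p):p\in (X\setminus A)\times[0,1]\}$ equals $\{\Phi(x):x\in X\setminus A\}$, which by hypothesis is equi-$LC^n$ in $Y$.

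Set $A^{*}=X\times\{0\}\cup A\times[0,1]$, a closed subset of $X\times[0,1]$ whose complement is $(X\setminus A)\times (0,1]$, and define $F_0:A^{*}\to Y$ by $F_0(x,0)=f(x)$ for $x\in X$ and $F_0(a,t)=G(a,t)$ for $(a,t)\in A\times[0,1]$. The two prescriptions agree on $A\times\{0\}$ because $G(a,0)=g(a)=f(a)$, so the pasting lemma yields continuity of $F_0$, and $F_0$ is a selection for $\Psi\uhr A^{*}$ by construction. Applying Theorem \ref{theorem-shsa-v25:1} (in the hybrid form appropriate when the equi-$LC^n$ hypothesis is only available on $\{\Psi(p):p\in (X\setminus A)\times[0,1]\}$) gives a continuous extension $H:X\times[0,1]\to Y$ of $F_0$ which is a selection for $\Psi$, so that $H(x,t)\in \Phi(x)$ for every $(x,t)\in X\times[0,1]$. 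Setting $f'(x)=H(x,1)$ then produces a continuous selection for $\Phi$ with $f'(a)=G(a,1)=h(a)$, so $f'$ extends $h$, and $H$ is simultaneously a $\Phi$-homotopy from $f=H(\cdot,0)$ to $f'$.

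The main obstacle is precisely the invocation of Theorem \ref{theorem-shsa-v25:1}: as stated the theorem asks for the \emph{uniformly} equi-$LC^n$ property on the whole of $\{\Psi(p):p\in X\times[0,1]\}$, whereas we control this family only on the portion coming from $X\setminus A$. Bridging this gap requires the natural hybrid refinement of Theorem \ref{theorem-shsa-v25:1}, following the pattern in which Theorem \ref{theorem-shsa-v21:9} generalises Theorem \ref{theorem-shsa-v21:6}; its verification uses the embedding supplied by Theorem \ref{theorem-shsa-v22:1} together with the fact that the selection values on $A^{*}$ are already prescribed, so only the $(X\setminus A)\times(0,1]$ part of the cylinder needs the local-connectedness control.
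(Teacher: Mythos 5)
Your setup (the cylinder, the closed set $A^{*}=X\times\{0\}\cup A\times[0,1]$, the pasted map $F_0$, and the endpoint argument $f'=H(\cdot,1)$) matches the paper's proof, but the proof has a genuine gap exactly where you flag it: you apply Theorem \ref{theorem-shsa-v25:1} ``in the hybrid form appropriate when the equi-$LC^n$ hypothesis is only available on $\{\Psi(p):p\in (X\setminus A)\times[0,1]\}$,'' and no such hybrid form is stated or proved anywhere. With your choice $\Psi(x,t)=\Phi(x)$ the value family contains every $\Phi(x)$ with $x\in A$, on which there is no local-connectedness control at all, so Theorem \ref{theorem-shsa-v25:1} simply does not apply; and the refinement you gesture at does not follow the pattern of Theorem \ref{theorem-shsa-v21:9} either, since there the uncontrolled values are required to be all of $Y$, not arbitrary closed sets. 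What would actually make such a refinement true here is precisely the fact that the selection is already prescribed on $A^{*}$ --- but that observation has to be built into the construction, not into a new unproved theorem.

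The paper closes the gap with a one-line modification that you are circling but do not make: instead of the $t$-independent $\Psi$, define $\Phi_u:X\times[0,1]\to\mathscr{F}(Y)$ by $\Phi_u(x,t)=\{u(x,t)\}$ for $(x,t)\in A^{*}$ (where $u=F_0$ in your notation) and $\Phi_u(x,t)=\Phi(x)$ otherwise. This mapping is l.s.c.\ by Proposition \ref{proposition-shsa-v28:1}, and is easily checked to be quasi-continuous (for $x\in A$ both values are singletons controlled by continuity of $G$; for $x\notin A$ one has $\Phi_u(x,s)\subset\Phi_u(x,t)$ whenever $s\leq t$). Crucially, its value family is the union of a collection of singletons with $\{\Phi(x):x\in X\setminus A\}$; singletons are harmless for the equi-$LC^n$ in $Y$ property, so the whole family is equi-$LC^n$ in $Y$, and by Theorem \ref{theorem-shsa-v22:1} one may choose a complete compatible metric $\rho$ on $Y$ making it uniformly equi-$LC^n$. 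Theorem \ref{theorem-shsa-v25:1} then applies verbatim to $\Phi_u$, giving the extension $H$ of $u$, which is automatically a selection for $\Phi$ on all of $X\times[0,1]$ since $\Phi_u(x,t)\subset\Phi(x)$ everywhere. If you replace your $\Psi$ by $\Phi_u$ and keep the rest of your argument, the proof is complete.
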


\begin{proof}
  Suppose that $g$ can be extended to a continuous selection
  $f:X\to Y$ for $\Phi$, and take a $\Phi\uhr A$-homotopy
  ${G:A\times [0,1]\to Y}$ between $g$ and $h$, say $G(x,0)=g(x)$ and
  $G(x,1)=h(x)$, for every $x\in A$. For convenience, define a
  continuous map $u:X\times\{0\}\cup A\times[0,1]\to E$ by
  $u(x,t)=G(x,t)$ for $(x,t)\in A\times[0,1]$, and $u(x,0)=f(x)$,
  $x\in X$. Next, define a mapping
  $\Phi_u:X\times[0,1]\to \mathscr{F}(Y)$ by $\Phi_u(x,t)=\{u(x,t)\}$
  if $(x,t)\in X\times\{0\}\cup A\times[0,1]$, and
  $\Phi_u(x,t)=\Phi(x)$ otherwise. Then the family
  $\{\Phi_u(x,t): (x,t)\in X\times[0,1]\}$ remains equi-$LC^n$ in
  $Y$. Hence, $Y$ admits a complete compatible metric $\rho$ so that
  this family is uniformly equi-$LC^n$ with respect to $\rho$, see
  Theorem \ref{theorem-shsa-v22:1}. Moreover, $\Phi_u$ is l.s.c.\
  because so is $\Phi$, see Proposition
  \ref{proposition-shsa-v28:1}. In fact, it is easy to see that
  $\Phi_u$ is quasi-continuous. Thus, by Theorem
  \ref{theorem-shsa-v25:1}, $u$ can be extended to a continuous
  selection $H:X\times [0,1]\to Y$ for $\Phi_u$. This $H$ is a
  $\Phi$-homotopy between $f$ and a continuous extension of the
  selection $h$.
\end{proof}

\subsection{Selections and Weak Deformation Retracts}

A closed subset $A\subset X$ of a space $X$ is called a \emph{weak
  deformation retract} of $X$ if there  exists a continuous
$r:X\times [0,1]\to X$ such that
\[
  \begin{cases}
    r(x,1)=x &\text{for every $x\in X$,}\\
    r(x,0)\in A &\text{for every $x\in X$,}\\
    r(x,0)=x &\text{for every $x\in A$.}
  \end{cases}
\]

The following theorem was proved by Michael \cite[Theorem
6.1]{michael:57}.

\begin{theorem}[\cite{michael:57}]
  \label{theorem-michael-continuous-III}
  Let $X$ be a paracompact space with $\dim(X)\leq n+1$, $(Y,\rho)$ be
  a complete metric space, $\mathscr{S}\subset \mathscr{F}(Y)$ be a
  uniformly equi-$LC^n$ family, $\Phi:X\to \mathscr{S}$ be a
  $\rho$-continuous mapping and $A\subset X$ be a weak deformation
  retract of $X$. Then every continuous selection $g:A\to Y$ for
  $\Phi\uhr A$ can be extended to a continuous selection for $\Phi$.
\end{theorem}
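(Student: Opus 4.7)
The plan combines the finite-dimensional selection extension theorem with the weak deformation retract structure.

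First, by Theorem~\ref{theorem-shsa-v21:6}, since $\mathscr{S}$ is equi-$LC^n$ (as it is uniformly equi-$LC^n$), $\Phi$ is l.s.c.\ (being $\rho$-continuous), and $\dim_X(X\setminus A)\leq \dim(X)\leq n+1$, the mapping $\Phi$ has the SNEP at $A$. So $g$ extends to a continuous selection $g^{\ast}\colon U\to Y$ of $\Phi\uhr U$ over some open neighbourhood $U\supset A$.

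Next, consider the pulled-back mapping $\Psi\colon X\times[0,1]\to \mathscr{F}(Y)$, $\Psi(x,t):=\Phi(r(x,t))$. Since $r$ is continuous and $\Phi$ is $\rho$-continuous, $\Psi$ is $\rho$-continuous (hence quasi-continuous) and $\{\Psi(x,t)\}\subset \mathscr{S}$ is still uniformly equi-$LC^n$. Note that $\Psi(x,0)=\Phi(r(x,0))$ with $r(x,0)\in A$, while $\Psi(x,1)=\Phi(x)$. The open set $V:=r^{-1}(U)\subset X\times[0,1]$ contains $X\times\{0\}$ (since $r(x,0)\in A\subset U$) and $A\times\{1\}$ (since $r(x,1)=x\in A\subset U$ when $x\in A$); the formula $v(x,t):=g^{\ast}(r(x,t))$ defines a continuous selection of $\Psi\uhr V$, with $v(x,0)=g(r(x,0))$ and $v(x,1)=g(x)$ for $x\in A$. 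It now suffices to extend $v$ to a continuous selection $H\colon X\times[0,1]\to Y$ of $\Psi$ agreeing with $v$ on $X\times\{0\}\cup A\times\{1\}$; then $f(x):=H(x,1)$ is a continuous selection of $\Phi$ extending $g$.

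To construct such $H$, one exploits the uniformly equi-$LC^n$ structure and the completeness of $(Y,\rho)$ via a Michael-style iterative approximation. Paracompactness of $X$ yields a continuous function $\tau\colon X\to(0,1]$ with $r(x,s)\in U$ for every $s\in[0,\tau(x)]$; then $(x,t)\mapsto g^{\ast}(r(x,\min(t,\tau(x))))$ is a continuous initial approximate selection whose $\rho$-distance to $\Psi(x,t)$ is controlled by the $\rho$-continuity of $\Phi$ along each path $r(x,\cdot)$. Using the uniform equi-$LC^n$ modulus $\delta(\cdot)$ (as in the proof of Theorem~\ref{theorem-shsa-v21:7}), one refines this approximation step by step, keeping the values on a suitable neighbourhood of $X\times\{0\}\cup A\times\{1\}$ fixed at each stage, producing a uniformly Cauchy sequence of continuous $2^{-k}$-selections whose uniform limit is the desired $H$.

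The technical heart of the proof is this last step. A direct use of the Borsuk-type Theorem~\ref{theorem-shsa-v25:1} would require $\dim_X(X\setminus A)\leq n$, whereas only $\dim(X)\leq n+1$ is available. The missing unit of slack is supplied by the weak deformation retract, which forces $V=r^{-1}(U)$ to contain both $X\times\{0\}$ and $A\times\{1\}$; and the $\rho$-continuity of $\Phi$ (strictly stronger than mere l.s.c.) furnishes the uniform metric control that makes the approximate-selection iteration converge. Together they compensate for the absence of a $C^n$ hypothesis on the sets $\Phi(x)$, which would otherwise have permitted a direct application of Theorem~\ref{theorem-shsa-v21:6}.
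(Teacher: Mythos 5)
The survey states this theorem without proof --- it is quoted as \cite[Theorem 6.1]{michael:57} --- so there is no in-paper argument to compare with; I can only assess your proposal against what it would take to actually prove the result. Your first two paragraphs are sound and set up the natural (and essentially Michael's) reduction: Theorem~\ref{theorem-shsa-v21:6} does give the SNEP at $A$ (note $\dim_X(X\setminus A)\leq n+1$ since $\dim(X)\leq n+1$ and $X$ is normal, and $\rho$-continuity implies lower semi-continuity), the pulled-back mapping $\Psi=\Phi\circ r$ is indeed quasi-continuous with values in the same uniformly equi-$LC^n$ family, and extending $v=g^{\ast}\circ r$ from a neighbourhood of $X\times\{0\}\cup A\times\{1\}$ to a selection $H$ for $\Psi$ would finish the proof via $f=H(\cdot,1)$.

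The gap is in the third paragraph, which is where all the work lies. First, the proposed initial approximation $h_0(x,t)=g^{\ast}(r(x,\min(t,\tau(x))))$ is \emph{not} a $\gamma(\varepsilon)$-selection for $\Psi$ for any prescribed $\varepsilon$: for $t>\tau(x)$ it lies in $\Psi(x,\tau(x))=\Phi(r(x,\tau(x)))$, and $\rho$-continuity only controls $\Phi$ under \emph{small} displacements, whereas here the parameter jumps from $\tau(x)$ (where $r(x,\cdot)$ is still near $A$) all the way to $t$ (where $r(x,t)$ may be far from $A$). Already for $X=[0,1]$, $A=\{0\}$, $r(x,t)=tx$ and $\Phi(x)=\{x\}\subset\R$ one finds $\rho\bigl(h_0(x,1),\Psi(x,1)\bigr)=x\bigl(1-\tau(x)\bigr)$, which is nowhere near $0$ for $x$ away from $A$. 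Since the values are not assumed $C^n$, Theorem~\ref{theorem-shsa-v21:7} cannot be started from an arbitrarily coarse approximation (the option $\gamma(+\infty)=+\infty$ is exactly what is unavailable), so your iteration never gets off the ground. Second, even granting a good starting approximation, running the approximation--refinement scheme of Theorem~\ref{theorem-shsa-v21:7} on the domain $X\times[0,1]$ requires that domain to have covering dimension at most $n+1$, while $\dim(X\times[0,1])$ can be $n+2$; a uniformly equi-$LC^n$ family does not support the iteration over an $(n+2)$-dimensional base. The mechanism that actually supplies the ``missing unit of slack'' (as in the proof of Theorem~\ref{theorem-shsa-v25:1}) is to use the quasi-continuity modulus to subdivide $[0,1]$ locally into small intervals and to propagate the selection step by step in the $t$-direction, invoking the $(n+1)$-dimensional approximate-selection machinery only in the $X$-direction at each step. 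Nothing in your sketch performs this subdivision, and the phrase ``keeping the values near $X\times\{0\}\cup A\times\{1\}$ fixed at each stage'' does not substitute for it; as written, the technical heart of the proof is missing.
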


Let us remark that, in contrast to Theorem \ref{theorem-shsa-v21:6},
here there is no requirement that each $\Phi(x)$ is $C^n$. Regarding
the condition $\dim(X)\leq n+1$, the following question was stated by
Michael in \cite{michael:57}.

\begin{question}[\cite{michael:57}]
  \label{question-shsa-v25:1}
  Does Theorem \ref{theorem-michael-continuous-III} remain true if
  $\dim(X)\leq n+1$ is replaced by the weaker requirement that
  $\dim_X(X\setminus A)\leq n+1$?
\end{question}

As commented by Michael, see \cite[Theorem 6.2]{michael:57}, the
answer to Question \ref{question-shsa-v25:1} is ``Yes'' provided the
condition on $A$ is strengthened to the existence of a continuous
$r:X\times [0,1]\to Y$ such that
\[
  \begin{cases}
    r(x,1)=x &\text{for every $x\in X$,}\\
    r(x,0)\in A &\text{for every $x\in X$,}\\
    r(x,t)\in A &\text{for every $x\in A$ and $0\leq t\leq 1$.}
  \end{cases}
\]

\section{Selections and Infinite-Dimensional Spaces}

\subsection{Selections and Countable Dimensionality}

Another hybrid selection theorem representing a common generalisation
of Theorems \ref{theorem-res-pro-v9:1} and \ref{theorem-shsa-v21:6}
was obtained in \cite[Theorem 1.4]{michael:80}. 

\begin{theorem}[\cite{michael:80}]
  \label{theorem-shsa-v21:10}
  Let $X$ be a paracompact space, $A\subset X$ be a closed set with
  $\dim_X(X\setminus A)\leq n+1$, $Z\subset X\setminus A$ with
  $\dim_X(Z)\leq m + 1$, where $m\leq n$, $Y$ be a completely
  metrizable space, and $\Phi:X \to \mathscr{F}(Y)$ be an l.s.c.\
  mapping such that $\{\Phi(x):x\in X\setminus Z\}$ is equi-$LC^n$
  in $Y$ and $\{\Phi(x):x\in Z\}$ is equi-$LC^m$ in $Y$. Then
  $\Phi$ has the SNEP at $A$. If, moreover, $\Phi(x)$ is $C^n$
  for all $x\in X\setminus Z$ and $C^m$ for all $x\in Z$, then
  $\Phi$ has the SEP at $A$.
\end{theorem}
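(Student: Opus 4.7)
The plan is to parallel the proof of the finite-dimensional selection theorem (Theorem \ref{theorem-shsa-v21:6}), carrying the two equi-$LC$ indices through the argument simultaneously. First I would apply Theorem \ref{theorem-shsa-v22:1} (or a two-step version of it) to embed $Y$ as a closed subset of a Banach space $E$ under a norm-compatible metric on $Y$ making $\{\Phi(x):x\in X\setminus Z\}$ uniformly equi-$LC^n$ and $\{\Phi(x):x\in Z\}$ uniformly equi-$LC^m$. This reduces the problem to the uniform setting inside a Banach space.

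The technical core is a hybrid version of the uniform approximation theorem \ref{theorem-shsa-v21:7}: for every $\varepsilon>0$ there exists $\gamma(\varepsilon)>0$ such that any continuous $\gamma(\varepsilon)$-selection $h:W\to Y$ for $\Phi\uhr W$, defined on an open neighborhood $W$ of $A$ and agreeing with the given selection on $A$, can be replaced by a continuous selection $h':W'\to Y$ for $\Phi\uhr W'$ with $\|h'(x)-h(x)\|<\varepsilon$ on some possibly smaller neighborhood $W'$ of $A$. I would prove this by the usual nerve-filling scheme of Michael: take a suitably fine locally finite open refinement of $\{\Phi^{-1}[\mathbf{O}_{\gamma(\varepsilon)/2}(h(x))]:x\in W\}$ and fill in values simplex by simplex via the sphere-to-ball extensions supplied by the uniform equi-$LC$ properties. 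The two dimensional hypotheses $\dim_X(X\setminus A)\leq n+1$ and $\dim_X(Z)\leq m+1$ allow the refinement to be chosen so that the nerve has order at most $n+2$ globally and order at most $m+2$ above points of $Z$; thus every simplex whose carrier meets $Z$ has dimension at most $m+1$, and the equi-$LC^m$ property suffices to fill its boundary, while the rest is handled by equi-$LC^n$.

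With this lemma, the SNEP proof becomes an iteration. Start from the given selection $g:A\to Y$ and extend it by Dowker's theorem (Theorem \ref{theorem-res-pro-v1:1}) to a continuous $\tilde g:X\to E$; then $\tilde g$ is a $\gamma(\varepsilon_0)$-selection for $\Phi$ on some open neighborhood $U_0\supset A$ by the closedness of $\Phi(x)$ and continuity of $\tilde g$. Iterate the hybrid lemma with geometrically decreasing $\varepsilon_k\downarrow0$ (with $\sum\varepsilon_k<\infty$) to obtain continuous $\varepsilon_k$-selections $h_k:U_k\to Y$ on nested neighborhoods $U_k\supset A$ satisfying $\|h_{k+1}(x)-h_k(x)\|<\varepsilon_k+\varepsilon_{k+1}$ and $h_k\uhr A=g$. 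The Cauchy limit $h=\lim h_k$ is continuous on $U=\bigcap U_k\supset A$, and since each $\Phi(x)$ is closed in $E$ while $h_k(x)\in\mathbf{O}_{\varepsilon_k}(\Phi(x))$, the value $h(x)$ lies in $\Phi(x)$; hence $h$ is a continuous selection for $\Phi\uhr U$ extending $g$. For the SEP strengthening, the $C^n$ and $C^m$ hypotheses permit $\gamma(\infty)=\infty$ in the hybrid lemma (as in the last clause of Theorem \ref{theorem-shsa-v21:7}), so the iteration can be carried out with $U_k=X$ throughout.

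The main obstacle is the combinatorial bookkeeping in the hybrid approximation lemma, namely arranging the locally finite open cover so that the carriers of all simplices of dimension exceeding $m+1$ in its nerve lie entirely in $X\setminus Z$. This is what the condition $\dim_X(Z)\leq m+1$ is designed to deliver, via the standard characterisation of covering dimension through orders of locally finite refinements, but the refinement must simultaneously be fine enough for the uniform $LC^n$ modulus, respect the dimensional cap $n+1$ on $X\setminus A$, and lie below the given $\gamma(\varepsilon)$-selection; making these three constraints coexist in a single cover is the delicate step that mirrors, in a more intricate form, the corresponding step in the proofs of Theorems \ref{theorem-shsa-v21:6} and \ref{theorem-shsa-v21:8}.
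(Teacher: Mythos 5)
The survey does not actually prove Theorem \ref{theorem-shsa-v21:10}; it is quoted from \cite{michael:80}, so your proposal can only be measured against Michael's original argument. Your overall architecture --- embed $Y$ into a Banach space so that the two families become uniformly equi-$LC^n$ and uniformly equi-$LC^m$ (Michael's Lemma 6.1 in \cite{michael:80} does exactly this, preserving the ``in $Y$'' character of the hypotheses), prove a two-modulus version of the approximation theorem \ref{theorem-shsa-v21:7} by the nerve-filling scheme, and arrange the refining cover to have order $\leq m+2$ at points of $Z$ and $\leq n+2$ elsewhere --- is faithful to that argument, and you have correctly located the genuinely delicate point: simplices whose carrier meets $Z$ have dimension $\leq m+1$ and are filled using equi-$LC^m$, the remaining ones using equi-$LC^n$, with the ``in $Y$'' form of the hypotheses needed to match boundary data assigned over $Z$ with fillings performed over $X\setminus Z$.

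There is, however, a genuine gap in your assembly of the SNEP from the approximation lemma. You iterate the lemma on ``nested neighbourhoods $U_k\supset A$'' and take the limit on $U=\bigcap_k U_k$. A countable intersection of neighbourhoods of $A$ need not be a neighbourhood of $A$ (it can collapse to $A$ itself), so as written you have produced a selection only on a set containing $A$, not on a neighbourhood of $A$ --- which is not the SNEP. The shrinking is also unnecessary: after extending $g$ to $\tilde g:X\to E$ by Theorem \ref{theorem-res-pro-v1:1}, the set $U_0=\{x\in X: \tilde g(x)\in \mathbf{O}_{\gamma(\varepsilon)}(\Phi(x))\}$ is open by Proposition \ref{proposition-shsa-v28:4} (and note it is the lower semi-continuity of $\Phi$, not the closedness of its values, that makes $\tilde g$ an approximate selection near $A$); one should fix a closed neighbourhood $\overline{V}$ with $A\subset V\subset\overline{V}\subset U_0$, replace $\Phi$ by $\Phi_g$ as in Proposition \ref{proposition-shsa-v28:1}, and run the entire iteration on the fixed paracompact space $\overline{V}$, where no further shrinking occurs. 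With that correction (and the remark that the $C^n$/$C^m$ hypotheses allow $\gamma(\infty)=\infty$, so that $\overline{V}$ may be taken to be $X$ for the SEP), your plan does reconstruct Michael's proof; what remains is the two-modulus nerve-filling lemma itself, which you have described accurately but not carried out.
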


Subsequently, Theorem \ref{theorem-shsa-v21:10} was generalised by
replacing $Z$ with finitely many such sets, see \cite{Ageev1998}. The
case of infinitely many sets seems to offer an interesting
question, which is discussed below. \medskip

The \emph{local dimension} $\ldim(X)$ of a space $X$ was introduced by
Dowker \cite{dowker:55} as the least number $n$ such that each point
of $X$ is contained in an open set $U$ with
$\dim\left(\overline{U}\right)\leq n$. It was shown in
\cite{dowker:55} that $\ldim(X)\leq \dim(X)$ for every normal space
$X$, but there exists a normal space $X$ with $\ldim(X)<\dim(X)$.

\begin{theorem}[\cite{dowker:55}]
  \label{theorem-shsa-v24:1}
  If $X$ is a paracompact space, then $\ldim(X)=\dim(X)$.
\end{theorem}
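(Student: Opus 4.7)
The plan is to establish the missing inequality $\dim(X)\leq \ldim(X)$ for paracompact $X$, since the reverse inequality $\ldim(X)\leq \dim(X)$ already holds in any normal space. So assume $\ldim(X)=n<\infty$ (the case $\ldim(X)=\infty$ is vacuous).

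First, by the very definition of local dimension, for each $x\in X$ there is an open neighbourhood $U_x$ of $x$ with $\dim(\overline{U_x})\leq n$. The family $\{U_x:x\in X\}$ is an open cover of $X$, so by paracompactness it admits a locally finite open refinement $\mathscr{V}=\{V_\alpha:\alpha\in A\}$. Since $\mathscr{V}$ refines $\{U_x\}$, for each $\alpha\in A$ there exists $x(\alpha)\in X$ with $V_\alpha\subset U_{x(\alpha)}$, and therefore $\overline{V_\alpha}\subset \overline{U_{x(\alpha)}}$. Because $\dim$ is monotone with respect to closed subspaces in normal spaces, we conclude that $\dim(\overline{V_\alpha})\leq n$ for every $\alpha\in A$.

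The family $\{\overline{V_\alpha}:\alpha\in A\}$ is then a locally finite closed cover of $X$ by subspaces of dimension at most $n$. The final step is to invoke the locally finite sum theorem for covering dimension in paracompact (equivalently, normal) spaces, which asserts that if $X$ is normal and $\{F_\alpha\}$ is a locally finite closed cover of $X$ with $\dim(F_\alpha)\leq n$ for each $\alpha$, then $\dim(X)\leq n$ (see \cite[Theorem 3.1.10]{engelking:89}). Applying this gives $\dim(X)\leq n=\ldim(X)$.

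The only potentially delicate point is the appeal to the locally finite sum theorem: the countable sum theorem holds in every normal space, but the general locally finite version requires either paracompactness or at least a hereditarily normal/collectionwise-normal hypothesis that lets one patch the locally finite family together. In our setting paracompactness is assumed outright, so this obstacle dissolves, and no further combinatorial argument is needed beyond extracting a locally finite refinement of the canonical cover witnessing $\ldim(X)\leq n$.
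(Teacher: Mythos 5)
Your argument is correct, and since the paper offers no proof of this statement (it is simply cited from Dowker's 1955 paper), there is nothing to diverge from: what you give is essentially the classical argument, and indeed the one Dowker's result rests on. Take the canonical cover witnessing $\ldim(X)\leq n$, pass to a locally finite open refinement, use monotonicity of $\dim$ on closed subspaces of a normal space, and finish with the locally finite sum theorem. Two small remarks. First, your reference \cite[Theorem 3.1.10]{engelking:89} points at Engelking's \emph{General Topology}, where that numbering lands in the chapter on compact spaces; the locally finite sum theorem you want is in the dimension-theory chapter of that book (or, under the numbering you quote, in Engelking's separate \emph{Dimension Theory} monograph). Second, the hedge in your final paragraph is unnecessary: the locally finite sum theorem for $\dim$ holds in \emph{every} normal space (Morita, Kat\v{e}tov), not only under paracompactness or collectionwise normality --- paracompactness is needed here only to produce the locally finite refinement in the first place, which is exactly where you use it.
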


Subsequently, Wenner \cite{Wenner1972} generalised the local dimension
and introduced the so called locally finite-dimensional spaces. A
space $X$ is \emph{locally finite-dimensional} \cite{Wenner1972} if
each point $p\in X$ has a finite-dimensional neighbourhood. In these
terms, a normal space $X$ is locally finite-dimensional if each point
$p\in X$ is contained in an open set $U\subset X$ with
$\dim\left(\overline{U}\right)<\infty$; equivalently, if each $p\in X$
has a neighbourhood $U\subset X$ with $\dim_X(U)<\infty$.\medskip

A space $X$ is \emph{countable-dimensional} if it is a countable union of
finite-dimensio\-nal subsets \cite{Nagata1959/1960}. A space $X$ is
\emph{strongly countable-dimensional} if it is a countable union of
closed finite-dimensional subsets \cite{Nagata1959/1960}. Each
strongly countable-dimensional space is countable-dimensional, but the
converse is not necessarily true \cite[Example
5.2]{Nagata1959/1960}. Each locally finite-dimensional metrizable
space is strongly countable-dimensional \cite[Theorem
1]{Wenner1972}. Essentially the same proof remains valid for locally
finite-dimensional paracompact spaces.

\begin{theorem}[\cite{Wenner1972}]
  \label{theorem-shsa-v24:2}
  Every  locally finite-dimensional paracompact space is stron\-gly
  countable-dimensional.
\end{theorem}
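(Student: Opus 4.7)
The plan is to mimic Wenner's argument, replacing the metric local refinement by paracompactness. Start from the hypothesis: for every $x\in X$ there is an open neighbourhood $U_x\subset X$ whose closure satisfies $\dim(\overline{U_x})<\infty$ (equivalently, $\dim_X(U_x)<\infty$, using normality together with the definition of locally finite-dimensional given just before the statement). The collection $\{U_x:x\in X\}$ is an open cover of $X$, so by paracompactness it has a locally finite open refinement $\mathscr{V}$. For each $V\in \mathscr{V}$, pick $x(V)\in X$ with $V\subset U_{x(V)}$; then $\overline{V}\subset \overline{U_{x(V)}}$, and since $\overline{V}$ is a closed subset of the normal space $\overline{U_{x(V)}}$, the monotonicity of covering dimension on closed subsets yields $\dim(\overline{V})\leq \dim(\overline{U_{x(V)}})<\infty$.

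Next, stratify $\mathscr{V}$ by dimension. For each $n\in\N$ set
\[
  \mathscr{V}_n=\{V\in \mathscr{V}: \dim(\overline{V})\leq n\}\quad\text{and}\quad
  X_n=\bigcup_{V\in \mathscr{V}_n}\overline{V}.
\]
Since $\mathscr{V}_n\subset \mathscr{V}$ is locally finite, so is $\{\overline{V}:V\in \mathscr{V}_n\}$, and therefore $X_n$ is closed in $X$. Each element of the cover belongs to some $\mathscr{V}_n$ (take $n=\dim(\overline{V})$), so $X=\bigcup_{n\in\N} X_n$.

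It remains to bound $\dim(X_n)\leq n$. For this I invoke the locally finite sum theorem for covering dimension in paracompact spaces: if $\{F_\alpha\}$ is a locally finite family of closed sets in a paracompact space with $\dim(F_\alpha)\leq n$ for all $\alpha$, then $\dim\bigl(\bigcup_\alpha F_\alpha\bigr)\leq n$. Applied to $\{\overline{V}:V\in \mathscr{V}_n\}$ this gives $\dim(X_n)\leq n$, and in particular $X_n$ is finite-dimensional. Hence $X$ is a countable union of closed finite-dimensional subsets, i.e.\ strongly countable-dimensional.

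The only non-routine ingredient is the locally finite sum theorem for $\dim$ in the paracompact (rather than metrizable) setting; this is the place where Wenner's metrizable argument needs to be replaced by its paracompact counterpart, but this sum theorem is classical (see, e.g., Dowker's treatment used in Theorem~\ref{theorem-shsa-v24:1}). Everything else is an immediate consequence of paracompactness, local finite-dimensionality, and the monotonicity of $\dim$ on closed subspaces.
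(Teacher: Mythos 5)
Your proof is correct and is essentially the argument the paper has in mind: the paper gives no proof of its own, stating only that Wenner's metrizable argument carries over to paracompact spaces, and your adaptation — locally finite open refinement of the cover by finite-dimensional neighbourhoods, stratification by dimension, closedness of locally finite unions of closed sets, and the locally finite sum theorem for $\dim$ (which holds already for normal spaces) — is exactly that adaptation. No gaps.
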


On the other hand, let us remark that there exists a strongly
countable-dimen\-sional metrizable space which is not locally
finite-dimensional \cite[Theorem 4]{Wenner1972}.\medskip

Regarding selections and locally finite-dimensional spaces, the
following natural ``infinite-dimen\-sional'' version of Theorem
\ref{theorem-shsa-v21:6} can be obtained following the proof of
\cite[Theorem 8.2]{michael:56a}, see also \cite[Theorem
6.2]{michael:88a}. 

\begin{theorem}
  \label{theorem-shsa-v24:3}
  Let $X$ be a locally finite-dimensional paracompact space, $Y$ be a
  completely metrizable space, and $\Phi:X\to \mathscr{F}(Y)$ be an
  l.s.c.\ mapping such that $\{\Phi(x):x\in X\}$ is equi-$LC^n$ for
  each $n\geq -1$. Then $\Phi$ has the SNEP. If, moreover,
  $\Phi(x)$ is $C^n$ for every $x\in X$ and $n\geq-1$, then
  $\Phi$ has the SEP.
\end{theorem}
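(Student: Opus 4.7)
The plan is to iterate the finite-dimensional selection theorem (Theorem~\ref{theorem-shsa-v21:6}) along the countable-dimensional decomposition of $X$ given by Theorem~\ref{theorem-shsa-v24:2}, and to control continuity of the resulting limit via the quantitative approximation form of Theorem~\ref{theorem-shsa-v21:7}. The argument closely follows the structure of the proof of \cite[Theorem~8.2]{michael:56a}.

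First, I would use Theorem~\ref{theorem-shsa-v24:2} to write $X=\bigcup_{k\in\N}X_k$ as an increasing union of closed subsets $X_1\subset X_2\subset\cdots$ with $\dim(X_k)=n_k<\infty$, taking $X_0=\emptyset$. By Theorem~\ref{theorem-shsa-v22:1} I may embed $Y$ as a closed subset of a Banach space $(E,\|\cdot\|)$ in such a way that the family $\{\Phi(x):x\in X\}$ is uniformly equi-$LC^n$ in $E$; refining the metric successively over $n$, I may assume this uniformity holds at every level $n\geq -1$ simultaneously. Let $\gamma_n(\varepsilon)>0$ denote the gauges provided by Theorem~\ref{theorem-shsa-v21:7}.

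For the SEP case, let $A\subset X$ be closed with a continuous selection $g:A\to Y$ for $\Phi\uhr A$, and set $A_k=A\cup X_k$. I would inductively construct continuous selections $f_k:A_k\to Y$ for $\Phi\uhr A_k$ satisfying $f_0=g$ and $f_k\uhr A_{k-1}=f_{k-1}$, by applying Theorem~\ref{theorem-shsa-v21:6} at each step to the paracompact space $A_k$ with closed subspace $A_{k-1}$: the bound $\dim_{A_k}(A_k\setminus A_{k-1})\leq n_k$, the equi-$LC^{n_k}$ property of $\{\Phi(x):x\in A_k\}$, and the $C^{n_k}$ hypothesis on each $\Phi(x)$ together deliver the SEP at $A_{k-1}$ inside $A_k$. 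The pointwise formula $f(x)=f_k(x)$ (for $x\in A_k$) then defines a selection for $\Phi$.

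The main obstacle is the continuity of $f$, since the ascending closed cover $\{A_k\}$ of $X$ is not in general locally finite. I would address this by enforcing quantitative control throughout the induction: at each stage extend $f_{k-1}$ to a continuous map $\tilde f_{k-1}:X\to E$ using Dowker's theorem (Theorem~\ref{theorem-res-pro-v1:1}) together with a norm-preserving Tietze extension, so as to carry the estimate $\|\tilde f_k-\tilde f_{k-1}\|<\varepsilon_k$ across all of $X$; at the next stage, use the approximation form of Theorem~\ref{theorem-shsa-v21:7} inside $X_k$ to choose $f_k$ so that $\|f_k(x)-\tilde f_{k-1}(x)\|<\varepsilon_k$ on $X_k$ while still $f_k\uhr A_{k-1}=f_{k-1}$, with the tolerances $\varepsilon_k$ summable and each $\varepsilon_{k}<\gamma_{n_k}(\varepsilon_{k-1}/2)$. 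The sequence $\tilde f_k$ is then uniformly Cauchy on $X$, its continuous limit $\tilde f$ agrees with $f$ on $\bigcup_k A_k=X$, and because each $\Phi(x)$ is closed in $E$, $\tilde f$ is a selection for $\Phi$ as required. The SNEP case proceeds identically, with each stage producing only a neighborhood extension as supplied by Theorem~\ref{theorem-shsa-v21:6}; the nested neighborhoods contract to an open set $U\supset A$ on which the Cauchy limit defines the desired selection.
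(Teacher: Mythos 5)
Your inductive construction of the partial selections $f_k$ on $A_k=A\cup X_k$ via Theorem~\ref{theorem-shsa-v21:6} is fine; the gap is in the continuity argument, which is the only real content of the theorem. The step where you invoke Theorem~\ref{theorem-shsa-v21:7} to choose $f_k$ with $\|f_k-\tilde f_{k-1}\|<\varepsilon_k$ on $X_k$ is unjustified: that theorem upgrades a continuous $\gamma(\varepsilon)$-\emph{selection} for $\Phi$ to a nearby exact selection, whereas your $\tilde f_{k-1}$ is merely a Dowker/Tietze extension of $f_{k-1}$ into the ambient Banach space and bears no relation to $\Phi$ on $A_k\setminus A_{k-1}$; at a point $x$ there where $\tilde f_{k-1}(x)$ lies at distance $1$ from $\Phi(x)$, no selection for $\Phi\uhr A_k$ can be $\varepsilon_k$-close to $\tilde f_{k-1}$. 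So the uniform Cauchy estimates cannot be secured, and the pointwise limit is exactly the object whose continuity the paper flags as the open difficulty: your argument uses only that $X$ is strongly countable-dimensional (Theorem~\ref{theorem-shsa-v24:2}), so if it worked it would settle Question~\ref{question-shsa-v23:1}, and the discussion following that question states explicitly that this decomposition yields a selection continuous on each closed piece whose global continuity is the whole problem. (A secondary unjustified step: Theorem~\ref{theorem-shsa-v22:1} produces, for each fixed $n$, an embedding making the family uniformly equi-$LC^n$; ``refining the metric successively'' does not obviously yield a single metric that works for all $n$ simultaneously.)

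The missing idea is to use \emph{local} finite-dimensionality, not just its consequence that $X$ is strongly countable-dimensional, to make the decomposition locally eventually constant. Take a locally finite open refinement $\{V_\alpha\}$ of a cover of $X$ by open sets with finite-dimensional closures, and set $X_k=\bigcup\{\overline{V_\alpha}:\dim\overline{V_\alpha}\le k\}$. Each $X_k$ is closed (local finiteness) with $\dim X_k\le k$ (locally finite sum theorem in the normal space $X_k$), and, crucially, $X=\bigcup_k\operatorname{int}X_k$, since every point lies in some open $V_\alpha\subset X_{\dim\overline{V_\alpha}}$. Running your induction with these $X_k$ --- which only needs equi-$LC^n$ and $C^n$ for each finite $n$, with no metric uniformity and no Banach space embedding --- the resulting $f$ agrees with $f_k$ on the open set $\operatorname{int}X_k$, so it is continuous with no estimates at all; this is the argument of \cite[Theorem 8.2]{michael:56a} that the paper cites. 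The SNEP case is handled the same way, shrinking the neighbourhoods furnished at each stage so that near any given point of $A$ only the finitely many stages $k\le m$ with $p\in\operatorname{int}X_m$ intervene.
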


A space $S$ is called $C^\omega$, or \emph{aspherical}, if every
continuous map $g:\s^k\to S$, $k\geq -1$, can be extended to a
continuous map $h:\B^{k+1}\to S$.  We shall say that a family
$\mathscr{S}\subset \mathscr{F}(Y)$ of subsets of a metric space
$(Y,\rho)$ is \emph{uniformly equi-$LC^\omega$} if for every
$\varepsilon>0$ there exists $\delta(\varepsilon)>0$ such that, for
every $S\in\mathscr{S}$, every continuous map of the $k$-sphere
($k\geq -1$) in $S$ of diameter$\ < \delta(\varepsilon)$ can be
extended to continuous map of the $(k+1)$-ball into a subset of $S$ of
diameter$\ <\varepsilon$.\medskip

In view of Theorems \ref{theorem-shsa-v24:2} and
\ref{theorem-shsa-v24:3}, the following question seems natural.

\begin{question}
  \label{question-shsa-v23:1}
  Let $X$ be a strongly countable-dimensional paracompact space,
  $(Y,\rho)$ be a complete metric space, and
  $\Phi:X\to \mathscr{F}(Y)$ be an l.s.c.\ mapping such that
  $\{\Phi(x):x\in X\}$ is uniformly equi-$LC^\omega$ and each
  $\Phi(x)$, $x\in X$, is $C^\omega$. Does there exist a continuous
  selection for $\Phi$?
\end{question}

Suppose that $X$, $Y$ and $\Phi$ are as in Question
\ref{question-shsa-v23:1}. Then $X=\bigcup_{n=0}^\infty F_n$, for some
increasing sequence of closed sets $F_n\subset X$ with
$\dim(F_n)\leq n$. Thus, inductively, using Theorem
\ref{theorem-shsa-v21:6}, one can construct a selection $f:X\to Y$ for
$\Phi$ such that $f\uhr F_n$ is continuous, for every $n\geq
0$. However, the challenge presented in this question is to make the
construction so that the resulting $f$ will be also continuous.

\subsection{Selections and $C$-spaces}

A space $X$ has property $C$, or $X$ is a
\emph{$C$-space}, \label{page-c-space} if for any sequence
$\{\mathscr{U} _n:n\in\mathbb{N}\}$ of open covers of $X$ there exists
a sequence $\{\mathscr{V} _n:n\in\mathbb{N}\}$ of open
pairwise-disjoint families in $X$ such that each $\mathscr{V} _n$
refines $\mathscr{U} _n$ and $\bigcup_{n\in\N}\mathscr{V} _n$ is a
cover of $X$.  The $C$-space property was originally defined by W.\
Haver \cite{haver:1974} for compact metric spaces, subsequently Addis
and Gresham \cite{addis-gresham:78} reformulated Haver's definition
for arbitrary spaces.  It should be remarked that a $C$-space $X$ is
paracompact if and only if it is countably paracompact and normal, see
e.g.\ \cite[Proposition 1.3]{MR2352366}. Every finite-dimensional
paracompact space, as well as every countable-dimensional metrizable
space, is a $C$-space \cite{addis-gresham:78}, but there exists a
compact metric $C$-space which is not countable-dimensional
\cite{pol:81}.\medskip

A set-valued mapping $\Phi:X\sto Y$ is called \emph{lower locally
  constant} \cite{gutev:05} if the set
$\{x\in X:K\subset \Phi(x)\}$ is open in $X$, for every compact
subset $K\subset Y$.  This property appeared in a paper of Uspenskij
\cite{uspenskij:98}; later on, it was used by some authors (see, for
instance, \cite{chigogidze-valov:00a,valov:00}) under the name
``strongly l.s.c.'', while in papers of other authors strongly l.s.c.\
was already used for a different property of set-valued mappings (see,
for instance, \cite{gutev:95e}).  Clearly, every lower locally
constant mapping is l.s.c.\ but the converse fails in general and
counterexamples abound. In fact, if we consider a single-valued map
$f:X\to Y$ as a set-valued one, then $f$ is l.s.c.\ if and only if it
is continuous, while $f$ will be lower locally constant if and only if
it is locally constant. Thus, the term ``lower locally constant''
provides some natural analogy with the single-valued case.\medskip

The following theorem was obtained by Uspenskij \cite{uspenskij:98}.

\begin{theorem}[\cite{uspenskij:98}]
  \label{theorem-shsa-v26:1}
  Let $X$ be a paracompact $C$-space, $Y$ be a topological space, and
  $\Phi : X\sto Y$ be a lower locally constant mapping with
  aspherical values. Then $\Phi$ has a continuous selection.
\end{theorem}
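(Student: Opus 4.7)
The plan is to exploit all three hypotheses in a coordinated fashion: lower local constancy converts pointwise choices into genuine local selections; the $C$-space property provides a layered decomposition of $X$; and asphericity drives the inductive extension across layers.

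First, observe a key consequence of lower local constancy. Applied to $K=\{y\}$, it says that for each $y\in Y$ the set $\{x\in X:y\in\Phi(x)\}$ is open in $X$. Hence, choosing any point $y_x\in\Phi(x)$ for each $x\in X$, the family $\mathscr{U}=\{U_x\}_{x\in X}$ with $U_x=\{x'\in X:y_x\in\Phi(x')\}$ is an open cover of $X$, and on each $U_x$ the constant map $x'\mapsto y_x$ is a continuous local selection for $\Phi$. More generally, for any finite $F\subset Y$ the set $\{x:F\subset\Phi(x)\}$ is open, which will be needed at the extension step.

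Next, invoke the $C$-space property with a sequence of open covers built from (locally finite refinements of) $\mathscr{U}$. This yields a countable collection of pairwise-disjoint open families $\mathscr{V}_n$, $n\in\N$, such that $\bigcup_n\bigcup\mathscr{V}_n=X$ and each $V\in\mathscr{V}_n$ lies in some $U_{x(V)}$, so that the preferred point $y_{x(V)}$ satisfies $y_{x(V)}\in\Phi(x)$ for all $x\in V$. Set $H_n=\bigcup_{k\le n}\bigcup\mathscr{V}_k$; each $H_n$ is open and $X=\bigcup_n H_n$. On the initial layer $\bigcup\mathscr{V}_0$, a continuous selection $f_0$ is immediate: define it on each (disjoint) $V\in\mathscr{V}_0$ as the constant $y_{x(V)}$.

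Now build continuous selections $f_n:H_n\to Y$ inductively with $f_n\uhr H_{n-1}=f_{n-1}$. To pass from $f_{n-1}$ to $f_n$, work on each $V\in\mathscr{V}_n$ separately, since $\mathscr{V}_n$ is pairwise disjoint. On $V$ we have a preferred value $y_{x(V)}\in\Phi(x)$ for every $x\in V$, and we must continuously extend the restriction $f_{n-1}\uhr V\cap H_{n-1}$ to all of $V$ while remaining a selection for $\Phi$. Using paracompactness of $V$ (inherited from $X$), pick a locally finite partition of unity subordinate to a cover of $V$ whose members have attached finite point-sets lying in the corresponding $\Phi(x)$'s (again by lower local constancy, now for compact finite sets). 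Form the associated nerve and canonical map $\pi$ from $V$ into its geometric realisation. The vertex assignment (the attached points, including $y_{x(V)}$ and the boundary values inherited from $f_{n-1}$) is extended by skeletal induction to the whole nerve, using at the $k$-th step that for each $k$-simplex $\sigma$ with witness $x_\sigma\in\bigcap_{v\in\sigma}W_v$, the aspherical ($C^\omega$) fibre $\Phi(x_\sigma)$ contains the image of $\partial|\sigma|$ and admits extensions of $\s^{k-1}\to\B^k$-maps. Composing with $\pi$ yields $f_n\uhr V$. The global map $f=\lim_n f_n$ is continuous: on the open set $H_n$ it equals $f_n$, and $\{H_n\}$ is an increasing open cover of $X$.

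The main obstacle is the inductive extension on each $V\in\mathscr{V}_n$: organising the witnesses and partitions of unity so that the skeletal extensions on different simplices of the nerve remain selections for $\Phi$ and glue continuously on their common faces. This is the step that rests on the $C$-space hypothesis, because the disjointness of the families $\mathscr{V}_n$ reduces a potentially infinite-dimensional global gluing problem to independent problems on the components of $\bigcup\mathscr{V}_n$, effectively substituting the $C$-space property for the finite-dimensionality used in Michael's classical finite-dimensional selection theorem.
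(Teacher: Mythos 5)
The paper only states this result with a citation to Uspenskij and gives no proof, so your proposal must stand on its own; as written it does not. The general flavour is right (disjoint families from the $C$-space property, skeletal extensions via asphericity, openness of $\{x\in X: K\subset \Phi(x)\}$), but two steps fail. First, your induction requires extending $f_{n-1}\uhr (V\cap H_{n-1})$ continuously over all of $V$. Since $H_{n-1}$ and $V$ are open, $V\cap H_{n-1}$ is an \emph{open} subset of $V$, and continuous maps do not in general extend from open subsets (nor even to the relative closure); you give no control on $f_{n-1}$ near the relative boundary of $V\cap H_{n-1}$ in $V$, and no extension theorem applies there. In a correct argument the partial objects are never literal selections on an increasing chain of open sets: they are compatible maps on the skeleta of a single filtered nerve whose $n$-th level of vertices comes from the $n$-th disjoint family, so every extension step is over a closed pair $(\B^{k+1},\s^k)$. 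Second, you feed the $C$-space property a sequence of covers all derived from the initial cover $\mathscr{U}$. The cover at level $n$ must instead be chosen \emph{after} levels $<n$, fine enough that for each of its members $W$ every fibre $\Phi(x)$, $x\in W$, contains the (infinite) compact images of the balls and homotopies already constructed at lower levels; this is exactly where lower local constancy for general compact $K$ is used, together with the recursive form of property $C$. Your proposal invokes the hypothesis only for finite $F\subset Y$, but $\{x: F\subset\Phi(x)\}=\bigcap_{y\in F}\{x: y\in\Phi(x)\}$ is open already from the singleton case, so this restricted use adds nothing and cannot make the skeletal data cohere.

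Moreover, the step you describe inside a single $V\in\mathscr{V}_n$ --- a locally finite partition of unity on $V$, its nerve, and a full skeletal induction over that nerve --- uses only paracompactness of $V$. If that step were sound it would prove the theorem for every paracompact space, which is false: by Theorem \ref{theorem-shsa-v26:4} the corresponding selection/avoidance property characterises $C$-spaces among compacta, and Pixley's example (Theorem \ref{theorem-shsa-v26:2}) defeats naive skeletal inductions over infinite-dimensional nerves. The nerve of a locally finite cover of $V$ has simplices of unbounded dimension, and compatibility of the extension over a simplex with the choices already made over its faces forces the cover to be refined after those choices --- a circularity that only the level-by-level use of the disjoint families resolves, one skeletal dimension per family. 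The disjointness of $\mathscr{V}_n$ does not reduce the problem to ``independent problems on components''; its role is to ensure each point meets at most one member per level, so the global nerve is filtered by the level index and the induction can proceed one level at a time.
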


In view of the method for proving Theorem \ref{theorem-shsa-v21:6}, the
above selection theorem can be considered as a ``selection
approximation property'' for the case of l.s.c.\ mappings, see
Proposition \ref{proposition-shsa-v28:4}.  This brings the following
natural question.

\begin{question}
  \label{question-shsa-v26:1}
  Let $X$ be a paracompact $C$-space, $(Y,\rho)$ be a complete metric
  space and $\Phi : X\to \mathscr{F}(Y)$ be an l.s.c.\ mapping. What
  conditions on the family $\{\Phi(x): x\in X\}$ will guarantee the
  existence of a continuous selection for $\Phi$? What if
  $\{\Phi(x): x\in X\}$ is uniformly equi-$LC^\omega$ and each
  $\Phi(x)$, $x\in X$, is $C^\omega$?
\end{question}

Turning to stronger properties in the setting of this question, let us
recall that a metrizable space $S$ is an \emph{Absolute Extensor} for
the metrizable spaces, or shortly an \emph{AE}, if every continuous map
from a closed subset $A$ of a metrizable space $X$ into $S$ can be
extended to a continuous map of $X$ into $S$. If every continuous map
from a closed subset $A$ of a metrizable space $X$ into $S$ can be
extended to a continuous map in $S$ over some neighbourhood of $A$ in
$X$, then $S$ is called an \emph{ANE}.  A collection $\mathscr{S}$ of
subsets of a metric space $(Y,\rho)$ is called \emph{uniformly
  equi-$LAE$} (\emph{Local Absolute Extensor}) \cite{MR0328858} if for
every $\varepsilon>0$ there exists $\delta>0$ such that if $g$ is a
continuous map from a closed subset $A$ of a metrizable space $X$ into
any $S\in \mathscr{S}$ with $\diam(g(A))<\delta$, then it has a
continuous extension $f: X\to S$ with $\diam(f(X))<\varepsilon$.  A
collection $\mathscr{S}$ of subsets of a metric space $(Y,\rho)$ is
called \emph{uniformly equi-$LC$} (\emph{Locally Contractible})
\cite{MR0328858} if for every $\varepsilon>0$ there exists $\delta>0$
such that if $p\in S\in \mathscr{S}$, then
$\mathbf{O}_\delta(p)\cap S$ is contractible over a subset of $S$ of
diameter $<\varepsilon$. Clearly, $S$ is an AE implies that $S$ is
$C^\omega$, and each uniformly equi-$LAE$ family $\mathscr{S}$ is is
equi-$LC^\omega$. Moreover, the following was shown by Pixley
\cite[Theorem 3.1]{MR0328858}.

\begin{theorem}[\cite{MR0328858}]
  \label{theorem-shsa-v26:3}
  For a collection $\mathscr{S}$ of subsets of a metric space
  $(Y,\rho)$, the following conditions are equivalent\textup{:}
  \begin{enumerate}
  \item The collection $\mathscr{S}$ is uniformly equi-$LAE$.
  \item Each $S\in\mathscr{S}$ is an ANE and $\mathscr{S}$ is
    uniformly equi-$LC$.
  \end{enumerate}
\end{theorem}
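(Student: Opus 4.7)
I would prove the equivalence in two directions. For (1) $\Rightarrow$ (2), the uniform equi-$LC$ condition follows from a clever application of the equi-$LAE$ hypothesis. Fix $p\in S\in\mathscr{S}$ and $\varepsilon>0$, and let $\delta(\varepsilon)$ be supplied by equi-$LAE$. Set $\delta=\delta(\varepsilon)/3$, and consider the metrizable space $X=(\mathbf{O}_\delta(p)\cap S)\times[0,1]$ with closed subset $A=(\mathbf{O}_\delta(p)\cap S)\times\{0,1\}$. The continuous map $g:A\to S$ defined by $g(x,0)=x$ and $g(x,1)=p$ has $\diam g(A)\leq 2\delta<\delta(\varepsilon)$, hence extends to a continuous $f:X\to S$ with $\diam f(X)<\varepsilon$; this $f$ is precisely a contraction of $\mathbf{O}_\delta(p)\cap S$ to $p$ within a subset of $S$ of diameter $<\varepsilon$.

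To show each $S\in\mathscr{S}$ is an ANE, I would verify that $S$ is locally ANE and invoke Hanner's theorem (metrizable local ANE equals ANE). Given $p\in S$ and a small $\varepsilon_0>0$, set $W=\mathbf{O}_{\delta(\varepsilon_0)/3}(p)\cap S$. For any continuous $h:B\to W$ from a closed subset $B$ of a metrizable space $Z$, we have $\diam h(B)<\delta(\varepsilon_0)$, so equi-$LAE$ extends $h$ to a continuous $\tilde h:Z\to S$; then $U=\tilde h^{-1}(W)$ is an open neighbourhood of $B$ in $Z$, and $\tilde h\uhr U:U\to W$ is the required extension, showing that $W$ is ANE.

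For the converse (2) $\Rightarrow$ (1), fix $\varepsilon>0$ and use uniform equi-$LC$ to choose inductively a decreasing sequence $\varepsilon_0=\varepsilon/2>\varepsilon_1>\varepsilon_2>\cdots$ with $\sum_n\varepsilon_n<\varepsilon$ and radii $\delta_0>\delta_1>\cdots$ so that, for each $n$, every $\delta_n$-ball in any $S\in\mathscr{S}$ is contractible inside a subset of $S$ of diameter $<\varepsilon_{n+1}$. Set $\delta(\varepsilon)=\delta_0$. Given a continuous $g:A\to S$ with $\diam g(A)<\delta_0$ and $A$ closed in a metrizable $X$, I would construct the extension $f:X\to S$ by a Dugundji-type nerve argument. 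Take a canonical locally finite open cover $\{U_\alpha\}$ of $X\setminus A$ with mesh decaying as one approaches $A$, together with a subordinated partition of unity, and tag each $U_\alpha$ with a point $a_\alpha\in A$ close to it, so that the values $\{g(a_\alpha):x\in U_\alpha\}$ lie in a $\delta_{n(x)}$-ball with $n(x)\to\infty$ as $x\to A$. Define $f$ inductively over the skeleta of the nerve of $\{U_\alpha\}$: on vertices set $f=g(a_\alpha)$, and extend over each $(k+1)$-simplex using the contractions supplied by uniform equi-$LC$ at the appropriate scale, with the ANE property of $S$ guaranteeing that the required sphere-to-ball fillings inside $S$ exist.

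The main obstacle is this inductive nerve construction: on each simplex one must simultaneously achieve a continuous extension into $S$ (from ANE), diameter control at scale $\varepsilon_{n+1}$ (from uniform equi-$LC$), and compatibility with the partial map already built on the boundary. Choosing the scales $(\varepsilon_n,\delta_n)$ so that these three constraints mesh, while also ensuring continuity at the seam with $A$ as $n(x)\to\infty$, is the technical heart of Pixley's proof.
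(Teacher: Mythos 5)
The paper itself offers no proof of this theorem (it is quoted from Pixley), so your argument can only be judged on its own merits. Your direction (1)$\,\Rightarrow\,$(2) is correct and clean: the cylinder trick with $X=(\mathbf{O}_\delta(p)\cap S)\times[0,1]$ and $A=(\mathbf{O}_\delta(p)\cap S)\times\{0,1\}$ produces exactly the required contraction, and the local-ANE-plus-Hanner argument shows each $S$ is an ANR, hence an ANE.

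The gap is in (2)$\,\Rightarrow\,$(1), and it is the approach, not a detail. A skeleton-by-skeleton extension over the nerve degrades the scale once per dimension: to fill a $(k+1)$-simplex inside a set of diameter $<\varepsilon_{k+1}$ you need its boundary to have image of diameter below the equi-$LC$ modulus attached to $\varepsilon_{k+1}$, which forces the $k$-skeleton to be built at scale $\delta(\varepsilon_{k+1})$, the $(k-1)$-skeleton at scale $\delta(\delta(\varepsilon_{k+1}))$, and so on. Over the finite-dimensional nerves of Michael's finite-dimensional theorem this is a finite composition; over the nerve of a cover of $X\setminus A$ for an arbitrary metrizable $X$ it is an infinite one, and the required precision at the vertex level collapses to zero. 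This is precisely the obstruction that makes local contractibility insufficient for the ANR property, so the construction cannot close on equi-$LC$ alone, and your sketch never says how ANE rescues it --- indeed you assign ANE the job of ``sphere-to-ball fillings,'' but fillings with diameter control come from equi-$LC$, while ANE by itself carries no metric control.

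The two hypotheses are meant to be used quite differently, and then no nerve is needed. Given $\varepsilon>0$, let $\delta>0$ be the uniform equi-$LC$ modulus for $\varepsilon$, and let $g:A\to S$ be continuous with $\diam g(A)<\delta$ and $A$ closed in a metrizable $X$. Pick $p\in g(A)$, so $g(A)\subset \mathbf{O}_\delta(p)\cap S$, and let $H:(\mathbf{O}_\delta(p)\cap S)\times[0,1]\to D\subset S$ be a contraction to a point $c$ with $\diam D<\varepsilon$. Since $S$ is an ANE, $g$ extends to a continuous $G:V\to S$ on an open $V\supset A$, and replacing $V$ by $V\cap G^{-1}(\mathbf{O}_\delta(p)\cap S)$ we may assume $G(V)\subset \mathbf{O}_\delta(p)\cap S$. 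Choose open $V'$ with $A\subset V'\subset\overline{V'}\subset V$ and a continuous $u:X\to[0,1]$ with $u\equiv 0$ on $A$ and $u\equiv 1$ off $V'$; then $f(x)=H(G(x),u(x))$ for $x\in V$ and $f(x)=c$ for $x\notin\overline{V'}$ agree on the overlap, so $f$ is a continuous extension of $g$ with $f(X)\subset D$ and hence $\diam f(X)<\varepsilon$. That Urysohn damping is where ANE genuinely enters, and it replaces your entire inductive construction.
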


Regarding the role of such properties in the selection problem for
l.s.c.\ mappings, the following example was given by Pixley
\cite[Theorem 1.1]{MR0328858}.

\begin{theorem}[\cite{MR0328858}]
  \label{theorem-shsa-v26:2}
  Let $\mathbf{Q}=[0,1]^\omega$ be the Hilbert cube. Then there exists
  an l.s.c.\ mapping $\Phi:\mathbf{Q}\to \mathscr{F}(\mathbf{Q})$
  such that
\begin{enumerate}[label=\upshape{(\roman*)}]
\item The collection $\{\Phi(x):x\in \mathbf{Q}\}$ is uniformly
  equi-$LC$,
\item Each $\Phi(x)$, $x\in \mathbf{Q}$, is either a point, or
  homeomorphic to a $k$-cell \textup{(}for some $k\geq 1$\textup{)},
  or homeomorphic to $\mathbf{Q}$,
\item There is no continuous selection for $\Phi$. In fact,
  $\Phi$ has no the SNEP at some singleton of $\mathbf{Q}$.
\end{enumerate}
\end{theorem}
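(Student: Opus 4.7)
The plan is to construct $\Phi$ by concentrating a topological obstruction at a single point $q\in\mathbf{Q}$ and surrounding it with nested finite-dimensional cells whose boundary data any continuous selection would be forced to realise as a retraction. The basic model is the two-dimensional disc example of Example \ref{example-shsa-v22:1}, iterated across infinitely many coordinate directions of $\mathbf{Q}$, with full copies of $\mathbf{Q}$ inserted on the interfaces to keep $\Phi$ lower semi-continuous without dissolving the obstruction.

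Explicitly, I would fix coordinates $\mathbf{Q}=\prod_{k=1}^\infty [0,1]$, set $q=(0,0,\dots)$, and choose a nested sequence of embedded $k$-cells $B_k\subset\mathbf{Q}$ shrinking to $q$, with boundaries $\partial B_k$ embedded $(k-1)$-spheres. Next I would partition $\mathbf{Q}\setminus\{q\}$ into regions $R_k$, each a tubular neighbourhood of $\partial B_k$ inside $\mathbf{Q}$. On each $R_k$ let $\Phi(x)\subset B_k$ interpolate in an l.s.c.\ manner from singleton values $\{x\}$ on points $x\in\partial B_k$ (identifying $x$ with its nearest boundary point in a continuous retract-like way) to the full cell $B_k$ as $x$ recedes from $\partial B_k$; on the thin interfaces where $R_k$ meets $R_{k+1}$ set $\Phi(x)=\mathbf{Q}$, and at $q$ itself set $\Phi(q)=\{q\}$. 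Properties (i) and (ii) then come out of the construction: each value is a singleton, a $k$-cell, or $\mathbf{Q}$, and uniform equi-$LC$ reduces to checking a common contractibility modulus $\delta(\varepsilon)$ for cells and for $\mathbf{Q}$, which is routine because a cell $B_k$ of diameter $d$ is uniformly locally contractible with constants depending only on $d$, and the $B_k$'s can be scheduled so these constants stay uniformly bounded.

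Failure of the SNEP at $\{q\}$ is the crux: any continuous selection $f$ defined on a neighbourhood $U$ of $q$ with $f(q)=q$ is forced, by the singleton constraint on each $\partial B_k$, to restrict to the identity on $\partial B_k\cap U$ for all large $k$, while simultaneously taking values in the cell $B_k$ on $R_k\cap U$; a standard Brouwer-type non-retraction argument, applied at each scale $k$ and pushed to the limit $k\to\infty$, shows that no such $f$ can be continuous at $q$. The main obstacle in implementing this plan is the tension between lower semi-continuity and the rigid singleton values on $\partial B_k$: l.s.c.\ tends to force $\Phi$ to grow along convergent sequences and thereby dilute the obstruction, while uniform equi-$LC$ requires each value to be uniformly well-behaved. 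The role of the copies of $\mathbf{Q}$ appearing in (ii) is precisely to act as a large, l.s.c.-compatible buffer between the rigid boundary values and the surrounding cell values — it is in calibrating these buffers so that l.s.c., uniform equi-$LC$, and the retraction obstruction all coexist that the real combinatorial work of the proof is concentrated.
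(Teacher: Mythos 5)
The paper itself gives no proof of this statement --- it is quoted from Pixley \cite{MR0328858} --- so your attempt can only be judged on its own terms, and it has a genuine gap at its core: the obstruction argument does not produce a contradiction. On each region $R_k$ your values interpolate from singletons on $\partial B_k$ up to the \emph{full cell} $B_k$, so a continuous selection is merely forced to agree with the identity on $\partial B_k$ and to take values in $B_k$ on the rest of $R_k$. That is not the hypothesis of Brouwer's non-retraction theorem; such maps exist in abundance (e.g.\ collapse a collar $R_k\cong\partial B_k\times[0,1]$ onto $\partial B_k\subset B_k$), and since the cells $B_k$ shrink to $q$ one can choose the selection with image near $B_k$ on $R_k$, so that it is even continuous at $q$. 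To obtain a Brouwer-type obstruction you would have to force the selection of an entire $k$-cell into the \emph{sphere} $\partial B_k$ while pinning it to the identity on the boundary; that is, the interior values must be proper subsets of the sphere (caps, like the arcs $S_y$ of Example \ref{example-shsa-v22:1}), not subsets of the ball.

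That fix, however, collides with condition (i), and this collision is the real content of Pixley's theorem. A family of sphere-caps is not uniformly equi-$LC$: for a cap that is almost the whole sphere, a small ball centred near the gap meets the cap in a disconnected set whose pieces can only be joined inside the cap by travelling around the whole sphere, so no modulus $\delta(\varepsilon)$ works for $\varepsilon$ below the diameter of the sphere --- the paper makes exactly this point for the family $\{S_t\}$ in the example preceding Example \ref{example-shsa-v22:1}. Conversely, once every value is a point, a cell or a copy of $\mathbf{Q}$, all values are absolute extensors, so by Theorem \ref{theorem-shsa-v26:3} a uniformly equi-$LC$ family of such sets is even uniformly equi-$LAE$ and no homotopy-theoretic obstruction is available at any single scale. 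Pixley's example therefore has to run on a different, quantitative mechanism: each finite-dimensional stage \emph{does} admit continuous selections, but only at the cost of a loss of diameter control that compounds with the dimension, and it is this accumulated loss that kills continuity at the limit point. Your proposal contains no trace of that idea; the ``calibration of the buffers'' you defer to is exactly where it would have to live. As a secondary point, the buffers as described also threaten lower semi-continuity: an l.s.c.\ mapping cannot jump up to the value $\mathbf{Q}$ at a point that is a limit of points whose values lie in a small cell, so the interface sets carrying the value $\mathbf{Q}$ would at the very least have to be open, which your ``thin interfaces'' do not guarantee.
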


Uniformly equi-$LAE$ families are another candidate to try Question
\ref{question-shsa-v26:1}.

\begin{question}
  \label{question-shsa-v26:2}
  Let $X$ be a paracompact $C$-space, $(Y,\rho)$ be a complete metric
  space, and $\Phi : X\to \mathscr{F}(Y)$ be an l.s.c.\ mapping
  such that $\{\Phi(x): x\in X\}$ is uniformly equi-$LAE$ and each
  $\Phi(x)$, $x\in X$, is $AE$. Does there exist a continuous
  selection for $\Phi$?
\end{question}

Let $|\Sigma|$ be the geometric realisation of a simplicial complex
$\Sigma$, and $\mathbf{V}_\Sigma$ be its vertices. As a topological
space, we will consider $|\Sigma|$ endowed with the \emph{Whitehead
  topology}. In this topology, a subset $U\subset |\Sigma|$ is open if
and only if $U\cap |\sigma|$ is open in $|\sigma|$, for every simplex
$\sigma\in\Sigma$. Motivated by the Lefschetz characterisation
\cite{MR0007094} of compact metrizable absolute neighbourhood
retracts, Pixley considered the following stronger condition. A
collection $\mathscr{S}$ of subsets of a metric space $(Y,\rho)$ is
called \emph{uniformly equi-$(L)$} if for each $\varepsilon>0$, there
exists $\delta>0$ with the property that for each simplicial complex
$\Sigma$ and subcomplex $K\subset \Sigma$ with
$\mathbf{V}_\Sigma\subset K$, each continuous $g:|K|\to S$ with
$\diam(g(|K\cap \sigma|))<\delta$ for every $\sigma\in \Sigma$, can be
extended to a continuous $f:|\Sigma|\to S$ such that
$\diam(f(|\sigma|))<\varepsilon$, $\sigma\in \Sigma$. Pixley remarked
that each uniformly equi-$(L)$ family is uniformly equi-$LAE$ based on
a result of Lefschetz \cite[(6.6)]{MR0007094} and the extension
theorem of Dugundji \cite[Theorem 3.1]{dugundji:51}, but the converse
is not true due to Theorem \ref{theorem-shsa-v26:2}. Thus, he stated
implicitly the following question.

 \begin{question}[\cite{MR0328858}]
   \label{question-shsa-v26:3}
   Let $X$ be a paracompact space, $(Y,\rho)$ be a complete metric
   space, and $\Phi:X\to \mathscr{F}(Y)$ be an l.s.c.\ mapping such
   that $\{\Phi(x): x\in X\}$ is uniformly equi-$(L)$ and each
   $\Phi(x)$, $x\in X$, is an AE. Does there exist a continuous
   selection for $\Phi$? What if $\Phi$ is $\rho$-continuous?
 \end{question}

\subsection{Selections Avoiding Sets}
 
For spaces $S$ and $Y$, we use $C(S,Y)$ to denote the set of all
continuous maps from $S$ to $Y$ endowed with the compact-open
topology. In fact, $C(S,Y)$ will be used in the case of a compact $S$
and a metrizable $Y$, where this topology is the uniform topology on
$C(S,Y)$ generated by any compatible metric on $Y$. Finally, let
$\mathbf{Q}=[0,1]^\omega$ be the Hilbert cube; $\mathbf{I}^n=[0,1]^n$
be the $n$-cube, and $\mathbf{I}^0$ to be the $0$-cube (i.e.\ a
singleton).\medskip

A closed set $A$ of a metrizable space $Y$ is said to be a
\emph{$Z_n$-set in $Y$}, where $n\geq 0$, if
$C(\mathbf{I}^n,Y\backslash A)$ is dense in $C(\mathbf{I}^n,Y)$, and
$A$ is called a \emph{$Z$-set in $Y$} if $C(\mathbf{Q},Y\setminus A)$
is dense in $C(\mathbf{Q},Y)$, see \cite{chigogidze:96} and
\cite{torunczyk:78}. The collection of all $Z_n$-sets ($Z$-sets) in
$Y$ will be denoted by $\mathscr{Z}_{n}(Y)$ (respectively,
$\mathscr{Z}(Y)$). Let us remark that the elements of
$\mathscr{Z}_0(Y)$ are precisely the closed nowhere dense subsets of
$Y$. Moreover, it is well known that
$\mathscr{Z}(Y)=\bigcap_{n\geq0}\mathscr{Z}_n(Y)$, see e.g.\
\cite[Proposition 2.1 in \S 2 of Chapter V]{bessage-pelczynski:75}. In
view of this, the $Z$-sets are sometimes called
$Z_\infty$-sets. Finally, let us also remark that a closed subset
$A\subset\mathbf{Q}$ is a $Z$-set in $\mathbf{Q}$ iff the identity map
of $\mathbf{Q}$ can be uniformly approximated by continuous self-maps
of $\mathbf{Q}$ with range entirely contained $\mathbf{Q}\setminus A$,
see e.g.\ \cite[Lemma 6.2.3]{mill:89}.\medskip

The following characterisation of $C$-spaces was obtained by Uspenskij
\cite[Theorem 1.4]{uspenskij:98}.

\begin{theorem}[\cite{uspenskij:98}]
  \label{theorem-shsa-v26:4}
  A compact space $X$ is a $C$-space if and only if for every
  continuous mapping $\Psi : X\to \mathscr{Z}(\mathbf{Q})$ there
  exists a continuous map $f : X\to \mathbf{Q}$ such that $f(x)\notin
  \Psi(x)$,  for all $x\in X$.
\end{theorem}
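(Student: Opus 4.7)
\emph{Plan.} My plan is to establish the two implications separately, with the ``only if'' direction flowing from Uspenskij's selection theorem (Theorem \ref{theorem-shsa-v26:1}) and the ``if'' direction requiring a direct construction that encodes the $C$-space property as a $Z$-set-avoidance problem.

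For the ``only if'' direction, I would associate to $\Psi$ the complementary open-valued mapping $\Phi:X\sto\mathbf{Q}$ defined by $\Phi(x)=\mathbf{Q}\setminus\Psi(x)$, and verify that it meets the hypotheses of Theorem \ref{theorem-shsa-v26:1}. First, $\Phi$ is lower locally constant: for every compact $K\subset\mathbf{Q}$ the set $\{x\in X:K\subset\Phi(x)\}$ equals $\{x\in X:\Psi(x)\subset\mathbf{Q}\setminus K\}=\Psi^{\#}[\mathbf{Q}\setminus K]$, which is open because $\Psi$ is continuous (hence u.s.c.) and $\mathbf{Q}\setminus K$ is open. Second, each $\Phi(x)$ is aspherical: given $g:\s^k\to\Phi(x)$, extend $g$ to a continuous $\tilde{h}:\B^{k+1}\to\mathbf{Q}$ using that $\mathbf{Q}$ is an AR, then use the $Z$-set property of $\Psi(x)$ to uniformly approximate $\tilde{h}$ by a continuous map into $\mathbf{Q}\setminus\Psi(x)$, correcting on a thin collar of $\s^k$ by a straight-line homotopy in $\mathbf{Q}$ which, by compactness of $g(\s^k)$ and positivity of $d(g(\s^k),\Psi(x))$, stays in $\Phi(x)$. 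Since $X$ is compact (hence paracompact), Theorem \ref{theorem-shsa-v26:1} yields a continuous selection $f:X\to\mathbf{Q}$ for $\Phi$, which is precisely a continuous map with $f(x)\notin\Psi(x)$ for every $x\in X$.

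For the ``if'' direction, I would verify the Addis-Gresham definition of a $C$-space directly. Given a sequence $\{\mathscr{U}_n\}_{n\in\N}$ of open covers of $X$, reduce each to a finite cover $\{U^n_j\}_{j=1}^{k_n}$ (possible since $X$ is compact), choose a partition of unity $\{\xi^n_j\}_{j=1}^{k_n}$ index-subordinated to $\mathscr{U}_n$, and via Lemma \ref{lemma-Count-Par-v12:2} obtain a continuous nerve map $\xi_n:X\to|\Sigma_n|$ into the geometric realisation of the nerve $\Sigma_n$ of $\mathscr{U}_n$. Embedding each finite polyhedron $|\Sigma_n|$ as a $Z$-set in a factor $\mathbf{Q}_n$ of a decomposition $\mathbf{Q}=\prod_n\mathbf{Q}_n$, define $\Psi(x)\subset\mathbf{Q}$ so that avoiding it coordinate by coordinate corresponds to, for each $n$, landing in the open star of a single vertex $U^n_j$ with $\xi_n(x)\in U^n_j$. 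The plan is to check that (i) each $\Psi(x)$ is a $Z$-set in $\mathbf{Q}$, exploiting that the $|\Sigma_n|$ are finite polyhedra inside $\mathbf{Q}_n$; (ii) $\Psi$ is continuous, via continuity of the $\xi_n$; and (iii) any continuous avoidance map $f:X\to\mathbf{Q}$ decodes, coordinate by coordinate, into pairwise-disjoint open families $\mathscr{V}_n$ refining $\mathscr{U}_n$ whose union covers $X$, giving the $C$-space property.

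The principal obstacle is the ``if'' direction, and specifically calibrating $\Psi$ so that it is simultaneously $Z$-set-valued, continuous, and encodes the refinement property with the right combinatorial content. The ``only if'' direction, by contrast, is essentially routine once one recognises that $\Phi(x)=\mathbf{Q}\setminus\Psi(x)$ fits Uspenskij's theorem: lower local constancy follows from u.s.c.\ of $\Psi$, and asphericity of $Z$-set complements is a classical property in the theory of the Hilbert cube.
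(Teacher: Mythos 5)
Your ``only if'' direction is sound. The complement mapping $\Phi(x)=\mathbf{Q}\setminus\Psi(x)$ is lower locally constant because $\{x:K\subset\Phi(x)\}=\Psi^{\#}[\mathbf{Q}\setminus K]$ and a continuous $\mathscr{Z}(\mathbf{Q})$-valued mapping is in particular u.s.c.; and the asphericity of the complement of a $Z$-set in $\mathbf{Q}$ is the classical fact you sketch (your collar correction works once the approximation is taken small relative to the distance from $g(\s^k)$ to $\Psi(x)$). This is Uspenskij's own route through Theorem \ref{theorem-shsa-v26:1}. The survey reaches the same implication differently, by specialising Theorem \ref{theorem-shsa-v26:5} to the constant mapping $\Phi(x)=\mathbf{Q}$ and $\Psi_n=\Psi$, using that a continuous $\mathscr{Z}(\mathbf{Q})$-valued mapping has closed graph; either derivation is legitimate, and yours is the more self-contained of the two.

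The ``if'' direction is where the paper offers no argument (it is simply cited to \cite{uspenskij:98}), and your sketch of it has genuine gaps which you flag but do not close. First, the Boolean structure of the encoding is not pinned down and, as phrased, is wrong: ``avoiding it coordinate by coordinate \dots for each $n$'' would force \emph{every} $\mathscr{V}_n$ to cover $X$, which is far stronger than the $C$-space property (on a connected $X$ a disjoint open cover refining $\mathscr{U}_1$ must be $\{X\}$); what is needed is $\Psi(x)=\prod_n B_n(x)$, so that $f(x)\notin\Psi(x)$ means some coordinate escapes and $x$ is resolved at \emph{some} level $n$. Second, ``landing in the open star of a single vertex'' does not yield pairwise-disjoint families, since distinct open stars overlap; the correct gadget is the partition of the simplex into the regions where one barycentric coordinate strictly dominates, with the tie-set playing the role of $B_n(x)$. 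Third, if $|\Sigma_n|$ is embedded as a $Z$-set in the factor $\mathbf{Q}_n$ and $\Psi(x)$ is kept inside $\prod_n|\Sigma_n|$, then $\Psi(x)$ lies in the $Z$-set $\pi_1^{-1}(|\Sigma_1|)$ and is avoided by a constant map that carries no information about the covers, so the decoding step collapses; one must instead take $\mathbf{Q}_n$ to be the simplex on the vertex set $\mathscr{U}_n$ itself (or let $B_n(x)$ be large in $\mathbf{Q}_n$). Finally, and this is the actual crux that you defer: continuity of $x\mapsto\Psi(x)$ in the Vietoris topology does not follow ``via continuity of the $\xi_n$'', because any bad set recording which members of $\mathscr{U}_n$ contain $x$ depends on the supports of the partition of unity, which vary only semi-continuously; $B_n(x)$ must be written as a continuous formula in the values $\xi^n_j(x)$ to prevent jumps. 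Until these points are resolved, the converse implication is not proved.
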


Subsequently, Theorem \ref{theorem-shsa-v26:4} was extended to all
paracompact $C$-spaces in \cite[Theorem 1.1]{gutev-valov:00}.

\begin{theorem}[\cite{gutev-valov:00}]
  \label{theorem-shsa-v26:5}
  A paracompact space $X$ is a $C$-space if and only if whenever $E$
  is a Banach space, $\Phi:X\to \mathscr{F}_\mathbf{c}(E)$ is l.s.c.\
  and ${\Psi_n:X\to \mathscr{F}(E)}$, $n\in \N$, are closed-graph
  mappings with $\Psi_n(x)\cap\Phi(x)\in \mathscr{Z}(\Phi(x))$, for
  all $x\in X$ and $n\in \N$, there exists a continuous map $f:X\to E$
  with $f(x)\in \Phi(x)\setminus\bigcup_{n\in\N}\Psi_n(x)$, for
  every $x\in X$.
\end{theorem}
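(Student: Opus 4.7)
The plan is to prove sufficiency by combining Michael's convex-valued selection theorem with a $C$-space-driven iterative perturbation, and to handle necessity by encoding a sequence of open covers as a $Z$-set avoidance problem in a Hilbert space. For the sufficiency direction, I would first apply Theorem \ref{theorem-res-pro-v9:1} to the paracompact space $X$ and the l.s.c.\ convex-valued $\Phi$ to obtain an initial continuous selection $f_0 : X \to E$. The task then reduces to modifying $f_0$ inductively so as to miss each $\Psi_n(x)$ while keeping the image inside $\Phi(x)$.

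The key tool is a one-step approximation lemma: given a continuous selection $f$ of $\Phi$, a single closed-graph $\Psi : X \to \mathscr{F}(E)$ with $\Psi(x) \cap \Phi(x) \in \mathscr{Z}(\Phi(x))$, and a continuous positive $\varepsilon : X \to (0,+\infty)$, there exists a continuous selection $g$ of $\Phi$ with $\|g(x) - f(x)\| < \varepsilon(x)$ and $g(x) \notin \Psi(x)$. To prove the lemma, at each $x_0 \in X$ I would invoke the $Z$-set property of $\Psi(x_0) \cap \Phi(x_0)$ in $\Phi(x_0)$ to obtain a point $y_{x_0} \in \Phi(x_0) \setminus \Psi(x_0)$ close to $f(x_0)$; combining l.s.c.\ of $\Phi$ with the closed-graph of $\Psi$, this pointwise choice extends to a continuous local selection $h_{x_0}$ of $\Phi$ on a neighborhood $U_{x_0}$, still close to $f$ and avoiding $\Psi$. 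Next, apply the $C$-space property to a countable sequence of open refinements of the cover $\{U_{x_0}\}$ to obtain pairwise-disjoint open families $\mathscr{V}_n$ with $\bigcup_n \bigcup \mathscr{V}_n = X$. A partition of unity subordinate to $\bigcup_n \mathscr{V}_n$ then defines $g$ as the corresponding convex combination of the selected $h_{x_V}$'s. Convexity of each $\Phi(x)$ keeps $g$ a selection for $\Phi$, while the pairwise-disjointness within each layer $\mathscr{V}_n$, together with a careful layer-by-layer shrinkage of the local perturbations, is used to guarantee $g(x) \notin \Psi(x)$. Iterating the lemma produces continuous selections $f_n$ of $\Phi$ avoiding $\Psi_1,\dots,\Psi_n$, with $\|f_{n+1}-f_n\|$ controlled by buffers $\delta_n(x)>0$ measuring the distance of $f_n(x)$ from the already-handled $\Psi_k(x)$; the uniform limit $f = \lim f_n$ is then a continuous selection of $\Phi$ (each $\Phi(x)$ being closed in $E$) that avoids every $\Psi_k$ (by the buffer together with the closed-graph of $\Psi_k$).

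The principal obstacle is the last step of the one-step lemma: a convex combination of points avoiding $\Psi(x)$ need not itself lie outside $\Psi(x)$, since $\Psi(x)$ is not assumed convex. This is precisely where the $C$-space property is essential and not replaceable by mere paracompactness. The countable union of \emph{pairwise-disjoint} layers means at most one local perturbation from each layer $\mathscr{V}_n$ contributes at any given point, and the weights from a single layer either vanish or reduce to one active index. By engineering the perturbations $h_{x_V}$ inside successively finer $Z$-set-avoiding neighborhoods as $n$ grows, the countable convex combination can be shown to stay inside an open set disjoint from $\Psi(x)$; combining this per-layer $Z$-set control is the technical heart of the argument. For the necessity direction, given a sequence of open covers $\mathscr{U}_n$ of $X$, I would take $E = \ell_2(\bigcup_n \mathscr{U}_n)$, let $\Phi$ be a convex-valued mapping encoding barycentric combinations of stars of $x$ in each $\mathscr{U}_n$, and define each $\Psi_n$ as a closed-graph mapping whose intersection with $\Phi$ cuts out a $Z$-set representing the failure of $x$ to lie in a pairwise-disjoint subfamily of $\mathscr{U}_n$; a selection $f$ avoiding all such $\Psi_n$ would then yield, via the supports of its $\mathscr{U}_n$-coordinates, exactly the pairwise-disjoint refinements required by the $C$-space definition.
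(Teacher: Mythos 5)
The survey states this theorem without proof, citing \cite[Theorem 1.1]{gutev-valov:00}, so there is no in-paper argument to compare against; judged on its own terms, your proposal has a genuine gap at its central step. The one-step lemma is where all of the difficulty of the theorem lives (already for a single $\Psi$ the avoidance property characterises $C$-spaces, by Theorem \ref{theorem-shsa-v26:4} together with Dranishnikov's example), and your proof of it does not go through. After choosing local perturbations $y_V\in\Phi(x)\setminus\Psi(x)$ and pairwise-disjoint layers $\mathscr{V}_n$, you form $g$ as a convex combination $\sum_V\xi_V h_{x_V}$; but $\Phi(x)\setminus\Psi(x)$ is not convex, and a convex combination of points each avoiding the $Z$-set $\Psi(x)\cap\Phi(x)$ can land squarely inside it (two perturbations on opposite sides of a point of $\Psi(x)$ average into $\Psi(x)$ as the weights pass through $1/2$). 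Your proposed remedy --- ``engineering the perturbations inside successively finer $Z$-set-avoiding neighbourhoods as $n$ grows'' --- has no anchor: shrinking the layer-$n$ perturbation only pulls the combination towards the common reference point $f(x)$, which may itself lie in $\Psi(x)$. The sentence asserting that the combination ``can be shown to stay inside an open set disjoint from $\Psi(x)$'' is exactly the statement to be proved, and no mechanism is supplied.

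The missing ingredient is homotopy-theoretic: complements of $Z$-sets (and of countable unions of $Z$-sets) in the convex sets $\Phi(x)$ are aspherical, and the proof in \cite{gutev-valov:00} exploits precisely this, extending partial maps over the skeleta of the nerve of the combined cover $\bigcup_n\mathscr{V}_n$ --- equivalently, reducing to Uspenskij's selection theorem for aspherical-valued lower locally constant mappings (Theorem \ref{theorem-shsa-v26:1}) applied to an auxiliary mapping. Skeleton-wise extension into an aspherical target is the correct substitute for your convex combination; a partition of unity alone cannot produce avoidance of a non-convex set. Your necessity direction is likewise only a plan: $\Phi$ and $\Psi_n$ are never defined, it is not verified that $\Psi_n(x)\cap\Phi(x)$ is a $Z$-set in $\Phi(x)$ (problematic for natural candidates when $\Phi(x)$ is finite-dimensional), and no argument extracts the disjoint refinements from an avoiding selection. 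The outer iteration with continuous buffers $\delta_n$ (which exist by the closed-graph hypothesis and paracompactness) and the initial appeal to Theorem \ref{theorem-res-pro-v9:1} are sound, but they are the routine part of the argument.
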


Every continuous mapping $\Psi:X\to \mathscr{Z}(\mathbf{Q})$ has a
closed graph. Hence, by taking $\Phi(x)=\mathbf{Q}$, $x\in X$, Theorem
\ref{theorem-shsa-v26:4} follows from Theorem
\ref{theorem-shsa-v26:5}. The interested reader is referred to
\cite{valov:00}, where finite $C$-spaces were characterised in a
similar manner; also to \cite{gutev-valov:01a}, where a natural
finite-dimensional version of Theorem \ref{theorem-shsa-v26:5} (in
terms of $Z_n$-sets) was obtained.

\subsection{Selections and Convex $G_\delta$-Sets}

If $Y$ is completely metrizable and an absolute extensor for the
metrizable spaces, then it is also an absolute extensor for the
collectionwise normal spaces \cite{dowker:52}. In particular, this is
true for every convex $G_\delta$-subset $Y$ of a Banach space
$E$. Indeed, $Y$ is an absolute extensor for the metrizable spaces
being convex (by Dugundji's extension theorem \cite{dugundji:51}), and
is also completely metrizable being a $G_\delta$-subset of a complete
metric space. Motivated by this, the following problem was stated by
Michael in \cite[Problem 396]{michael:90}, it became known as
\emph{Michael's $G_\delta$-problem}.

\begin{question}[\cite{michael:90}]
  \label{question-shsa-v28:1}
  Let $E$ be a Banach space, $Y\subset E$ be a convex
  $G_\delta$-subset of $E$, $X$ be a paracompact space and
  $\Phi:X\to \mathscr{F}_\mathbf{c}(Y)$ be an l.s.c.\ mapping. Does
  $\Phi$ have a continuous selection?
\end{question}

In general, the answer to this question is in the negative due to a
counterexample constructed by Filippov \cite{filippov:04,filippov:05},
see also \cite{Repovvs2007}. However, Question
\ref{question-shsa-v28:1} was also resolved in the affirmative in a
number of partial cases. As Michael remarked in \cite[Remark
3.11]{michael:90}, the answer is ``Yes'' if
$\overline{\conv(K)}\subset \Phi(x)$ for every compact subset
$K\subset\Phi(x)$ and $x\in X$. For instance, this is true if $Y$ is a
countable intersection of open convex sets, or $\dim \Phi(x)<\infty$,
for all $x\in X$. Various related observations for this special case
can be found in \cite{MR2848743,MR2406392}.  Another remark made by
Michael is that the answer is ``Yes'' provided $\dim(X)<\infty$
\cite[Remark 3.6]{michael:90}.\medskip

All these special cases were generalised in \cite[Theorem
1.1]{gutev:94} by showing that in the setting of Question
\ref{question-shsa-v28:1}, continuous selections are equivalent to the
existence of convex-valued usco selections (i.e.\
convex-compact-valued u.s.c.\ selections).

\begin{theorem}[\cite{gutev:94}]
  \label{theorem-shsa-v26:7}
  For a paracompact space $X$ and a $G_\delta$-subset $Y$ of a Banach
  space $E$, the following are equivalent\textup{:}
  \begin{enumerate}
  \item Every l.s.c.\ $\Phi:X\to \mathscr{F}_\mathbf{c}(Y)$ has a
    continuous selection.
  \item Every l.s.c.\ $\Phi:X\to \mathscr{F}_\mathbf{c}(Y)$ has an
    usco convex-valued selection. 
  \end{enumerate}
\end{theorem}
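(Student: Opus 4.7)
The implication $(1)\Rightarrow(2)$ is immediate: a continuous selection $f:X\to E$ for $\Phi$ gives rise to the convex-compact-valued u.s.c.\ selection $x\mapsto\{f(x)\}$.

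For $(2)\Rightarrow(1)$, fix an l.s.c.\ $\Phi:X\to\mathscr{F}_\mathbf{c}(Y)$ and, by the hypothesis, a convex-valued usco selection $\psi:X\to\mathscr{C}_\mathbf{c}(Y)$ of $\Phi$. The plan is to use $\psi$ as a compact skeleton inside $\Phi$, together with the $G_\delta$-structure of $Y$ inside $E$, to construct a continuous selection of $\Phi$ by uniform-Cauchy approximation. First, I would write $Y=\bigcap_{n\in\N}U_n$ as a decreasing intersection of open subsets of $E$. Since $\psi(x)$ is compact with $\psi(x)\subset U_n$, the distance $\alpha_n(x)=d(\psi(x),E\setminus U_n)$ is positive; u.s.c.\ of $\psi$ makes $\alpha_n$ lower semi-continuous, so by paracompactness one obtains continuous $\eta_n:X\to(0,+\infty)$ with $3\eta_n<\alpha_n$ and $\sum_{m\geq n}\eta_m(x)<\alpha_n(x)/3$ for every $x\in X$ and $n\in\N$.

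Next, I would inductively construct continuous maps $f_n:X\to E$ that are selections of $\overline{\Phi}:X\to\mathscr{F}_\mathbf{c}(E)$ (which is l.s.c.\ by Proposition \ref{proposition-shsa-v28:2}), each satisfying the ``near-$\psi$'' condition that at every $x\in X$ there is a locally chosen anchor $y_n(x)\in\psi(x_\alpha)$ (for some base point $x_\alpha$ coming from a locally finite partition-of-unity cover) with $\|f_n(x)-y_n(x)\|<\eta_n(x)$, together with the Cauchy bound $\|f_{n+1}(x)-f_n(x)\|<\eta_n(x)+\eta_{n+1}(x)$. The anchor bound places $f_n(x)$ in $U_n$ with safety margin at least $2\alpha_n(x_\alpha)/3$, and the summability of the $\eta_m$ ensures that this margin survives in the limit: the Cauchy sequence $(f_n)$ converges uniformly to a continuous $f:X\to E$ with $d(f(x),E\setminus U_k)>0$ for every $k$, so $f(x)\in\bigcap_n U_n=Y$ and, since $f(x)\in\overline{\Phi(x)}^E$, also $f(x)\in\overline{\Phi(x)}^Y=\Phi(x)$, giving the desired continuous selection.

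The main obstacle is the inductive step. A direct application of Michael's theorem (Theorem \ref{theorem-res-pro-v9:1}) to $\overline{\Phi}\wedge\mathbf{O}_{\eta_{n+1}}[\psi]\wedge\mathbf{O}_{\eta_n+\eta_{n+1}}[f_n]$ is blocked because $\mathbf{O}_{\eta_{n+1}}[\psi]$ need not have open graph when $\psi$ is only u.s.c.; Proposition \ref{proposition-shsa-v28:5} therefore does not yield lower semi-continuity of the intersection. I would circumvent this by replacing the global tube around $\psi$ with local anchor points: at each $x_0\in X$, pick $y_{x_0}\in\psi(x_0)$ within $\eta_n(x_0)$ of $f_n(x_0)$ (possible by the previous anchor bound and convexity of $\psi(x_0)$); l.s.c.\ of $\Phi$ together with Proposition \ref{proposition-shsa-v28:5} then makes $\overline{\Phi}\wedge\mathbf{O}_{\eta_{n+1}(x_0)/3}[y_{x_0}]\wedge\mathbf{O}_{\eta_n+\eta_{n+1}}[f_n]$ l.s.c.\ and nonempty on an open neighborhood $W_{x_0}\ni x_0$, where Michael's theorem provides a continuous local selection $g_{x_0}$. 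Patching via convex combinations $f_{n+1}(x)=\sum_\alpha\xi_\alpha(x)g_{x_\alpha}(x)$ against a locally finite partition of unity $\{\xi_\alpha\}$ subordinated to $\{W_{x_\alpha}\}$ preserves selection of $\overline{\Phi}$ by convexity, and the triangle inequality delivers the Cauchy bound. The most delicate step is choosing the $y_{n+1}(x)$ consistently with the evolving partition of unity so that the near-$\psi$ bound is inherited at stage $n+1$: this is where u.s.c.\ of $\psi$ is invoked, shrinking each $W_{x_0}$ so that $\psi(x_\alpha)$ for all relevant $\alpha$ is confined within a common small neighborhood of $\psi(x)$.
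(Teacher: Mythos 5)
The survey itself does not reproduce a proof of this theorem (it is quoted from \cite{gutev:94}), so your plan can only be judged on its own terms. The implication (1)$\Rightarrow$(2) is indeed trivial, and your overall strategy for (2)$\Rightarrow$(1) --- a uniformly Cauchy sequence of continuous selections of $\overline{\Phi}$ kept so close to the usco selection $\psi$ that the limit is trapped in every $U_n$ --- is the natural one. You also correctly diagnose the central obstacle: $\mathbf{O}_{\varepsilon}[\psi]$ need not have open graph when $\psi$ is merely u.s.c., so Proposition \ref{proposition-shsa-v28:5} and Theorem \ref{theorem-res-pro-v9:1} cannot be applied to $\overline{\Phi}\wedge\mathbf{O}_{\varepsilon}[\psi]$ directly.

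The workaround, however, has a genuine gap exactly at the step you flag as ``most delicate'', and the proposed fix does not repair it. Your invariant anchors $f_n(x)$ to a point $y_{x_\alpha}\in\psi(x_\alpha)$ for a base point $x_\alpha$ of the stage-$n$ cover; to begin stage $n+1$ you must produce, for each \emph{new} base point $x_0$, a point $y_{x_0}\in\psi(x_0)$ within $\eta_n(x_0)$ of $f_n(x_0)$. The invariant only supplies a point of $\psi(x_\alpha)$ near $f_n(x_0)$, and upper semi-continuity of $\psi$ relates the two sets in the wrong direction: it gives $\psi(x)\subset\mathbf{O}_\delta(\psi(x_\alpha))$ for $x$ near $x_\alpha$, never $\psi(x_\alpha)\subset\mathbf{O}_\delta(\psi(x))$. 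An usco mapping may collapse abruptly --- $\psi(x_\alpha)=[0,1]$ and $\psi(x)=\{0\}$ for all $x\neq x_\alpha$ is usco --- so the anchor $y_{x_\alpha}$ can lie at distance $1$ from every point of $\psi(x_0)$ no matter how small $W_{x_\alpha}$ is taken; convexity of $\psi(x_0)$ is of no help, and the closing sentence of your plan asserts precisely the containment that u.s.c.\ fails to provide. The same wrong-direction issue contaminates the limit estimate: the safety margin $\alpha_n(x_\alpha)-\eta_n(x)$ is measured at the base point, the tail $\sum_{m\geq n}\eta_m(x)$ at $x$, and lower semi-continuity of $\alpha_n$ bounds $\alpha_n(x_\alpha)$ from above, not below, in terms of $\alpha_n(x)$. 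The standard remedy is to glue the \emph{neighbourhoods} rather than anchor points, and by intersection rather than by convex combination: over a locally finite closed shrinking $\{\overline{W_\alpha}\}$ with $\overline{W_\alpha}\subset W_{x_\alpha}$ one puts $V(x)=\bigcap\bigl\{\mathbf{O}_{2\delta_\alpha}(\psi(x_\alpha)):x\in \overline{W_\alpha}\bigr\}$, which is a convex-valued \emph{open-graph} mapping satisfying $\psi(x)\subset V(x)\subset\overline{V(x)}\subset U_n$, so that $\overline{\Phi\wedge V}$ is l.s.c.\ and Michael's theorem applies globally. Until your anchor invariant is replaced by something of this kind that actually propagates from one stage to the next, the induction does not close.
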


Furthermore, Theorem \ref{theorem-shsa-v26:7} implies that the answer
is ``Yes'' when $X$ is a countable-dimensional metrizable space
\cite[Corollary 1.2]{gutev:94}, or when it is a strongly
countable-dimensional paracompact space \cite[Corollary
1.3]{gutev:94}. \medskip

Finally, let us explicitly remark that Question
\ref{question-shsa-v28:1} was also resolved in the affirmative in the
realm of $C$-spaces \cite[Theorem 4.4]{gutev-valov:00}.

\begin{theorem}[\cite{gutev-valov:00}]
  \label{theorem-shsa-v13:1}
  Let $X$ be a paracompact $C$-space, $E$ be a Banach space and $Y$
  be a $G_{\delta}$-subset of $E$. Then every l.s.c.\ mapping
  $\Phi:X\to \mathscr{F}_\mathbf{c}(Y)$ has a continuous selection.
\end{theorem}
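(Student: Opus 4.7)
The plan is to reduce the theorem to the $Z$-set avoidance characterisation of paracompact $C$-spaces (Theorem \ref{theorem-shsa-v26:5}), applied to a natural Banach-space-valued envelope of $\Phi$. Write $Y=\bigcap_{n\in\N} W_n$ with each $W_n$ open in $E$, and set $F_n=E\setminus W_n$; we may harmlessly assume each $F_n\neq\emptyset$, since if some $F_n$ is empty we drop that index, and if all are empty then $Y=E$ and Theorem \ref{theorem-res-pro-v9:1} applies directly because a paracompact $C$-space is paracompact. Define $\tilde{\Phi}:X\to \mathscr{F}_\mathbf{c}(E)$ by $\tilde{\Phi}(x)=\overline{\Phi(x)}$, the closure now taken in $E$. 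Regarding $\Phi$ as a mapping into $E$ does not affect lower semi-continuity (open sets of $Y$ are traces of open sets of $E$), so by Proposition \ref{proposition-shsa-v28:2} the mapping $\tilde{\Phi}$ is l.s.c.; convexity of the values is preserved by closure in a normed space. Let $\Psi_n:X\sto E$ be the constant mapping $\Psi_n(x)=F_n$, which trivially has closed graph $X\times F_n$. Provided we can verify that
\[
\Psi_n(x)\cap \tilde{\Phi}(x)=F_n\cap \tilde{\Phi}(x)\in \mathscr{Z}(\tilde{\Phi}(x))\quad\text{for all $x\in X$ and $n\in\N$,}
\]
Theorem \ref{theorem-shsa-v26:5} applied to $\tilde{\Phi}$ and $\{\Psi_n\}_{n\in\N}$ yields a continuous $f:X\to E$ with $f(x)\in \tilde{\Phi}(x)\setminus\bigcup_{n\in\N} F_n=\tilde{\Phi}(x)\cap Y$. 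Since $\Phi(x)$ is closed in $Y$, this last intersection is exactly $\Phi(x)$, so $f$ is the desired continuous selection.

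The only substantive step is the $Z$-set claim. Fix $x\in X$ and $n\in\N$ and write $C=\tilde{\Phi}(x)$ and $D=\Phi(x)$; then $D$ is convex, $D$ is dense in $C$ by definition of $\tilde{\Phi}$, and $D\cap F_n=\emptyset$ because $D\subset Y\subset W_n$. The closed set $F_n\cap C$ will be a $Z$-set in $C$ as soon as each continuous $g:\mathbf{Q}\to C$ can be uniformly approximated by maps $\mathbf{Q}\to C\setminus F_n$. Given $\varepsilon>0$, compactness of $g(\mathbf{Q})$ and density of $D$ in $C$ yield finitely many points $y_1,\dots,y_m\in D$ such that $\{\mathbf{O}_\varepsilon(y_i)\}_{i=1}^m$ covers $g(\mathbf{Q})$. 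Take a continuous partition of unity $\{\phi_i\}_{i=1}^m$ on $\mathbf{Q}$ subordinate to $\{g^{-1}(\mathbf{O}_\varepsilon(y_i))\}_{i=1}^m$ and set $g_\varepsilon(q)=\sum_{i=1}^m \phi_i(q)\,y_i$. Then $g_\varepsilon(q)$ is a convex combination of points of $D$, hence lies in $D\subset C\setminus F_n$, and
\[
\|g_\varepsilon(q)-g(q)\|\leq \sum_{i=1}^m \phi_i(q)\,\|y_i-g(q)\|<\varepsilon,
\]
since only those indices $i$ with $g(q)\in \mathbf{O}_\varepsilon(y_i)$ contribute to the sum. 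Thus $g_\varepsilon$ uniformly $\varepsilon$-approximates $g$ and maps into $C\setminus(F_n\cap C)$, which is exactly the required $Z$-set property.

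The hard part of the argument is conceptual rather than computational: one has to recognise that the $G_\delta$-datum $Y=\bigcap W_n$ can be repackaged as a countable family of closed-graph avoidance mappings $\Psi_n$ for the extended mapping $\tilde{\Phi}$, and then extract the $Z$-set condition needed by Theorem \ref{theorem-shsa-v26:5} from the convexity of $\Phi(x)$ inside its own Banach-space closure via the Michael-style partition-of-unity approximation above. Once this framework is in place, everything reduces cleanly to the $C$-space selection theorem, with no further dimensional or usco-selection inputs (such as Theorem \ref{theorem-shsa-v26:7}) required.
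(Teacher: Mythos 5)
Your argument is correct and follows essentially the route the paper indicates: reduce to the $Z$-set avoidance theorem for paracompact $C$-spaces (Theorem \ref{theorem-shsa-v26:5}) by writing $Y=\bigcap_{n}W_n$ and checking that each $F_n=E\setminus W_n$ meets $\overline{\Phi(x)}$ in a $Z$-set, which is exactly the special case $F\cap H=\emptyset$ of Lemma \ref{lemma-shsa-v26:1}. Your inline partition-of-unity verification of that special case, using convexity and density of $\Phi(x)$ in its closure, is a correct substitute for citing the lemma.
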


This result is, in fact, based on Theorem \ref{theorem-shsa-v26:5} and
the following property of convex $G_\delta$-sets in Banach space, see
\cite[Lemma 4.3]{gutev-valov:00}.

\begin{lemma}[\cite{gutev-valov:00}]
  \label{lemma-shsa-v26:1}
  If $E$ is a Banach space, $H\subset E$ is a convex $G_\delta$-subset
  of $E$ and $F\subset E$ is a closed set with
  $F\cap H\in\mathscr{Z}(H)$, then
  $F\cap \overline{H}\in\mathscr{Z}\left(\overline{H}\right)$.
\end{lemma}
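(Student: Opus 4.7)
The plan is to verify the $Z$-set property directly from the definition: I must show that $C(\mathbf{Q},\overline{H}\setminus F)$ is dense in $C(\mathbf{Q},\overline{H})$, noting first that $F\cap \overline{H}$ is closed in $\overline{H}$ because $F$ is closed in $E$. Since $\mathbf{Q}$ is compact and $\overline{H}$ is metrizable, the compact-open topology on $C(\mathbf{Q},\overline{H})$ is the uniform one, so it suffices, given an arbitrary $g\in C(\mathbf{Q},\overline{H})$ and $\varepsilon>0$, to produce a continuous map $\hat g:\mathbf{Q}\to \overline{H}\setminus F$ with $\|\hat g(x)-g(x)\|<\varepsilon$ for every $x\in \mathbf{Q}$. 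I would do this in two moves.

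First I would push $g$ into $H$ while paying at most $\varepsilon/2$ in uniform distance. Since $g(\mathbf{Q})$ is compact, choose a finite cover of $g(\mathbf{Q})$ by balls $B_i=\mathbf{O}_{\varepsilon/4}(y_i)$ with $y_i\in \overline{H}$, together with a partition of unity $\{\xi_i\}$ on $g(\mathbf{Q})$ subordinated to $\{B_i\}$. Because $H$ is dense in $\overline{H}$ (tautologically, as $\overline{H}$ is the closure), one can select $h_i\in H\cap B_i$ for each $i$. Set
\[
\tilde g(x)=\sum_i \xi_i(g(x))\,h_i,\qquad x\in \mathbf{Q}.
\]
Convexity of $H$ forces $\tilde g(x)\in H$, and the routine estimate $\|\tilde g(x)-g(x)\|\leq \sum_i \xi_i(g(x))\|h_i-g(x)\|<\varepsilon/2$ (for every $i$ with $\xi_i(g(x))>0$ one has $\|h_i-y_i\|<\varepsilon/4$ and $\|y_i-g(x)\|<\varepsilon/4$) delivers the desired approximation.

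Second, I would invoke the hypothesis. By assumption $F\cap H\in \mathscr{Z}(H)$, which by definition means $C(\mathbf{Q},H\setminus F)$ is dense in $C(\mathbf{Q},H)$. Applied to the map $\tilde g:\mathbf{Q}\to H$ with tolerance $\varepsilon/2$, this furnishes a continuous map $\hat g:\mathbf{Q}\to H\setminus F$ with $\|\hat g(x)-\tilde g(x)\|<\varepsilon/2$ for every $x\in \mathbf{Q}$. Then $\hat g$ takes values in $\overline{H}\setminus F=\overline{H}\setminus (F\cap \overline{H})$, and the triangle inequality yields $\|\hat g(x)-g(x)\|<\varepsilon$, as required.

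The only non-routine ingredient is the first move, and even there the heart of the argument is the triviality that a convex combination of elements of $H$ lies in $H$; compactness of $g(\mathbf{Q})$ and density of $H$ in $\overline{H}$ handle the rest. In particular, the $G_\delta$ hypothesis on $H$ plays no role in bridging $Z$-sets in $H$ to $Z$-sets in $\overline{H}$; it enters the broader context of Theorem \ref{theorem-shsa-v13:1} only because it ensures $H$ is completely metrizable and so the $Z$-set language on $H$ is meaningful in the first place.
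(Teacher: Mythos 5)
The paper states this lemma without proof, quoting it from \cite{gutev-valov:00}, so there is no in-text argument to compare yours against; judged on its own, your proof is correct. The two steps are exactly what is needed: since $\mathbf{Q}$ is compact, the compact-open topology is the uniform one, and your partition-of-unity construction $\tilde g(x)=\sum_i\xi_i(g(x))h_i$ with $h_i\in H\cap B_i$ genuinely lands in $H$ because a \emph{finite convex combination} of points of $H$ lies in $H$ --- this is the right device here, and it quietly avoids the tempting but false shortcut of sliding $g$ toward a fixed point $z\in H$ along segments (for a dense convex $H$ with empty interior, e.g.\ a dense hyperplane, the half-open segment $[z,y)$ with $y\in\overline{H}$ need not lie in $H$). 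The error estimate $\|h_i-g(x)\|<\varepsilon/2$ for the indices with $\xi_i(g(x))>0$ is right, the appeal to $F\cap H\in\mathscr{Z}(H)$ then finishes the argument since $H\setminus F\subset\overline{H}\setminus\bigl(F\cap\overline{H}\bigr)$, and your closing remark is also accurate: only convexity and density of $H$ in $\overline{H}$ are used, while the $G_\delta$ hypothesis serves the surrounding theorem (complete metrizability of $H$) rather than this lemma.
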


Motivated by this, the following question was posed by Repov\v{s} and
Semenov in \cite[Problem 1.6]{MR1970007}, and subsequently in
\cite[Problem 2.6]{MR3205497}.

\begin{question}[\cite{MR1970007,MR3205497}]
  \label{question-shsa-v26:4}
  Let $X$ be a paracompact space such that for every 
  $G_\delta$-subset $Y\subset E$ of a Banach space $E$, every l.s.c.\
  $\Phi:X\to \mathscr{F}_\mathbf{c}(Y)$ has a continuous
  selection. Does this imply that $X$ is a $C$-space?
\end{question}

Question \ref{question-shsa-v26:4} was resolved by Karassev
\cite[Theorem 4.6]{karassev:08} for the case of weakly
infinite-dimensional compact spaces.

\begin{theorem}[\cite{karassev:08}]
  \label{theorem-shsa-v26:6}
  Let $X$ be a compact space such that for every $G_\delta$-subset
  $Y\subset E$ of a Banach space $E$, every l.s.c.\ mapping
  $\Phi:X\to \mathscr{F}_\mathbf{c}(Y)$ has a continuous
  selection. Then $X$ is weakly infinite-dimensional.
\end{theorem}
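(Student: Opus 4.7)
The plan is to prove the contrapositive: if the compact space $X$ is strongly infinite-dimensional (i.e.\ not weakly infinite-dimensional), then one can construct a Banach space $E$, a convex $G_\delta$-subset $Y\subset E$ and an l.s.c.\ mapping $\Phi:X\to\mathscr{F}_\mathbf{c}(Y)$ admitting no continuous selection. The starting point is Filippov's counterexample \cite{filippov:04,filippov:05} to Question \ref{question-shsa-v28:1}: there exist a compact space $X_0$, a Banach space $E_0$, a convex $G_\delta$-subset $Y_0\subset E_0$ and an l.s.c.\ $\Phi_0:X_0\to\mathscr{F}_\mathbf{c}(Y_0)$ without a continuous selection. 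My strategy is to transfer this obstruction to an arbitrary compact strongly infinite-dimensional $X$.

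Since $X$ is compact and strongly infinite-dimensional, by the classical Alexandroff characterisation it carries an \emph{essential family} of pairs $\{(A_n,B_n)\}_{n\in\N}$ of disjoint closed subsets: for every sequence of closed sets $L_n$ partitioning $X$ between $A_n$ and $B_n$, one has $\bigcap_{n\in\N} L_n\neq\emptyset$. Using Urysohn's lemma, I would fix continuous functions $\xi_n:X\to[0,1]$ with $\xi_n(A_n)=\{0\}$ and $\xi_n(B_n)=\{1\}$ and assemble them into a continuous map $\xi=(\xi_n):X\to\mathbf{Q}$. The essentiality of $\{(A_n,B_n)\}$ then translates into the statement that $\xi$ is \emph{essential}, in the sense that the preimages under $\xi$ of the standard opposite-face pairs of $\mathbf{Q}$ constitute an essential family in $X$.

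The desired mapping would then be the pullback $\Phi=\Phi_0\circ\xi:X\to\mathscr{F}_\mathbf{c}(Y_0)$, where Filippov's $\Phi_0$ is taken (or slightly adapted) to sit on $X_0=\mathbf{Q}$ so that its selection obstruction is encoded in the essential face-pair structure of $\mathbf{Q}$. Because $\xi$ is continuous and $\Phi_0$ is l.s.c., $\Phi$ is still l.s.c.\ and convex-valued into the same $G_\delta$-set $Y_0\subset E_0$. A continuous selection $f:X\to Y_0$ of $\Phi$ would, by unwinding Filippov's construction, produce a sequence of closed partitions $L_n\subset X$ separating $A_n$ from $B_n$ with $\bigcap_n L_n=\emptyset$, contradicting the essentiality of $\{(A_n,B_n)\}$ and hence the strong infinite-dimensionality of $X$.

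The main obstacle is the last step: one must inspect Filippov's construction sufficiently closely to see that a continuous selection of $\Phi_0$ really does encode a trivialisation of the face-pair essential system on $\mathbf{Q}$, and that such a trivialisation pulls back through $\xi$ to give partitions on $X$. This is the technical heart of Karassev's argument: reformulating Filippov's example so that the obstruction to selection is phrased directly in terms of essential families on the domain, after which the pullback via $\xi$ is essentially formal. Once this reformulation is in place, the composition $\Phi=\Phi_0\circ\xi$ yields the required counterexample on $X$, and the contrapositive is complete.
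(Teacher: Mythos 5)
Your overall framing --- pass to the contrapositive, extract an essential family $\{(A_n,B_n)\}$ from strong infinite-dimensionality, and assemble Urysohn functions into a continuous ``universal'' map $\xi:X\to\mathbf{Q}$ --- is at least consistent with how the theorem is actually approached: the paper records that Karassev's proof rests on Theorem \ref{theorem-shsa-v28:1}, the Segal--Watanabe characterisation of weak infinite-dimensionality by the existence, for every continuous $g:X\to\mathbf{Q}$, of a continuous $f:X\to\mathbf{Q}$ with $f(x)\neq g(x)$ for all $x$ (see the proof of Proposition \ref{proposition-nach space} for the template). Your $\xi$ is precisely a map $g$ for which no such avoiding $f$ exists.

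The gap is in your final step, which is where all the content lies. First, pulling back Filippov's mapping along $\xi$ proves nothing by itself: a continuous selection $h$ for $\Phi_0\circ\xi$ satisfies $h(x)\in\Phi_0(\xi(x))$ but need not factor through $\xi$, so the non-existence of selections for $\Phi_0$ on its own domain places no restriction whatsoever on $\Phi_0\circ\xi$. You acknowledge that one must instead show that the obstruction in Filippov's example is an essential-family obstruction that pulls back along essential maps; but that is precisely the statement to be proved, it is not a feature of Filippov's construction as it stands, and Filippov's example is not defined on $\mathbf{Q}$ in the first place (adapting it is a separate, unargued claim). Second, the logical dependence is inverted: Karassev's argument does not reformulate Filippov's example --- it constructs, directly from an arbitrary continuous $g:X\to\mathbf{Q}$, a $G_\delta$-set $Y$ in a Banach space and an l.s.c.\ mapping $\Phi:X\to \mathscr{F}_\mathbf{c}(Y)$ any continuous selection of which yields $f$ with $f(x)\neq g(x)$ for all $x$, after which Theorem \ref{theorem-shsa-v28:1} finishes the proof; indeed it is this theorem that then produces Filippov-type counterexamples on every strongly infinite-dimensional compactum, not the other way round. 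That explicit convex-geometric construction (the analogue of $\Phi(x)=\mathbf{Q}-g(x)$ in the proof of Proposition \ref{proposition-nach space}, adapted so that the values are closed convex subsets of a $G_\delta$-set and selections are forced to avoid $g$) is exactly what your proposal omits, and it cannot be outsourced to Filippov. As written, the ``technical heart'' you defer is the entire proof.
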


As remarked in \cite{MR3205497}, perhaps what is also interesting is the
implicit relation of Theorem \ref{theorem-shsa-v26:6} to one of the
main problems in infinite dimension theory of whether every weakly
infinite-dimensional compact metric space has property $C$. In the
realm of compact spaces, there are various characterisations of weak
infinite-dimensionality. In case of Theorem \ref{theorem-shsa-v26:6},
Karassev used the following property, compare with
Theorem \ref{theorem-shsa-v26:4}.

\begin{theorem}[\cite{MR1194494}]
  \label{theorem-shsa-v28:1}
  A compact space $X$ is weakly infinite-dimensional if and only if
  for every continuous map $g :X\to \mathbf{Q}$ in the Hilbert cube
  $\mathbf{Q}$, there exists a continuous map $f:X\to \mathbf{Q}$ with
  $f (x)\neq g(x)$, for all $x\in X$.
\end{theorem}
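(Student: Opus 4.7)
The plan is to prove both implications using Alexandroff's characterization of weak infinite-dimensionality: $X$ is weakly infinite-dimensional iff every countable family of pairs $(A_n,B_n)$ of disjoint closed subsets admits partitions $L_n$ (closed sets such that $X\setminus L_n=U_n\sqcup V_n$ with $U_n,V_n$ disjoint open, $A_n\subset U_n$, $B_n\subset V_n$) with $\bigcap_n L_n=\emptyset$. Both directions will pass between the pointwise inequality $f(x)\neq g(x)$ and partition data $L_n$ by comparing $f_n$ with $g_n$ coordinatewise.

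For the direction \emph{weakly infinite-dimensional $\Rightarrow$ avoidance}, given $g=(g_n):X\to\mathbf{Q}$, I would set $A_n=g_n^{-1}(0)$ and $B_n=g_n^{-1}(1)$, extract partitions $L_n$ with $\bigcap_n L_n=\emptyset$, and invoke compactness of $X$ on the open cover $\{X\setminus L_n\}$ to reduce to a finite subfamily with $\bigcap_{n\leq N}L_n=\emptyset$. For each $n\leq N$ I would pick a continuous $\psi_n:X\to[-1,1]$ that is negative on $U_n$, positive on $V_n$, and zero on $L_n$, and set $\psi_n^+=\max(\psi_n,0)$, $\psi_n^-=\max(-\psi_n,0)$. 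Define
\[
  f_n(x)=g_n(x)+(1-g_n(x))\psi_n^-(x)-g_n(x)\psi_n^+(x)
\]
for $n\leq N$ and $f_n=g_n$ for $n>N$. On $U_n$ one has $g_n<1$ (since $B_n\cap U_n=\emptyset$), so $f_n-g_n=(1-g_n)\psi_n^->0$; on $V_n$ one has $g_n>0$, so $f_n-g_n=-g_n\psi_n^+<0$; on $L_n$ both $\psi_n^\pm$ vanish, making $f_n=g_n$. Hence $f_n:X\to[0,1]$ is continuous with $f_n\neq g_n$ exactly off $L_n$, and since for each $x$ some $n\leq N$ has $x\notin L_n$, the map $f=(f_n)$ satisfies $f(x)\neq g(x)$ for every $x$.

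For the converse, given disjoint closed pairs $(A_n,B_n)$, I would pick Urysohn functions $g_n:X\to[0,1]$ with $g_n(A_n)=0$ and $g_n(B_n)=1$, set $g=(g_n)$, and invoke the hypothesis to obtain $f$ with $f(x)\neq g(x)$ everywhere. The key trick is to perturb $f$ by $\tilde f_n=(1-\eta_n)f_n+\eta_n/2$ with summable $\eta_n>0$ chosen so small that, for a product metric $d$ on $\mathbf{Q}$, one still has $d(\tilde f(x),g(x))>0$ for every $x$; this is possible because $d(f(\cdot),g(\cdot))$ attains a positive minimum on the compact $X$. The perturbation forces $\tilde f_n(X)\subset(0,1)$ strictly, so $\tilde f_n>0=g_n$ on $A_n$ and $\tilde f_n<1=g_n$ on $B_n$. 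Consequently $L_n=\{x:\tilde f_n(x)=g_n(x)\}$ is a partition between $A_n$ and $B_n$ via $X\setminus L_n=\{\tilde f_n>g_n\}\sqcup\{\tilde f_n<g_n\}$, and $\bigcap_n L_n=\{\tilde f=g\}=\emptyset$, delivering weak infinite-dimensionality.

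The main technical obstacle is the existence of the signed function $\psi_n$ in the forward direction: one needs $\psi_n^{-1}(0)=L_n$ with $\psi_n$ of the prescribed sign on $U_n$ and $V_n$, which forces $L_n$ to be a zero set. In a compact metric space this comes for free since every closed set is $G_\delta$ and $\psi_n$ can be built from normalized distances to $L_n$; in a general compact Hausdorff $X$ one must select the partitions $L_n$ themselves to be zero sets (for instance in the form $\phi_n^{-1}(1/2)$ for Urysohn functions $\phi_n:X\to[0,1]$ with $\phi_n(A_n)=0$, $\phi_n(B_n)=1$) while simultaneously arranging $\bigcap_n L_n=\emptyset$, or else reduce the problem to the compact metric image $g(X)\subset\mathbf{Q}$. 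The compactness-based perturbation step in the converse is then routine.
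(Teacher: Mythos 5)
The paper does not actually prove this statement; it is quoted from Segal and Watanabe \cite{MR1194494}, so your argument has to be judged on its own terms. Your converse direction (avoidance implies weak infinite-dimensionality) is complete and correct: the perturbation $\tilde f_n=(1-\eta_n)f_n+\eta_n/2$ with $\sum_n 2^{-n}\eta_n$ small compared with $\min_{x\in X} d(f(x),g(x))$ is exactly what is needed to force $A_n\subset\{\tilde f_n>g_n\}$ and $B_n\subset\{\tilde f_n<g_n\}$ while keeping $\bigcap_n\{\tilde f_n=g_n\}=\emptyset$. The forward direction is also correct as written when $X$ is metrizable, and the coordinatewise formula $f_n=g_n+(1-g_n)\psi_n^--g_n\psi_n^+$ is a clean way to convert the sign data of the partitions into avoidance.

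The one genuine gap is the one you flag yourself: for non-metrizable compact $X$ the function $\psi_n$ with $\psi_n^{-1}(0)=L_n$ need not exist, and neither of your two proposed remedies is carried through. The second remedy --- ``reduce to the compact metric image $g(X)$'' --- is in fact unsound as stated: weak infinite-dimensionality is not preserved by continuous surjections of compacta (the Cantor set maps onto $\mathbf{Q}$), so $g(X)$ need not be weakly infinite-dimensional; a correct reduction of this kind would need a factorization of $g$ through a weakly infinite-dimensional metrizable compactum, which is a nontrivial extra ingredient. The first remedy is the right one, and here is how to close it. Having arranged $\bigcap_{n\leq N}L_n=\emptyset$ by compactness, take a closed shrinking $F_1,\dots,F_N$ of the finite open cover $\{X\setminus L_n\}_{n\leq N}$, so that $F_n\subset U_n\cup V_n$ and $\bigcup_{n\leq N}F_n=X$. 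Then $A_n\cup(F_n\cap U_n)$ and $B_n\cup(F_n\cap V_n)$ are disjoint closed sets (note $F_n\cap U_n=F_n\setminus V_n$ is closed), so Urysohn's lemma gives $\psi_n:X\to[-1,1]$ equal to $-1$ on the first and to $+1$ on the second. Replacing $L_n$, $U_n$, $V_n$ by $\psi_n^{-1}(0)$, $\psi_n^{-1}([-1,0))$, $\psi_n^{-1}((0,1])$ preserves every property your formula uses: $A_n$ lies in the negative part and $B_n$ in the positive part (so $g_n<1$, respectively $g_n>0$, there), and the new partitions still have empty intersection because each $x$ lies in some $F_m$, where $\psi_m(x)=\pm1$. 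With these $\psi_n$ your construction goes through verbatim.
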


Another aspect of Question \ref{question-shsa-v28:1} was considered in
\cite{MR2643824}. Namely, in view of the relationship between
selections and extension (see Theorem \ref{theorem-res-pro-v1:4}), the
following question was posed in \cite[Question 2]{MR2643824}.

\begin{question}[\cite{MR2643824}]
  \label{question-shsa-v12:1}
  Let $X$ be a collectionwise normal space, $E$ be a Banach space,
  $Y\subset E$ be a convex $G_\delta$-subset of $E$ and
  $\Phi:X\to \mathscr{C}'_\mathbf{c}(Y)$ be an l.s.c.\ mapping. Does
  $\Phi$ have a continuous selection?
\end{question}

This question is not only similar to Question
\ref{question-shsa-v28:1}, but most of the affirmative solutions of
Question \ref{question-shsa-v28:1} remain valid for it as
well. Indeed, if $\overline{\conv(K)}\subset \Phi(x)$ for every
compact subset $K\subset \Phi(x)$ and $x\in X$, by a result of
\cite{choban-valov:75}, $\Phi$ has an l.s.c.\ selection
$\varphi:X\to \mathscr{C}_\mathbf{c}(Y)$. Hence, $\Phi$ also has a
continuous selection because, by Theorem \ref{theorem-res-pro-v1:2},
so does $\varphi$. If $X$ is finite-dimensional, the answer is also
``Yes'', and follows directly from a selection theorem in
\cite{gutev:86}. The answer to Question \ref{question-shsa-v12:1} is
also ``Yes'' if $X$ is strongly countable-dimensional. In this case,
the mapping ${\Phi:X\to \mathscr{C}'_\mathbf{c}(Y)}$ has an l.s.c.\
weak-factorisation $(Z,g,\varphi)$ with $Z$ being a strongly
countable-dimensional space, see, for instance, the proof of
\cite[Theorem 5.3]{nedev:80}. Then just as in the proof of Proposition
\ref{proposition-shsa-v17:1}, $\Phi$ has a continuous selection
because so does the mapping
$\overline{\conv[\varphi]}^Y:Z\to \mathscr{F}_\mathbf{c}(Y)$, where
the closure is in $Y$. \medskip

In contrast to Theorem \ref{theorem-shsa-v13:1}, Question
\ref{question-shsa-v12:1} is still open for collectionwise normal
$C$-spaces.

\begin{question}
  \label{question-shsa-v13:3}  
  Let $X$ be a collectionwise normal $C$-space, $E$ be a Banach space,
  $Y$ be a convex $G_{\delta}$-subset of $E$ and
  $\Phi:X\to \mathscr{C}'_\mathbf{c}(Y)$ be an l.s.c.\ mapping.  Does
  there exist a continuous selection for $\Phi$?
\end{question}

As mentioned before, a countably paracompact normal $C$-space is
paracompact, in which case the answer is ``Yes'', by Theorem
\ref{theorem-shsa-v13:1}. Hence, Question \ref{question-shsa-v13:3} is
for collectionwise normal spaces which are not countably
paracompact. \medskip

Every countable-dimensional metrizable space has property $C$, and
more generally, every countable-dimensional hereditarily paracompact
space is a $C$-space \cite[Corollary 2.10]{addis-gresham:78}. However,
it seems it is still an open question if every countable-dimensional
paracompact space is a $C$-space \cite[Question 1]{MR2352366}. This
brings the following related question in terms of selections and
$G_\delta$-sets of Banach spaces.

\begin{question}
  \label{question-shsa-v26:5}
  Let $X$ be a countable-dimensional paracompact space, $Y\subset E$
  be a (convex) $G_\delta$-subset of a Banach space and $\Phi:X\to
  \mathscr{F}_\mathbf{c}(Y)$ be an l.s.c.\ mapping. Does there exist
  a continuous selection for $\Phi$?
\end{question}

\subsection{Disjoint Sections and Selections}
  
There is a further relationship between $C$-spaces and the property of
weakly infinite-dimensional spaces used by Karassev
\cite{karassev:08}, see Theorem \ref{theorem-shsa-v28:1}. A pair of
set-valued mappings $\varphi,\psi:X\sto Y$ are called \emph{disjoint}
if $\varphi(x)\cap \psi(x)=\emptyset$, for every $x\in X$. It was
shown by Dranishnikov \cite[Theorem 1]{dranishnikov:90}, see also
\cite{dranishnikov:88}, that the fibration
\[
  \eta=\prod_{i=0}^\infty \nu_{2^i}:\prod_{i=0}^\infty \s^{(2^i)}\to
  \mathbb{RP}^{(2^i)}
\]
does not accept two disjoint usco sections, where
$\nu_k:\s^k\to \mathbb{RP}^k$ is a $2$-fold covering map of the
$k$-sphere onto the real projective $k$-space. Here, an usco section
for $\eta$ is meant an usco set-valued selection for the inverse
set-valued mapping $\eta^{-1}$. Since $\eta$ is open, its inverse is
l.s.c., so this is an example of an l.s.c.\ infinite-valued mapping
which doesn't admit a pair of disjoint usco selections. On the other
hand, the following was shown in \cite[Corollary 4.4]{MR2545446}.

\begin{theorem}[\cite{MR2545446}]
  \label{theorem-shsa-v26:9}
  Let $X$ be a paracompact $C$-space, $Y$ be completely metrizable
  and $\Phi:X\to \mathscr{F}(Y)$ be an l.s.c.\ mapping such that each
  $\Phi(x)$, $x\in X$, is infinite. Then $\Phi$ has a pair of disjoint
  usco selections. 
\end{theorem}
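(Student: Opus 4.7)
The plan combines a classical existence result for usco selections with the $C$-space avoidance principle of Theorem \ref{theorem-shsa-v26:5}. The first step invokes Michael's classical usco selection theorem: since $X$ is paracompact and $Y$ is completely metrizable, the l.s.c.\ mapping $\Phi$ admits an usco selection $\varphi_1:X\to\mathscr{C}(Y)$. The remaining task is to produce a second usco selection $\varphi_2$ with $\varphi_2(x)\cap\varphi_1(x)=\emptyset$ for every $x\in X$, which is where the infiniteness of $\Phi(x)$ and the $C$-space property of $X$ become essential.

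For the second selection, I would set up a $Z$-set avoidance problem in a Banach embedding. Embed $Y$ as a closed subset of a Banach space $E$, form the l.s.c.\ convex-hull mapping $\Lambda=\overline{\conv[\Phi]}:X\to\mathscr{F}_\mathbf{c}(E)$ (l.s.c.\ by Propositions \ref{proposition-shsa-v28:2} and \ref{proposition-shsa-v28:3}), and regard the usco selection $\varphi_1$ as a closed-graph obstruction with $\varphi_1(x)\cap\Lambda(x)=\varphi_1(x)$. The key technical verification is that $\varphi_1(x)$ is a $Z$-set in $\Lambda(x)$ for every $x\in X$: because $\Phi(x)$ is infinite, the closed convex set $\Lambda(x)$ admits ``transverse'' directions relative to the compact subset $\varphi_1(x)$, along which the identity of $\Lambda(x)$ can be uniformly approximated by self-maps avoiding $\varphi_1(x)$. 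Applying Theorem \ref{theorem-shsa-v26:5} then produces a continuous map $h:X\to E$ with $h(x)\in\Lambda(x)\setminus\varphi_1(x)$.

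The final step converts $h$ into an usco selection $\varphi_2$ of $\Phi$ that is disjoint from $\varphi_1$. Using continuity of $h$ and paracompactness of $X$, choose a continuous function $\eta:X\to(0,+\infty)$ such that $\overline{\mathbf{O}_{\eta(x)}(h(x))}\cap\varphi_1(x)=\emptyset$ for every $x\in X$; the l.s.c.\ mapping $\Phi_2:X\to\mathscr{F}(Y)$ defined by $\Phi_2(x)=\overline{\Phi(x)\cap\mathbf{O}_{\eta(x)}(h(x))}$ then admits an usco selection by Michael's theorem, providing the desired $\varphi_2$.

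The main obstacle is ensuring nonemptiness of $\Phi_2(x)$: since $h(x)$ lies only in the closed convex hull of $\Phi(x)$ and need not be close to $\Phi(x)$ itself, a small $\eta$-ball about $h(x)$ may miss $\Phi(x)$ entirely. Overcoming this requires an iterative refinement combining Theorem \ref{theorem-shsa-v26:5} with Uspenskij's lower-locally-constant selection theorem (Theorem \ref{theorem-shsa-v26:1}): one produces a sequence of continuous approximate selections of $\Phi$ that avoid $\varphi_1$, whose limit lies in $\Phi(x)\setminus\varphi_1(x)$ for every $x$, yielding a continuous selection $h'$ of $\Phi$ disjoint from $\varphi_1$; the singleton assignment $\varphi_2(x)=\{h'(x)\}$ is then the required usco selection.
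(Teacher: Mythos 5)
The survey states this result without proof (it is quoted from \cite{MR2545446}), so your argument must stand on its own, and it does not. The first breakdown is structural: fixing an \emph{arbitrary} usco selection $\varphi_1$ of $\Phi$ and only afterwards seeking a disjoint one cannot work, because Michael's theorem gives no control on the size of $\varphi_1(x)$; when $\Phi(x)$ is an infinite compact set nothing prevents $\varphi_1(x)=\Phi(x)$, after which no usco selection of $\Phi$ disjoint from $\varphi_1$ can exist at that point. The two selections have to be built simultaneously, or the first must be constructed so that $\Phi(x)\setminus\varphi_1(x)$ stays substantial. The second breakdown is the key technical claim that $\varphi_1(x)\in\mathscr{Z}\bigl(\overline{\conv(\Phi(x))}\bigr)$ because $\Phi(x)$ is infinite. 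A compact set is indeed a $Z$-set in every \emph{infinite-dimensional} closed convex set, but infiniteness of $\Phi(x)$ does not make its closed convex hull infinite-dimensional: for $\Phi(x)=\{0\}\cup\{1/n:n\in\N\}\subset\R$ one gets $\Lambda(x)=[0,1]$, and any compact subset of $\Phi(x)$ containing $1/2$ fails to be a $Z$-set in $[0,1]$, since a surjection $\mathbf{Q}\to[0,1]$ cannot be uniformly approximated by maps whose (connected) images miss an interior point. So the hypothesis of Theorem \ref{theorem-shsa-v26:5} is simply unavailable, and your ``transverse directions'' heuristic has no content in the finite-dimensional case.

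The third breakdown is your repair of the ``$h(x)$ may be far from $\Phi(x)$'' problem, which ends with a continuous \emph{single-valued} selection $h'$ of $\Phi$ avoiding $\varphi_1$. That conclusion is strictly stronger than the theorem and is false under its hypotheses: take $X=[0,1]$, $Y=\{0,1\}^{\omega}$ the Cantor set, $\Phi(t)=\{s:s_1=0\}$ for $t\in[0,1/3]$, $\Phi(t)=Y$ for $t\in(1/3,2/3)$ and $\Phi(t)=\{s:s_1=1\}$ for $t\in[2/3,1]$; this is l.s.c.\ with infinite closed values, yet every continuous map $[0,1]\to Y$ is constant and no constant selects $\Phi$. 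This is exactly why the theorem asserts only usco (set-valued) selections. Uspenskij's Theorem \ref{theorem-shsa-v26:1} cannot rescue the step either: a general l.s.c.\ mapping is not lower locally constant, and its values need not be aspherical. In short, the reduction to $Z$-set avoidance in convex hulls is not salvageable as written, and the argument in \cite{MR2545446} must proceed along different lines.
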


Regarding disjoint usco selections, the following part of
\cite[Problem 1516]{karassev-tuncali-valov:07} is still open.

\begin{question}[\cite{karassev-tuncali-valov:07}]
  \label{question-shsa-v26:7}
  Let $X$ be a metrizable space such that for every metrizable space
  $Y$, any l.s.c.\ mapping $\Phi : X\to \mathscr{C}(Y)$ with perfect
  point-images $\Phi(x)$, $x\in X$, admits disjoint
  u.s.c. selections. Is it true that $X$ is a $C$-space?
\end{question}

A map $f:X\to E$ is said to \emph{avoid} some set $Z\subset E$, if
$f(x)\notin Z$ for all $x\in X$. In case $E$ is a linear space and
$Z=\{\mathbf{0}\}$ is the singleton of the origin of $E$, the map is
simply called \emph{$\mathbf{0}$-avoiding}. In \cite{michael:88a},
Michael considered the following natural problem for
$\mathbf{0}$-avoiding selections.

\begin{question}[\cite{michael:88a}]
  \label{question-shsa-vgg:1}
  Let $X$ be a paracompact space, $E$ be a Banach space and
  $\Phi:X\to \mathscr{F}_\mathbf{c}(E)$ be an l.s.c.\ mapping such
  that $\Phi(x)\neq \{\mathbf{0}\}$, for every $x\in X$. Under what
  conditions, does $\Phi$ have a continuous $\mathbf{0}$-avoiding
  selection?
\end{question}

He remarked that in setting of selection theorems such as Theorem
\ref{theorem-res-pro-v9:1}, the constructed continuous selections
cannot be chosen to be $\mathbf{0}$-avoiding (even when
$\Phi(x)\neq \{\mathbf{0}\}$ for all $x\in X$), and provided several
examples, see \cite[Examples 10.1 and 10.2]{michael:88a}. In case of
dimension restrictions on $X$, or strengthening the continuity of
$\Phi$, he obtained the following theorems.

\begin{theorem}[\cite{michael:88a}]
  \label{theorem-shsa-v27:1}
  Let $X$ be a paracompact space, $E$ be a Banach space and
  $\Phi :X \to \mathscr{F}_\mathbf{c}(E)$ be an l.s.c.\ mapping such
  that $\dim(X)<\dim\Phi(x)$, whenever $x\in X$ and
  $\mathbf{0}\in\Phi(x)$. Then $\Phi$ has a continuous
  $\mathbf{0}$-avoiding selection.
\end{theorem}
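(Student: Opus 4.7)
The plan is to reduce to the finite-dimensional selection theorem (Theorem \ref{theorem-shsa-v21:6}), applied to the modified mapping $\Phi':X\sto Y$ given by $\Phi'(x)=\Phi(x)\setminus\{\mathbf{0}\}$, with $Y=E\setminus\{\mathbf{0}\}$. A continuous selection for $\Phi'$ is exactly a continuous $\mathbf{0}$-avoiding selection for $\Phi$, so producing one finishes the proof.

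First dispose of the trivial case: if $\mathbf{0}\notin\Phi(x)$ for every $x\in X$, then any continuous selection supplied by Theorem \ref{theorem-res-pro-v9:1} already works. Otherwise the hypothesis forces $n:=\dim X<\infty$. The space $Y$ is completely metrizable as an open subspace of the Banach space $E$; each $\Phi'(x)$ is closed in $Y$; it is also nonempty, since $\mathbf{0}\in\Phi(x)$ implies $\dim\Phi(x)\geq n+1\geq 1$, so $\Phi(x)$ contains more than one point. Lower semi-continuity of $\Phi':X\to\mathscr{F}(Y)$ is immediate: for every open $W\subset Y$ one has $(\Phi')^{-1}[W]=\Phi^{-1}[W]$, which is open in $X$ by l.s.c.\ of $\Phi$.

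The essential verifications are that each $\Phi'(x)$ is $C^{n-1}$ and that the family $\mathscr{S}=\{\Phi'(x):x\in X\}$ is equi-$LC^{n-1}$ in $Y$. If $\mathbf{0}\notin\Phi(x)$, then $\Phi'(x)=\Phi(x)$ is convex and hence contractible. If $\mathbf{0}\in\Phi(x)$ lies on the relative boundary of $\Phi(x)$, then $\Phi(x)\setminus\{\mathbf{0}\}$ is star-shaped from any relative interior point of $\Phi(x)$ (the segment from such a point to any $q\in\Phi(x)\setminus\{\mathbf{0}\}$ cannot pass through $\mathbf{0}$, as otherwise $\Phi(x)$ would extend past $\mathbf{0}$ in a direction where it cannot), and hence is contractible. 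Finally, if $\mathbf{0}$ lies in the relative interior of $\Phi(x)$, then $\Phi(x)\setminus\{\mathbf{0}\}$ deformation retracts onto a sphere of dimension $\dim\Phi(x)-1\geq n$, which is $(n-1)$-connected. Thus each $\Phi'(x)$ is $C^{n-1}$. For the equi-$LC^{n-1}$ condition, given $y\in\bigcup\mathscr{S}$ and a neighbourhood $U$ of $y$ in $Y$, shrink $U$ to an open convex ball in $E$ around $y$ with $\mathbf{0}\notin U$, and take $V=U$. Then for every $x$, $V\cap\Phi'(x)=V\cap\Phi(x)$ is an intersection of two convex sets, hence convex; the cone construction extends any continuous $g:\s^k\to V\cap\Phi'(x)$ to a continuous $h:\B^{k+1}\to V\cap\Phi'(x)\subset U\cap\Phi'(x)$ for every $k\geq 0$.

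With these conditions in place, Theorem \ref{theorem-shsa-v21:6} applied with $A=\emptyset$ and parameter $n-1$ (so that $\dim_X(X\setminus A)=\dim X\leq (n-1)+1$) yields a continuous selection $f:X\to Y$ for $\Phi'$, which is the required $\mathbf{0}$-avoiding continuous selection for $\Phi$. The only substantive point is the dimension bookkeeping: the strict inequality $\dim X<\dim\Phi(x)$ at points with $\mathbf{0}\in\Phi(x)$ is precisely what makes the removed point $\mathbf{0}$ homotopically invisible to the $(n-1)$-connectivity test needed by Theorem \ref{theorem-shsa-v21:6}. The edge case $n=0$ goes through with ``$n-1$'' read as $-1$, for which $C^{-1}$ and equi-$LC^{-1}$ collapse to nonemptiness, which has already been established.
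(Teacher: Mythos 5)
The survey does not actually prove this theorem --- it is quoted from Michael's paper \emph{Continuous selections avoiding a set} --- but your overall strategy is the right one and is essentially Michael's: pass to $\Phi'(x)=\Phi(x)\setminus\{\mathbf{0}\}$ viewed as a mapping into the completely metrizable space $Y=E\setminus\{\mathbf{0}\}$ and invoke the finite-dimensional selection theorem (Theorem \ref{theorem-shsa-v21:6}) with parameter $n-1$ and $A=\emptyset$. Your reductions are correct: the hypothesis does force $\dim X=n<\infty$ once some $\Phi(x)$ contains $\mathbf{0}$; each $\Phi'(x)$ is nonempty and closed in $Y$; $\Phi'$ is l.s.c.; and the family $\{\Phi'(x):x\in X\}$ is equi-$LC^{n-1}$ (indeed equi-$LC^{k}$ for all $k$), since a small convex ball $V$ about $y\neq\mathbf{0}$ misses $\mathbf{0}$ and $V\cap\Phi'(x)=V\cap\Phi(x)$ is convex.

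The gap is in the verification that each $\Phi'(x)$ is $C^{n-1}$. The hypothesis $\dim(X)<\dim\Phi(x)$ permits $\dim\Phi(x)=\infty$, and a closed convex subset of an infinite-dimensional Banach space can have \emph{empty} relative interior: for example the Hilbert cube $Q=\{y\in\ell_2:|y_k|\leq 2^{-k}\}$, which is compact, infinite-dimensional and nowhere dense in its closed affine hull $\ell_2$. For such a value your trichotomy collapses --- there is no relative interior point to serve as a star centre, and the star-shapedness assertion is actually false there: for any $q\in Q\setminus\{\mathbf{0}\}$ the point $-\tfrac12 q$ lies in $Q$ and the segment $\left[q,-\tfrac12 q\right]$ passes through $\mathbf{0}$. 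Nor is there a finite-dimensional sphere to retract onto. The fact that $\Phi(x)\setminus\{\mathbf{0}\}$ is $C^{n-1}$ whenever $\dim\Phi(x)\geq n+1$ is precisely the nontrivial lemma of Michael's paper, not a two-line convexity observation. One workable patch reduces to your finite-dimensional case: given $g:\s^k\to\Phi(x)\setminus\{\mathbf{0}\}$ with $k\leq n-1$, take a triangulation of $\s^k$ so fine that the piecewise-affine approximation $g'$ (agreeing with $g$ on vertices, extended affinely over simplices) satisfies $\|g'-g\|_\infty<\operatorname{dist}\big(g(\s^k),\mathbf{0}\big)$; then $g'$ maps into a finite-dimensional convex polytope $P_0\subset\Phi(x)$, the straight-line homotopy from $g$ to $g'$ stays in $\Phi(x)\setminus\{\mathbf{0}\}$, and after enlarging $P_0$ to $P=\conv\big(P_0\cup\{e_0,\dots,e_{n+1}\}\big)$ with $e_0,\dots,e_{n+1}\in\Phi(x)$ affinely independent, your relative-interior analysis applies to the finite-dimensional convex set $P$ and extends $g'$ over $\B^{k+1}$ into $P\setminus\{\mathbf{0}\}$. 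Without some such argument the proof is incomplete, even though every other step is sound.
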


\begin{theorem}[\cite{michael:88a}]
  \label{theorem-shsa-v27:2}
  Let $X$ be an arbitrary space, $E$ be a Banach space and
  ${\Phi: X \to \mathscr{F}_\mathbf{c}(E)}$ be a norm-continuous
  mapping with $\dim\Phi(x)=\infty$, for all ${x\in X}$. If
  $\frac{u}{\|u\|}\in \Phi(x)$, whenever $\mathbf{0}\neq u\in \Phi(x)$
  and $x\in X$, then $\Phi$ has a continuous $\mathbf{0}$-avoiding
  selection.
\end{theorem}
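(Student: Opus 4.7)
The plan is to construct a continuous selection $f\colon X\to E$ of $\Phi$ satisfying $\|f(x)\|=1$ for every $x\in X$; any such $f$ is automatically $\mathbf{0}$-avoiding. To this end, introduce the auxiliary mapping $\Psi\colon X\sto \s$ defined by $\Psi(x)=\Phi(x)\cap \s$, where $\s$ denotes the unit sphere of $E$, and aim at producing a continuous selection for $\Psi$.

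\textbf{Preliminary verifications.} First, each $\Psi(x)$ is non-empty: the infinite-dimensionality of $\Phi(x)$ provides some $u\in\Phi(x)\setminus\{\mathbf{0}\}$, and the normalization hypothesis then gives $u/\|u\|\in\Psi(x)$. Second, $\Psi$ inherits norm-continuity from $\Phi$. Indeed, given $\varepsilon\in(0,1)$ and $x_0\in X$, norm-continuity of $\Phi$ yields a neighborhood $V$ of $x_0$ such that $\Phi(x)\subset \mathbf{O}_{\varepsilon/3}(\Phi(x_0))$ and $\Phi(x_0)\subset \mathbf{O}_{\varepsilon/3}(\Phi(x))$ for $x\in V$. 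For $v\in \Psi(x_0)$, pick $u\in\Phi(x)$ with $\|u-v\|<\varepsilon/3$. Then $\|u\|\in(1-\varepsilon/3,\,1+\varepsilon/3)$, whence $u/\|u\|\in \Psi(x)$ lies within distance $\varepsilon$ of $v$; the reverse inclusion is symmetric. So $\Psi\colon X\to \mathscr{F}(\s)$ is norm-continuous, and thus in particular $\rho$-continuous, into the complete metric space $\s$.

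\textbf{Main construction.} Apply Michael's selection theorem for $\rho$-continuous mappings with arbitrary domain (\cite{choban:70b,michael:57}) to the mapping $\Psi$. This requires verifying that the family $\{\Psi(x):x\in X\}$ is uniformly equi-$LC^\omega$ in $\s$ and that each $\Psi(x)$ is $C^\omega$. Both properties rest on the following mechanism: convexity of $\Phi(x)$ combined with the normalization hypothesis permits the radial projection $u\mapsto u/\|u\|$ of convex combinations of unit vectors of $\Psi(x)$ back onto $\s$, as long as the combination is bounded away from $\mathbf{0}$. A continuous map of a $k$-sphere into $\Psi(x)$ of sufficiently small diameter has its convex-hull image in $\Phi(x)$ contained in a half-space missing a neighborhood of $\mathbf{0}$; then the linear extension to the $(k{+}1)$-ball followed by radial projection onto $\s$ produces the required extension inside $\Psi(x)$. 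This yields the uniform equi-$LC^\omega$ modulus $\delta(\varepsilon)$. The infinite-dimensionality of $\Phi(x)$, together with the contractibility of $\s$ in an infinite-dimensional Banach space (Klee's theorem), yields $C^\omega$ for the individual $\Psi(x)$. The resulting continuous selection $f\colon X\to \s$ of $\Psi$ is the desired $\mathbf{0}$-avoiding selection for $\Phi$.

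\textbf{Main obstacle.} The delicate step is establishing the \emph{uniform} equi-$LC^\omega$ modulus across the entire family $\{\Psi(x)\}$, particularly for values $\Psi(x)$ that contain (nearly) antipodal unit vectors, where the naive linear extension of a boundary map would pass through $\mathbf{0}$ and obstruct the radial projection. The remedy is twofold: the diameter constraint built into the definition of equi-$LC^\omega$ keeps the image of the boundary sphere away from the antipodal regime for small enough $\delta(\varepsilon)$, and norm-continuity of $\Phi$ transfers the ensuing local estimates to a uniform bound in $x$. Residual near-antipodal configurations are absorbed by using the infinite-dimensionality of $\Phi(x)$ to select, in $\Psi(x)$, a unit vector linearly independent from the antipodal pair, which serves as a transverse detour for the extension.
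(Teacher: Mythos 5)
The survey states this theorem only as a citation to \cite{michael:88a} and gives no proof of its own, so there is nothing in the text to compare against step by step; your reduction to the mapping $\Psi(x)=\Phi(x)\cap\s$, where $\s$ is the unit sphere, is nevertheless exactly the strategy of Michael's original argument, and your verification that $\Psi$ is norm-continuous (hence $\rho$-continuous into the complete metric space $\s$) is correct.

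Two steps in your sketch are genuinely gapped. First, the asphericity of $\Psi(x)$ does not follow from ``the contractibility of $\s$ in an infinite-dimensional Banach space'': $\Psi(x)$ is in general a proper closed subset of the sphere, and contractibility of the ambient sphere transfers no information to it. The correct use of $\dim\Phi(x)=\infty$ is that $\{\mathbf{0}\}$ is a $Z$-set in the infinite-dimensional convex set $\Phi(x)$ (compare the proof of Corollary \ref{corollary-shsa-v27:1}); one extends a map $\s^k\to\Psi(x)$ into $\Phi(x)$ by convexity, perturbs the extension off $\mathbf{0}$ relative to the boundary using the $Z$-set property, and only then applies the radial projection $u\mapsto u/\|u\|$, which is a retraction of $\Phi(x)\setminus\{\mathbf{0}\}$ onto $\Psi(x)$ precisely because of the normalisation hypothesis. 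Your ``linear extension followed by radial projection'' works as stated only in the small-diameter ($LC$) regime. Second, the selection theorem you invoke --- for $\rho$-continuous mappings with \emph{arbitrary} domain and values forming a uniformly equi-$LC^n$ family of $C^n$ sets for all $n$ --- is the load-bearing external input, and it is nowhere stated in this survey: Theorem \ref{theorem-michael-continuous-III} requires paracompactness and $\dim(X)\leq n+1$. You must quote the arbitrary-domain theorem from \cite{michael:57} or \cite{choban:70b} precisely and check that its hypotheses match what you verify. By contrast, your ``main obstacle'' is not one: the diameter constraint in the definition of uniform equi-$LC^n$ forces the convex hull of the image of the boundary sphere to lie within $\delta$ of a unit vector, hence at distance at least $1-\delta$ from $\mathbf{0}$, where the radial projection is uniformly Lipschitz; no ``transverse detour'' through antipodal configurations is needed, and that device as described is too vague to check. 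As an outline the proposal follows the intended route, but these two points must be repaired before it is a proof.
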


Regarding the proper place of Theorem \ref{theorem-shsa-v27:2},
Michael stated the following question in \cite[Problem
395]{michael:90}. 

\begin{question}[\cite{michael:90}]
  \label{question-shsa-v27:1}
  Let $X$ be a paracompact space, $E$ be an infinite-dimensional
  Banach space and $\Phi:X\to \mathscr{F}(E)$ be an l.s.c.\ mapping
  with each $\Phi(x)$ a linear subspace of deficiency one (or of
  finite deficiency) in $E$.  Must $\Phi$ have a continuous
  $\mathbf{0}$-avoiding selection?
\end{question}

According to \cite[Remark 3.7]{michael:90}, the answer to this
question is ``No'' if it is only assumed that each $\Phi(x)$ is
infinite-dimensional. This follows from the mentioned example of
Dranishnikov \cite{dranishnikov:88} and a similar example of
Toru\'nczyk and West \cite{torunczyk1989}. Here are some further
remarks regarding Question \ref{question-shsa-v27:1}.

\begin{corollary}
  \label{corollary-shsa-v27:1}
  Let $X$ be a paracompact $C$-space, $E$ be a Banach space and
  $\Phi:X\to \mathscr{F}_\mathbf{c}(E)$ be an l.s.c.\ mapping with 
  $\dim \Phi(x)=\infty$, for every $x\in X$. Then 
  $\Phi$ has a continuous $\mathbf{0}$-avoiding selection.  
\end{corollary}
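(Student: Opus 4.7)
The natural plan is to derive this corollary from Theorem \ref{theorem-shsa-v26:5}. Define the constant mappings $\Psi_n:X\to \mathscr{F}(E)$ by $\Psi_n(x)=\{\mathbf{0}\}$ for every $x\in X$ and $n\in\N$. Each $\Psi_n$ has closed graph, since that graph equals the closed set $X\times\{\mathbf{0}\}$, and a continuous map $f:X\to E$ with $f(x)\in \Phi(x)\setminus \bigcup_{n\in\N}\Psi_n(x)=\Phi(x)\setminus\{\mathbf{0}\}$ for every $x\in X$ is precisely a continuous $\mathbf{0}$-avoiding selection for $\Phi$. Hence the whole proof reduces to verifying the $Z$-set hypothesis of Theorem \ref{theorem-shsa-v26:5}, namely that $\Psi_n(x)\cap \Phi(x)\in \mathscr{Z}(\Phi(x))$ for every $x\in X$ and $n\in\N$.

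When $\mathbf{0}\notin \Phi(x)$ the intersection is the empty set, which is trivially a $Z$-set in $\Phi(x)$ since $C(\mathbf{Q},\Phi(x)\setminus\emptyset)=C(\mathbf{Q},\Phi(x))$. When $\mathbf{0}\in \Phi(x)$ the intersection is the singleton $\{\mathbf{0}\}$, and here the hypothesis $\dim\Phi(x)=\infty$ enters: since $\Phi(x)$ is an infinite-dimensional closed convex subset of the Banach space $E$, the classical fact from infinite-dimensional topology that every compact subset of such a set is a $Z$-set in it (see e.g.\ \cite[Chapter V]{bessage-pelczynski:75}) applies to $\{\mathbf{0}\}$ and completes the verification.

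The only substantial step is therefore this last $Z$-set property, and it is the main obstacle. The standard proof uses the perturbation technology of infinite-dimensional topology: a continuous $g:\mathbf{Q}\to \Phi(x)$ is first approximated within $\varepsilon/2$ by a map into a finite-dimensional convex piece of $\Phi(x)$ via a partition of unity subordinated to a finite $\varepsilon/2$-net in $g(\mathbf{Q})$, and then pushed slightly in a direction of $\Phi(x)$ transverse to that piece --- available thanks to $\dim\Phi(x)=\infty$ --- so as to miss $\mathbf{0}$; convexity of $\Phi(x)$ keeps the perturbed map inside $\Phi(x)$ and within $\varepsilon$ of $g$. Once this is in place, Theorem \ref{theorem-shsa-v26:5} delivers the continuous $\mathbf{0}$-avoiding selection for $\Phi$ at once, with the $C$-space hypothesis on $X$ absorbed entirely into that theorem.
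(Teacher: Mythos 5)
Your proposal is correct and follows essentially the same route as the paper: both reduce the corollary to Theorem \ref{theorem-shsa-v26:5} by taking $\Psi_n(x)=\{\mathbf{0}\}$ for all $x$ and $n$, and both rest on the single substantive point that $\{\mathbf{0}\}\cap\Phi(x)$ is a $Z$-set in $\Phi(x)$ because $\dim\Phi(x)=\infty$. The only (inessential) difference is how that $Z$-set claim is justified --- you invoke the classical Bessaga--Pe{\l}czy\'nski perturbation argument for compact subsets of infinite-dimensional closed convex sets, while the paper derives it from Theorem \ref{theorem-shsa-v27:1} together with the definition of a $Z_n$-set and the identity $\mathscr{Z}(Y)=\bigcap_{n\geq 0}\mathscr{Z}_n(Y)$; you are also slightly more careful than the paper in noting the trivial case $\mathbf{0}\notin\Phi(x)$.
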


\begin{proof}
  Whenever $x\in X$, the singleton $\{\mathbf{0}\}$ is a $Z$-set in
  $\Phi(x)$ because $\Phi(x)$ is infinite-dimensional; equivalently,
  $\{\mathbf{0}\}$ a $Z_n$-set in $\Phi(x)$ for every $n\geq 0$ (for
  instance, one can apply Theorem \ref{theorem-shsa-v27:1} and the
  definition of a $Z_n$-set). Then the existence of a continuous
  $\mathbf{0}$-avoiding selection for $\Phi$ follows from Theorem
  \ref{theorem-shsa-v26:5} by taking $\Psi_n(x)=\{\mathbf{0}\}$ for
  all $x\in X$ and $n\in \N$.
\end{proof}

As Michael emphasised in \cite{michael:88a}, the benefit of Theorems
\ref{theorem-shsa-v27:1} and \ref{theorem-shsa-v27:2} is that they
actually show the existence of continuous selections avoiding given
continuous maps. Namely, suppose that $g : X\to E$ is a continuous map
in a Banach space, and $\Phi:X\to \mathscr{F}_\mathbf{c}(E)$ is an
l.s.c.\ mapping. Then one can consider the l.s.c.\ mapping
$\Phi-g:X\to \mathscr{F}_\mathbf{c}(E)$, defined by
$[\Phi-g](x)=\Phi(x)-g(x)$, $x\in X$. If $\Phi-g$ has a continuous
$\mathbf{0}$-avoiding selection $f:X\to E$, then $h=f+g:X\to E$ is a
continuous selection for $\Phi$ with $h(x)\neq g(x)$ for all $x\in
X$. Based on this and the characterisation of weakly
infinite-dimensional compact spaces in Theorem
\ref{theorem-shsa-v28:1}, we also have the following observation.

\begin{proposition}
  \label{proposition-nach space}
  Let $X$ be a compact space such that for every
  \textup{(}separable\textup{)} Banach space $E$, every l.s.c.\
  mapping $\Phi:X\to \mathscr{F}_\mathbf{c}(E)$ with
  $\dim\Phi(x)=\infty$, for all $x\in X$, has a continuous
  $\mathbf{0}$-avoiding selection. Then $X$ is weakly
  infinite-dimensional.
\end{proposition}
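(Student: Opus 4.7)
The plan is to apply the characterisation in Theorem \ref{theorem-shsa-v28:1}: it suffices to show that for every continuous $g:X\to \mathbf{Q}$ there is a continuous $f:X\to\mathbf{Q}$ with $f(x)\neq g(x)$ for all $x\in X$. To manufacture such an $f$ from the hypothesis, I will use the translation trick exhibited by Michael just before the statement, turning the search for $f$ into the search for a $\mathbf{0}$-avoiding selection of a suitably translated set-valued mapping.

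Fix $g:X\to\mathbf{Q}$. I realise the Hilbert cube inside a separable Banach space: take $E=\ell_2$ and $K=\prod_{n=1}^\infty[0,1/n]\subset E$, together with the canonical homeomorphism $\iota:\mathbf{Q}\to K$, $(t_n)\mapsto(t_n/n)_n$. The image $K$ is convex and compact, and is infinite-dimensional in the affine sense because it contains the linearly independent sequence $e_n/n$, $n\in\N$, and is therefore not contained in any finite-dimensional affine subspace of $\ell_2$. Set $\tilde g=\iota\circ g:X\to K$ and define $\Phi:X\to\mathscr{F}_\mathbf{c}(E)$ by $\Phi(x)=K-\tilde g(x)$. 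Then $\Phi$ is convex-compact-valued (a translate of the convex compact set $K$) and l.s.c.\ (the constant mapping $x\mapsto K$ is continuous, and $\tilde g$ is a continuous single-valued map), each $\Phi(x)$ remains infinite-dimensional, and $\mathbf{0}\in\Phi(x)$ for every $x\in X$ because $\tilde g(x)\in K$.

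By hypothesis, $\Phi$ admits a continuous $\mathbf{0}$-avoiding selection $h:X\to E$. Set $\tilde f=h+\tilde g:X\to E$. Then $\tilde f(x)\in\Phi(x)+\tilde g(x)=K$, and $\tilde f(x)\neq\tilde g(x)$ because $h(x)\neq\mathbf{0}$. Composing with the inverse homeomorphism yields $f=\iota^{-1}\circ\tilde f:X\to\mathbf{Q}$, a continuous map with $f(x)\neq g(x)$ for every $x\in X$. Theorem \ref{theorem-shsa-v28:1} then forces $X$ to be weakly infinite-dimensional.

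The main thing to get right is the choice of embedding, which must ensure that the resulting $\Phi$ simultaneously lies in $\mathscr{F}_\mathbf{c}(E)$, satisfies $\dim\Phi(x)=\infty$ in the affine sense used in the paper, and passes $\mathbf{0}$ through each $\Phi(x)$ so that a $\mathbf{0}$-avoiding selection automatically produces a map disagreeing with $\tilde g$ everywhere. The explicit embedding $(t_n)\mapsto(t_n/n)$ into $\ell_2$ realises all three properties at once; any affine embedding of $\mathbf{Q}$ as a convex compactum in an infinite-dimensional separable Banach space would serve equally well.
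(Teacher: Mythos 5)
Your argument is correct and is essentially the paper's own proof: both fix a continuous $g:X\to\mathbf{Q}$, realise $\mathbf{Q}$ as a convex compactum in $\ell_2$, form the translated l.s.c.\ mapping $\Phi(x)=\mathbf{Q}-g(x)$, and convert the resulting $\mathbf{0}$-avoiding selection into a fixed-point-free companion of $g$, concluding via Theorem \ref{theorem-shsa-v28:1}. The only difference is that you spell out the embedding $(t_n)\mapsto(t_n/n)$ explicitly, which the paper leaves implicit.
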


\begin{proof}
  Consider the Hilbert cube $\mathbf{Q}$ as a compact convex subset of
  $E=\ell_2=\ell_2(\N)$, or, alternatively, of
  $\ell_1=\ell_1(\N)$. Take a continuous map $g:X\to \mathbf{Q}$, and
  next define an l.s.c.\ mapping $\Phi:X\to \mathscr{F}_\mathbf{c}(E)$
  by $\Phi(x)=\mathbf{Q}-g(x)$, $x\in X$, see \cite[Example
  10.2]{michael:88a}. Then $\dim\Phi(x)=\infty$, for all $x\in X$, and
  by condition, $\Phi$ has a continuous $\mathbf{0}$-avoiding
  selection $h:X\to E$. As discussed above, this implies that
  $f=h+g:X\to \mathbf{Q}$ is a continuous map with $f(x)\neq g(x)$,
  for every $x\in X$. According to Theorem \ref{theorem-shsa-v28:1},
  $X$ is weakly infinite-dimensional.
\end{proof}

\subsection*{Acknowledgement}
The author is very much indebted to the referee who contributed an
alternative proof of Theorem \ref{theorem-res-pro-v2:1}, and whose
helpful and essential corrections greatly improved the final version
of this paper.

\providecommand{\bysame}{\leavevmode\hbox to3em{\hrulefill}\thinspace}


\end{document}